\numberwithin{equation}{section}
\def\labelenumi{\theenumi}
\def\theenumi{(\alph{enumi})}
\def\p@enumii{\theenumi}
\long\def\forget#1{}
\newcommand{\TimesQQQ}{\times}
\newcommand{\TimesL}{\times}
\newcommand{\TimesBC}{\times}
\theoremstyle{plain}
\newtheorem{thm}{Theorem}[section]
\newtheorem{prop}[thm]{Proposition}
\newtheorem{lemma}[thm]{Lemma}
\newtheorem{cor}[thm]{Corollary}
\newtheorem{con}[thm]{Conjecture}
\theoremstyle{definition}
\newtheorem{defn}[thm]{Definition}
\newtheorem{notn}[thm]{Notation}
\newtheorem{setup}[thm]{Setup}
\newtheorem{point}[thm]{}
\newtheorem{example}[thm]{Example}
\newtheorem{rem}[thm]{Remark}
\newcounter{proofnumber}
\newcounter{technique}
\newcommand{\technique}[1]{\refstepcounter{technique}\noindent{\bfseries Reduction Technique~(\Alph{technique}) #1.}}
\newcommand{\dbl}{{\mathchoice{\mbox{\rm [\hspace{-0.15em}[}}
                              {\mbox{\rm [\hspace{-0.15em}[}}
                              {\mbox{\scriptsize\rm [\hspace{-0.15em}[}}
                              {\mbox{\tiny\rm [\hspace{-0.15em}[}}}}
\newcommand{\dbr}{{\mathchoice{\mbox{\rm ]\hspace{-0.15em}]}}
                              {\mbox{\rm ]\hspace{-0.15em}]}}
                              {\mbox{\scriptsize\rm ]\hspace{-0.15em}]}}
                              {\mbox{\tiny\rm ]\hspace{-0.15em}]}}}}
\newcounter{commentcounter}
\newcommand{\Urscomment}[1]{}{
\immediate\write16{}
\immediate\write16{Warning: There was still a comment . . . }
\immediate\write16{}}
\def\?{\ {\color{magenta}???}\ 
\immediate\write16{}
\immediate\write16{Warning: There was still a question mark . . . }
\immediate\write16{}}
\newcommand{\BOne} {{\mathchoice{\hbox{\rm1\kern-2.7pt l\kern.9pt}}
                              {\hbox{\rm1\kern-2.7pt l\kern.9pt}}
                              {\hbox{\scriptsize\rm1\kern-2.3pt l\kern.4pt}}
                              {\hbox{\scriptsize\rm1\kern-2.4pt l\kern.5pt}}}}
\newcommand{\UOne}{\underline{\BOne}}
\newcommand{\BA}{{\mathbb{A}}}
\newcommand{\BC}{{\mathbb{C}}}
\newcommand{\BE}{{\mathbb{E}}}
\newcommand{\BF}{{\mathbb{F}}}
\newcommand{\BG}{{\mathbb{G}}}
\newcommand{\BH}{{\mathbb{H}}}
\newcommand{\BJ}{{\mathbb{J}}}
\newcommand{\BK}{{\mathbb{K}}}
\newcommand{\BL}{{\mathbb{L}}}
\newcommand{\BN}{{\mathbb{N}}}
\newcommand{\BQ}{{\mathbb{Q}}}
\newcommand{\BR}{{\mathbb{R}}}
\newcommand{\BS}{{\mathbb{S}}}
\newcommand{\BT}{{\mathbb{T}}}
\newcommand{\BZ}{{\mathbb{Z}}}
\newcommand{\CB}{{\mathcal{B}}}
\newcommand{\CC}{{\mathcal{C}}}
\newcommand{\mathcalD}{{\mathcal{D}}}
\newcommand{\CE}{{\mathcal{E}}}
\newcommand{\CF}{{\mathcal{F}}}
\newcommand{\CG}{{\mathcal{G}}}
\newcommand{\CH}{{\mathcal{H}}}
\newcommand{\CI}{{\mathcal{I}}}
\newcommand{\CK}{{\mathcal{K}}}
\newcommand{\CL}{{\mathcal{L}}}
\newcommand{\CN}{{\mathcal{N}}}
\newcommand{\CO}{{\mathcal{O}}}
\newcommand{\CS}{{\mathcal{S}}}
\newcommand{\CT}{{\mathcal{T}}}
\newcommand{\CU}{{\mathcal{U}}}
\newcommand{\Fu}{{\mathfrak{u}}}
\DeclareMathOperator{\Ad}{Ad}
\DeclareMathOperator{\Aut}{Aut}
\DeclareMathOperator{\End}{End}
\DeclareMathOperator{\Frob}{Frob}
\DeclareMathOperator{\Gal}{Gal}
\DeclareMathOperator{\GL}{GL}
\DeclareMathOperator{\Gr}{Gr}
\DeclareMathOperator{\Koh}{H}
\DeclareMathOperator{\Hom}{Hom}
\DeclareMathOperator{\Ind}{Ind}
\DeclareMathOperator{\Isom}{Isom}
\DeclareMathOperator{\Lie}{Lie}
\DeclareMathOperator{\PGL}{PGL}
\DeclareMathOperator{\Rep}{Rep}
\DeclareMathOperator{\Res}{Res}
\DeclareMathOperator{\Spec}{Spec}
\DeclareMathOperator{\Tr}{Tr}
\DeclareMathOperator{\Var}{V}
\DeclareMathOperator{\charakt}{char}
\newcommand{\cont}{{\rm c}}
\newcommand{\et}{{\rm\acute{e}t}}
\DeclareMathOperator{\id}{\,id}
\DeclareMathOperator{\im}{im}
\renewcommand{\mod}{\;{\rm mod}\;}
\newcommand{\red}{{\rm red}}
\newcommand{\rig}{{\rm rig}}
\DeclareMathOperator{\rk}{rk}
\newcommand{\un}{{\rm un}}
\newcommand{\open}{^\circ}
\newcommand{\dual}{^{\scriptscriptstyle\vee}}
\newcommand{\mal}{^\times}
\newcommand{\ul}[1]{{\underline{#1}}}
\newcommand{\ol}[1]{{\overline{#1}}}
\newcommand{\wh}[1]{{\widehat{#1}}}
\newcommand{\wt}[1]{{\widetilde{#1}}}
\newcommand{\olB}{{\,{\overline{\!B}}{}}}
\newcommand{\olC}{{\,{\overline{\!C}}{}}}
\newcommand{\olD}{{\,{\overline{\!D}}{}}}
\newcommand{\olG}{{\,{\overline{\!G}}{}}}
\newcommand{\olH}{{\,{\overline{\!H}}{}}}
\newcommand{\olK}{{\,{\overline{\!K}}{}}}
\newcommand{\olP}{{\,{\overline{\!P}}{}}}
\newcommand{\olS}{{\,{\overline{\!S}}{}}}
\newcommand{\olT}{{\overline{T}}{}}
\newcommand{\olV}{{\overline{V}}{}}
\newcommand{\olW}{{\overline{W}}{}}
\newcommand{\olX}{{\,{\overline{\!X}}{}}}
\newcommand{\olZ}{{\,{\overline{\!Z}}{}}}
\def\longto{\longrightarrow}
\def\into{\hookrightarrow}
\def\longinto{\lhook\joinrel\longrightarrow}
\def\onto{\twoheadrightarrow}
\newcommand{\longonto}{\hbox{$\longrightarrow\kern-12pt\to\kern2pt$}}
\def\isoto{\stackrel{}{\mbox{\hspace{1mm}\raisebox{+1.4mm}{$\scriptstyle\sim$}\hspace{-3.5mm}$\longrightarrow$}}}
\newcommand{\dal}{{\mathchoice{\mbox{$\langle\hspace{-0.15em}\langle$}}
                              {\mbox{$\langle\hspace{-0.15em}\langle$}}
                              {\mbox{\scriptsize$\langle\hspace{-0.15em}\langle$}}
                              {\mbox{\tiny$\langle\hspace{-0.15em}\langle$}}}}
\newcommand{\dar}{{\mathchoice{\mbox{$\rangle\hspace{-0.15em}\rangle$}}
                              {\mbox{$\rangle\hspace{-0.15em}\rangle$}}
                              {\mbox{\scriptsize$\rangle\hspace{-0.15em}\rangle$}}
                              {\mbox{\tiny$\rangle\hspace{-0.15em}\rangle$}}}}
\DeclareMathOperator{\FIsoc}{{\mathchoice{{\mbox{\rm $F$-Isoc}}}
                                         {{\mbox{\rm $F$-Isoc}}}
                                         {{\mbox{\rm\scriptsize$F$-Isoc}}}
                                         {{\mbox{\rm\tiny$F$-Isoc}}}}}
\DeclareMathOperator{\Isoc}{{\mathchoice{{\mbox{\rm Isoc}}}
                                         {{\mbox{\rm Isoc}}}
                                         {{\mbox{\rm\scriptsize Isoc}}}
                                         {{\mbox{\rm\tiny Isoc}}}}}
\DeclareMathOperator{\FnIsoc}{{\mathchoice{{\mbox{\rm$F^n$-Isoc}}}
                                          {{\mbox{\rm$F^n$-Isoc}}}
                                          {{\mbox{\rm\scriptsize$F^n$-Isoc}}}
                                          {{\mbox{\rm\tiny$F^n$-Isoc}}}}}
\newcommand{\BasePoint}{u}
\newcommand{\BasePtDeg}{e}
\DeclareMathOperator{\FuIsoc}{{\mathchoice{{\mbox{\rm$F^\BasePtDeg$-Isoc}}}
                                          {{\mbox{\rm$F^\BasePtDeg$-Isoc}}}
                                          {{\mbox{\rm\scriptsize$F^\BasePtDeg$-Isoc}}}
                                          {{\mbox{\rm\tiny$F^\BasePtDeg$-Isoc}}}}}
\DeclareMathOperator{\FUR}{{\mathchoice{{\mbox{\rm $F$-UR}}}
                                         {{\mbox{\rm $F$-UR}}}
                                         {{\mbox{\rm\scriptsize$F$-UR}}}
                                         {{\mbox{\rm\tiny$F$-UR}}}}}
\DeclareMathOperator{\FConst}{{\mathchoice{{\mbox{\rm $F$-Const}}}
                                         {{\mbox{\rm $F$-Const}}}
                                         {{\mbox{\rm\scriptsize$F$-Const}}}
                                         {{\mbox{\rm\tiny$F$-Const}}}}}
\DeclareMathOperator{\DGal}{DGal}
\renewcommand{\epsilon}{\varepsilon}
\renewcommand{\phi}{\varphi}
\begin{document}
\title{Crystalline Chebotar\"ev Density Theorems}
\author{Urs Hartl and Ambrus P\'al}
\address{Universit\"at M\"unster, Mathematisches Institut, Einsteinstrasse 62,
48149 M\"un\-ster, Germany}
\email{https:/\hspace{-1mm}/www.uni-muenster.de/Arithm/hartl/}
\address{Eötvös Loránd University, Mathematical Institute, 1117 Budapest, Pázmány Péter sétány 1/C, Hungary}
\email{ambrus.pal@ttk.elte.hu}

\mbox{ } \vspace{-1.3cm}
\maketitle

\vspace{-1cm}

\begin{abstract}
Using the Tannakian formalism, we formulate conjectural analogs of Chebotar\"ev's Density Theorem for $F$-isocrystals over a smooth geometrically irreducible variety defined over a finite field. We prove these analogs for several large classes, including (a) constant $F$-isocrystals, (b) direct sums of isoclinic convergent $F$-isocrystals, (c) semi-simple overconvergent $F$-isocrystals, and (d) semi-simple convergent $F$-isocrystals which have an overconvergent extension. Case (a) is a generalization of the Mordell-Lang Conjecture for tori and enters in the proofs of (b) and (c). For (b) we use the classical Chebotar\"ev Density Theorem, and point counting techniques in $p$-adic Lie groups building on a result of Oesterl\'e. For (c) we give two proofs. One of them uses deep input on the Langlands correspondence by Abe and Lafforgue, and the theory of Frobenius weights of Kedlaya, Abe and Caro. Building on this we formulate and prove the $p$-adic analog of Deligne's Equidistribution Theorem. Then (c) follows by applying real algebraic geometry to maximal compact subgroups in complex algebraic groups, measure theory, and a convergence result on complex hypersurfaces. For (d) we develop the theory of maximal quasi-tori (generalizing maximal tori in non-connected linear algebraic groups) and use D'Addezio's result on Crew's parabolicity conjecture to reduce to (b). These arguments also yield a second proof of (c). Besides of the deep inputs mentioned above and some Tannakian arguments, our main technique is the theory of linear algebraic groups. We include a comparison with the recent article of Cadoret and Tamagawa on the same topic.

\noindent
{\itshape Mathematics Subject Classification (2000)\/}: 
14F30,  
(14F35,  
20G15)  
\end{abstract}

\setcounter{tocdepth}{2}
\tableofcontents

\vspace{-1cm}

\section{Introduction} \label{SectIntroduction}

The classical Chebotar\"ev Density Theorem has plenty of applications in arithmetic geometry. It was originaly formulated for Galois extensions of number fields. We consider its function field version in the following setup.
Let $\BF_q$ be a finite field with $q$ elements and characteristic $p$. Let $U$ be a smooth, geometrically irreducible, quasi-compact scheme over $\BF_q$ and let $|U|$ be the set of closed points in $U$. Let $\bar\BasePoint\in U(\ol\BF_q)$ be a geometric base point of $U$ and let $\pi_1^\et(U,\bar\BasePoint)$ be the \'etale fundamental group of $U$. It classifies separable algebraic extensions of the function field of $U$ which are \'etale above $U$. In the function field version it replaces the Galois group of extensions of number fields. For every $x\in|U|$ write $\deg(x)=[\BF_x:\BF_q]$, where $\BF_x$ denotes the residue field of $x$, and let $\Frob_x\subset \pi_1^\et(U,\bar\BasePoint)$ be the Frobenius conjugacy class at $x$. The classical Chebotar\"ev Density Theorem \cite[Theorem~7]{Serre63} says that for any finite quotient group $\pi_1^\et(U,\bar\BasePoint)\onto\Gamma$ and conjugation invariant subset $C\subset\Gamma$, the set of points $x\in |U|$ for which $\Frob_x$ maps into $C$ has Dirichlet density $\tfrac{\#C}{\#\Gamma}$. As a consequence, for every subset $S\subset|U|$ of Dirichlet density $1$, the Frobenius conjugacy classes $\Frob_x$ for $x\in S$ are dense in $\pi_1^\et(U,\bar\BasePoint)$. Thus representations of $\pi_1^\et(U,\bar\BasePoint)$ are largely controlled by the images of the $\Frob_x$. A particularly interesting case are representations in finite $\BQ_p$-vector spaces. These are equivalent by Crew's fundamental result \cite{Crew87} to convergent unit-root $F$-isocrystals. We may therefore ask whether the classical Chebotar\"ev Density Theorem can be generalized to larger classes of $F$-isocrystals. Considering the historical context of $F$-isocrystals we refer to such generalizations as ``Crystalline Chebotar\"ev Density Theorems''.

More precisely, let ${\rm Fr}_{q,U}$ denote the absolute $q$-Frobenius on $U$ which is the identity on points and the $q$-power map on the structure sheaf. Let $K$ be a finite totally ramified extension of the fraction field $W(\BF_q)[\tfrac{1}{p}]$ of the ring of Witt vectors. To shorten notation, we write $\olK$ (and not $\ol\BQ_p$) for an algebraic closure of $K$. The $q$-Frobenius ${\rm Fr}_{q,U}$ on $U$ corresponds to the $q$-Frobenius ${\rm Fr}_{q,\BF_q}$ on $\Spec \BF_q$. The latter equals the identity on $\Spec\BF_q$. Thus, we may choose the identity $F:=\text{id}_K$ on the lift $K$ of $\BF_q$ as a lift of ${\rm Fr}_{q,\BF_q}$. We consider the $K$-linear rigid abelian tensor categories $\FIsoc_K(U)$, respectively $\FIsoc^\dagger_K(U)$ of $K$-linear convergent, respectively overconvergent $F$-isocrystals on $U$ and their $\olK$-linear scalar extensions $\FIsoc_\olK(U):=\FIsoc_K(U)\otimes_K\olK$ and $\FIsoc^\dagger_\olK(U):=\FIsoc^\dagger_K(U)\otimes_K\olK$; see Section~\ref{SectPrelim} for details. In this article we only consider $F$-isocrystals with $\olK$-coefficients. If $\CF\in\FIsoc_\olK(U)$ we let $\dal\CF\dar$ denote the strictly full rigid abelian tensor sub-category of $\FIsoc_\olK(U)$ generated by $\CF$ and likewise for $\FIsoc^\dagger_\olK(U)$. We fix a base point $\BasePoint\in U(\BF_{q^\BasePtDeg})$ below $\bar\BasePoint$ with $\BasePtDeg=\deg(\BasePoint)$. Pulling back to $u$ defines a faithful fiber functor $\omega_\BasePoint\colon\CF\mapsto\BasePoint^*\CF$ to $\olK$-vector spaces, which makes $\FIsoc_\olK(U)$ into a neutral Tannakian category and $\dal\CF\dar$ into a neutral Tannakian sub-category of $\FIsoc_\olK(U)$, and likewise for $\FIsoc^\dagger_\olK(U)$; see \cite{Deligne-Milne82}. If $\CF$ is a convergent, respectively an overconvergent $F$-isocrystal on $U$ we consider its \emph{monodromy group}, denoted by $\Gr(\CF/U)$, respectively $\Gr^\dagger(\CF/U)$, and defined as the Tannakian fundamental group of $\dal\CF\dar$, that is, the linear algebraic group $\Aut^\otimes(\omega_\BasePoint|_{\dal\CF\dar})$ over $\olK$ consisting of the tensor automorphisms of $\omega_\BasePoint$. For every closed point $x\in|U|$ the Frobenius $F_\CF$ of $\CF$ furnishes a conjugacy class $\Frob_x(\CF)$ in $\Gr(\CF/U)$, respectively $\Frob^\dagger_x(\CF)$ in $\Gr^\dagger(\CF/U)$; see Definition~\ref{FrobClass}. The crystalline version of Chebotar\"ev's Density Theorem for a convergent $F$-isocrystal $\CF\in\FIsoc_\olK(U)$ is the following

\begin{con}\label{MainConj}
For every subset $S\subset|U|$ of upper Dirichlet density one, the set $\bigcup_{x\in S}\Frob_x(\CF)$ is Zariski-dense in $\Gr(\CF/U)$.
\end{con}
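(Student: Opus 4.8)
\begin{rem}
Since Conjecture~\ref{MainConj} is stated as a conjecture, what follows is a plan rather than a complete argument; it is the plan carried out below for the cases treated in this article, together with an indication of where it breaks down in general. The model is the classical $\ell$-adic situation, in which the analogue is a theorem: there $\dal\CF\dar$ is replaced by the Tannakian category generated by a lisse $\ol{\BQ}_\ell$-sheaf, its Tannakian group $G$ is a genuine linear algebraic group, and one combines two inputs. First, the Frobenius elements $\{\Frob_x:x\in S\}$ attached to a Dirichlet-density-one set $S$ still surject onto every finite quotient of $\pi_1^{\et}(U)$ by the Chebotar\"ev density theorem, and, by Serre's Frobenius-torus argument, the tori they generate produce a subgroup of $G$ of full rank meeting every connected component. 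Second, the image of $\pi_1^{\et}(U)$ in $G$ is Zariski-dense by the very definition of $G$. Together these force $\bigcup_{x\in S}\Frob_x$ to be Zariski-dense in $G$; density one suffices because deleting a density-zero subset of $|U|$ destroys neither the surjectivity onto finite quotients nor the full-rank conclusion.

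The obstruction to imitating this on the crystals side is that the relevant Tannakian group --- for $\dal\CF\dar$, after the harmless base change to $K_e$ which trivialises $\omega_u$, a form of $\Gr(\CF/U,u)$ --- is only pro-algebraic, with a potentially large pro-unipotent part, and there is no known statement that the crystalline Frobenius classes are dense in it; this density is essentially the content of the conjecture. The plan is therefore to reduce to situations admitting a genuine profinite fundamental group or an $\ell$-adic avatar. (i) If $\CF$ is semisimple and extends to an overconvergent $F$-isocrystal, I would invoke the theory of companions --- Abe's crystalline Langlands correspondence for curves, building on L.~Lafforgue --- to attach to $\CF$ a lisse $\ell$-adic sheaf with matching Frobenius characteristic polynomials at all closed points, compatibly with every tensor construction; the two Tannakian groups are then canonically isomorphic, the Frobenius conjugacy classes correspond, and the $\ell$-adic theorem transports back. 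This is the overconvergent analogue of Chebotar\"ev's theorem referred to in the abstract, and the technical core of the overconvergent case. (ii) For a direct sum of isoclinic convergent $F$-isocrystals I would twist each isoclinic summand by a rank-one object to make it unit-root, whereupon Crew's equivalence identifies it with a continuous $p$-adic representation of $\pi_1^{\et}(U,\bar u)$; now a genuine profinite $\pi_1$ is available, the classical argument applies, the twists change $G$ only by a central torus and the Frobenius classes only by explicit scalars --- so Zariski-density is unaffected --- and one reassembles the sum using that the Tannakian group of a direct sum embeds into the product of those of the summands.

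For the general semisimple case that possesses an overconvergent extension, the plan is to pull back to a non-empty open $V\subseteq U$ small enough that the semisimplification of $\CF|_V$ has abelian monodromy, dispatch that by (i)--(ii), and then climb back up: a density-one subset of $|U|$ restricts to a density-one subset of $|V|$, the finitely many omitted closed points contribute only a harmless ambiguity, and the Tannakian group of $\CF$ sits in an extension of that of its semisimplification by a unipotent group, on which the Frobenius classes stay dense because density one is preserved under all of these operations.

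The hard part --- and the reason the conjecture remains open in general --- is the genuinely mixed convergent case: a convergent $F$-isocrystal with several slopes and no overconvergent lift is out of reach of the companions formalism and does not reduce to the unit-root case, so one is left facing the pro-algebraic crystalline fundamental group with no handle on whether Frobenius is dense in its pro-unipotent radical. A secondary, purely technical, source of friction throughout is the bookkeeping needed to check that Dirichlet density one survives every reduction: base change along a finite \'etale cover, restriction to an open subcurve, Tate-type twists, semisimplification, and the passage between a sum or extension and its constituents.
\end{rem}
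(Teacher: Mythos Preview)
Your broad outline is recognisable but differs from the paper's actual strategy in several places, and glosses over precisely the step where the real content lies.

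For the overconvergent case you propose to go through $\ell$-adic companions and transport the $\ell$-adic Chebotar\"ev theorem back. The paper does not do this: it proves the overconvergent density statement directly, via the $p$-adic analogue of Deligne's equidistribution theorem (Crew--Kedlaya) together with an argument on maximal compact quasi-tori in $G(\BC)$ and a limiting argument on algebraic hypersurfaces. Your claim that companions make ``the two Tannakian groups canonically isomorphic'' is not immediate --- matching of Frobenius characteristic polynomials at closed points does not by itself furnish an isomorphism of monodromy groups compatible with the Frobenius classes --- so even as an alternative route this would need justification. For the isoclinic case your sketch (twist to unit-root and invoke Crew) is the right first move, but ``the classical argument applies'' is too quick: after twisting one has a fibre product of a unit-root piece with a constant piece, and the interaction of the degree-of-point parameter with the geometric monodromy is exactly what forces the paper's Oesterl\'e-type analysis and the quantitative Chebotar\"ev input.

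The main gap is your ``climb back up'' step. You describe it as passing from the semisimplification to $\CF$ through a unipotent extension; but for the semisimple $\CF$ under consideration there is no unipotent radical, and that is not where the difficulty lies. The genuine obstruction is that the \emph{convergent} monodromy group $\Gr(f^*\CF/V,\BasePoint)$ can be strictly smaller than $\Gr(\CF/U,\BasePoint)$ when one restricts to an open $V\subset U$ --- the lack of rigidity peculiar to convergent isocrystals. The paper's entire machinery of maximal quasi-tori and the ``weakly pink'' hypothesis is designed to control this shrinkage well enough to transfer density on $V$ back to density on $U$; your plan does not mention this at all. Relatedly, your remark that preserving density one under a finite \'etale base change is mere bookkeeping is incorrect: the paper gives an explicit example showing that the preimage of a density-one set under a degree-$n$ Galois cover can have density zero, so one cannot reduce by passing to a cover. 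Finally, your diagnosis of the general obstruction --- density in a pro-unipotent radical --- is off target; the paper locates the core difficulty in Pink's question about whether $\Gr(f^*\CF/V)$ is parabolic in $\Gr(\CF/U)$, i.e.\ in the comparison of the two algebraic groups, not in any pro-unipotent phenomenon.
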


We follow Serre~\cite{Serre63} in the definition of (upper) Dirichlet density; see Definition~\ref{dirichlet_density_def}. The notion of positive upper Dirichlet density is a natural weakening of positive Dirichlet density. We explain in Remark~\ref{RemOtherTopologies} why we do not expect a density statement for any other topology than the Zariski-topology. As a consequence, we always use the words ``closure'' and ``density'' with respect to the Zariski-topology in this article, unless specified otherwise. We prove in Proposition~\ref{reduction3->2} that Conjecture~\ref{MainConj} is a consequence of the following stronger

\begin{con}\label{MainConj3}
For every subset $S\subset|U|$ of positive upper Dirichlet density the Zariski-closure of the set $\bigcup_{x\in S}\Frob_x(\CF)$ contains a connected component of the group $\Gr(\CF/U)$. 
\end{con}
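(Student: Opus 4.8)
As stated this is a conjecture; I describe the strategy by which it can be proved in the two families named in the abstract, and indicate where the general case breaks.

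\medskip

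\emph{Reduction to connected monodromy.} Put $G=\Gr(\CF/U,\BasePoint)\times_{K_\BasePtDeg}\olK$ with identity component $G^\circ$, so $\pi_0(G)=G/G^\circ$ is finite. Composing $x\mapsto\Frob_x(\CF)$ with $G\onto\pi_0(G)$ records the Frobenii of the largest finite-monodromy sub\-quotient of $\dal\CF\dar$; a convergent $F$-isocrystal with finite monodromy trivializes on a finite \'etale cover, hence corresponds to a representation of a finite quotient of $\pi_1^\et(U,\BasePoint)$, and the classical Chebotar\"ev density theorem shows the $\Frob_x(\CF)$ are equidistributed among the cosets of $G^\circ$. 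Thus if $S$ has positive upper Dirichlet density there is a coset $gG^\circ$ and a subset $S'\subseteq S$ of positive upper density with $\Frob_x(\CF)\in gG^\circ$ for all $x\in S'$; pulling back along the connected \'etale cover $U'\to U$ trivializing $\pi_0(G)$, over which $\CF$ has monodromy group $G^\circ$, turns $S'$ into a positive-upper-density set of closed points and a component $G^\circ\subset G$ into the full group. Hence Conjecture~\ref{MainConj3} for $\CF$ follows from Conjecture~\ref{MainConj2} applied to $\CF|_{U'}$, so it suffices to treat connected monodromy.

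\medskip

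\emph{The connected case.} Assume $G$ connected and $S$ of positive upper density, and suppose for contradiction that $Z:=\overline{\bigcup_{x\in S}\Frob_x(\CF)}$ is a proper conjugation-stable closed subvariety of $G$. For each algebraic representation $\rho$ of $\Gr(\CF/U,\BasePoint)$ let $\rho_*\CF\in\dal\CF\dar$ be the associated convergent $F$-isocrystal and use the Lefschetz trace formula for rigid cohomology of \'Etesse--Le Stum to rewrite $\sum_{\deg(x)\le n}\Tr\rho(\Frob_x(\CF))$ as $\sum_i(-1)^i\Tr\bigl(F^n\mid H^i_{\rm rig}(U,\rho_*\CF)\bigr)$. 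If $\CF$ is semi-simple and underlies an overconvergent $F$-isocrystal, these cohomology groups are finite-dimensional (Crew, Kedlaya) and carry Frobenius weights bounded via Weil~II for overconvergent $F$-isocrystals (Kedlaya, Abe--Caro); feeding the resulting first- and second-moment estimates into the argument of Serre and Rajan forces the Frobenii over any positive-density set to be Zariski-dense in the connected group $G$, contradicting the properness of $Z$. Semi-simplicity is used so that every $\rho_*\CF$ is semi-simple and pure weight pieces separate, and overconvergence is exactly what makes $H^*_{\rm rig}$ finite-dimensional and mixed; this is why the paper first establishes the overconvergent analogue of Chebotar\"ev's theorem.

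\medskip

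\emph{The two families.} If $\CF$ is a direct sum of isoclinic convergent $F$-isocrystals, a suitable Frobenius/Tate twist makes each summand unit-root, so $\dal\CF\dar$ is generated by unit-root $F$-isocrystals; by Crew's equivalence these are continuous $p$-adic representations of $\pi_1^\et(U,\BasePoint)$, $\Gr(\CF/U,\BasePoint)$ is the Zariski closure of the image, $\Frob_x(\CF)$ the image of the geometric Frobenius, and the statement follows from the classical density machinery for profinite groups (reduce to connected monodromy as above, then apply the positive-density density theorem). If $\CF$ is semi-simple with an overconvergent extension $\CF^\dagger$ and $(\CF^\dagger|_V)^{ss}$ has abelian monodromy for some small non-empty open $V\ni\BasePoint$, one first checks $\Gr(\CF/U,\BasePoint)\cong\Gr(\CF^\dagger/U,\BasePoint)\cong\Gr(\CF^\dagger|_V/V,\BasePoint)$ for the semi-simple objects at hand (idempotents lift by full faithfulness of the forgetful and restriction functors for overconvergent $F$-isocrystals, Kedlaya), reducing everything to the overconvergent object over $V$. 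Abelian monodromy there means $\dal\CF^\dagger|_V\dar$ is, up to semisimplification, generated by rank-one overconvergent $F$-isocrystals, i.e. by crystalline Hecke characters of $\BF_q(U)$; for these the classes $\Frob_x$ are the character values and equidistribution is a one-variable statement obtained from class field theory for $\BF_q(U)$ together with the functional equations and Weil bounds of the attached $L$-functions. Descending from $V$ to $U$ and from the semisimplification back to $\CF^\dagger$, with the defect again controlled by the trace formula, then gives Conjecture~\ref{MainConj3}, and in fact the stronger Conjecture~\ref{MainConj}, in both families.

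\medskip

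\emph{The main obstacle.} Everything hinges on replacing $\CF$ by an overconvergent object. A general convergent $F$-isocrystal need not admit an overconvergent extension, and without one the rigid cohomology of the $\rho_*\CF$ is not known to be finite-dimensional or to have bounded Frobenius weights, so the Lefschetz/$L$-function engine, and with it the equidistribution and the Rajan-type argument, collapses; one also loses all handle on the monodromy group, even on its component group. Closing this gap is precisely what would be needed to prove Conjectures~\ref{MainConj}--\ref{MainConj3} in general.
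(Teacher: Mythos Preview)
Your outline has several substantive gaps, and the paper's arguments proceed quite differently.

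\textbf{The reduction to connected monodromy fails.} You claim that lifting $S'$ to the \'etale cover $U'\to U$ trivializing $\pi_0(G)$ gives a set of positive upper Dirichlet density on $U'$. But if $S'$ lands in a coset $gG^\circ$ with $g$ of order $m>1$ in $\pi_0(G)$, then each $x\in S'$ is inert of degree $m$ in $U'/U$, and the preimage of $S'$ in $|U'|$ has density zero; this is exactly Example~\ref{ExDensity}. The paper never attempts this reduction and works with non-connected monodromy throughout, which is why Section~\ref{SectQuasiTori} develops the theory of maximal quasi-tori.

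\textbf{The isoclinic case is not a unit-root statement.} Twisting an isoclinic $\CF_i$ of non-zero slope by a constant $\CC_i$ of opposite slope makes $\CU_i=\CF_i\otimes\CC_i\dual$ unit-root, but $\CC_i$ itself has monodromy containing $\BG_m$, in which the Frobenii $\Frob_x(\CC_i)$ form a \emph{discrete} subset (Example~\ref{ExConstantIsocr}). So $\dal\CF\dar$ is not generated by unit-root objects, and classical Chebotar\"ev for $\pi_1^\et$ does not suffice. The actual proof of Theorem~\ref{ThmIsoclinic} writes $\Gr(\CU\oplus\CC)$ as a fibre product of a unit-root piece and a constant piece, and then combines the effective Chebotar\"ev of Theorem~\ref{useful_che} with Oesterl\'e's bound on mod-$p^\nu$ reductions of $p$-adic analytic hypersurfaces and a Mordell--Lang result for tori (Theorem~\ref{TheoremMordellLang}) to conclude via Theorem~\ref{Thm4.3}.

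\textbf{Convergent versus overconvergent monodromy.} Your claimed isomorphism $\Gr(\CF/U,\BasePoint)\cong\Gr^\dagger(\CF/U,\BasePoint)$ is false in general: the natural map is only a closed immersion (Lemma~\ref{LemmaOverconvMonodr}, and the remark preceding it). Bridging this gap is precisely the content of Sections~\ref{SectQuasiTori}--\ref{SectWPinkForFirm}. The paper proves the overconvergent Chebotar\"ev (Theorem~\ref{Thm1.14}) via the $p$-adic Deligne equidistribution theorem rather than a Serre--Rajan trace argument, and then descends to the convergent group by showing that a maximal quasi-torus of $\Gr(f^*\CF/V,\BasePoint)$ is already maximal in $\Gr(\CF/U,\BasePoint)$ (the ``weakly pink'' condition, Theorem~\ref{ThmWeaklyPink}); no identification of the two monodromy groups is available or used.
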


There are the obvious analogs of the two conjectures for an overconvergent $F$-isocrystal $\CF\in\FIsoc^\dagger_\olK(U)$. After Lemma~\ref{Lemma1.10Converse} we will give the proof of the following application of crystalline Chebotar\"ev density. For $\CF\in\FIsoc_\olK(U)$ and $x\in|U|$ let $\Tr(\Frob_x(\CF))$ denote the common trace of all elements of $\Frob_x(\CF)$, considered as endomorphisms of the $\olK$-vector space $\omega_\BasePoint(\CF)$.

\begin{cor}\label{Cor1.9}
Let $S\subset|U|$ be a subset of upper Dirichlet density one, and let $\CF,\CG$ be two convergent (respectively overconvergent) $F$-isocrystals of the same rank on $U$ such that $\Tr(\Frob_x(\CF))=\Tr(\Frob_x(\CG))$ for every $x\in S$. Assume that Conjecture~\ref{MainConj} holds for the direct sum $\CF^{ss}\oplus\CG^{ss}$ of their semi-simplifications. Then $\CF^{ss}$ and $\CG^{ss}$ are isomorphic in $\FIsoc_\olK(U)$ (respectively in $\FIsoc^\dagger_\olK(U)$ ).
\end{cor}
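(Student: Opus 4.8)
The plan is to apply Conjecture~\ref{MainConj} to the monodromy group $\Gr(\CH/U,\BasePoint)$ of $\CH:=\CF^{ss}\oplus\CG^{ss}$ and use the equality of traces on a density-one set to force the two projections of $\CH$ to be isomorphic as representations of this group. First I would record the standard Tannakian dictionary: since $\dal\CH\dar$ is a Tannakian category with fiber functor $\omega_\BasePoint$, the functor $\CM\mapsto\omega_\BasePoint(\CM)$ gives an equivalence between $\dal\CH\dar$ and the category of finite-dimensional $K_\BasePtDeg$-linear representations of $G:=\Gr(\CH/U,\BasePoint)$. Under this equivalence $\CF^{ss}$ and $\CG^{ss}$ correspond to two representations $\rho_\CF,\rho_\CG\colon G\to\GL(V_\CF),\GL(V_\CG)$ with $\dim V_\CF=\dim V_\CG$ (the rank hypothesis), and both are semisimple representations because $\CF^{ss},\CG^{ss}$ are semisimple objects. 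For each $x\in|U|$ the Frobenius conjugacy class $\Frob_x(\CH)\subset G(\olK)$ has a well-defined image in $\GL(V_\CF)(\olK)$ and in $\GL(V_\CG)(\olK)$, and by construction $\Tr\rho_\CF(\Frob_x(\CH))=\Tr(\Frob_x(\CF))$ and similarly for $\CG$.

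Next, consider the regular function on $G$ defined by $g\mapsto\Tr\rho_\CF(g)-\Tr\rho_\CG(g)$; call it $t\in\CO(G)$. By hypothesis $t$ vanishes on $\Frob_x(\CH)$ for every $x\in S$, hence on $\bigcup_{x\in S}\Frob_x(\CH)$, and since $S$ has Dirichlet density one, Conjecture~\ref{MainConj} (assumed for $\CH$) says this union is Zariski-dense in $G$. A regular function vanishing on a Zariski-dense subset vanishes identically, so $\Tr\rho_\CF(g)=\Tr\rho_\CG(g)$ for all $g\in G(\olK)$, i.e. the two representations $\rho_\CF$ and $\rho_\CG$ of $G$ have equal characters. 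Two semisimple representations of an affine group scheme over a field of characteristic zero with the same character are isomorphic (Brauer--Nesbitt / standard character theory; here one may base change to $\olK$, conclude the characters of the semisimple $\olK[G]$-modules agree, deduce the composition factors coincide with multiplicities, and use semisimplicity, then descend the isomorphism since $\Hom_G(\rho_\CF,\rho_\CG)$ is a $K_\BasePtDeg$-vector space whose formation commutes with the field extension $\olK/K_\BasePtDeg$). Therefore $\rho_\CF\cong\rho_\CG$ as representations of $G$, and applying the inverse Tannakian equivalence yields $\CF^{ss}\cong\CG^{ss}$ in $\dal\CH\dar$, hence in $\FIsoc_K(U)$.

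The only genuinely delicate points are bookkeeping rather than depth. One must check that $\Tr(\Frob_x(\CF))$ really equals the trace of $\rho_\CF$ evaluated at the class $\Frob_x(\CH)$ — i.e. that the Frobenius class of the summand $\CF^{ss}$ of $\CH$ is the image of $\Frob_x(\CH)$ under the projection $G\to\GL(V_\CF)$ induced by the inclusion $\dal\CF^{ss}\dar\subset\dal\CH\dar$; this is immediate from the functoriality of $\Frob_x$ in Definition~\ref{FrobClass} but should be stated. One must also note that $t$ is genuinely in $\CO(G)$, not merely a function on $\olK$-points: the trace of a representation is a regular function, and the difference of two is too. The potential obstacle — that $\Gr(\CH/U,\BasePoint)$ might be disconnected, so that "Zariski-dense" is a strong conclusion — is not actually an obstacle here, precisely because we invoke the strong form, Conjecture~\ref{MainConj}, which delivers density in the whole group; no connectedness of the monodromy group is needed. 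Finally, semisimplicity of $\CF^{ss}$ and $\CG^{ss}$ is needed only for the last step (equal characters $\Rightarrow$ isomorphic), and it holds by construction since the semisimplification is a semisimple object of the abelian category $\FIsoc_K(U)$.
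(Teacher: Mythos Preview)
Your argument is correct and follows essentially the same route as the paper: pass to the monodromy group $G=\Gr(\CF^{ss}\oplus\CG^{ss},\BasePoint)$, observe that the difference of trace functions is a regular function on $G$ vanishing on the Zariski-dense set $\bigcup_{x\in S}\Frob_x(\CH)$, hence vanishes identically, and conclude by character theory for semisimple representations.

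One small point you elide: your ``by construction $\Tr\rho_\CF(\Frob_x(\CH))=\Tr(\Frob_x(\CF))$'' actually needs two steps, and your bookkeeping paragraph only supplies one of them. Functoriality of Frobenius classes (Lemma~\ref{LemmaCompatibility}) gives $\Tr\rho_\CF(\Frob_x(\CH))=\Tr(\Frob_x(\CF^{ss}))$, since $\rho_\CF$ is the representation on $\omega_\BasePoint(\CF^{ss})$. But the hypothesis of the corollary is stated for $\Tr(\Frob_x(\CF))$, not $\Tr(\Frob_x(\CF^{ss}))$, so you still need $\Tr(\Frob_x(\CF))=\Tr(\Frob_x(\CF^{ss}))$. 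The paper spells this out explicitly: in a basis adapted to a Jordan--H\"older filtration of $\omega_\BasePoint(\CF)$, the kernel of $\Gr(\CF,\BasePoint)\onto\Gr(\CF^{ss},\BasePoint)$ consists of block-upper-triangular unipotent matrices, so the diagonal blocks of any $g$ and its image coincide and the traces agree. This is of course the standard fact that trace is a Jordan--H\"older invariant, but it is not ``functoriality of $\Frob_x$''.

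The only other difference is packaging at the end: you cite Brauer--Nesbitt abstractly, while the paper builds the $K_\BasePtDeg$-algebra $\Lambda$ spanned by the image of $G$ inside $\End(\omega_\BasePoint(\CH))$, checks that $\omega_\BasePoint(\CF^{ss})$ and $\omega_\BasePoint(\CG^{ss})$ are semisimple $\Lambda$-modules with equal traces, and applies Serre's lemma from \cite[\S\,I.2.3]{SerreAbelian}. These are equivalent.
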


Although currently we are unable to establish Conjectures~\ref{MainConj} and \ref{MainConj3} in general, we can still prove them in many cases. See \ref{PointCadTam} for further known cases. We start by proving Conjecture~\ref{MainConj} for unit-root $F$-isocrystals, because this explains the relation to the classical Chebotar\"ev Density Theorem. The \emph{Newton slopes} of a convergent $F$-isocrystal $\CF\in\FIsoc_\olK(U)$ at a closed point $x\in|U|$ are $\tfrac{1}{\deg(x)}$ times the common eigenvalues of the elements of $\Frob_x(\CF)\subset\Gr(\CF/U,\BasePoint)$ acting on $\omega_\BasePoint(\CF)$; see the explanation after Definition~\ref{FrobClass}. Recall that an $F$-isocrystal $\CF$ is called \emph{isoclinic} if for every $x\in|U|$ all the Newton slopes of $\CF$ at $x$ are equal. They then are independent of $x$. If $\CF$ is isoclinic of slope zero, it is called \emph{unit-root}.

\begin{prop}\label{PropUnitRoot}
Conjecture~\ref{MainConj} holds for convergent unit-root $F$-isocrystals.
\end{prop}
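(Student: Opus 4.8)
The plan is to deduce Proposition~\ref{PropUnitRoot} from the classical Chebotar\"ev density theorem for the function field $\SomeField=\BF_q(U)$ by means of Crew's equivalence \cite{Crew92} between unit-root convergent $F$-isocrystals on $U$ and continuous $p$-adic representations of the arithmetic \'etale fundamental group $\pi_1^{\et}(U,\ol{\BasePoint})$, where $\ol{\BasePoint}$ is a geometric point above $\BasePoint$. Given a unit-root object $\CF$ of $\FIsoc_K(U)$, this equivalence attaches to it a continuous representation $\rho_\CF\colon\pi_1^{\et}(U,\ol{\BasePoint})\to\GL_n(K)$, functorially and compatibly with $\otimes$, and it carries $\dal\CF\dar$ onto the Tannakian subcategory of $\Rep_K\bigl(\pi_1^{\et}(U,\ol{\BasePoint})\bigr)$ generated by $\rho_\CF$ (all of whose objects are again unit-root, since slope zero is stable under $\otimes$, duals and subquotients). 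Writing $\Gamma:=\rho_\CF\bigl(\pi_1^{\et}(U,\ol{\BasePoint})\bigr)$ and letting $G$ be the Zariski closure of $\Gamma$ in $\GL_{n,K}$, I would first record the two facts that make the translation work: (i) the monodromy group $\Gr(\CF/U,\BasePoint)$ equals $G\times_K K_\BasePtDeg$, the base change to $K_\BasePtDeg$ coming from the fact that the fiber functor at the degree-$\BasePtDeg$ point satisfies $\omega_\BasePoint=\omega_{\ol{\BasePoint}}\otimes_K K_\BasePtDeg$, which is non-neutral over $K$ but neutral over $K_\BasePtDeg$; and (ii) for every $x\in|U|$ the conjugacy class $\Frob_x(\CF)$ in $\Gr(\CF/U,\BasePoint)(\olK)$ is the image under $\rho_\CF$ of the Frobenius substitution conjugacy class $\Frob_x\subset\pi_1^{\et}(U,\ol{\BasePoint})$. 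Setting up (i) and (ii) carefully --- in particular matching the non-neutral fiber functor $\omega_\BasePoint$ with Crew's fiber functor at a geometric point, and invoking the standard fact that the Tannakian automorphism group of the subcategory generated by a single representation of a profinite group is exactly the Zariski closure of its image --- is the only step that requires genuine attention.

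Granting this, fix a subset $S\subset|U|$ of Dirichlet density one; I claim the subset $T:=\{\Frob_x:x\in S\}$ is dense in $\pi_1^{\et}(U,\ol{\BasePoint})$ for the profinite topology. It is enough to check that the image of $T$ in every finite quotient $\pi_1^{\et}(U,\ol{\BasePoint})\onto G_0$ is all of $G_0$; such a quotient corresponds to a connected Galois covering $\wt U\to U$ with group $G_0$. By the classical Chebotar\"ev density theorem (in the normalisation of Dirichlet density used here), for each conjugacy class $C\subset G_0$ the set $\{x\in|U|:\Frob_x\subseteq C\}$ has Dirichlet density $|C|/|G_0|$; every $C$ does occur, because the image of $\{\Frob_x:x\in|U|\}$ in the cyclic quotient $\Gal\bigl(\ol{\BF}_q\cap\BF_q(\wt U)/\BF_q\bigr)$ of $G_0$ is the set of all positive powers of the $q$-Frobenius, which exhausts that group since $U$, being geometrically irreducible, has closed points of every sufficiently large degree. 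Intersecting with the density-one set $S$ leaves a set of density $|C|/|G_0|>0$, in particular a non-empty one, so the image of $T$ in $G_0$ meets every conjugacy class and is therefore all of $G_0$. This proves the claim. Since $\pi_1^{\et}(U,\ol{\BasePoint})$ is compact and $\rho_\CF$ is continuous, it follows that $D:=\rho_\CF(T)=\bigcup_{x\in S}\rho_\CF(\Frob_x)$ is dense in $\Gamma$ for the $p$-adic topology on $\GL_n(K)$.

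It remains to pass from $p$-adic density to Zariski density. Let $Z$ be the Zariski closure of $D$ in $\GL_{n,K}$. As $Z$ is cut out by polynomial equations, $Z(K)$ is closed in $\GL_n(K)$ for the $p$-adic topology, and since it contains the $p$-adically dense subset $D$ of $\Gamma$, it contains $\Gamma$; hence $Z\supseteq G$, and as trivially $Z\subseteq G$ we get $Z=G$, i.e.\ $D$ is Zariski-dense in $G$. Because $D$ consists of $K$-rational points, Galois descent along $K_\BasePtDeg/K$ shows that $D$ stays Zariski-dense after base change: the intersection of the $\Gal(K_\BasePtDeg/K)$-translates of any proper closed subscheme of $G\times_K K_\BasePtDeg$ containing $D$ would descend to a proper closed subscheme of $G$ containing $D$, which is absurd. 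By (i) this means $D$ is Zariski-dense in $\Gr(\CF/U,\BasePoint)$, and by (ii) $D\subseteq\bigcup_{x\in S}\Frob_x(\CF)$, so $\bigcup_{x\in S}\Frob_x(\CF)$ is Zariski-dense in $\Gr(\CF/U,\BasePoint)$. This is Conjecture~\ref{MainConj} for $\CF$.

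The argument has no serious obstacle: its whole content is the classical Chebotar\"ev density theorem transported through Crew's equivalence, together with the elementary observation that a subset of a linear algebraic group which is dense, for the $p$-adic topology, in a given subgroup is Zariski-dense in the Zariski closure of that subgroup. The only place where care is needed is the Tannakian bookkeeping (i)--(ii), especially the role of the non-neutral fiber functor $\omega_\BasePoint$; this is why the proposition counts as easy.
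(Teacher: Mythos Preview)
Your approach is the same as the paper's: Crew's equivalence plus classical Chebotar\"ev, followed by the observation that profinite density in the image of $\rho_\CF$ forces Zariski density in its Zariski closure. The one place where your write-up is not quite right is claim~(i). The equality $\omega_\BasePoint=\omega_{\ol\BasePoint}\otimes_K K_\BasePtDeg$ is false in general: the two fiber functors differ by a torsor which the paper computes explicitly in Remark~\ref{RemCrewsThm}, and the identification of $\Gr(\CF/U,\BasePoint)$ with the Zariski closure $G$ of $\rho_\CF(\pi_1^{\et})$ is only available after base change to some finite extension $L$ of $K_\BasePtDeg$, not over $K_\BasePtDeg$ itself (this is exactly the content of the paper's Corollary~\ref{CorCrewsThm}, and Remark~\ref{RemBCrewsThm} even notes that one does not know whether $L$ can be taken inside $K^{\un}$). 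This does not damage the conclusion---Zariski density descends along the faithfully flat base change $\Spec L\to\Spec K_\BasePtDeg$, so your final paragraph goes through verbatim with $L$ in place of $K_\BasePtDeg$---but the asserted equality $\Gr(\CF/U,\BasePoint)=G\times_K K_\BasePtDeg$ is unjustified and should be replaced by the weaker statement over $L$. You correctly flagged (i) as the delicate step; the paper resolves it by forward-referencing the later Corollary~\ref{CorCrewsThm}.
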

We will prove a more general statement later, but we think that the proof is rather instructive, and it is also a good motivation for our conjectures. Therefore, we decided to present its proof here.

\begin{proof}[Proof of Proposition~\ref{PropUnitRoot}] By a result of R.~Crew~\cite[Theorem~2.1 and Remark~2.2.4]{Crew87} (see Theorem~\ref{ThmCrewsThm} below) the full sub-category of $\FIsoc_\olK(U)$ consisting of unit-root $F$-isocrystals is tensor equivalent to the category of continuous representations of $\pi_1^\et(U,\bar\BasePoint)$ on finite dimensional $\olK$-vector spaces. Let $\rho\colon \pi_1^\et(U,\bar\BasePoint)\to\GL_r(\olK)$ be a representation corresponding to a unit-root $F$-isocrystal $\CF$. Then $\Gr(\CF/U)$ is a closed subgroup of $\GL_{r,\olK}$ and by Corollary~\ref{CorCrewsThm} below $\Gr(\CF/U)$ equals the Zariski-closure of the image of $\rho$. Moreover, for all points $x\in|U|$ the $\Gr(\CF/U)$-conjugacy classes of $\rho(x_*\Frob_x^{-1})$ and $\Frob_x(\CF)$ coincide, where $\Frob_x^{-1}\in\Gal(\ol\BF_x/\BF_x)$ is the geometric Frobenius at $x$ which maps $a\in\ol\BF_x$ to $a^{1/\#\BF_x}$.

To prove Conjecture~\ref{MainConj} let $S\subset|U|$ be a subset of upper Dirichlet density one. By the Chebotar\"ev Density Theorem \cite[Theorem~7]{Serre63} the Frobenius conjugacy classes for the points $x\in S$ are dense in $\pi_1^\et(U,\bar\BasePoint)$ with respect to the pro-finite topology. Since this topology is finer than the restriction of the Zariski-topology from $\Gr(\CF/U)$, the set $\bigcup_{x\in S}\Frob_x(\CF)$ is Zariski-dense in $\Gr(\CF/U)$.
\end{proof}

Note that Conjecture~\ref{MainConj} for convergent unit-root $F$-isocrystals on $U$ is considerably weaker than the classical Chebotar\"ev Density Theorem for $U$; see Remark~\ref{RemOtherTopologies} for more explanations. Let us next describe cases for which we prove the conjecture. In Section~\ref{SectIsoclinic} we use a theorem of Oesterl\'e~\cite{Oesterle} to strengthen Proposition~\ref{PropUnitRoot} to

\begin{thm}\label{ThmIsoclinic} 
Conjectures~\ref{MainConj} and \ref{MainConj3} hold true for direct sums of isoclinic convergent $F$-isocrystals.
\end{thm}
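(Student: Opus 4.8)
The plan is to reduce the problem, by twisting, from an arbitrary direct sum of isoclinic convergent $F$-isocrystals to a direct sum of unit-root $F$-isocrystals, where we can invoke the classical Chebotar\"ev density theorem as in the proof of Proposition~\ref{PropUnitRoot}. Write $\CF=\bigoplus_{i=1}^m\CF_i$ with each $\CF_i$ isoclinic of slope $\lambda_i\in\BQ$. The key observation is that after pulling back along a sufficiently divisible Frobenius (replacing $U$ by $U\times_{\BF_q}\BF_{q^N}$ with $N\lambda_i\in\BZ$ for all $i$) each $\CF_i$ becomes a twist of a unit-root isocrystal by the $N\lambda_i$-th power of a fixed rank-one isocrystal of slope $1$; so the monodromy group $\Gr(\CF/U,\BasePoint)$ differs from that of a unit-root object only by the action of a central torus coming from the slopes, together with the finite quotient $\Gal(\BF_{q^N}/\BF_q)$ coming from base change. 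Concretely, I would use the fact that for unit-root direct sums Conjecture~\ref{MainConj} — in fact a statement about density in the profinite topology on $\pi_1^\et$ — holds, and then track how the Frobenius conjugacy classes and the group are modified by the slope twist.

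First I would set up the twisting. Let $\CL$ be the rank-one convergent $F$-isocrystal on $\Spec\BF_q$ (pulled back to $U$) whose Frobenius is multiplication by $p$; it is isoclinic of slope $1$ and its monodromy group is $\BG_m$. For each $i$, write $\lambda_i=a_i/N$ in lowest-denominator-dividing-$N$ form; after pulling everything back along $\Fr_{q^N/q}$, the isocrystal $\CF_i\otimes\CL^{\otimes(-a_i)}$ (on $U_N:=U\times_{\BF_q}\BF_{q^N}$, with the $q^N$-Frobenius) is isoclinic of slope $0$, hence unit-root by \cite{Crew87}. Thus on $U_N$ the isocrystal $\CF=\bigoplus_i\CF_i$ is isomorphic to $\bigoplus_i(\CE_i\otimes\CL^{\otimes a_i})$ with $\CE_i$ unit-root. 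The monodromy group $\Gr(\CF/U_N,\BasePoint)$ then sits inside $\bigl(\prod_i\Gr(\CE_i/U_N,\BasePoint)\bigr)\times\BG_m$, and — this is the point — the Frobenius class $\Frob_x(\CF)$ for $x\in|U_N|$ is, in each block, the product of the unit-root Frobenius class $\Frob_x(\CE_i)$ (an element of the profinite image of $\pi_1^\et(U_N)$) with the scalar $p^{a_i\deg(x)/N}$, which depends only on $\deg(x)$.

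The heart of the argument, and the step I expect to be the main obstacle, is the density statement on $U_N$: I must show that as $x$ ranges over a density-one subset $S$ of $|U_N|$, the pairs (profinite Frobenius class, slope scalar) are Zariski-dense in the relevant subgroup of $\bigl(\prod_i\Gr(\CE_i)\bigr)\times\BG_m$. The profinite part is handled by classical Chebotar\"ev exactly as in Proposition~\ref{PropUnitRoot}, but I need to control it \emph{simultaneously} with the constraint $\deg(x)=n$; i.e. for each residue $n$ in the (arithmetic-progression) set of degrees that is compatible with the abelianized constant-field extension inside the monodromy, the Frobenius classes of points of degree exactly $n$ in $S$ should still be dense in the corresponding coset. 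This is where the cited theorem of Oesterl\'e~\cite{Oesterle} enters: it gives an effective/uniform form of Chebotar\"ev that controls the density of points of each fixed degree, ensuring that restricting to a single degree (equivalently, a single value of the slope scalar) does not destroy density in the fiber. Granting that, the Zariski closure of $\bigcup_{x\in S}\Frob_x(\CF)$ on $U_N$ contains a whole connected component of $\Gr(\CF/U_N,\BasePoint)\times\olK$, which is Conjecture~\ref{MainConj3} for the pullback.

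Finally I would descend from $U_N$ back to $U$. The pullback map $\pi_1^\et(U_N)\to\pi_1^\et(U)$ has image of finite index, and correspondingly $\Gr(\CF/U_N,\BasePoint)$ is a finite-index closed subgroup (a union of connected components) of $\Gr(\CF/U,\BasePoint)$; a closed point $y\in|U|$ of degree divisible by $N$ splits in $U_N$ into points $x$ with $\Frob_x(\CF)$ lying over $\Frob_y(\CF)$. Since a density-one subset of $|U|$ pulls back to a density-one subset of $|U_N|$ (points of $U$ inert or split behave controllably, and the negligible set stays negligible), and since Conjecture~\ref{MainConj3} only asks for a single connected component, the statement on $U_N$ upgrades to the statement on $U$: the component of $\Gr(\CF/U_N,\BasePoint)\times\olK$ we produced is also a component of $\Gr(\CF/U,\BasePoint)\times\olK$. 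This yields Conjecture~\ref{MainConj3} for $\CF$ and completes the proof.
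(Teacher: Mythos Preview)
Your initial reduction --- passing to $U_N$ and writing each $\CF_i$ as a unit-root object twisted by a constant rank-one isocrystal --- matches the paper's setup, as does using Lemma~\ref{Lemma1.16} to relate connected components on $U_N$ and $U$. The argument goes astray at the ``joint density'' step in your third paragraph, and there is a separate error in the descent.

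For a fixed degree $n$ there are only \emph{finitely} many closed points of degree $n$, so the Frobenius classes in that fiber can never be Zariski-dense in a positive-dimensional coset; your phrase ``restricting to a single degree does not destroy density in the fiber'' cannot be made literally true, and this is not what Oesterl\'e's theorem says. What the paper actually does is a two-sided counting argument, packaged as Theorem~\ref{Thm4.3}. Suppose for contradiction that the closure of the Frobenii misses every component, so they lie in a proper closed $H\subset X=Y\times_Z T$. A Mordell--Lang statement for tori (Theorem~\ref{TheoremMordellLang}, via Laurent) shows that for all but finitely many $n$ the slice $H\cap pr_2^{-1}(ng)$ misses some connected component of the fiber (Corollary~\ref{Cor4.2}). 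Oesterl\'e's theorem --- which is \emph{not} an effective Chebotar\"ev but a bound on $\#(Z(f)\bmod p^\nu)$ for an analytic hypersurface $Z(f)\subset\BZ_p^N$ --- is then applied to the $p$-adic Lie-group structure on the compact image $C\subset Y(L)$ of $\pi_1^\et$, giving that the image of each slice in $C/\psi(p^\nu\BZ_p^N)$ has size $O(p^{\nu(N-1)})$ (Lemma~\ref{Lemma4.5}). A \emph{separate} effective Chebotar\"ev (Theorem~\ref{useful_che}, deduced from the Weil bounds) shows that the Frobenii of degree $n$ in $S$ have image of size at least $\epsilon\,p^{\nu N}$ for infinitely many $n$; for large $\nu$ these bounds contradict each other. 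You have collapsed three independent ingredients (Laurent, Oesterl\'e, effective Chebotar\"ev) into a single appeal to ``Oesterl\'e's effective Chebotar\"ev'', and the Mordell--Lang input is missing from your sketch entirely.

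Finally, a density-one (let alone a positive-upper-density) subset of $|U|$ does \emph{not} in general pull back to one on $|U_N|$ --- see Example~\ref{ExDensity}, where the inert points have preimage of density zero --- so your descent by pulling back $S$ fails as written. The paper sidesteps this: it never pulls $S$ back, but works with the $F^n$-structure on $U_n$ and uses Lemmas~\ref{Lemma1.16}\ref{Lemma1.16b} and~\ref{Lemma1.10} to transfer the conclusion for $\CU\oplus\CC$ back to $\CF$ on $U$.
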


As important ingredient of its proof, we also show the following Theorem~\ref{ThmCheboForConstants}. The proofs of Theorems~\ref{ThmIsoclinic}, respectively \ref{ThmCheboForConstants}, are formulated in an abstract group theoretic way in Theorems~\ref{Thm4.3}, respectively \ref{TheoremMordellLang}, and proven by using $\ell$-adic Lie groups for $\ell=p$ in Theorem~\ref{Thm4.3}, respectively for an auxiliary prime $\ell$ in Theorem~\ref{TheoremMordellLang} that might be different from $p$.

\begin{thm}\label{ThmCheboForConstants} 
Let $\CF$ be a constant $F$-isocrystal on $U$ and $S\subset |U|$ be an infinite subset. Then the Zariski-closure of the set $\bigcup_{x\in S}\Frob_x(\CF)$ contains a connected component of $\Gr(\CF/U)$. In particular, Conjectures~\ref{MainConj} and \ref{MainConj3} hold true for $\CF$.
\end{thm}

We will also look at the analogous problem for \emph{overconvergent} $F$-isocrystals and give two different proofs of the following

\begin{thm}\label{ThmOverconvIntro}
The analogs of Conjectures~\ref{MainConj} and \ref{MainConj3} hold true for semi-simple overconvergent $F$-isocrystals.
\end{thm}

One proof, given in Section~\ref{Section11}, is inspired by the proof of Theorem~\ref{ThmIsoclinic}. But we base change the monodromy groups to the complex numbers and study maximal compact subgroups with methods of real algebraic geometry. And more importantly, we replace Oesterl\'e's result by the $p$-adic analog of Deligne's Equidistribution Theorem~\cite{DeligneWeil2}, which we formulate and prove in Theorem~\ref{ThmEquiDistr} using the theory of Frobenius weights for overconvergent $F$-isocrystals of Kedlaya, Abe and Caro. We reduce to this theorem using the purity part of the companion conjecture via Corollary~\ref{langlands_implies_mixedness}, which relies on the Langlands correspondence of Abe ($p$-adic) and Lafforgue ($\ell$-adic). More detailed outlines of the proof will be given at the end of Section~\ref{SectStrategy} and the beginning of Section~\ref{Section11}.

\bigskip

We now formulate our hardest result on Conjectures~\ref{MainConj} and \ref{MainConj3} for convergent $F$-isocrystals. Recall that by the specialization theorem of Grothendieck and Katz~\cite[Corollary~2.3.2]{Katz79} there is a non-empty open subscheme $f\colon V\into U$ on which the Newton slopes of $\CF$ are constant, and by the slope filtration theorem \cite[Corollary~2.6.3]{Katz79} the restriction $f^*\CF$ to $V$ has a slope filtration with isoclinic subquotients. Thus the semi-simplification $(f^*\CF)^{ss}$ is a direct sum of isoclinic convergent $F$-isocrystals to which Theorem~\ref{ThmIsoclinic} applies. Using this restriction $f^*\CF$, our main result says that Conjectures~\ref{MainConj} and \ref{MainConj3} hold true for a semi-simple $\CF\in\FIsoc_\olK(U)$ provided that every \emph{maximal quasi-torus} of $\Gr(f^*\CF/V)$ also is a maximal quasi-torus of $\Gr(\CF/U)$. (See Definition \ref{DefQuasiTorus} below for the notion of maximal quasi-tori, which is a good generalization of maximal tori in non-connected linear algebraic groups.) More precisely, we prove the following stronger characterization.

\begin{thm}\label{ThmWPinkImpliesChebotarevIntro}
Let $\CF\in\FIsoc_\olK(U)$ be semi-simple and let $f\colon V\into U$ be an open subscheme such that $f^*\CF$ has a slope filtration on $V$ with isoclinic subquotients. Consider the natural inclusion $\beta\colon \Gr(f^*\CF/V)\into\Gr(\CF/U)$ and the following statements:
\begin{enumerate}
\item\label{ThmWPinkImpliesChebotarevIntro_A}
Under $\beta$ every maximal \emph{torus} of $\Gr(f^*\CF/V)$ is also a maximal torus of the group $\Gr(\CF/U)$.
\item\label{ThmWPinkImpliesChebotarevIntro_B}
Under $\beta$ every maximal \emph{quasi-torus} of $\Gr(f^*\CF/V)$ is also a maximal quasi-torus of $\Gr(\CF/U)$.
\item\label{ThmWPinkImpliesChebotarevIntro_C}
Conjectures~\ref{MainConj} and \ref{MainConj3} hold true for any $\CG\oplus \CI$, where $\CG\in\dal\CF\dar$ and $\CI$ is a direct sum of isoclinic convergent $F$-isocrystals on $U$.
\item\label{ThmWPinkImpliesChebotarevIntro_D}
Conjecture~\ref{MainConj} holds true for $\CF$ and for the set $S=|V|$, which has (upper) Dirichlet density $1$.
\end{enumerate}
Then $\ref{ThmWPinkImpliesChebotarevIntro_B}\Rightarrow\ref{ThmWPinkImpliesChebotarevIntro_C}\Rightarrow\ref{ThmWPinkImpliesChebotarevIntro_D}\Rightarrow\ref{ThmWPinkImpliesChebotarevIntro_A}$. If in addition the maximal quasi-tori of $\Gr(f^*\CF/V)$ are abelian, then also $\ref{ThmWPinkImpliesChebotarevIntro_A}\Rightarrow\ref{ThmWPinkImpliesChebotarevIntro_B}$.
\end{thm}

The proof of the most interesting implication $\ref{ThmWPinkImpliesChebotarevIntro_B}\Rightarrow\ref{ThmWPinkImpliesChebotarevIntro_C}$ given on page~\pageref{DefFrobSS} consists of three main steps: by the theory of maximal quasi-tori, which we develop in Subsection~\ref{SubSectQuasiTori}, we first show that it is enough to prove Conjecture~\ref{MainConj3} for $\CF$ only. Next we apply Theorem~\ref{ThmIsoclinic} to the semi-simplification $(f^*\CF)^{ss}$. By assumption \ref{ThmWPinkImpliesChebotarevIntro_B}, respectively by Corollary~\ref{Cor6.10}, the images of a maximal quasi-torus $T\subset\Gr(f^*\CF/V)$ in $\Gr(\CF/U)$, respectively in $\Gr((f^*\CF)^{ss}/V)$, are maximal quasi-tori. In the third step, carried out in Subsection~\ref{SubSectConjMQT}, we show that for the reductive groups $\Gr(\CF/U)$ and $\Gr((f^*\CF)^{ss}/V)$ the density of Conjecture~\ref{MainConj3} can be tested on maximal quasi-tori. Therefore, it can be transferred from $\Gr((f^*\CF)^{ss}/V)$ to $\Gr(\CF/U)$ by a careful analysis of the different conjugacy classes in these two groups.

The arguments for the implication $\ref{ThmWPinkImpliesChebotarevIntro_B}\Rightarrow\ref{ThmWPinkImpliesChebotarevIntro_C}$ also give a second proof of Theorem~\ref{ThmOverconvIntro}, see Remark~\ref{RemPinkForOverconv}. As another result supplementing Theorem~\ref{ThmWPinkImpliesChebotarevIntro}, we show in Proposition~\ref{PropWeaklyPinkQuasiTorus} that the hypothesis~\ref{ThmWPinkImpliesChebotarevIntro_B} is a consequence of the following 

\begin{con}[Parabolicity Conjecture for convergent $F$-isocrystals] \label{ConjPink}
Let $\CF\in\FIsoc_\olK(U)$ and let $f\colon V\into U$ be an open subscheme. Then the natural inclusion $\Gr(f^*\CF/V)\into \Gr(\CF/U)$ identifies $\Gr(f^*\CF/V)$ with a parabolic subgroup of $\Gr(\CF/U)$.
\end{con}

Note that one of the distinguishing features of \emph{convergent} $F$-isocrystals is that their monodromy groups in general shrink when we shrink the underlying scheme $U$. This shows that $F$-isocrystals lack the type of rigidity which we have for $l$-adic and even $p$-adic Galois representations and \emph{overconvergent} $F$-isocrystals, where the monodromy group does not change when one shrinks the scheme. Conjecture~\ref{ConjPink} is the variant for convergent $F$-isocrystals of Crew's parabolicity conjecture. There is also an overconvergent variant:

\begin{thm}\label{ThmDAddezio}
Let $\CF^\dagger\in\FIsoc^\dagger_\olK(U)$ be an overconvergent $F$-isocrystal on $U$ and write $\CF=j_U(\CF^\dagger)\in\FIsoc_\olK(U)$ for the induced convergent $F$-isocrystal. Let $f\colon V\into U$ be a dense open subscheme such that $f^*\CF$ has a slope filtration. Then $\Gr(f^*\CF/V)$ is the stabilizer of the slope filtration in $\Gr(\CF/U)$ and $\Gr^\dagger(\CF^\dagger/U)$. Moreover, when $\CF^\dagger$ is semi-simple then  Conjecture~\ref{ConjPink} holds true for $\CF$ and any dense open subscheme of $U$.
\end{thm}

\begin{proof}
This was recently proven by D'Addezio~\cite[Theorem~1.1.1]{DAddezioParabol} with the ``Moreover'' part added by Cadoret and Tamagawa~\cite[Lemma~9.2.3]{CadTam}.
\end{proof}

As a consequence, Theorems~\ref{ThmWPinkImpliesChebotarevIntro} and \ref{ThmDAddezio} together with Proposition~\ref{PropWeaklyPinkQuasiTorus}\ref{PropWeaklyPinkQuasiTorus_D} imply the following

\begin{cor}
Conjectures~\ref{MainConj} and \ref{MainConj3} hold true for $\CG\oplus\CI$, where $\CG\in\dal\CF\dar$ for a semi-simple $\CF\in\FIsoc_\olK(U)$ having an overconvergent extension $\CF^\dagger$ as in Theorem~\ref{ThmDAddezio}, and $\CI$ is a direct sum of isoclinic convergent $F$-isocrystals on $U$.
\end{cor}

\begin{point}\label{PointWordAboutProofs} {\bfseries Some words about our proofs.}
Throughout, we use the following input from arithmetic geometry which is specific to the theory of $F$-isocrystals:
\renewcommand{\labelenumi}{(\arabic{enumi})}
\renewcommand{\theenumi}{(\alph{enumi})}
\begin{itemize}
\item\label{PointWordAboutProofs_1} The specialization theorem and the slope filtration theorem for convergent $F$-isocrystals of Grothendieck and Katz~\cite{Katz79}.
\item\label{PointWordAboutProofs_3} Crew's~\cite{Crew87} equivalence between convergent unit-root $F$-isocrystals and $p$-adic representations of $\pi_1^\et(U,\bar\BasePoint)$; see Theorem~\ref{ThmCrewsThm}.
\end{itemize}
Moreover, in the proof of Theorem~\ref{ThmOverconvIntro} in Section~\ref{Section11} we use the following deep results:
\begin{itemize}
\setcounter{enumi}{2}
\item\label{PointWordAboutProofs_4} Kedlaya's~\cite{Kedlaya04} full faithfulness theorem for the completion functor $j_U\colon\FIsoc^\dagger_\olK(U) \to \FIsoc_\olK(U)$; see Theorem~\ref{ThmKedlayaFF}.
\item Tsuzuki's extension theorem \cite[Theorem~1.3.1]{TsuzukiDuke} for overconvergent unit-root $F$-isocrystals.
\item\label{PointWordAboutProofs_5} That an overconvergent $F$-isocrystal with finite determinant is pure of weight zero; see Corollary~\ref{langlands_implies_mixedness}. This is a consequence of Abe’s $p$-adic Langlands correspondence~\cite{Abe13} and Lafforgue’s Ramanujan-Peterson conjecture~\cite{Lafforgue02}.
\item\label{PointWordAboutProofs_6} The theory of Frobenius weights for overconvergent $F$-isocrystals by Kedlaya~\cite{Kedlaya06}, Abe and Caro. It is used in the proof of our Equidistribution Theorem~\ref{ThmEquiDistr} by describing the holomorphy region of $L$-functions of overconvergent $F$-isocrystals in Lemma~\ref{LemmaClaimOnL-Fcn}. 
\end{itemize}
Other deep input from arithmetic geometry is
\begin{itemize}
\setcounter{enumi}{5}
\item\label{PointWordAboutProofs_7} Deligne's~\cite{DeligneWeil2} prime number theorem for $U$ and the holomorphy properties of its Zeta-function; see Theorem~\ref{ThmWeilBounds}.
\end{itemize}
The remaining arguments do not use that one deals with $F$-isocrystals. We employ various types of techniques:
\begin{enumerate}
\item\label{PointWordAboutProofs_A} 
easy Tannakian reduction steps, explained in Subsection~\ref{SubSectTannRed}. They allow to reduce to the case that $\CF=\CU\oplus\CC$ is a direct sum with $\CC$ constant and $\CU$ convergent unit-root, respectively overconvergent pure of weight zero. 
\item\label{PointWordAboutProofs_0}
an effective version of the classical Chebotar\"ev density theorem; see Theorem~\ref{useful_che}.
\item\label{PointWordAboutProofs_B}
point counting in $p$-adic Lie groups building on Oesterl\'e's result to prove Theorem~\ref{ThmIsoclinic} in Section~\ref{SectIsoclinic}. 
\item\label{PointWordAboutProofs_C}
measure theory. In Appendix~\ref{AppendixB} we develop the pullback of measures under nice maps between topological spaces and its compatibility with weak convergence. This is used to prove the Equidistribution Theorem~\ref{ThmEquiDistr}.
\item\label{PointWordAboutProofs_D}
complex algebraic groups, maximal compact subgroups, and real algebraic geometry to prove Theorem~\ref{ThmOverconvIntro}.
\item\label{PointWordAboutProofs_E}
a compactness argument proving convergence of complex hypersurfaces in Theorem~\ref{new_oesterle1} to prove Theorem~\ref{ThmOverconvIntro}.
\item\label{PointWordAboutProofs_F}
The large majority of our arguments and difficulties concern \emph{non-connected} (reductive) linear algebraic groups over algebraically closed fields. For this purpose, Section~\ref{SectMaxQuasiTori} develops the theory of maximal quasi-tori and proves transfer statements for density of conjugacy invariant subsets between the full group and a maximal quasi-torus.
\end{enumerate}
A difficult problem arises from the reduction step (1): The monodromy group of a direct sum is a fiber product $\Gr(\CU\oplus\CC/U)=\Gr(\CU/U) \times_{\Gr(\dal\CU\dar \cap \dal\CC\dar/U)} \Gr(\CC/U)$; see Proposition~\ref{PropGroupOfSum}\ref{PropGroupOfSum_B}. We have to prove the density of a conjugation invariant subset $C\subset \Gr(\CU\oplus\CC/U)$, while we know that the projections of $C$ are dense in $\Gr(\CU/U)$ and $\Gr(\CC/U)$ (e.g.~from Proposition~\ref{PropUnitRoot} and Theorem~\ref{ThmCheboForConstants}). This cannot be handled by group theory alone, and we use the techniques~(2), (3) for Theorem~\ref{ThmIsoclinic}, respectively (4), (5), (6) for Theorem~\ref{ThmOverconvIntro}.
\end{point}

\begin{point}\label{PointCadTam} {\bfseries Relation to the article \cite{CadTam} of Cadoret and Tamagawa.}
After a first version of our article was distributed on arXiv, Cadoret and Tamagawa~\cite{CadTam} generalized certain of our results using different techniques. More precisely, in addition to (over)convergent $F$-isocrystals $\CF$, they consider locally constant constructible Weil $\ol\BQ_\ell$-sheaves $\CF$ on $U$ with $\ell\ne p$ in the sense of \cite[(1.1)]{DeligneWeil2}, and locally constant constructible quasi-tame Weil $\ol\BQ_\Fu$-sheaves $\CF$ on $U$ for a non-principal ultra-filter $\Fu$ in the sense of \cite[\S\,3.6.2]{Cadoret19}. In \cite[Theorem~1.5]{CadTam}, they prove the analog of Conjectures~\ref{MainConj} and \ref{MainConj3} for these two types of coefficients $\CF$ and, in addition, our Theorem~\ref{ThmOverconvIntro} without the semi-simplicity hypothesis. Their arguments use the deep theory of companions and its by-product, the existence of weight filtrations, the Kazhdan-Larsen-Varshavsky reconstruction theorem for connected reductive groups~\cite{KLV14}, and go as follows: (1) For \'etale $\ol\BQ_\ell$-sheaves, i.e.~those corresponding to $\ol\BQ_\ell$-representations of $\pi_1^\et(U,\bar\BasePoint)$, the theorem is an easy consequence of the classical Chebotar\"ev Density Theorem as in our Proposition~\ref{PropUnitRoot}. (2) For general semi-simple $\CF$ they use the existence of an \'etale $\ol\BQ_\ell$-companion to reduce to (1). (3) From the semi-simple case they deduce the assertion for those $\CF$ which are direct sums of pure ones. (4) They show that the theorem for arbitrary $\CF$ follows from the case of direct sums of pure ones. Their remaining reduction steps rely on group-theoretic arguments. In particular, they develop the theory of quasi-Cartan subgroups which (as they write) was inspired by our theory of maximal quasi-tori and replaces it.

In contrast, our proofs in this article are ``elementary'' in the sense that they do not use automorphic techniques via the companion conjecture or the formalism of Frobenius weights, except for our proof of Theorem~\ref{ThmOverconvIntro} in Section~\ref{Section11} via equidistribution.

Without the theory of companions, but relying on our Theorem~\ref{ThmIsoclinic} and D'Addezio's Theorem~\ref{ThmDAddezio}, \cite[\S\,9.3]{CadTam} give another ``elementary'' proof of Conjectures~\ref{MainConj} and \ref{MainConj3} for overconvergent $F$-isocrystals, and a proof of Conjectures~\ref{MainConj} and \ref{MainConj3} for convergent $F$-isocrystals $\CF$ in the cases when $\CF$ has an overconvergent extension, or more generally, $\CF$ satisfies (a weakening of) Conjecture~\ref{ConjPink}. This generalizes the implication $\ref{ThmWPinkImpliesChebotarevIntro_B}\Rightarrow\ref{ThmWPinkImpliesChebotarevIntro_C}$ in our Theorem~\ref{ThmWPinkImpliesChebotarevIntro} to the non-semi-simple case by working with quasi-Cartan subgroups instead of maximal quasi-tori.

To summarize, there are currently three proofs of Conjectures~\ref{MainConj} and \ref{MainConj3} for overconvergent $F$-isocrystals:
\begin{itemize}
\item a proof in \cite{CadTam} via the companion conjecture based on automorphic techniques,
\item our proof (in the semi-simple case) of Theorem~\ref{ThmOverconvIntro} in Section~\ref{Section11} via equidistribution, using the purity part of the companion conjecture and the theory of Frobenius weights,
\item an ``elementary'' proof, without using companions, in our Remark~\ref{RemPinkForOverconv} (in the semi-simple case) and in \cite{CadTam} (in the general case) using our Theorem~\ref{ThmIsoclinic} and D'Addezio's Theorem~\ref{ThmDAddezio}.
\end{itemize}
For convergent $F$-isocrystals $\CF\in\FIsoc_\olK(U)$ Conjectures~\ref{MainConj} and \ref{MainConj3} are proven in the following cases:
\begin{itemize}
\item if $\CF$ is constant, by our Theorem~\ref{ThmCheboForConstants}, (In this case, $\CF$ is also overconvergent and Theorem~\ref{ThmCheboForConstants} provides a fourth, $\ell$-adic proof for overconvergent constant $F$-isocrystals, different from the three proofs mentioned above, although our argument could be viewed as the companion conjecture for $\Spec\BF_q$.) 
\item if $\CF$ is a direct sum of isoclinic convergent $F$-isocrystals, by our Theorem~\ref{ThmIsoclinic},
\item if $\CF$ satisfies the Parabolicity Conjecture~\ref{ConjPink} (or a weakening of it, namely assertion \ref{ThmWPinkImpliesChebotarevIntro_B} of Theorem~\ref{ThmWPinkImpliesChebotarevIntro} when $\CF$ is semi-simple, or \cite[Conjecture~9.2.4]{CadTam} for general $\CF$), by reduction to our Theorem~\ref{ThmIsoclinic},
\item if $\CF$ has an overconvergent extension, because it then satisfies the Parabolicity Conjecture~\ref{ConjPink} by Theorem~\ref{ThmDAddezio}.
\item if $\CF$ has a slope filtration on $U$ with isoclinic factors, because it then satisfies \cite[Conjecture~9.2.4]{CadTam}.
\end{itemize}
So for the time being, our Theorem~\ref{ThmIsoclinic} seems to be the necessary stepping stone for crystalline Chebotar\"ev density results on convergent $F$-isocrystals, and for ``elementary'' proofs for overconvergent $F$-isocrystals not using companions.
\end{point}

\begin{point}\label{PointSummary} {\bfseries Summary of the article.}
We describe the content of the individual sections. In Section~\ref{SectPrelim} we give the precise definition of the Frobenius conjugacy class $\Frob_x(\CF)$ and more details about (over)convergent $F$-isocrystals. We also review constant $F$-isocrystals here and explain the relation of $\Gr(\CF/U)$ with Crew's monodromy group \cite{Crew92}. Section~\ref{SectionUnitRootAndConnComp} recalls Crew's theory \cite{Crew87} of unit-root $F$-isocrystals in Subsection~\ref{SubSectionUnit}, and discusses the group of connected components of the monodromy group $\Gr(\CF/U)$ and the reduction of Conjecture~\ref{MainConj} to Conjecture~\ref{MainConj3} in Subsection~\ref{SubSectionComponents}. Also at the beginning of Subsection~\ref{SubSectionComponents}, we collect facts about the relation of the monodromy groups of an overconvergent $F$-isocrystal $\CF^\dagger\in \FIsoc^\dagger_\olK(U)$ and its convergent completion $j_U(\CF^\dagger)\in\FIsoc_\olK(U)$. In Section~\ref{SectDensity} we review the notion of (upper) Dirichlet density and prove an effective version of the classical Chebotar\"ev Density Theorem for function fields. In Sections~\ref{SectStrategy} and \ref{SectIsoclinic} we prove our Chebotar\"ev Density Conjectures for constant $F$-isocrystals (Theorem~\ref{ThmCheboForConstants}), respectively direct sums of isoclinic $F$-isocrystals (Theorem~\ref{ThmIsoclinic}). We also give an overview of the proofs of Theorems~\ref{ThmIsoclinic} and \ref{ThmOverconvIntro}, and describe the abstract Tannakian reduction techniques we use in Subsection~\ref{SubSectTannRed}. As a tool for the two last sections, we develop in Section~\ref{SectMaxQuasiTori} the theory of maximal quasi-tori and study in Subsection~\ref{SubSectConjMQT} the intersections of conjugacy classes with maximal quasi-tori. In Subsection~\ref{SubSectMaxCompact} we collect some results on maximal compact subgroups of complex algebraic groups. In Section~\ref{Section11} we treat the case of overconvergent $F$-isocrystals, and derive Theorem~\ref{ThmOverconvIntro} and the $p$-adic version of Deligne's Equidistribution Theorem in Theorem~\ref{ThmEquiDistr}. In the final Section~\ref{SectPink} we formulate properties of the closed subgroup $\Gr(f^*\CF/V)\subset\Gr(\CF/U)$ which one might expect when one restricts a convergent $F$-isocrystal on $U$ to an open subscheme $f\colon V\into U$ and we prove Theorem~\ref{ThmWPinkImpliesChebotarevIntro} saying that these properties imply our Chebotar\"ev density conjectures. We also give a second, ``elementary'' proof of Theorem~\ref{ThmOverconvIntro} there.
\end{point}

\begin{notn}\label{InitialNotation}
Throughout we denote topological ($\ell$-adic or complex analytic) groups by bold face letters like $\BG,\BH,\BK$ and algebraic groups over an algebraically closed field $L$ of characteristic zero by roman letters like $G,H,P,Q,T$. We do not assume that any of these groups are connected. For such an algebraic group $G$ we identify $G$ with its group $G(L)$ of rational points. Furthermore, we let $G\open$ be its identity component, $R_u G$ the unipotent radical, $r_G\colon G\onto G^\red:=G/R_u G$ the surjection onto the maximal reductive quotient. Note that $G/G\open\isoto G^\red/(G^\red)\open$, because $R_u G=R_u G\open$ is connected in characteristic zero, see Lemma~\ref{LemmaUnipotConnected}\ref{LemmaUnipotConnected_D}. If $\phi\colon G\onto H$ is a surjection of linear algebraic groups, then there is an induced surjection $\phi^\red\colon G^\red\onto H^\red$ between the reductions, because the image of $R_uG$ is a closed connected unipotent normal subgroup, and hence contained in $R_u H$. We say that $G$ is \emph{reductive} if $G\open$ is.

For an algebraic subgroup $H\subset G$ and an element $g\in G$, we let $N_G(H)$ be the normalizer, $Z_G(H)$ the centralizer, $H^g:=Z_{H\open}(g):=\{h\in H\open\colon gh=hg\}$ the fixed points of conjugation by $g$ on $H\open$, and $H^g{}\open:=(H^g)\open$. (Note that we abstain from the more correct notation $H^{\circ g}$, as it leads to $H^{\circ g\circ}$). For a closed subgroup $H\subset G$ and a set $C\subset G$ let ${}^{H\!}C=\bigcup\limits_{g\in H}gCg^{-1}$ be the union of the $H$-conjugacy classes of elements of $C$. Clearly the map $C\mapsto {}^{H\!}C$ on subsets of $G$ preserves inclusions and satisfies ${}^{H\!}({}^{H\!}C)={}^{H\!}C$.
\end{notn}

We will frequently use the following well known facts about algebraic groups.

\begin{lemma}\label{LemmaUnipotConnected}
Let $G$ be a linear algebraic group over an algebraically closed field $L$.
\begin{enumerate}
\item \label{LemmaUnipotConnected_A}
If $\phi\colon G\to H$ is a homomorphism of algebraic groups then $\phi(G\open)=\phi(G)\open$ by \cite[I.1.4~Corollary]{Borel91}.
\item \label{LemmaUnipotConnected_B}
If $G\open$ is reductive and $Z$ is its center, then the group $G\open/Z$ is semi-simple with trivial center and $G\open/[G\open,G\open]$ is a torus by \cite[IV.11.21~Proposition, IV.14.11~Corollary and III.10.6~Theorem]{Borel91}
\end{enumerate}
Now let $L$ have characteristic zero.
\begin{enumerate}
\setcounter{enumi}{2}
\item \label{LemmaUnipotConnected_C}
Let $g\in G$ and let $n$ be a positive integer such that $g^n$ is semi-simple, then $g$ is semi-simple. 
\item \label{LemmaUnipotConnected_D}
All unipotent elements of $G$ are contained in $G\open$. In particular, all unipotent groups in characteristic zero are connected.
\end{enumerate}
\end{lemma}

\begin{proof}
\ref{LemmaUnipotConnected_B}
The center of $G\open/Z$ is trivial, because for any central element $\bar z\in G\open/Z$ and a preimage $z\in G\open$ of $\bar z$ the map $G\open\to Z\cap[G\open,G\open],\ g\mapsto gzg^{-1}z^{-1}$ factors through the identity component of $Z\cap[G\open,G\open]$ which is trivial by \cite[IV.14.2~Proposition]{Borel91}. Thus $z\in Z$ and $\bar z=1$.
  
\medskip\noindent
\ref{LemmaUnipotConnected_C}
If $g=g_sg_u$ is the multiplicative Jordan decomposition, where $g_s, g_u\in G$ are the semi-simple and unipotent parts of $g$, respectively, then $g^n=g_s^n g_u^n$ is the multiplicative Jordan decomposition of $g^n$. Consider a faithful representation $G\subset\GL_r$. Then $g_u$ is conjugate in $\GL_r$ to a unipotent upper triangular matrix (use \cite[IV.11.10~Theorem]{Borel91}), and hence if $g_u\ne 1$ it has infinite order as $\charakt(L)=0$. Therefore, $g^n$ is semi-simple, that is $g_u^n=1$, if and only if $g_u=1$ and $g$ is semi-simple. 

\medskip\noindent
\ref{LemmaUnipotConnected_D}
If $g\in G$ is a unipotent element, then its image in $G/G\open$ is of finite order and unipotent by \cite[I.4.4~Theorem]{Borel91}, whence trivial by \ref{LemmaUnipotConnected_C}.
\end{proof}

\bigskip\noindent
{\bfseries Acknowledgement.} We thank Friedrich Knop for providing a proof of Theorem~\ref{ThmQuasiTorusReductive}\ref{ThmQuasiTorusReductive_D} and Zakhar Kabluchko and Linus Kramer for some advice on measure theory and complex Lie groups, respectively. We thank the anonymous referees for their careful reading and many helpful suggestions to improve this article. We are also thankful for the support received through the EPSRC grants P19164 and P36794, and by the German Science Foundation (DFG) in form of SFB 878, Project-ID 427320536 -- SFB 1442, and Germany's Excellence Strategy EXC 2044--390685587 ``Mathematics M\"unster: Dynamics--Geometry--Structure''.

\section{Preliminaries on (Over)Convergent and Constant $F$-Isocrystals} \label{SectPrelim}

\subsection{Basic Definitions} \label{SubSectDefConj}

We recall the basic setup from Section~\ref{SectIntroduction} and give further details. For every $n\in\BN$ let $U_n:=U\otimes_{\BF_q}\BF_{q^n}$, let $K_n\subset\olK$ be the unramified extension of $K$ of degree $n$, and let $F$ be the Frobenius of $K_n$ over $K$. Then $F^n$ is the identity on $K_n$. Let $\FnIsoc_{K_n}(U_n)$ denote the $K_n$-linear rigid tensor category of $K_n$-linear convergent $F^n$-isocrystals on $U_n$; see \cite[Chapter~1]{Crew92} for details. We simply write $\FIsoc_K(U)$ for $F^1\textrm{-Isoc}_K(U)$. If $\CF$ is an object of $\FnIsoc_{K_n}(U_n)$ we let $\dal\CF\dar$ denote the Tannakian sub-category of $\FnIsoc_{K_n}(U_n)$ generated by $\CF$. There is a functor
\begin{equation}\label{EqFn}
(\,.\,)^{(n)}\colon \FIsoc_K(U)\longrightarrow \FnIsoc_{K_n}(U_n),\quad\CF\mapsto\CF^{(n)}
\end{equation}
given by pulling back under $U_n\to U$, that is, by tensoring the coefficients from $K$ to $K_n$, and replacing the Frobenius $F_\CF$ of $\CF$ by $F_\CF^n:=F_\CF\circ{\rm Fr}_{q,U}^*F_\CF\circ\ldots\circ{\rm Fr}_{q,U}^{(n-1)*}F_\CF$.

We fix a base point $\BasePoint\in U(\BF_{q^\BasePtDeg})=U_\BasePtDeg(\BF_{q^\BasePtDeg})$ with $\BasePtDeg=\deg(\BasePoint)\in\BN$. The pullback $\BasePoint^*\CF$ of any $\CF\in\FuIsoc_\olK(U)$ is a $K_\BasePtDeg$-linear $F^\BasePtDeg$-isocrystal on $\Spec\BF_{q^\BasePtDeg}$, that is, a finite dimensional $K_\BasePtDeg$-vector space together with a $K_\BasePtDeg$-linear automorphism coming from the Frobenius $F^\BasePtDeg$. Forgetting this automorphism yields the fiber functor $\omega_\BasePoint$ and makes $\FuIsoc_{K_\BasePtDeg}(U_\BasePtDeg)$ into a neutral Tannakian category. On the category $\FIsoc_K(U)$ we can still consider the fiber functor $\omega_\BasePoint\colon\CF\mapsto\BasePoint^*\CF$ to $K_\BasePtDeg$-vector spaces. It factors through the functor $(\,.\,)^{(\BasePtDeg)}$ from \eqref{EqFn} as $\omega_\BasePoint=\BasePoint^*\circ (\,.\,)^{(\BasePtDeg)}$. This makes $\FIsoc_K(U)$ into a $K$-linear Tannakian category, but $\omega_\BasePoint$ is non-neutral when $\BasePtDeg>1$.

To remedy this we consider for every $n$ the extension of scalars $\FnIsoc_\olK(U_n):=\FnIsoc_{K_n}(U_n)\otimes_{K_n} \olK$ which is a $\olK$-linear Tannakian category; see \cite[\S\,7.3.3]{AbeMarmora}, \cite[\S\,4.1]{Abe13} or more generally \cite[p.~155ff]{Deligne-Milne82}. This scalar extension comes with a tensor functor \mbox{${}_\bullet\otimes_{K_n} \olK\colon\FnIsoc_{K_n}(U_n)\to \FnIsoc_\olK(U_n)$}, $\CF\mapsto \CF\otimes_{K_n} \olK$, which satisfies 
\begin{equation*}
\Hom_{\FnIsoc_{K_n}(U_n)}(\CF,\CG)\otimes_{K_n} \olK\;=\;\Hom_{\FnIsoc_\olK(U_n)}(\CF\otimes_{K_n} \olK, \CG\otimes_{K_n} \olK)\,. 
\end{equation*}
Again we write $\FIsoc_\olK(U):=F^1\textrm{-Isoc}_\olK(U)$. Via the embedding $K_n\subset\olK$ we obtain a homomorphism $K_n\otimes_K \olK\to \olK$. Tensoring with this homomorphism, the functor $(\,.\,)^{(n)}$ described above induces a functor
\begin{equation}\label{EqFnKbar}
(\,.\,)^{(n)}\colon \FIsoc_\olK(U)\longrightarrow \FnIsoc_\olK(U_n),\quad\CF\mapsto\CF^{(n)}
\end{equation}
given by pulling back under $U_n\to U$ and replacing the Frobenius $F_\CF$ of $\CF$ by $F_\CF^n$. If $\omega$ is an $L$-rational fiber functor on $\FnIsoc_{K_n}(U_n)$ for some intermediate field $K_n\subset L\subset\olK$ then $\omega$ extends canonically to a $\olK$-rational, hence neutral fiber functor on $\FnIsoc_\olK(U_n)$. In particular, the base point $\BasePoint\in U(\BF_{q^\BasePtDeg})$ yields the neutral fiber functor
\[
\omega_\BasePoint=\BasePoint^*\circ (\,.\,)^{(\BasePtDeg)}\otimes\id_\olK\colon \FIsoc_\olK(U)\;\longto\;(\olK\text{-vector spaces})\,,\quad \CF\;\longmapsto\; \BasePoint^*\CF \otimes_{K_\BasePtDeg\otimes\olK} \olK
\]
and makes $\FIsoc_\olK(U)$ into a neutral Tannakian category over $\olK$. For $\CF\in \FIsoc_\olK(U)$ let $\Gr(\CF/U,\BasePoint):=\Aut^\otimes(\omega_\BasePoint|_{\dal\CF\dar})$ denote the \emph{monodromy group of $\CF$}, that is, the Tannakian fundamental group of $\dal\CF\dar$ with respect to the fiber functor $\omega_\BasePoint$; see \cite[Theorem~2.11]{Deligne-Milne82}. By \cite[Proposition~2.20(b)]{Deligne-Milne82} it is a linear algebraic group over $\olK$ and $\dal\CF\dar$ is tensor equivalent to the category of $\olK$-rational representations of $\Gr(\CF/U,\BasePoint)$. By Cartier's theorem all linear algebraic groups over a field of characteristic zero are smooth; see for example \cite[\S\,11.4]{Waterhouse79}. We explain the relation of $\Gr(\CF/U,\BasePoint)$ with Crew's~\cite{Crew92} monodromy group $\DGal(\CF)$ in Remark~\ref{Rem3.6} and Proposition~\ref{crew_sequence}. Since $\omega_\BasePoint=\omega_\BasePoint\circ (\,.\,)^{(\BasePtDeg)}$ the tensor functor $(\,.\,)^{(\BasePtDeg)}$ from \eqref{EqFnKbar} induces a homomorphism of linear algebraic groups over $\olK$
\[
h_\BasePtDeg(\CF)\colon\Gr(\CF^{(\BasePtDeg)}/U_\BasePtDeg,\BasePoint)\longrightarrow \Gr(\CF/U,\BasePoint)\,,
\]
which we study further in Proposition~\ref{Prop1.16}.

\begin{rem}\label{RemIgnoreBasePoint}
The group $\Gr(\CF/U,\BasePoint)$ is independent of the base point $\BasePoint$ up to conjugation in the following sense. Let $\BasePoint'\in U(\BF_{q^{\BasePtDeg'}})$ be another base point. By \cite[Theorem~3.2]{Deligne-Milne82} there is a (non-canonical) isomorphism of fiber functors $\alpha=\alpha_{\BasePoint',\BasePoint}\colon\omega_{\BasePoint'}\isoto\omega_\BasePoint$ over $\olK$. Every other isomorphism differs from $\alpha$ by composition with an element $g\in \Aut^\otimes(\omega_\BasePoint)=\Gr(\CF/U,\BasePoint)$. The isomorphism $\alpha$ induces an isomorphism of algebraic groups $\alpha_*\colon\Gr(\CF/U,\BasePoint')\isoto\Gr(\CF/U,\BasePoint)$ over $\olK$ and the isomorphism $(g\circ\alpha)_*$ induced by $g\circ\alpha$ differs from $\alpha_*$ by conjugation with $g$. In this way we may move the base point whenever it is convenient. We will thus drop the base point from the notation. Furthermore, if no confusion can arise we will drop the base scheme $U$ from the notation, and hence often simply write $\Gr(\CF):=\Gr(\CF/U,\BasePoint)$.
\end{rem}

\begin{rem}
The issue that the fiber functor $\omega_\BasePoint\colon\FIsoc_K(U) \to (K_\BasePtDeg\text{-vector spaces})$ is non-neutral and one needs the scalar extensions $\FIsoc_\olK(U)$ to $\olK$ cannot be avoided without restricting the generality. One could reformulate the scalar extension equivalently in terms of the non-neutral Tannakian category $\FIsoc_K(U)$ with the fiber functor $\omega_\BasePoint\colon \FIsoc_K(U)\to(K_\BasePtDeg\text{-vector spaces})$. This category is tensor equivalent to the $K_\BasePtDeg$-rational representations of a $K_\BasePtDeg/K$-groupoid. The latter point of view was taken in the second version on \href{http://arxiv.org/abs/1811.07084v2}{arXiv:1811.07084v2} of this article. It is also taken in Crew's seminal work \cite[\S\,2.2]{Crew92} by the concept of ``Frobenius structure'' on the group $\Gr(\CF)$ and is inherent to the monodromy groups of $F$-isocrystals. It would suffice to extend scalars to $K_\BasePtDeg$ instead of $\olK$, or to the maximal unramified extension $K^\un\subset \olK$. The latter also allows to change the base point $\BasePoint$ to a point $\BasePoint'$ with $\BasePtDeg':=\deg(\BasePoint') \ne \deg(\BasePoint) = \BasePtDeg$.

The readers who want to avoid the scalar extension from $K$ to $\olK$ (or $K_\BasePtDeg$ or $K^\un$) may assume that $U$ has an $\BF_q$-rational point, thus assume $\BasePtDeg=1$ in which case $\FIsoc_K(U)$ is neutral. However, they should note that one of our techniques is to pass to open subschemes $V\subset U$ which might lack rational points. If $U(\BF_q)=\emptyset$, the Tannakian category $\FIsoc_K(U)$ cannot be made neutral by naively enlarging the coefficient field $K$ to some $\wt K$, because the new category will still be linear only over the fixed field of the Frobenius $F$ in $\wt K$. It can be made neutral by enlarging the constant field $\BF_q$ of the scheme $U$ to $\BF_{q^\BasePtDeg}$ via the functor $(\,.\,)^{(\BasePtDeg)}$ from \eqref{EqFn}. But this will kill part of the monodromy; see Proposition~\ref{Prop1.16}. In any case there is no easy way out of this dilemma.
\end{rem}

To introduce the Frobenius conjugacy classes let $\CF\in\FIsoc_\olK(U)$ and let $x\in|U|$ be a closed point with residue field $\BF_{q^n}$. Choose a point $y\in U(\BF_{q^{n}})$ above $x$ and use $\BasePoint=y$ as base point. Since $y=({\rm Fr}_{q,U})^n\circ y$ as morphisms $\Spec\BF_{q^n}\to U$, the Frobenius $F^n_\CF\colon({\rm Fr}_{q,U})^{n*}\CF^{(n)}\isoto\CF^{(n)}$ of the $F^n$-isocrystal $\CF^{(n)}$ induces an automorphism $y^*F^n_\CF$ of the fiber functor $\omega_y$, that is an element of $\Gr(\CF^{(n)}/U_n)$. We denote by $\Frob_y(\CF)$ the $\Gr(\CF/U)$-conjugacy class of its image $h_n(\CF)(y^*F_\CF^n)$ under $h_n(\CF)$ in $\Gr(\CF/U)$. We claim that $\Frob_y(\CF)$ only depends on the closed point $x$ of $U$ lying below $y$. Indeed, there is a point $\tilde y\in U(\BF_{q^n})$ above $x$ with ${\rm Fr}_{q,U}\circ \tilde y=y$ and $\tilde y^*{\rm Fr}_{q,U}^*\CF=y^*\CF$. The isomorphism $F_\CF\colon{\rm Fr}_{q,U}^*\CF\isoto\CF$ of the $F$-isocrystal $\CF$ induces an isomorphism $\tilde y^*F_\CF\colon y^*\CF\isoto\tilde  y^*\CF$ under which the $K_n$-linear automorphisms $F_\CF^n$ on the fibers at $y$ and $\tilde y$ are mapped to each other. So $\Frob_{\tilde y}(\CF)$ and $\Frob_y(\CF)$ yield the same conjugacy class under the change of base point isomorphism $\alpha_{\tilde y,y}$ from Remark~\ref{RemIgnoreBasePoint}. We denote this conjugacy class by $\Frob_x(\CF)$.

\begin{defn}\label{FrobClass}
The subset $\Frob_x(\CF)\subset\Gr(\CF/U)$ defined above is called the \emph{Frobenius conjugacy class} of $\CF$ at the closed point $x\in |U|$. For an overconvergent $F$-isocrystal $\CF\in\FIsoc^\dagger_\olK(U)$ we similarly define $\Frob^\dagger_x(\CF)\subset\Gr^\dagger(\CF/U)$.
\end{defn}

Note that the $\olK$-linear automorphism $y^*F^n_\CF$ of $\omega_y(\CF)$ from the previous paragraph is the Frobenius of the $F^n$-isocrystal $y^*\CF^{(n)}$ over $y=\Spec\BF_{q^n}$. As explained by Katz~\cite[\S\,1.3]{Katz79}, the Newton slopes of $\CF$ at $x$ are equal to $\tfrac{1}{n}$ times the Newton slopes of $y^*\CF^{(n)}$, that is, equal to $\tfrac{1}{n}$ times the $p$-adic valuations of the eigenvalues of the automorphism $y^*F^n_\CF$ of $\omega_y(\CF)$, and hence equal to $\tfrac{1}{n}$ times the $p$-adic valuations of the common eigenvalues of the elements of $\Frob_x(\CF)\subset\Gr(\CF/U,\BasePoint)$ acting on $\omega_\BasePoint(\CF)$, no matter which base point $\BasePoint$ we choose.

\subsection{Constant $F$-Isocrystals} \label{SubSectConst}

In order to explain the relation of $\Gr(\CF/U)$ with Crew's monodromy group \cite{Crew92} we discuss constant $F$-isocrystals. We begin with the following

\begin{defn}\label{DefAlgEnvelope}
For any topological group $\BG$ and any topological field $L$ we let $\Rep^\cont_{L} \BG$ be the neutral Tannakian category of continuous representations of $\BG$ on finite dimensional $L$-vector spaces equipped with the forgetful fiber functor $\omega_f$ which sends a representation $\rho\colon \BG\to\Aut_{L}(W)$ to the $L$-vector space $W$. We define the \emph{$L$-linear algebraic envelope} $\BG^{L\text{\rm-alg}}$ of $\BG$ as the Tannakian fundamental group $\Aut^\otimes(\omega_f)$ of $\Rep^\cont_{L} \BG$. By Tannakian duality \cite[Theorem~2.11]{Deligne-Milne82}, $\omega_f$ induces a tensor equivalence between $\Rep^\cont_L \BG$ and the category $\Rep_L \BG^{L\text{\rm-alg}}$ of algebraic representations of $\BG^{L\text{\rm-alg}}$.
\end{defn}

We recall the following lemma; compare also with Saavedra~\cite[Chapter~V.0.3.1]{Saavedra72} and \cite[Example~(2.33)]{Deligne-Milne82}.

\begin{lemma}[{\cite[b) on page~66]{SerreGebres}}] \label{LemmaAlgEnvelope}
There is a natural continuous homomorphism $\BG\to\Aut^\otimes(\omega_f)(L)$ with dense image. In particular, for every continuous finite-dimensional $L$-linear representation $\rho\colon \BG\to\GL_{n,L}$ of $\BG$, the monodromy group $\Aut^\otimes(\omega_f|_{\dal\rho\dar})$ of $\rho$ is canonically isomorphic to the closure of the image $\rho(\BG)\subset\GL_n(L)$ of $\BG$. Therefore, we may describe $\BG^{L\text{\rm-alg}}$ as the limit of the closures of the images $\im(\rho)$ over the diagram of all continuous finite-dimensional $L$-linear representations $\rho$ of $\BG$ (in some suitably large universe).
\end{lemma}

\begin{example} \label{ExAlgEnvOfZ}
When $L=\olK$ then the $L$-linear algebraic envelope of $\BZ$ is $(\BG_{m,L}^{\kappa}\times_L\BG_{a,L})\times\wh\BZ$ where $\kappa$ is the cardinality of $L$. We leave the verification of this fact to the reader. What we only need is Theorem~\ref{ThmMonodrOfConstant} below.
\end{example}

We turn towards constant $F$-isocrystals and recall their

\begin{defn}\label{DefConstantFIsoc}
An $F$-isocrystal with $K$-coefficients on $\Spec\BF_q$ is by definition a pair $(W,f)$ consisting of a finite dimensional $K$-vector space $W$ together with a $K$-linear automorphism $f\in\Aut_K(W)$, its \emph{Frobenius}. The category $\FIsoc_K(\BF_q)$ is tensor equivalent to the category $\Rep_K^\cont\BZ$, where $\BZ$ carries the discrete topology, by sending a representation $\rho\colon\BZ\to\GL_r(K)$ to $(K^{\oplus r},\rho(1))\in\FIsoc_K(\BF_q)$. The scalar extension to $\olK$ is the category $\FIsoc_\olK(\BF_q)$ whose objects are pairs $(W,f)$ consisting of a finite dimensional $\olK$-vector space $W$ together with a $\olK$-linear Frobenius automorphism $f\in\Aut_\olK(W)$. This category is equivalent to $\Rep_\olK^\cont\BZ$.

Any object $(W,f)$ of $\FIsoc_\olK(\BF_q)$ can be pulled back under the structure morphism $\pi\colon U\to\Spec\BF_q$ to an (over)convergent $F$-isocrystal $\pi^*(W,f)$ on $U$, and any $F$-isocrystal on $U$ arising in this way is called \emph{constant}. We denote by $\FConst_\olK(U)$ the category of $\olK$-linear constant $F$-isocrystals on $U$. An $F$-isocrystal is constant if and only if it is trivial as an isocrystal, i.e.~it is generated by its horizontal sections. In particular, an $F$-isocrystal $\CF\in\FIsoc_\olK(U)$ is constant if and only if the associated $F^n$-isocrystal $(\CF)^{(n)}$ is constant for some (any) $n$.

\end{defn}

\begin{thm}\label{ThmMonodrOfConstant}
Let $\CC=\pi^*(W,f)\in \FIsoc_\olK(U)$ be a constant $F$-isocrystal. Then the following holds.
\begin{enumerate}
\item \label{ThmMonodrOfConstant_A}
The functor $\pi^*$ induces a tensor equivalence between the Tannakian sub-categories $\dal(W,f)\dar\subset\FIsoc_\olK(\BF_q)$ and $\dal\CC\dar\subset\FIsoc_\olK(U)$. In particular, every $\CF\in\dal\CC\dar$ is constant.
\item \label{ThmMonodrOfConstant_B}
The monodromy group $\Gr(\CC/U)$ of $\CC$ is isomorphic to $\Gr((W,f)/\BF_q)$. This group equals the closure of $f^\BZ$ in $\Aut_\olK(W)$. In particular, it is commutative and isomorphic to $T\TimesQQQ\BG_{a,\olK}^\epsilon$ where $T$ is the product of the finite abelian group $T/T\open$ by the torus $T\open$ and $\epsilon=0$ or $1$.
\item \label{ThmMonodrOfConstant_C}
For every $x\in |U|$ the set $\Frob_x(\CC)$ consists of the single element $f^{\deg(x)}$. 
\item \label{ThmMonodrOfConstant_E}
The categories $\FIsoc_\olK(\BF_q)$ and $\FConst_\olK(U)$ are tensor equivalent to the category $\Rep^\cont_\olK\BZ$, where $\BZ$ carries the discrete topology. Their Tannakian fundamental groups $\pi_1^{\FIsoc}(\BF_q)$ and $\pi_1^{\FConst}(U)$ are equal to the $\olK$-linear algebraic envelope $\BZ^{\olK\text{\rm-alg}}$ of $\BZ$.
\end{enumerate}
\end{thm}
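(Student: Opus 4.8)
The plan is to derive all four parts from one fact: the pullback $\pi^*$ identifies $\FConst_K(U)$ with $\FIsoc_K(\Spec\BF_q)$ as tensor categories, the latter being $\Rep^\cont_K\BZ$ by Definition~\ref{DefConstantFIsoc}. The essential input is full faithfulness of $\pi^*$ on constant $F$-isocrystals. For $F$-isocrystals $A,B$ on $\Spec\BF_q$ the internal Hom commutes with $\pi^*$, so $\Hom_{\FIsoc_K(U)}(\pi^*A,\pi^*B)=H^0\bigl(U,\pi^*\CHom(A,B)\bigr)^{F}$. Writing $\CHom(A,B)=(V,g)$, the underlying convergent isocrystal of $\pi^*(V,g)$ is $\CO_U\otimes_K V$, and since $U$ is geometrically irreducible the unit object $\UOne$ has $\Hom(\UOne,\UOne)=K$, hence $H^0\bigl(U,\pi^*(V,g)\bigr)=V$; taking $F$-invariants returns $\{v\in V:g(v)=v\}=\Hom_{\FIsoc_K(\BF_q)}(A,B)$, as desired.

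For part~\ref{ThmMonodrOfConstant_A} and \ref{ThmMonodrOfConstant_E} I would first show that a sub- or quotient-$F$-isocrystal of a constant one is constant. The underlying isocrystal $\CO_U\otimes_K V$ of $\pi^*(V,g)$ lies in the Tannakian subcategory $\dal\CO_U\dar$ of convergent isocrystals on $U$, whose monodromy group at $\BasePoint$ is trivial; being closed under subobjects, $\dal\CO_U\dar$ contains every sub-isocrystal $\CE\subset\CO_U\otimes_K V$, so $\CE\cong\CO_U^{\oplus m}$. Putting $V':=\Hom(\CO_U,\CE)\subset\Hom(\CO_U,\CO_U\otimes_K V)=V$, a rank count gives $\CE=\CO_U\otimes_K V'$; if $\CE$ is $F$-stable then $V'$ is $g$-stable and $\CE=\pi^*(V',g|_{V'})$, and the quotient case follows by passing to duals. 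Since $\pi^*$ is an exact tensor functor, every object of $\dal\CC\dar$ is a subquotient of a finite sum of $\CC^{\otimes a}\otimes(\CC\dual)^{\otimes b}=\pi^*\bigl((W,f)^{\otimes a}\otimes((W,f)\dual)^{\otimes b}\bigr)$, hence constant, hence $\pi^*$ of a subquotient lying in $\dal(W,f)\dar$; together with full faithfulness this proves \ref{ThmMonodrOfConstant_A}. Then \ref{ThmMonodrOfConstant_E} follows: $\FConst_K(U)$ is by definition the essential image of $\pi^*$, so it is tensor equivalent to $\FIsoc_K(\Spec\BF_q)\simeq\Rep^\cont_K\BZ$, and the fundamental $K_\BasePtDeg/K$-groupoids are obtained from the $K$-linear algebraic envelope $\BZ^{K\text{\rm-alg}}$ by base change, cf. Definitions~\ref{DefAlgEnvelope}, \ref{DefFundamentalGroupoid} and \ref{DefNeutralGroupoid}.

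For part~\ref{ThmMonodrOfConstant_B}, the equivalence of \ref{ThmMonodrOfConstant_A} satisfies $\BasePoint^*\circ\pi^*=(\pi\circ\BasePoint)^*$, so it carries $\omega_\BasePoint$ to $\omega_\BasePoint$ and identifies $\Gr(\CC/U,\BasePoint)$ with $\Gr((W,f)/\BF_q,\BasePoint)$. Under $\FIsoc_K(\Spec\BF_q)\simeq\Rep^\cont_K\BZ$ the object $(W,f)$ is the representation $1\mapsto f$, so Lemma~\ref{LemmaAlgEnvelope} identifies this group with the Zariski closure of $f^\BZ$ in $\Aut_{K_\BasePtDeg}(W\otimes_K K_\BasePtDeg)$, the closure being insensitive to the field extension $K\to K_\BasePtDeg$. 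This group is commutative; if $f=f_sf_u$ is the Jordan decomposition then $f_s,f_u$ lie in it, $T:=\overline{f_s^\BZ}$ is diagonalizable, hence an extension of the finite \'etale abelian group $T/T\open$ by the torus $T\open$, while $\overline{f_u^\BZ}$ is trivial if $f_u=1$ and isomorphic to $\BG_a$ otherwise, since a nontrivial unipotent element in characteristic zero generates a one-dimensional unipotent group. As the two subgroups commute, meet trivially and their product contains $f$, the group equals $T\times_{K_\BasePtDeg}\BG_{a,K_\BasePtDeg}^\epsilon$ with $\epsilon\in\{0,1\}$. For part~\ref{ThmMonodrOfConstant_C}, fix $x\in|U|$, put $n=\deg(x)$ and choose $y\in U(\BF_{q^n})$ over $x$; then $\CC^{(n)}=\pi^*(W\otimes_K K_n,f^n)$ is the constant $F^n$-isocrystal with constant Frobenius $f^n$, so $y^*F_\CC^n=f^n$ in $\Gr(\CC^{(n)}/U_n,y)=\overline{(f^n)^\BZ}$, and $h_n(\CC)$ sends it to $f^n$ in $\Gr(\CC/U,y)=\overline{f^\BZ}$; as this group is commutative, $\Frob_x(\CC)=\Frob_y(\CC)=\{f^{\deg(x)}\}$.

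I expect the one genuine obstacle to be the constancy statement used in part~\ref{ThmMonodrOfConstant_A}: one must verify carefully that a horizontal sub-bundle of the trivial convergent isocrystal $\CO_U^{\oplus r}$ is again trivial and spanned by its global horizontal sections — equivalently, that $\CO_U$ has trivial monodromy — which is precisely where geometric irreducibility of $U$ enters, and where one must resist confusing constant $F$-isocrystals with the much larger class of iterated extensions of trivial ones.
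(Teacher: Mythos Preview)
Your proof is correct and follows essentially the same approach as the paper. The only notable difference is in part~\ref{ThmMonodrOfConstant_A}: the paper argues that subobjects of a constant $F$-isocrystal are constant by pulling back to a rational point of $U$ (and, when no $\BF_q$-point exists, by first passing to $F^n$-isocrystals to create one and then using uniqueness to descend), whereas you argue directly that any horizontal sub-bundle of $\CO_U^{\oplus r}$ is globally trivial because the isocrystal $\CO_U$ has trivial monodromy. Your route is slightly more self-contained and avoids the case distinction on the existence of a rational point; the paper's route is a bit quicker once one accepts the pull-back-to-a-point heuristic. For parts~\ref{ThmMonodrOfConstant_B}--\ref{ThmMonodrOfConstant_E} the two arguments are essentially identical, with the paper citing the structure theorem for commutative linear algebraic groups \cite[I.4.7]{Borel91} where you unwind it by hand via the Jordan decomposition of $f$.
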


\begin{proof}
\ref{ThmMonodrOfConstant_A} 
Since $\pi^*$ is a tensor functor we only need to see that every subobject of $\pi^*(W,f)$ is constant. This is trivial if $U$ has a rational point, because we can pull back to this point and use that this functor is fully faithful. The general case follows by applying the previous case to $F^n$-isocrystals, and then use that the image of the subobject actually must be invariant under $F$, too, by its uniqueness. 

\medskip\noindent
\ref{ThmMonodrOfConstant_B}
The isomorphy of monodromy groups follows from \ref{ThmMonodrOfConstant_A}. Since the category of $F$-isocrystals over $\BF_q$ is just the category of linear representations of $\BZ$, the second claim follows from Lemma~\ref{LemmaAlgEnvelope}. Since the monodromy group is the closure of $f^\BZ$, it is commutative and is the direct product of its unipotent radical by the subgroup consisting of its semi-simple elements, see \cite[I.4.7~Theorem]{Borel91}. The latter is the product of a finite \'etale abelian group scheme over $\olK$ by a torus, see \cite[III.8.7~Proposition]{Borel91}, and the unipotent radical is isomorphic to $\BG_{a,\olK}^\epsilon$ by \cite[Chapter~VIII, \S\,2.7, Corollary]{Serre88}. Since the image of $f$ must be dense in the quotient $\BG_{a,\olK}^\epsilon$, we have $\epsilon=0$ or $\epsilon=1$.

\medskip\noindent
\ref{ThmMonodrOfConstant_C} follows from the definition of $\Frob_x(\CC)$ from (before) Definition~\ref{FrobClass}. The set $\Frob_x(\CC)$ consists of only one element $f^{\deg(x)}$ here, because $\Gr(\CC/U)$ is commutative.

\medskip\noindent
\ref{ThmMonodrOfConstant_E}
The tensor equivalences follow from \ref{ThmMonodrOfConstant_A} and Definition~\ref{DefConstantFIsoc}. The last assertion follows from Lemma~\ref{LemmaAlgEnvelope}.
\end{proof}

\begin{defn}\label{Def3.5}
For every $\CF\in\FIsoc_\olK(U)$ let $\dal\CF\dar_{const}$ be the full Tannakian sub-category of constant $F$-isocrystals in $\dal\CF\dar$ and let $\mathbf{W}(\CF)$ denote the Tannakian fundamental group of $\dal\CF\dar_{const}$ with respect to the fiber functor $\omega_\BasePoint$. Let $\beta\colon\Gr(\CF/U)\onto\mathbf{W}(\CF)$ be the homomorphism induced by the inclusion $\dal\CF\dar_{const}\subset\dal\CF\dar$; see Lemma~\ref{LemmaCompatibility}\ref{LemmaCompatibility_A} below. We call the kernel
\[
\Gr(\CF/U)^{\rm geo} \;:=\;\ker\bigl(\Gr(\CF/U) \onto\mathbf{W}(\CF)\bigr)
\]
the \emph{geometric monodromy group} of $\CF$. This terminology is motivated by Corollary~\ref{CorCrewsThmGeo} below.
\end{defn}

\begin{rem} \label{Rem3.6}
Let $\Isoc_K(U)$ be the category of $K$-linear convergent isocrystals on $U$ and let $\Isoc_\olK(U):=\Isoc_K(U)\otimes_K \olK$ be its scalar extension. For every $\CF\in\FIsoc_\olK(U)$ let $\CF^{\sim}$ denote the underlying convergent isocrystal and let $\dal\CF^{\sim}\dar$ denote the Tannakian sub-category generated by $\CF^{\sim}$ in $\Isoc_\olK(U)$. For $\CF\in\FIsoc_\olK(U)$ R.~Crew~\cite{Crew92} has defined and studied the monodromy group $\DGal(\CF)$ of the neutral Tannakian category $\dal\CF^\sim\dar$ with respect to the fiber functor $\omega_\BasePoint$. If $\BasePoint\in U(\BF_q)$ he also defined the \emph{Weil group} $W^\CF(U/\olK)=\DGal(\CF)\rtimes\BZ$ as the semi-direct product, where $1\in\BZ$ operates on $\DGal(\CF)$ by conjugation with the Frobenius $\BasePoint^*F_\CF$.

Let $\alpha\colon \DGal(\CF)\to\Gr(\CF/U)$ be the homomorphism induced by the forgetful functor $(\,.\,)^{\sim}\colon \dal\CF\dar\to\dal\CF^{\sim}\dar$. It is natural to expect that in our setting $\DGal(\CF)$ plays the role of the geometric monodromy group from Definition~\ref{Def3.5}. However, we are only able to prove this in the semi-simple case.
\end{rem}

\begin{prop}\label{crew_sequence}
Assume that $\CF^{\sim}$ is semi-simple. Then $\DGal(\CF)$ is canonically isomorphic to the geometric monodromy group $\Gr(\CF/U)^{\rm geo}$, that is, the bottom row in the following diagram is exact. Moreover, if $\BasePoint\in U(\BF_q)$, then the bottom row can be canonically extended to a diagram with exact rows
\[
\xymatrix { 0 \ar[r] & \DGal(\CF) \ar[r]\ar@{=}[d] & W^\CF(U/\olK) \ar[r] \ar[d] & \BZ \ar[r] \ar[d] & **{!L(1) =<1.5pc,1.5pc>} \objectbox{0\,\;}\\
0 \ar[r] & \DGal(\CF) \ar[r]^\alpha & \Gr(\CF/U) \ar[r]^\beta &  \mathbf{W}(\CF) \ar[r] & **{!L(1) =<1.5pc,1.5pc>} \objectbox{0\,.}
}
\]
\end{prop}

The proof of Proposition~\ref{crew_sequence} will use the following criterion:

\begin{thm} \label{ThmLP}
Let
\begin{equation}\label{3.1.2}
\xymatrix { G_1\ar[r]^{q} & G_2\ar[r]^{p} & G_3 \ar[r] & 1}
\end{equation}
be a sequence of affine group schemes over a field $L$ such that $p$ is faithfully flat. Consider the conditions:
\begin{enumerate}
\item \label{ThmLP_A}
if $V\in\Rep_L G_2$, then $q^*(V)$ is trivial in $\Rep_L G_1$ if and only if $V\cong p^*(V')$ for some $V'\in\Rep_L G_3$,
\item \label{ThmLP_B}
for any $V\in\Rep_L G_2$, if $W_0\subset q^*(V)$ is the maximal trivial subobject in $\Rep_L G_1$, then there exists a subobject $V_0\subset V$ in $\Rep_L G_2$ such that $q^*(V_0)=W_0\subset q^*(V)$,
\item \label{ThmLP_C} any $W\in\Rep_L G_1$ is a subobject (or a quotient) of $q^*(V)$ for some $V\in\Rep_L G_2$. 
\end{enumerate}
If \ref{ThmLP_A} and \ref{ThmLP_B} hold, then in sequence~\eqref{3.1.2} the composition $p\circ q$ is trivial and $\ker(p)=q(G_1)^{\rm norm}$ is the smallest closed normal subgroup containing $q(G_1)$. If in addition, \ref{ThmLP_C} holds, then $G_1$ is isomorphic to $\ker(p)$ via $q$.
\end{thm}

\begin{proof}
The first assertion is \cite[Theorem~2.4]{LP}, and the second is \cite[Theorem~A.1]{EHS}.
\end{proof}

Now we are ready to give the

\begin{proof}[Proof of Proposition~\ref{crew_sequence}]
We first prove the exactness of the lower sequence, which yields the first assertion. Since $\dal\CF\dar_{const}$ is a sub-category of $\dal\CF\dar$ the map $\beta$ is surjective and faithfully flat. We apply Theorem~\ref{ThmLP} and verify its conditions \ref{ThmLP_A}, \ref{ThmLP_B} and \ref{ThmLP_C}. Condition \ref{ThmLP_A} holds, because an $F$-isocrystal on $U$ is constant if and only if it is trivial as an isocrystal. Next we show \ref{ThmLP_B}. The maximal trivial convergent sub-isocrystal $\CH_0$ of a convergent $F$-isocrystal $\CG$ is generated by horizontal sections of $\CG$. Since the Frobenius $F_\CG$ respects horizontal sections, the isocrystal $\CH_0$ underlies a convergent $F$-isocrystal. To prove \ref{ThmLP_C} let $\CG\in\dal\CF^\sim\dar$. Since the image of $\dal \CF\dar$ under $(\,.\,)^{\sim}$ is closed under direct sums, tensor products and duals, there is an object $\CH$ of $\dal \CF\dar$ such that $\CG$  is a subquotient of $\CH^{\sim}$. Since $\CF^{\sim}$ is semi-simple, so is every object in $\dal \CF^{\sim}\dar$. Therefore, $\CG$  is isomorphic to a subobject of $\CH^{\sim}$.

The upper sequence is exact by definition of the group $W^\CF(U/\olK)$. To prove the commutativity of the diagram we consider the morphism $\BZ\to\Gr(\CF/U)$ which sends $1\in\BZ$ to the element $\BasePoint^*F_\CF\in\Gr(\CF/U)$. This extends to a morphism $W^\CF(U/\olK)\to\Gr(\CF/U)$ by definition of $W^\CF(U/\olK)=\DGal(\CF)\rtimes\BZ$.
\end{proof}

\section{Unit-Root $F$-Isocrystals and Groups of Connected Components} \label{SectionUnitRootAndConnComp}

We recall Crew's~\cite{Crew92} theory of convergent unit-root $F$-isocrystals in Subsection~\ref{SubSectionUnit}. As an application we study in Subsection~\ref{SubSectionComponents} the group of connected components of the monodromy groups $\Gr(\CF/U)$ and we prove in Proposition~\ref{reduction3->2} that Conjecture~\ref{MainConj} follows from Conjecture~\ref{MainConj3}. 

\subsection{Unit-Root $F$-Isocrystals} \label{SubSectionUnit}

We begin our discussion of unit-root $F$-isocrystals with the following useful criterion.

\begin{lemma} \label{Lemma3.2}
Let $\CF\in\FIsoc_\olK(U)$ and assume that the identity component $\Gr(\CF)\open$ of its monodromy group is unipotent. Then $\CF$ is a unit-root $F$-isocrystal. In particular, this is the case, when the monodromy group is finite and hence $\Gr(\CF)\open=\{1\}$.
\end{lemma}

\begin{proof} 
Let $N\in\BN$ be the order of the group $\Gr(\CF)/\Gr(\CF)\open$. Let $x\in|U|$, let $\BF_{q^n}$ be its residue field, and let $y\in U(\BF_{q^n})$ be a point above $x$, which we may take as our base point. Then $(y^*F^n_\CF)^N$ lies in the unipotent group $\Gr(\CF,y)\open$ and acts unipotently on $\omega_y(\CF)$. Thus $\omega_y(\CF)$ has a basis in which $(y^*F^n_\CF)^N$ is an upper triangular unipotent matrix. By the definition of the Newton slopes in \cite[page~121]{Katz79} this implies that all Newton slopes of $y^*\CF$ are zero. As this holds for all $x$, we conclude that $\CF$ is unit-root.
\end{proof}

To recall Crew's result on unit-root $F$-isocrystals, let $K^\un=\bigcup_n K_n$ be the maximal unramified extension of $K$ in $\olK$ and let $\wh K^\un$ be its $p$-adic completion. Let $\FUR_\olK(U)\subset\FIsoc_\olK(U)$ be the Tannakian sub-category of $\olK$-rational convergent unit-root $F$-isocrystals on $U$. It is tensor equivalent to the category of $\olK$-rational representations of its Tannakian fundamental group $\pi_1^{\FUR}(U):=\Aut^\otimes\bigl(\omega_\BasePoint|_{\FUR_\olK(U)}\bigr)$.

\begin{thm}\label{ThmCrewsThm}
The category $\FUR_\olK(U)$ is canonically tensor equivalent to the category $\Rep^\cont_\olK\pi_1^\et(U,\bar\BasePoint)$, such that the fiber functor $\omega_\BasePoint$ on $\FUR_\olK(U)$ and the forgetful fiber functor $\omega_f$ on $\Rep^\cont_\olK\pi_1^\et(U,\bar\BasePoint)$ become canonically isomorphic over the compositum $\wh K^\un\olK$ of $\wh K^\un$ and $\olK$. In particular, $\pi_1^{\FUR}(U)\times_{\olK} (\wh K^\un\olK)$ is canonically isomorphic to the base change to $\wh K^\un\olK$ of the $\olK$-linear algebraic envelope of the topological group $\pi_1^\et(U,\bar\BasePoint)$.
\end{thm}

\begin{proof}
The tensor equivalence of categories was established by Crew \cite[Theorem~2.1 and Remark~2.2.4]{Crew87}. As this equivalence is natural (see loc.\ cit.), it commutes with the pull-back to the base point $\BasePoint\in U(\BF_{q^\BasePtDeg})$. Since we will need it in the next corollary, we explicitly describe the tensor equivalence at $\BasePoint$ between $\Rep^\cont_\olK\pi_1^\et(\BasePoint,\bar\BasePoint)$ and $\FUR_\olK(\BF_{q^\BasePtDeg})$; see \cite[p.~119]{Crew87}. The objects in the latter category are pairs $(\CF,F_\CF)$ consisting of a finite free $K_\BasePtDeg\otimes_K \olK$-module $\CF$ and an $F\otimes\id_\olK$-semi linear automorphism $F_\CF$ of $\CF$. Also $\pi_1^\et(\BasePoint,\bar\BasePoint)=\Gal(\ol\BF_q/\BF_{q^\BasePtDeg})\cong\wh\BZ$. The tensor equivalence associates a Galois representation $\rho\colon\Gal(\ol\BF_q/\BF_{q^\BasePtDeg})\to\Aut_\olK(W)$ with a unit-root $F$-isocrystal $(\CF,F_\CF)$ in such a way that there is a canonical Galois and $F$-equivariant isomorphism
\begin{equation}\label{EqThmCrewsThm}
\alpha\colon W\otimes_\olK (\wh K^\un\otimes_K \olK) \; \isoto \; \CF\otimes_{K_\BasePtDeg\otimes\olK} (\wh K^\un\otimes_K \olK)
\end{equation}
where $\gamma\in\Gal(\ol\BF_q/\BF_{q^\BasePtDeg})=\Gal(K^\un/K_\BasePtDeg)=\Aut_{K_\BasePtDeg}^{\rm cont}(\wh K^\un)$ acts on the left hand side as $\rho(\gamma)\otimes (\gamma\otimes\id_\olK)$ and on the right hand side as $\id_\CF\otimes(\gamma\otimes\id_\olK)$, and where the Frobenius $F$ acts on the left hand side as $\id_W\otimes (F\otimes\id_\olK)$ and on the right hand side as $F_\CF\otimes (F\otimes\id_\olK)$. The isomorphism $\alpha$ allows to recover the $F$-isocrystal $(\CF,F_\CF)$ as $\bigl(W\otimes_K (\wh K^\un\otimes_K \olK),\id_W\otimes (F\otimes\id_\olK)\bigr)^{\Gal(\ol\BF_q/\BF_{q^\BasePtDeg})}$ and $W$ as $\bigl(\CF\otimes_{K_\BasePtDeg\otimes\olK} (\wh K^\un\otimes_K \olK)\bigr)^{F\otimes\id_\olK=\id}$. Let $\wh K^\un\otimes_K \olK\to \wh K^\un\olK$ be the multiplication homomorphism. Tensoring $\alpha$ with $\wh K^\un\olK$ over $\wh K^\un\otimes_K \olK$ yields a canonical isomorphism of fiber functors $\alpha\otimes\id_{\wh K^\un\olK}\colon\omega_f\otimes_\olK (\wh K^\un\olK)\isoto\omega_\BasePoint\otimes_{\olK} (\wh K^\un\olK)$. The latter induces an isomorphism of $\wh K^\un\olK$-group schemes $\alpha_*\colon\Aut^\otimes(\omega_f)\times_\olK (\wh K^\un\olK)\isoto\pi_1^{\FUR}(U)\times_{\olK} (\wh K^\un\olK)$ and so the last statement follows directly from Lemma~\ref{LemmaAlgEnvelope}.
\end{proof}

We formulate the consequence for the individual monodromy groups $\Gr(\CF)$. For each $x\in|U|$ let $\bar x$ be a geometric base point of $U$ lying above $x$ and choose an isomorphism of groups $\pi_1^\et(U,\bar x)\isoto\pi_1^\et(U,\bar\BasePoint)$. It is unique up to conjugation in $\pi_1^\et(U,\bar\BasePoint)$. Let $\Frob_x^{-1}\in\Gal(\ol\BF_x/\BF_x)=\pi_1^\et(x,\bar x)$ be the \emph{geometric Frobenius} which maps $a\in\ol\BF_x$ to $a^{1/\#\BF_x}$. It is the inverse of the \emph{arithmetic Frobenius} $\Frob_x\colon a\mapsto a^{\#\BF_x}$. Then the conjugacy class of $x_*\Frob_x^{-1}$ in $\pi_1^\et(U,\bar\BasePoint)$ is well defined.

\begin{cor}\label{CorCrewsThm}
Let $\CF\in \FUR_\olK(U)$ and let $\rho\colon\pi_1^\et(U,\bar\BasePoint)\to\Aut_\olK(W)$ be the representation corresponding to $\CF$ under the tensor equivalence from Theorem~\ref{ThmCrewsThm}. Then the categories $\dal\CF\dar\subset\FUR_\olK(U)$ and $\dal\rho\dar\subset\Rep^\cont_\olK\pi_1^\et(U,\bar\BasePoint)$ are tensor equivalent and there is a (non-canonical) $\olK$-rational isomorphism $\beta\colon\omega_f|_{\dal\rho\dar}\isoto\omega_\BasePoint|_{\dal\CF\dar}$ of tensor functors on these categories. In particular $\Gr(\CF)$ is the closure of the image of $\beta_*\circ\rho\colon\pi_1^\et(U,\bar\BasePoint)\to\Aut_{\olK}(\omega_\BasePoint(\CF))$ and for all points $x\in|U|$ the $\Gr(\CF)$-conjugacy classes of $\beta_*\circ\rho(x_*\Frob_x^{-1})$ and $\Frob_x(\CF)$ coincide.
\end{cor}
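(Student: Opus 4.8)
The plan is to obtain all four assertions by transporting structure along Crew's tensor equivalence (Proposition~\ref{PropCrewsThm}), together with the representability of $\Isom^\otimes$-torsors of fiber functors and Lemma~\ref{LemmaAlgEnvelope}. First I would restrict the canonical tensor equivalence $\FUR_K(U)\isoto\Rep^\cont_K\pi_1^\et(U,\bar\BasePoint)$ to the strictly full Tannakian subcategories generated by the corresponding objects. Since a tensor equivalence preserves direct sums, tensor products, duals and subquotients, it identifies $\dal\CF\dar$ with $\dal\rho\dar$; this gives the first assertion. I would also note here that, under this identification, the fiber functor $\omega_\BasePoint|_{\dal\CF\dar}$ and the forgetful fiber functor $\omega_f|_{\dal\rho\dar}$ become isomorphic after base change to $\wh K^\un$, by restricting the isomorphism $\alpha$ of Proposition~\ref{PropCrewsThm}.

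Next, to produce $\beta$ over a finite extension, I would consider the scheme $P:=\Isom^\otimes\bigl(\omega_f|_{\dal\rho\dar}\,,\,\omega_\BasePoint|_{\dal\CF\dar}\bigr)$ of tensor isomorphisms of these two fiber functors, taken over $K_\BasePtDeg$-algebras. By \cite[Theorem~3.2]{Deligne-Milne82} it is represented by an affine $K_\BasePtDeg$-scheme which is faithfully flat over $K_\BasePtDeg$ and a torsor under $\Gr(\CF,\BasePoint)=\Aut^\otimes(\omega_\BasePoint|_{\dal\CF\dar})$; in particular it is non-empty (it even has a $\wh K^\un$-point by the previous paragraph) and, being a torsor under a linear algebraic group, of finite type over $K_\BasePtDeg$. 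A non-empty scheme of finite type over a field has a closed point whose residue field $L$ is a finite extension of that field, and any such point of $P$ is the desired isomorphism $\beta\colon\omega_f|_{\dal\rho\dar}\otimes_K L\isoto\omega_\BasePoint|_{\dal\CF\dar}\otimes_{K_\BasePtDeg}L$. The isomorphism $\beta$ then induces an isomorphism of $L$-group schemes $\beta_*\colon\Aut^\otimes(\omega_f|_{\dal\rho\dar})\times_K L\isoto\Gr(\CF,\BasePoint)\times_{K_\BasePtDeg}L$, which via the component $\beta_\CF\colon W\otimes_K L\isoto\BasePoint^*\CF\otimes_{K_\BasePtDeg}L$ is compatible with the tautological faithful representations of the two groups and sends $\rho(\gamma)$ to $\beta_\CF\circ\rho(\gamma)\circ\beta_\CF^{-1}$. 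By Lemma~\ref{LemmaAlgEnvelope}, $\Aut^\otimes(\omega_f|_{\dal\rho\dar})$ is the Zariski-closure of $\rho(\pi_1^\et(U,\bar\BasePoint))$ in $\GL(W)$; since this group is smooth and $\rho(\pi_1^\et(U,\bar\BasePoint))$ is Zariski-dense in it, forming the closure commutes with the base change along $L/K$, and applying $\beta_*$ yields that $\Gr(\CF,\BasePoint)\times_{K_\BasePtDeg}L$ is the Zariski-closure of the image of $\beta_*\circ\rho$ in $\Aut_{K_\BasePtDeg}(\BasePoint^*\CF)(L)$. This is the third assertion.

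For the Frobenius compatibility I would fix $x\in|U|$ of residue degree $n$, a point $y\in U(\BF_{q^n})$ above $x$, and a geometric point $\bar x$ above $y$ and above $\bar\BasePoint$, giving $x_*\colon\pi_1^\et(x,\bar x)\to\pi_1^\et(U,\bar\BasePoint)$ up to conjugacy. By the naturality of Crew's equivalence with respect to the functor $(\,.\,)^{(n)}$ and to the pull-back along $y$, together with the explicit description of the equivalence at a point recalled in the proof of Proposition~\ref{PropCrewsThm}—the same short computation with the $\Gal$- and $F$-equivariance of the comparison isomorphism, which shows that the geometric Frobenius corresponds to the relative Frobenius of the $F$-isocrystal—the element $\rho(x_*\Frob_x^{-1})$ is carried to the $K_n$-linear fiber automorphism $y^*F_\CF^n$, i.e.\ to $h_n(\CF)(y^*F_\CF^n)$, whose conjugacy class is $\Frob_y(\CF)$. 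Transporting from $y$ to $\BasePoint$ by an isomorphism of fiber functors $\alpha_{y,\BasePoint}$ as in the construction preceding Definition~\ref{FrobClass}, and comparing the comparison isomorphism at $\BasePoint$ with the chosen $\beta$ over $\wh\olK$ (they differ by an element of $\Gr(\CF,\BasePoint)(\wh\olK)$), one obtains that $\beta_*\circ\rho(x_*\Frob_x^{-1})$ and $\Frob_x(\CF)$ define the same $\Gr(\CF,\BasePoint)(\wh\olK)$-conjugacy class. Since by Remark~\ref{RemFrobClass} $\Frob_x(\CF)$ consists of a single $\olK$-conjugacy class, the transporter between $\beta_*\circ\rho(x_*\Frob_x^{-1})$ and a representative of $\Frob_x(\CF)$ is a non-empty closed subscheme of finite type over the algebraically closed field $\olK$, hence has an $\olK$-point, so the two classes already agree over $\olK$.

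I expect the last step to be the main obstacle—not because of any single hard input (Crew's theorem, Lemma~\ref{LemmaAlgEnvelope} and \cite[Theorem~3.2]{Deligne-Milne82} are all available), but because of the careful bookkeeping required: keeping track of the various base-point-change isomorphisms of fiber functors, of the interplay between the Crew equivalences for $(U,F)$ and $(U_n,F^n)$ under $(\,.\,)^{(n)}$, and of the fields $K_\BasePtDeg$, $K_n$, $L$, $\wh K^\un$, $\olK$, $\wh\olK$ occurring along the way; in particular one must verify that the identifications match up to the expected conjugacy and that $\wh\olK$-conjugacy descends to $\olK$-conjugacy.
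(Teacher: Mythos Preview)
Your proposal is correct and follows essentially the same route as the paper: restrict Crew's tensor equivalence to the subcategories generated by $\CF$ and $\rho$, use the finite-type $\Isom^\otimes$-torsor of fiber functors to find $\beta$ over a finite extension $L$, invoke Lemma~\ref{LemmaAlgEnvelope} for the Zariski-closure statement, and for the Frobenius compatibility compare $\beta$ with the canonical $\alpha$ over $\wh\olK$ and then descend the conjugating element to $\olK$. The only organizational difference is that the paper first treats the case $x=\BasePoint$ directly from the $\Gal$- and $F$-equivariance of $\alpha$ in \eqref{EqPropCrewsThm}, obtaining the relation $\beta_*\circ\rho(\BasePoint_*\Frob_\BasePoint^{-1})\cdot h=h\cdot \BasePoint^*F_\CF$ for $h=\beta\circ\alpha^{-1}\in\Gr(\CF,\BasePoint)(\wh\olK)$, and then invokes Hilbert's Nullstellensatz to find $h\in\Gr(\CF,\BasePoint)(\olK)$; the general $x$ is handled by literally replacing $\BasePoint$ with $x$. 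Your transporter argument is the same Nullstellensatz step phrased scheme-theoretically, and your route through $y$ and $(\,.\,)^{(n)}$ amounts to the same reduction.
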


\begin{proof}
By Theorem~\ref{ThmCrewsThm} the categories $\dal\CF\dar$ and $\dal\rho\dar$ are tensor equivalent. Since $\Aut^\otimes(\omega_f|_{\dal\rho\dar})$ is a closed subgroup of $\Aut_\olK(\omega_f(\rho))$ the $\Aut^\otimes(\omega_f|_{\dal\rho\dar})$-torsor $\Isom^\otimes(\omega_f|_{\dal\rho\dar},\,\omega_\BasePoint|_{\dal\CF\dar})$ is a scheme of finite type over $\olK$ by \cite[IV$_2$, Proposition~2.7.1]{EGA}. By Theorem~\ref{ThmCrewsThm} it has a point over $\wh K^\un\olK$, and hence is non-empty. Therefore, it also has points over $\olK$. Every such point defines an isomorphism $\beta\colon\omega_f|_{\dal\rho\dar}\isoto\omega_\BasePoint|_{\dal\CF\dar}$ of tensor functors and an isomorphism of algebraic groups $\beta_*\colon\Aut^\otimes(\omega_f|_{\dal\rho\dar})\isoto\Gr(\CF)$, $g\mapsto\beta\circ g\circ\beta^{-1}$ over $\olK$. So the statement about the latter group follows from Lemma~\ref{LemmaAlgEnvelope}.

To prove the equality of conjugacy classes note that $\rho(\BasePoint^*\Frob_\BasePoint^{-1})\otimes\id_{\wh K^\un\otimes\olK}=\bigl(\rho(\BasePoint^*\Frob_\BasePoint^{-1})\otimes\Frob_\BasePoint^{-1}\otimes\id_{\olK}\bigr) \circ \bigl(\id_W\otimes F^\BasePtDeg\otimes\id_\olK\bigr)$ is mapped under the tensor isomorphism $\alpha$ from \eqref{EqThmCrewsThm} to 
\[
\alpha\circ\bigl(\rho(\BasePoint_*\Frob_{\BasePoint}^{-1})\otimes\id_{\wh K^\un}\bigr)\circ\alpha^{-1} = \bigl(\id_{\BasePoint^*\CF}\otimes\Frob_\BasePoint^{-1}\otimes\id_\olK\bigr) \circ (\BasePoint^*F_\CF^\BasePtDeg\otimes F^\BasePtDeg\otimes\id_\olK\bigr)=\BasePoint^*F_\CF^\BasePtDeg\,.
\]
If $h:=\beta\circ\alpha^{-1}\in\Gr(\CF)(\wh K^\un\olK)$ then $\beta_*\circ\rho(\BasePoint_*\Frob_{\BasePoint}^{-1})\cdot h\;=\;h\cdot\alpha\circ\bigl(\rho(\BasePoint_*\Frob_{\BasePoint}^{-1})\otimes\id_{\wh K^\un}\bigr)\circ\alpha^{-1}\;=\;h\cdot \BasePoint^*F_\CF^\BasePtDeg$. Since $\beta_*\circ\rho(\BasePoint_*\Frob_{\BasePoint}^{-1})$ and $\BasePoint^*F_\CF^\BasePtDeg$ lie in $\Gr(\CF)(\olK)$ this is an equation for $h$ with coefficients in $\olK$ which has a solution in $\wh K^\un\olK$. Thus, it has a solution $h\in\Gr(\CF)(\olK)$, too. This proves that the $\Gr(\CF)(\olK)$-conjugacy classes of $\beta_*\circ\rho(\BasePoint_*\Frob_{\BasePoint}^{-1})$ and $\BasePoint^*F_\CF^\BasePtDeg=\Frob_\BasePoint(\CF)$ coincide for $x=\BasePoint$. The equality for general $x$ follows moving the base point $\BasePoint$ to $x$.
\end{proof}

\begin{rem}\label{RemOtherTopologies} 
Note that Conjecture~\ref{MainConj} for convergent unit-root $F$-isocrystals on $U$, which we proved in Proposition~\ref{PropUnitRoot}, is considerably weaker than the classical Chebotar\"ev Density Theorem for $U$ to the same extent as the pro-finite topology on $\pi_1^\et(U,\bar\BasePoint)$ is finer than the Zariski-topology on its $\olK$-linear algebraic envelope. Namely, the classical Chebotar\"ev Density Theorem says that the Frobenii of a set $S$ of Dirichlet density $1$ are dense in $\pi_1^\et(U,\bar\BasePoint)$ for the pro-finite topology, see \cite[Theorem~7]{Serre63}. If $\CF$ is a unit-root $F$-isocrystal the representation $\pi_1^\et(U,\bar\BasePoint)\to\Gr(\CF)$ corresponding to $\CF$ by Corollary~\ref{CorCrewsThm} is continuous for the $p$-adic topology. So the Frobenii lie $p$-adically dense in the image of this representation, but this image itself is not $p$-adically dense in $\Gr(\CF)$, since it is $p$-adically closed, but not the whole group in general. This image is only Zariski-dense by Corollary~\ref{CorCrewsThm}. Therefore, the stronger assertion that the set $\bigcup_{x\in S}\Frob_x(\CF)$ is $p$-adically dense in $\Gr(\CF)$ is false in general. So it is unreasonable to expect a density statement for any topology other than the Zariski-topology, even in the most simple case of constant $F$-isocrystals. This can be seen from the following
\end{rem}

\begin{example}\label{ExConstantIsocr}
Let $\CC$ be the pullback to $U$ of the $F$-isocrystal on $\BF_q$ of rank $1$ given by $(K,F=p^s)$ with $s\in\BZ$. If $s\ne0$ then $\Gr(\CF)=\BG_{m,\olK}$. Indeed, $\Gr(\CF)$ is a closed subgroup of $\Aut_{\olK}(\BasePoint^*\CC)=\BG_{m,\olK}$ which contains $\Frob_{\BasePoint}(\CC)=\{p^{\BasePtDeg s}\}$, where $\BasePtDeg=\deg(\BasePoint)$. Since the set $p^{\BZ\BasePtDeg s}$ is infinite, the only such group is $\BG_{m,\olK}$. However, the set $\bigcup_{x\in |U|}\Frob_x(\CF)\subset p^{\BZ\BasePtDeg s}$ is discrete in $\BG_m(\olK)$ for the $p$-adic topology.
\end{example}

For the next corollary recall from Definition~\ref{Def3.5} the geometric monodromy group $\Gr(\CF/U)^{\rm geo}$ and $\mathbf{W}(\CF)$, the Tannakian fundamental group of $\dal\CF\dar_{const}$. Also recall that the geometric \'etale fundamental group $\pi_1^\et(U,\bar\BasePoint)^{\rm geo}$ is defined as the \'etale fundamental group $\pi_1^\et(U\otimes_{\BF_q}\ol\BF_q,\bar\BasePoint)$ of $U\otimes_{\BF_q}\ol\BF_q$. It sits in Grothendieck's fundamental exact sequence \cite[IX, Th\'eor\`eme~6.1]{SGA1}, which is the upper row in diagram~\eqref{EqCorCrewsThmGeoDiag}.

\begin{cor}\label{CorCrewsThmGeo}
In the situation of Corollary~\ref{CorCrewsThm} the homomorphism $\beta_*\circ\rho$ induces a commutative diagram
\begin{equation}\label{EqCorCrewsThmGeoDiag}
\xymatrix {
1 \ar[r] & \pi_1^\et(U,\bar\BasePoint)^{\rm geo} \ar[r] \ar[d]^{\beta_*\circ\rho} & \pi_1^\et(U,\bar\BasePoint) \ar[r] \ar[d]^{\beta_*\circ\rho} & \Gal(\ol\BF_q/\BF_q) \ar[r] \ar[d] & 1 \\
1 \ar[r] & \Gr(\CF/U)^{\rm geo} \ar[r] & \Gr(\CF/U) \ar[r] & \mathbf{W}(\CF) \ar[r] & 1 \\
}
\end{equation}
with exact rows in which the three vertical maps have dense image. In particular, if $\rho^{\rm geo}:=\rho|_{\pi_1^\et(U,\bar\BasePoint)^{\rm geo}}$ is the restriction of $\rho$, then $\beta_*$ induces an isomorphism $\Aut^\otimes(\omega_f|_{\dal\rho^{\rm geo}\dar})\isoto\Gr(\CF/U)^{\rm geo}$.
\end{cor}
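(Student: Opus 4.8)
The plan is to read off the whole diagram \eqref{EqCorCrewsThmGeoDiag} from Corollary~\ref{Cor_exactness}, applied to Grothendieck's fundamental exact sequence \eqref{EqGrothFundExSeq} — which is a short exact sequence of profinite, hence compact, topological groups — and to the representation $\rho_L:=\beta_*\circ\rho\colon\pi_1^\et(U,\bar\BasePoint)\to\GL_n(L)$ furnished by Corollary~\ref{CorCrewsThm}, where $n=\rk\CF$ after a choice of basis of $\omega_\BasePoint(\CF)\otimes_{K_\BasePtDeg}L$. First I would record that the bottom row of \eqref{EqCorCrewsThmGeoDiag} is exact: by Definition~\ref{Def3.5} the map $\beta\colon\Gr(\CF/U,\BasePoint)\onto\mathbf W(\CF,\BasePoint)$ is surjective (Corollary~\ref{CorPushoutGroupoid}\ref{CorPushoutGroupoid_A}, since $\dal\CF\dar_{const}$ is a Tannakian subcategory of $\dal\CF\dar$) with kernel $\Gr(\CF/U,\BasePoint)^{geo}$, hence faithfully flat, and flat base change along $K_\BasePtDeg\subset L$ preserves this exactness.

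Next I would invoke Corollary~\ref{Cor_exactness} with $\mathbf G_1=\pi_1^\et(U,\bar\BasePoint)^{geo}$, $\mathbf G_2=\pi_1^\et(U,\bar\BasePoint)$, $\mathbf G_3=\Gal(\ol\BF_q/\BF_q)$ and the representation $\rho_L$. This produces a commutative diagram with exact rows and three Zariski-dense vertical maps, whose bottom row is $1\to\Aut^\otimes(\omega_f|_{\dal\rho_L^{geo}\dar})\to\Aut^\otimes(\omega_f|_{\dal\rho_L\dar})\to\Aut^\otimes(\omega_f|_\CC)\to1$, where $\rho_L^{geo}:=\rho_L|_{\pi_1^\et(U,\bar\BasePoint)^{geo}}$ and $\CC$ is the full subcategory of $\Rep_L\Aut^\otimes(\omega_f|_{\dal\rho_L\dar})$ of those objects on which the induced representation of $\pi_1^\et(U,\bar\BasePoint)$ factors through $\Gal(\ol\BF_q/\BF_q)$. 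It then remains to match this bottom row with that of \eqref{EqCorCrewsThmGeoDiag} via $\beta_*$. By Lemma~\ref{LemmaAlgEnvelope} and Corollary~\ref{CorCrewsThm}, $\Aut^\otimes(\omega_f|_{\dal\rho_L\dar})$ is the Zariski closure of $\rho_L(\pi_1^\et(U,\bar\BasePoint))$, which equals $\Gr(\CF/U,\BasePoint)\times_{K_\BasePtDeg}L$; and since $\rho_L^{geo}$ is isomorphic, as an abstract $L$-representation, to $\rho^{geo}\otimes_KL$, and the Zariski closure of a set of $K$-points commutes with the base change $K\subset L$, the left-hand term is $\Aut^\otimes(\omega_f|_{\dal\rho^{geo}\dar})\times_KL$, compatibly with $\beta_*$.

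The crucial identification is that of $\CC$. Here I would use the naturality of Crew's equivalence (Proposition~\ref{PropCrewsThm}) with respect to the structure morphism $\pi\colon U\to\Spec\BF_q$: under it the pull-back $\pi^*$ corresponds to restriction of representations along the surjection $\pi_1^\et(U,\bar\BasePoint)\onto\pi_1^\et(\Spec\BF_q,\bar\BasePoint)=\Gal(\ol\BF_q/\BF_q)$ of \eqref{EqGrothFundExSeq}. Combined with the definition of constant $F$-isocrystals (Definition~\ref{DefConstantFIsoc}) — together with the elementary observation that an object of $\dal\CF\dar$ is unit-root, so if it is constant it comes from a \emph{unit-root} $F$-isocrystal on $\Spec\BF_q$, whose associated representation is then a genuine $\Gal(\ol\BF_q/\BF_q)$-representation — this shows that a unit-root $F$-isocrystal in $\dal\CF\dar$ is constant if and only if its associated $\pi_1^\et(U,\bar\BasePoint)$-representation factors through $\Gal(\ol\BF_q/\BF_q)$; see also Theorem~\ref{ThmMonodrOfConstant}\ref{ThmMonodrOfConstant_A}. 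Hence, under the tensor equivalence $\dal\rho_L\dar\simeq\Rep_L\bigl(\Gr(\CF/U,\BasePoint)\times_{K_\BasePtDeg}L\bigr)$, the category $\CC$ is precisely $\dal\CF\dar_{const}\otimes_{K_\BasePtDeg}L$, so $\Aut^\otimes(\omega_f|_\CC)=\mathbf W(\CF,\BasePoint)\times_{K_\BasePtDeg}L$ and the right vertical map of \eqref{EqCorCrewsThmGeoDiag} is the one induced by $\rho_L$. Splicing these identifications and $\beta_*$ into the diagram from Corollary~\ref{Cor_exactness} yields \eqref{EqCorCrewsThmGeoDiag} with Zariski-dense vertical maps, and the ``in particular'' statement follows because, by exactness of both rows, $\Gr(\CF/U,\BasePoint)^{geo}\times_{K_\BasePtDeg}L$ is the kernel of the right-hand quotient, hence coincides with $\Aut^\otimes(\omega_f|_{\dal\rho^{geo}\dar})\times_KL$ under $\beta_*$.

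I expect the main obstacle to be exactly this last dictionary point: the clean verification that the category-theoretic condition ``$\rho_L$ factors through $\Gal(\ol\BF_q/\BF_q)$'' cuts out precisely the constant $F$-isocrystals in $\dal\CF\dar$, which amounts to pinning down the compatibility of Crew's equivalence with $\pi^*$ and with the description of constant $F$-isocrystals. Once this is secured, the remainder is bookkeeping: a formal combination of Corollaries~\ref{Cor_exactness} and \ref{CorCrewsThm}, Lemma~\ref{LemmaAlgEnvelope}, and flat base change.
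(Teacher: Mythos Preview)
Your proposal is correct and follows essentially the same approach as the paper: apply Corollary~\ref{Cor_exactness} to Grothendieck's fundamental exact sequence and the representation $\rho$ (or $\rho_L$), and then identify the category $\CC$ of objects on which $\rho$ factors through $\Gal(\ol\BF_q/\BF_q)$ with $\dal\CF\dar_{const}$. The paper makes the forward direction of this identification slightly more explicit---it picks a tensor generator $\CC$ of $\dal\CF\dar_{const}$, observes that its Frobenius lies in $\GL_r(\CO_K)$ and hence extends from $\BZ$ to $\wh\BZ=\Gal(\ol\BF_q/\BF_q)$---whereas you phrase the same content as naturality of Crew's equivalence with respect to $\pi^*$; both arguments invoke Crew's result over $\Spec\BF_q$ for the converse direction, and the remainder is bookkeeping.
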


\begin{proof}
The category $\dal\CF\dar_{const}$ has a tensor generator $\CC$. Let $(K^{\oplus r},f)\in\FIsoc_\olK(\BF_q)$ be an $F$-isocrystal on $\Spec\BF_q$ such that $\CC$ is the pullback of $(K^{\oplus r},f)$ under the structure morphism $U\to\Spec\BF_q$. Then $\mathbf{W}(\CF)=\Gr(\CC/U)=\Gr((K^{\oplus r},f)/\Spec\BF_q)$ equals the closure of $f^\BZ$ in $\GL_{r,\olK}$ by Theorem~\ref{ThmMonodrOfConstant}\ref{ThmMonodrOfConstant_B}. Since $\CF\in \FIsoc_\olK(U)$ is unit-root, also $\CC$ is unit-root, and possibly after a change of basis, we can assume that $f\in\GL_r(\CO_K)$, where $\CO_K$ denotes the valuation ring of $K$. Since the group $\GL_r(\CO_K)$ is pro-finite, the morphism $\BZ\to\GL_r(\CO_K)$, $n\mapsto f^n$ extends uniquely to a morphism $\rho_{const}\colon\Gal(\ol\BF_q/\BF_q)=\wh\BZ\cdot\Frob_q\to\GL_r(\CO_K)$, $\Frob_q\mapsto f$ with image contained and dense in $\mathbf{W}(\CF)$. Let $H\subset \Gr(\CF/U)^{\rm geo}$ be the closure of $\beta_*\circ \rho(\pi_1^\et(U,\bar\BasePoint)^{\rm geo})$. It is a normal subgroup of $\Gr(\CF/U)$ by the density of $\beta_*\circ \rho$. Let $\CG\in\dal\CF\dar$ with $\Gr(\CG/U)=\Gr(\CF/U)/H$. Then the representation $\rho\colon\pi_1^\et(U,\bar\BasePoint)\to\Aut_\olK(\omega_\BasePoint(\CG))$ factors through $\Gal(\ol\BF_q/\BF_q)$, and hence $\CG$ is constant and belongs to $\dal\CF\dar_{const}$ by Crew's result \cite[Theorem~2.1 and Remark~2.2.4]{Crew87} for $\Spec\BF_q$. This shows that $\Gr(\CF/U)^{\rm geo}\subset \ker(\Gr(\CF/U)\onto\Gr(\CF/U)/H)=H$.
\end{proof}

\begin{rem}
We do not know, whether the image $\beta_*\circ\rho\bigl(\pi_1^\et(U,\bar\BasePoint)^{\rm geo}\bigr)$ equals the intersection of $\beta_*\circ\rho\bigl(\pi_1^\et(U,\bar\BasePoint)\bigr)$ with $\Gr(\CF/U)^{\rm geo}$. To prove this, one needs to find a faithful representation $\rho'$ of the group $\rho\bigl(\pi_1^\et(U,\bar\BasePoint)\bigr)\big/\rho\bigl(\pi_1^\et(U,\bar\BasePoint)^{\rm geo}\bigr)$ on a finite dimensional $\olK$-vector space which belongs to the Tannakian sub-category $\dal\rho\dar\subset\Rep^\cont_\olK\pi_1^\et(U,\bar\BasePoint)$. Nevertheless, we can prove the following
\end{rem}

\begin{cor}\label{CorCrewsThmGeo2}
In the situation of Corollary~\ref{CorCrewsThmGeo} there is a constant unit-root $F$-isocrystal $\CF'$ on $U$, corresponding to a representation $\rho'\colon \pi_1^\et(U,\bar\BasePoint)\onto\Gal(\ol\BF_q/\BF_q)\to\GL_r(\olK)$ for $r\in\BN$, that factors through an isomorphism $\rho\bigl(\pi_1^\et(U,\bar\BasePoint)\bigr)\big/\rho\bigl(\pi_1^\et(U,\bar\BasePoint)^{\rm geo}\bigr)\isoto\im(\rho')$. If we replace $\CF$ by $\CF\oplus\CF'$ and $\rho$ by $\rho\oplus\rho'$, then in diagram~\eqref{EqCorCrewsThmGeoDiag} we have
\[
\beta_*\circ (\rho\oplus\rho')\bigl(\pi_1^\et(U,\bar\BasePoint)^{\rm geo}\bigr)\;=\;\beta_*\circ (\rho\oplus\rho')\bigl(\pi_1^\et(U,\bar\BasePoint)\bigr)\cap\Gr(\CF\oplus\CF')^{\rm geo}\,.
\]
\end{cor}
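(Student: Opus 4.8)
The plan is to split the proof into two parts: first construct $\CF'$, then verify the displayed identity, the whole argument being organized around the single observation that $\CF'$ is built so as to realize the quotient $Q:=\rho\bigl(\pi_1^\et(U,\bar\BasePoint)\bigr)\big/\rho\bigl(\pi_1^\et(U,\bar\BasePoint)^{geo}\bigr)$ faithfully.

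First I would construct $\CF'$. By Corollary~\ref{CorCrewsThm} we may, after conjugation, assume that $\rho$ takes values in $\GL_n(\CO_K)$, where $\CO_K$ is the valuation ring of $K$ and $n=\dim_K W$. Since $\GL_n(\CO_K)$ is virtually pro-$p$ (its first congruence subgroup is open and pro-$p$), so is its closed subquotient $Q$. Moreover, because $\rho\bigl(\pi_1^\et(U,\bar\BasePoint)^{geo}\bigr)$ dies in $Q$ while $\pi_1^\et(U,\bar\BasePoint)^{geo}=\ker\bigl(\pi_1^\et(U,\bar\BasePoint)\onto\Gal(\ol\BF_q/\BF_q)\bigr)$ by \eqref{EqGrothFundExSeq}, the canonical surjection $\pi_1^\et(U,\bar\BasePoint)\onto Q$ factors through $\Gal(\ol\BF_q/\BF_q)\cong\wh\BZ$, so $Q$ is procyclic; hence $Q\cong C\times D$ with $C$ finite cyclic of order prime to $p$ and $D$ either a finite cyclic $p$-group or isomorphic to $\BZ_p$. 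I would then assemble a continuous injective homomorphism $\iota\colon Q\into\GL_r(\CO_K)$ (for a suitable $r$) from the regular representations over $K$ of $C$ and of $D$ in the finite case, together with $1\mapsto 1+p$ (resp.\ $1+4$ if $p=2$) into $\GL_1(K)=K\mal$ in the case $D\cong\BZ_p$, and take a block-diagonal sum. Setting $\rho':=\iota\circ w$, where $w\colon\pi_1^\et(U,\bar\BasePoint)\onto\Gal(\ol\BF_q/\BF_q)\onto Q$, we get a continuous representation that factors through $\Gal(\ol\BF_q/\BF_q)$, so its associated unit-root $F$-isocrystal $\CF'$ (Proposition~\ref{PropCrewsThm}) is pulled back from $\Spec\BF_q$, i.e.\ constant, with $\im(\rho')=\iota(Q)\cong Q$. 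The point to record is that $\ker(\rho')$ is the preimage of $\ker(\wh\BZ\onto Q)$, namely $(\ker\rho)\cdot\pi_1^\et(U,\bar\BasePoint)^{geo}$; in particular $\rho'$ factors through $\rho$, so $\ker(\rho\oplus\rho')=\ker(\rho)$, and for every $\gamma\in\pi_1^\et(U,\bar\BasePoint)$ one has $\rho'(\gamma)=1$ if and only if $\rho(\gamma)\in\rho\bigl(\pi_1^\et(U,\bar\BasePoint)^{geo}\bigr)$.

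Next I replace $\CF$ by $\CF\oplus\CF'$ and put $\sigma:=\rho\oplus\rho'$, which corresponds to $\CF\oplus\CF'$ under Crew's tensor equivalence; I enlarge $L$ so that Corollary~\ref{CorCrewsThm} holds for $\CF\oplus\CF'$ over this $L$ (restricting the resulting fiber-functor isomorphism then makes it hold for $\CF$ and $\CF'$, both lying in $\dal\CF\oplus\CF'\dar$). Write $G:=\Gr(\CF\oplus\CF'/U,\BasePoint)$ and $G^{geo}:=\Gr(\CF\oplus\CF'/U,\BasePoint)^{geo}$. The inclusion $\supseteq$ in the asserted equality is immediate: $\beta_*\sigma\bigl(\pi_1^\et(U,\bar\BasePoint)^{geo}\bigr)$ lies in $\beta_*\sigma\bigl(\pi_1^\et(U,\bar\BasePoint)\bigr)$ and, being Zariski-dense in $G^{geo}\times_{K_\BasePtDeg}L$ by Corollary~\ref{CorCrewsThmGeo} and Lemma~\ref{LemmaAlgEnvelope}, lies in $G^{geo}(L)$. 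For the inclusion $\subseteq$, note that $\CF'$ is constant and in $\dal\CF\oplus\CF'\dar$, hence in $\dal\CF\oplus\CF'\dar_{const}$, so the canonical epimorphism $p'\colon G\onto\Gr(\CF'/U,\BasePoint)$ of Lemma~\ref{LemmaCompatibility} factors through $G\onto\mathbf W(\CF\oplus\CF',\BasePoint)$, whence $G^{geo}=\ker\bigl(G\onto\mathbf W(\CF\oplus\CF',\BasePoint)\bigr)\subseteq\ker(p')$. Given $h=\beta_*\sigma(\gamma)$ in $\beta_*\sigma\bigl(\pi_1^\et(U,\bar\BasePoint)\bigr)\cap G^{geo}(L)$ we then have $p'(h)=1$; by naturality of Crew's equivalence the map $p'$, base-changed to $L$ and restricted to $\beta_*\sigma\bigl(\pi_1^\et(U,\bar\BasePoint)\bigr)$, is the homomorphism induced by the projection $\sigma\mapsto\rho'$ via the fiber-functor isomorphisms for $\CF\oplus\CF'$ and for $\CF'$, and since these are injective on points, $p'(h)=1$ forces $\rho'(\gamma)=1$, hence $\rho(\gamma)\in\rho\bigl(\pi_1^\et(U,\bar\BasePoint)^{geo}\bigr)$ by the first paragraph. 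Choosing $\delta\in\pi_1^\et(U,\bar\BasePoint)^{geo}$ with $\rho(\delta)=\rho(\gamma)$, we have $\gamma\delta^{-1}\in\ker(\rho)=\ker(\sigma)$, so $h=\beta_*\sigma(\gamma)=\beta_*\sigma(\delta)\in\beta_*\sigma\bigl(\pi_1^\et(U,\bar\BasePoint)^{geo}\bigr)$, as required.

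I expect the main, though not deep, obstacles to be two: (i) the construction of the embedding $\iota\colon Q\into\GL_r(\CO_K)$, i.e.\ checking carefully that $Q$ is procyclic and virtually pro-$p$ and then assembling $\iota$ from its cyclic constituents (including the torsion-freeness that makes $1\mapsto 1+p$, resp.\ $1+4$, injective); and (ii) the compatibility invoked in the last paragraph — that under Crew's natural equivalence the canonical projection $p'\colon\Gr(\CF\oplus\CF'/U,\BasePoint)\onto\Gr(\CF'/U,\BasePoint)$ corresponds, on the image of $\pi_1^\et(U,\bar\BasePoint)$, to the projection $\sigma\mapsto\rho'$, which forces one to keep the fiber-functor identifications for $\CF\oplus\CF'$ and for $\CF'$ coherent. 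Once these are in place the actual idea — $\rho'(\gamma)=1\Leftrightarrow\rho(\gamma)\in\rho(\pi_1^\et(U,\bar\BasePoint)^{geo})$, because $\CF'$ was defined precisely to realize $Q$ — makes the equality immediate.
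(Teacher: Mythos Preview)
Your proof is correct and follows the same overall architecture as the paper's: first determine the structure of the procyclic quotient $Q=C/C^{geo}$, embed it faithfully into some $\GL_r(\CO_K)$ to produce the constant unit-root $\CF'$, then prove the displayed equality by observing that $\Gr(\CF'/U,\BasePoint)$ is a quotient of $\mathbf W(\CF\oplus\CF',\BasePoint)$ (since $\CF'$ is constant), so $G^{geo}\subset\ker(p')$, which forces $\rho'(\gamma)=1$ and hence $\rho(\gamma)\in C^{geo}$.

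The one genuine difference lies in how you determine the structure of $Q$. The paper invokes $p$-adic Lie group theory (Cartan's theorem and several results from Serre's book) to show that $C/C^{geo}$ is a Lie group over $\BQ_p$, that the image of $\prod_{\ell\ne p}\BZ_\ell$ is finite, and that the image of $\BZ_p$ is either finite or isomorphic to $\BZ_p$ via a local-isomorphism argument. Your route---observe that $\rho$ lands in $\GL_n(\CO_K)$ after conjugation, hence $Q$ is a virtually pro-$p$ procyclic group, hence $Q\cong C\times D$ with $C$ finite cyclic of order prime to $p$ and $D$ a cyclic pro-$p$ group (finite or $\BZ_p$)---is more elementary and entirely within profinite group theory. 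It also neatly sidesteps the point in the paper's argument where the finiteness of the image of $\prod_{\ell\ne p}\BZ_\ell$ is asserted with an unspecified reference. Your embedding of $\BZ_p$ via $1\mapsto 1+p$ (resp.\ $1+4$) into $\CO_K^\times$ is a harmless variant of the paper's choice of a unipotent embedding. The compatibility you flag in (ii) is exactly what the paper uses as well, and your formulation of it is adequate.
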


\begin{proof}
We first construct $\rho'$. By Cartan's theorem, see \cite[Part~II, \S\,V.9, Corollary to Theorem~1 on page~155]{Serre92} or Theorem~\ref{Cartan1} below, the images $\BK:=\rho\bigl(\pi_1^\et(U,\bar\BasePoint)\bigr)$ and $\BK^{\rm geo}:=\rho\bigl(\pi_1^\et(U,\bar\BasePoint)^{\rm geo}\bigr)$ are Lie groups over $\BQ_p$, and the quotient $\BK/\BK^{\rm geo}=\rho\bigl(\pi_1^\et(U,\bar\BasePoint)\bigr)\big/\rho\bigl(\pi_1^\et(U,\bar\BasePoint)^{\rm geo}\bigr)$ is again a Lie group over $\BQ_p$ by \cite[Part~II, \S\,IV.5, Remark~2 after Theorem~1 on page~108]{Serre92}. If the quotient $\BK/\BK^{\rm geo}$ is finite, then it has a faithful representation $\rho'$ on a finite dimensional $\olK$-vector space. So we now assume that $\BK/\BK^{\rm geo}$ is not finite. Note that $\Gal(\ol\BF_q/\BF_q)\cong\wh\BZ=\BZ_p\times\prod_{\ell\ne p}\BZ_\ell$ surjects onto $\BK/\BK^{\rm geo}$. By the incompatibility of the $\ell$-adic and the $p$-adic topologies, the image of $\prod_{\ell\ne p}\BZ_\ell$ in $\BK/\BK^{\rm geo}$ is a finite subgroup $H$ and thus has a faithful representation on a finite dimensional $\olK$-vector space $V'_1$. On the other hand, the map $\BZ_p\into\Gal(\ol\BF_q/\BF_q)\onto \BK/\BK^{\rm geo}$ is analytic by \cite[Part~II, \S\,V.9, Theorem~2]{Serre92} and its image is an at most one-dimensional Lie group over $\BQ_p$ by \cite[Part~II, \S\,IV.5, Theorems~1 and 3 and Corollary to Theorem~2]{Serre92}. If it were zero dimensional, then it would be finite because it is compact, and this was excluded. So it is one-dimensional and the map from $\BZ_p$ onto its image is a local isomorphism by \cite[Part~II, \S\,III.9, Theorem~2]{Serre92}. The kernel of this map is finite, and hence trivial, because $\BZ_p$ is compact and torsion free. Therefore, we obtain an epimorphism $\phi\colon\BZ_p\times H\onto \BK/\BK^{\rm geo}$, which is even an isomorphism, because if an element $(g,h)$ lies in the kernel, then $\phi(g)=\phi(h^{-1})$ is a torsion element of $\phi(\BZ_p)=\BZ_p$, and hence trivial. Therefore, $g=1$, and since $\phi|_H$ is injective also $h=1$. Now take a faithful representation of $\BZ_p$ on a finite dimensional $\olK$-vector space $V'_2$, for example in a unipotent group. The sum $V'_1\oplus V'_2$ is the desired representation $\rho'\colon\pi_1^\et(U,\bar\BasePoint)\onto\Gal(\ol\BF_q/\BF_q)\to\GL_r(\olK)$. The convergent $F$-isocrystal $\CF'$ on $U$ corresponding to $\rho'$ is constant by Crew's result \cite[Theorem~2.1 and Remark~2.2.4]{Crew87} for $\Spec\BF_q$.

To prove the last statement, note that the inclusion ``$\subset$'' is trivial. To prove the converse inclusion ``$\supset$'' let $\wt \BK:=(\rho\oplus\rho')\bigl(\pi_1^\et(U,\bar\BasePoint)\bigr)$ and $\wt \BK^{\rm geo}:=(\rho\oplus\rho')\bigl(\pi_1^\et(U,\bar\BasePoint)^{\rm geo}\bigr)$. By construction of $\rho'$ we have isomorphisms $\BK\isoto\wt \BK$ and $\BK^{\rm geo}\isoto\wt \BK^{\rm geo}$ given by $c\mapsto c\oplus (c\mod \BK^{\rm geo})$. Therefore, every element $\beta_*(c\oplus c\mod \BK^{\rm geo})\in\beta_*(\wt \BK)=\beta_*\circ (\rho\oplus\rho')\bigl(\pi_1^\et(U,\bar\BasePoint)\bigr)$ which lies in the kernel $\Gr(\CF\oplus\CF')^{\rm geo}$ of $\Gr(\CF\oplus\CF')\onto \mathbf{W}(\CF\oplus\CF')$ is mapped to $1$ in the quotient $\Gr(\CF')=\Aut^\otimes(\omega_f|_{\dal\rho'\dar})$ of $\mathbf{W}(\CF\oplus\CF')$. This implies $\rho'(c)=1$ and $c\in \BK^{\rm geo}$ as desired.
\end{proof}

Another consequence of Corollary~\ref{CorCrewsThm} is the following

\begin{cor}\label{CorUnitRootFromUToV}
Let $\CF\in\FUR_\olK(U)$ and let $f\colon V\into U$ be a non-empty open subscheme. Then the pullback functor $f^*\colon\CG\mapsto f^*\CG$ from $\dal\CF\dar$ to $\dal f^*\CF\dar$ is a tensor equivalence of Tannakian categories. In particular, the induced morphism of monodromy groups $\Gr(f^*\CF/V)\to\Gr(\CF/U)$ is an isomorphism.
\end{cor}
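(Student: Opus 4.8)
The plan is to transport the statement through Crew's tensor equivalence (Proposition~\ref{PropCrewsThm}) into a statement about representations of \'etale fundamental groups, where it becomes a formal fact about inflation along a surjective homomorphism.

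First I would record that the open immersion $f\colon V\into U$ of smooth connected (hence normal) curves over $\BF_q$ induces a \emph{surjective} homomorphism $f_*\colon\pi_1^\et(V,\bar\BasePoint)\onto\pi_1^\et(U,\bar\BasePoint)$; this is the standard fact that a dense open subscheme of a normal connected scheme has surjective $\pi_1$, see \cite[V, Proposition~8.2]{SGA1}. Since Crew's equivalence $\FUR_K(U)\simeq\Rep^\cont_K\pi_1^\et(U,\bar\BasePoint)$ is natural in $U$, the pullback functor $f^*$ corresponds under it to inflation of representations along $f_*$. Writing $\rho\colon\pi_1^\et(U,\bar\BasePoint)\to\GL_r(K)$ for the representation attached to $\CF$ and $\rho'=\rho\circ f_*$ for its inflation, attached to $f^*\CF$, it therefore suffices to prove that inflation along $f_*$ restricts to a tensor equivalence $\dal\rho\dar\isoto\dal\rho'\dar$ between the Tannakian subcategory generated by $\rho$ in $\Rep^\cont_K\pi_1^\et(U,\bar\BasePoint)$ and the one generated by $\rho'$ in $\Rep^\cont_K\pi_1^\et(V,\bar\BasePoint)$.

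Here I would use that inflation along the surjection $f_*$ is fully faithful (a morphism of inflated representations which is $\pi_1^\et(V,\bar\BasePoint)$-equivariant is automatically $\pi_1^\et(U,\bar\BasePoint)$-equivariant, as the source acts through its quotient), and that it identifies $\Rep^\cont_K\pi_1^\et(U,\bar\BasePoint)$ with the full subcategory of $\Rep^\cont_K\pi_1^\et(V,\bar\BasePoint)$ of representations trivial on $\ker(f_*)$. That subcategory is closed under subquotients, $\otimes$, duals and internal Hom, hence contains $\dal\rho'\dar$; therefore every object of $\dal\rho'\dar$ is the inflation of a unique representation of $\pi_1^\et(U,\bar\BasePoint)$, and because inflation is exact, fully faithful and has image closed under subquotients, that representation is a subquotient of the corresponding tensor construction on $\rho$, i.e.\ lies in $\dal\rho\dar$. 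Conversely $f^*$, being a tensor functor sending $\rho$ to $\rho'$, maps $\dal\rho\dar$ into $\dal\rho'\dar$. Combining these observations, inflation restricts to the desired tensor equivalence $\dal\rho\dar\isoto\dal\rho'\dar$, and transporting back along Crew's equivalences yields the tensor equivalence $f^*\colon\dal\CF\dar\isoto\dal f^*\CF\dar$.

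For the final assertion about monodromy groups, I would note that for $\BasePoint\in V(\BF_{q^\BasePtDeg})\subset U(\BF_{q^\BasePtDeg})$ one has $\omega_\BasePoint\circ f^*=\omega_\BasePoint$ on $\dal\CF\dar$, since pulling a convergent $F$-isocrystal back to $\BasePoint$ does not depend on whether it is first restricted to $V$. Hence the tensor equivalence $f^*$ is compatible with the fiber functor $\omega_\BasePoint$ and so induces an isomorphism of the automorphism group schemes of that fiber functor, which is precisely $\Gr(f^*\CF/V,\BasePoint)\isoto\Gr(\CF/U,\BasePoint)$. I expect the only non-formal input to be essential surjectivity of $f^*$ on the generated subcategories: via the dictionary above this comes down to the fact that a subrepresentation of an inflated representation is again inflated, which rests on the surjectivity of $f_*$ on \'etale fundamental groups; everything else is bookkeeping with Tannakian subcategories.
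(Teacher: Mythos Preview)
Your proof is correct and follows essentially the same route as the paper: both reduce via Crew's equivalence (Proposition~\ref{PropCrewsThm}) to the statement that inflation along the surjection $\pi_1^\et(V,\bar\BasePoint)\onto\pi_1^\et(U,\bar\BasePoint)$ is fully faithful and that subobjects of inflated representations are again inflated, then conclude by tensor-functoriality. Your write-up is in fact slightly more careful than the paper's (which contains a typo in the direction of the $\pi_1$-map and leaves the essential surjectivity step implicit), and your explicit remark on $\omega_\BasePoint\circ f^*=\omega_\BasePoint$ cleanly justifies the final isomorphism of monodromy groups.
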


\begin{proof}
We use that $\pi_1^\et(f)\colon\pi_1^\et(V,\bar\BasePoint)\onto\pi_1^\et(U,\bar\BasePoint)$ is an epimorphism by \cite[V, Proposition~8.2]{SGA1}. Let $\rho\colon\pi_1^\et(U,\bar\BasePoint)\to\Aut_\olK(W)$ be the representation corresponding to $\CF$ under the tensor equivalence from Theorem~\ref{ThmCrewsThm}. Then $f^*\rho:=\rho\circ\pi_1^\et(f)\colon\pi_1^\et(V,\bar\BasePoint)\to\Aut_\olK(W)$ is the representation corresponding to $f^*\CF$. Therefore, the functor $f^*$ on $\dal\CF\dar$ is fully faithful. Moreover, every subobject of $\dal f^*\rho\dar$ is of the form $\rho'\colon\pi_1^\et(V,\bar\BasePoint)\to\Aut_\olK(W')$ for an invariant subspace $W'\subset W$. By the surjectivity of $\pi_1^\et(f)$ this subspace is also $\pi_1^\et(U,\bar\BasePoint)$-invariant, and hence $\rho'=f^*(\rho_{W'})$ for the subobject $\rho_{W'}\colon\pi_1^\et(U,\bar\BasePoint)\to\Aut_\olK(W')$ of $\rho$. Since $f^*$ clearly is a tensor functor, the corollary is proven.
\end{proof}

\subsection{Groups of Connected Components} \label{SubSectionComponents}

We first record Kedlaya's full faithfulness theorem.

\begin{thm}[{\cite[Theorem~1.1]{Kedlaya04}}]\label{ThmKedlayaFF}
The functor $j_U\colon \FIsoc^{\dagger}_\olK(U)\rightarrow \FIsoc_\olK(U)$ that sends an overconvergent $F$-isocrystal $\CF^\dagger$ on $U$ to its convergent completion $j_U(\CF^\dagger)$ is fully faithful.
\end{thm}

\begin{lemma}\label{LemmaOverconvMonodr}
\begin{enumerate}
\item \label{LemmaOverconvMonodr_A}
The homomorphism $\Gr(j_U(\CF^\dagger))\to\Gr^\dagger(\CF^\dagger)$ induced by the functor $j_U$ always is a closed immersion.
\item \label{LemmaOverconvMonodr_B}
It induces an epimorphism $\Gr(j_U(\CF^\dagger))/\Gr(j_U(\CF^\dagger))\open\onto\Gr^\dagger(\CF^\dagger)/\Gr^\dagger(\CF^\dagger)\open$ on the groups of connected components.
\item \label{LemmaOverconvMonodr_C}
If $\CF\in\FIsoc_\olK(U)$ has finite monodromy group $\Gr(\CF)$, then there is an $\CF^\dagger\in\FIsoc^\dagger_\olK(U)$ with $j_U(\CF^\dagger)\cong\CF$. In this case, $j_U\colon \dal\CF^\dagger\dar \isoto \dal \CF\dar$ and $\Gr(\CF)\isoto\Gr^\dagger(\CF^\dagger)$ are isomorphisms.
\end{enumerate}
\end{lemma}

\begin{proof}
\ref{LemmaOverconvMonodr_A} follows from \cite[Proposition~2.21(b)]{Deligne-Milne82}, because every object of $\dal j_U(\CF^\dagger)\dar$ is a subquotient of an object of the form $\bigoplus_i j_U(\CF^\dagger)^{\otimes m_i}\otimes (j_U(\CF^\dagger)\dual)^{\otimes n_i}=j_U\bigl(\bigoplus_i (\CF^\dagger)^{\otimes m_i}\otimes (\CF^\dagger{}\dual)^{\otimes n_i}\bigr)$.

\medskip\noindent
\ref{LemmaOverconvMonodr_C}
$\CF$ is unit-root by Lemma~\ref{Lemma3.2}. The corresponding representation $\beta_*\circ\rho\colon \pi_1^\et(U,\bar\BasePoint)\to \Gr(\CF)$ from Corollary~\ref{CorCrewsThm} has dense image, and hence is surjective. Its kernel equals $\pi_1^\et(V,\bar\BasePoint)$ for a finite \'etale Galois covering $f\colon V\to U$ and a lift of the base point $\bar\BasePoint$ to $V$. This implies that $f^*\CF\cong\UOne_V^{\oplus\rk\CF}$ is trivial and has an overconvergent extension on $V$. By \cite[Th\'eor\`eme~2]{Etesse02} we conclude that $\CF\cong j_U(\CF^\dagger)$ for an $\CF^\dagger\in\FIsoc^\dagger_\olK(U)$ with $f^*\CF^\dagger\cong\UOne_V^{\oplus\rk\CF}$.

To prove the rest we use \ref{LemmaOverconvMonodr_A} and Kedlaya's Theorem~\ref{ThmKedlayaFF}. By \cite[Proposition~2.21(a)]{Deligne-Milne82} it remains to show that for every $\CG^\dagger\in\dal\CF^\dagger\dar$ and for every convergent sub-$F$-isocrystal $\CH\subset j_U(\CG^\dagger)$ there is an overconvergent sub-$F$-isocrystal $\CH^\dagger\subset \CG^\dagger$ with $\CH=j_U(\CH^\dagger)$. In this situation, $\CH\in\dal\CF\dar$ and $\Gr(\CH)$ is finite by Lemma~\ref{LemmaCompatibility}\ref{LemmaCompatibility_A} below. Then $\CH\cong j_U(\CH^\dagger)$ for an $\CH^\dagger\in\FIsoc^\dagger_\olK(U)$ by what we have just proven for $\CF$. By Theorem~\ref{ThmKedlayaFF}, the inclusion $\CH=j_U(\CH^\dagger)\into j_U(\CG^\dagger)$ comes from an inclusion $\CH^\dagger\into\CG^\dagger$.

\medskip\noindent
\ref{LemmaOverconvMonodr_B}
Let $\CU^\dagger\in\dal\CF^\dagger\dar$ such that $\Gr^\dagger(\CF^\dagger)\onto\Gr^\dagger(\CU^\dagger)$ has kernel equal to the normal subgroup $\Gr^\dagger(\CF^\dagger)\open\subset\Gr^\dagger(\CF^\dagger)$; see Lemma~\ref{LemmaCompatibility}\ref{LemmaCompatibility_C} below. Then $\Gr^\dagger(\CU^\dagger)$ is finite and isomorphic to $\Gr(j_U(\CU^\dagger))$ by \ref{LemmaOverconvMonodr_C}. Since $j_U(\CU^\dagger)\in \dal j_U(\CF^\dagger)\dar$, the corresponding homomorphism $\Gr(j_U(\CF^\dagger))\onto\Gr(j_U(\CU^\dagger))\isoto\Gr^\dagger(\CU^\dagger)= \Gr^\dagger(\CF^\dagger)/\Gr^\dagger(\CF^\dagger)\open$ is surjective by Lemma~\ref{LemmaCompatibility}.
\end{proof}

We now give an application of unit-root $F$-isocrystals. We consider a finite \'etale Galois covering $f\colon V\to U$ of smooth, geometrically irreducible, quasi-compact schemes and the pull back functor
\begin{equation}\label{EqPullbackFunctor}
f^*\colon \FIsoc_\olK(U)\longrightarrow \FIsoc_\olK(V)\,,\quad\CF\mapsto f^*\CF\,;
\end{equation}
see for example \cite[p.~431]{Crew92}. In addition, recall the functor $(\,.\,)^{(n)}$ from \eqref{EqFnKbar}. Both functors possess right adjoints 
\begin{equation}\label{EqAdjoints}
\begin{array}{r@{\,\colon \;}l@{\;\longrightarrow\;}l@{\,,\quad}r@{\;\mapsto\;}l}
f_* & \FIsoc_\olK(V) & \FIsoc_\olK(U) & \CG & f_*\CG \qquad\text{and}\\[2mm]
(\,.\,)_{(n)} & \FnIsoc_\olK(U_n) & \FIsoc_\olK(U) & \CG & \CG_{(n)}\,.
\end{array}
\end{equation}
For $f_*$ see \cite[1.7]{Crew92}. The functor $(\,.\,)_{(n)}$ can explicitly be described as $\CG_{(n)}:=\bigoplus_{i=0}^{n-1} F^{i*}\CG$ with the Frobenius $F_{\CG_{(n)}}\colon F^*{\CG_{(n)}}\isoto{\CG_{(n)}}$ be given by the matrix
\[
\left( \raisebox{3.7ex}{$
\xymatrix @C=0.5pc @R=0pc {
0 \ar@{.}[drdrdr] & & & F_{\CG}\\
1\ar@{.}[drdr]\\
& & & \\
& & 1 & 0\\
}$}
\right)\,,
\]
where $F_\CG\colon F^{n*}\CG\isoto\CG$ is the Frobenius of $\CG$. Note that due to the scalar extension from $K_n$ to $\olK$ there is no need to apply the pushforward $pr_*$ under the projection $pr\colon U_n\to U$. Indeed, for an $F^n$-isocrystal $\CG_0\in\FnIsoc_{K_n}(U_n)$ and its extension $\CG_0\otimes_{K_n}\olK$, the $F^n$-isocrystal $(pr_*\CG_0)\otimes_K \olK\in\FnIsoc_\olK(U)$ is a module over $K_n\otimes_K \olK$, and the tensor product $\bigl((pr_*\CG_0)\otimes_K \olK\bigr)\otimes_{K_n\otimes\olK} \olK$ equals $\CG_0\otimes_{K_n}\olK$. In this way, every $\CG\in \FnIsoc_\olK(U_n)$ can be viewed as an $F^n$-isocrystal $\CG\in\FnIsoc_\olK(U)$.

Let $\CK$ be the pullback to $U$ of the constant $F$-isocrystal on $\Spec\BF_q$ given by $\bigoplus_{i\in\BZ/n\BZ}\olK\cdot e_i$ with $\olK$-linear Frobenius $F(e_i)=e_{i+1}$. Then $\CK$ is unit-root, and even $\CK\in\FIsoc^\dagger_\olK(U)$ is overconvergent. Moreover, for the trivial $F^n$-isocrystal ${}^n\UOne_{U_n}$ on $U_n$ one has $F^{i*}({}^n\UOne_{U_n})={}^n\UOne_{U_n}$ and $({}^n\UOne_{U_n})_{(n)}=\CK$. The adjunction satisfies the projection formula $(\CF^{(n)}\otimes\CG)_{(n)}\cong\CF\otimes\CG_{(n)}$, and $(\CF^{(n)})_{(n)}\cong\CF\otimes \CK$, as well as $(\CG_{(n)})^{(n)}=\bigoplus_{i=0}^{n-1} F^{i*}\CG$. In particular, via the counit morphism of the adjunction, $\CG$ is a quotient of $(\CG_{(n)})^{(n)}$. Similarly, let $\CL:=f_*\UOne_V$ where $\UOne_V$ is the trivial $F$-isocrystal on $V$. Then $\CL$ is unit-root by \cite[bottom of p.~436]{Crew92}. Moreover, $f_*(f^*(\CF)\otimes\CG)\cong\CF\otimes f_*\CG$, and $f_*f^*\CF=\CF\otimes\CL$. And if we set $\Gamma=\Gal(V/U)$ then $f^*f_*\CG\cong\bigoplus_{\gamma\in\Gamma}\gamma^*\CG$ and $f^*\CL\cong\bigoplus_\Gamma\UOne_V$. So again, via the counit morphism of the adjunction, $\CG$ is a quotient of $f^*f_*\CG$.

Analogous functors to \eqref{EqAdjoints} exist for overconvergent $F$-isocrystals. This is obvious for $(\,.\,)_{(n)}$. For $f_*$ the condition $\CG\in\FIsoc^\dagger_\olK(U)$ implies $f^*f_*(j_V(\CG))\cong j_V(\bigoplus_{\gamma\in\Gamma}\gamma^*\CG)$ with $\bigoplus_{\gamma\in\Gamma}\gamma^*\CG\in\FIsoc^\dagger_\olK(V)$, and hence $f_*\CG\in\FIsoc^\dagger_\olK(U)$ by \cite[Th\'eor\`eme~2]{Etesse02}. In particular, $\CL\in\FIsoc^\dagger_\olK(U)$ is overconvergent.

\begin{lemma}\label{LemmaExampleGr}
The monodromy groups satisfy $\Gr^\dagger(\CK/U) = \Gr(\CK/U)\cong\BZ/n\BZ$ and $\Gr^\dagger(\CL/U) = \Gr(\CL/U)\cong\Gal(V/U)$.
\end{lemma}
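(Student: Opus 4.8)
The assertion for $\CK$ is immediate from the theory of constant $F$-isocrystals in Section~\ref{SectionConstant}. By construction $\CK$ is the pull-back to $U$ of the constant $F$-isocrystal $(W,f)$ on $\Spec\BF_q$ with $W=\bigoplus_{i\in\BZ/n\BZ}K\cdot e_i$ and $f(e_i)=e_{i+1}$, and this $f$ is the permutation matrix of an $n$-cycle, so it has order exactly $n$ in $\GL(W)$. By Theorem~\ref{ThmMonodrOfConstant}\ref{ThmMonodrOfConstant_B} the group $\Gr(\CK/U,\BasePoint)$ is the Zariski-closure of $f^{\BZ}$ in $\Aut_{K_\BasePtDeg}(W\otimes_K K_\BasePtDeg)$. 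Since $f^{\BZ}=\{1,f,\dots,f^{n-1}\}$ is a finite cyclic subgroup of order $n$ it is already Zariski-closed, and, being a $0$-dimensional smooth group scheme (as $\charakt K_\BasePtDeg=0$), it is the constant group scheme on $\BZ/n\BZ$. Hence $\Gr(\CK/U,\BasePoint)\cong\BZ/n\BZ$.

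For $\CL=f_*\UOne_V$ the plan is to combine Crew's description of unit-root $F$-isocrystals with the natural action of $\Gamma:=\Gal(V/U)$ on $\CL$. First, $\CL$ is unit-root: its pull-back $f^*\CL\cong\bigoplus_{\Gamma}\UOne_V$ is trivial, hence unit-root, and a finite \'etale pull-back leaves the (normalized) Newton polygon over a given $x\in|U|$ unchanged, so $\CL$ is unit-root as well. Thus by Proposition~\ref{PropCrewsThm} it corresponds to a continuous representation $\rho\colon\pi_1^\et(U,\bar\BasePoint)\to\Aut_K\bigl(\omega_\BasePoint(\CL)\bigr)$, and by naturality of Crew's equivalence $f^*\CL$ corresponds to the restriction of $\rho$ along the inclusion $\pi_1^\et(f)\colon\pi_1^\et(V,\bar\BasePoint)\into\pi_1^\et(U,\bar\BasePoint)$. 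This restriction is trivial because $f^*\CL$ is, so $\rho$ factors through $\pi_1^\et(U,\bar\BasePoint)/\pi_1^\et(V,\bar\BasePoint)\cong\Gamma$ (the covering being Galois), whence $\#\im(\rho)\le\#\Gamma=n$. Now Corollary~\ref{CorCrewsThm} gives a finite extension $L/K_\BasePtDeg$ for which $\Gr(\CL/U,\BasePoint)\times_{K_\BasePtDeg}L$ is the Zariski-closure of $\beta_*(\im\rho)$ in $\GL\bigl(\omega_\BasePoint(\CL)\bigr)(L)$; a set of at most $n$ points is already closed and reduced in characteristic zero, so $\Gr(\CL/U,\BasePoint)$ is a finite $K_\BasePtDeg$-group scheme with $\#\Gr(\CL/U,\BasePoint)(\olK)\le n$. (In fact, using that $f_*$ is right adjoint to $f^*$ and that coinduction equals induction for the finite-index subgroup $\pi_1^\et(V,\bar\BasePoint)$, one sees that $\rho$ is the regular representation of $\Gamma$ inflated to $\pi_1^\et(U,\bar\BasePoint)$; but only the displayed inequality is needed.)

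For the reverse inequality I would exhibit $n$ rational points. Every $\gamma\in\Gamma=\Aut(V/U)$ satisfies $f\circ\gamma=f$, so the canonical isomorphism $\gamma^*\UOne_V\cong\UOne_V$ together with the adjunction identities from the beginning of Section~\ref{SectionComponents} produces a canonical automorphism $\tau_\gamma$ of $\CL=f_*\UOne_V$ in $\FIsoc_K(U)$, and $\gamma\mapsto\tau_\gamma$ is an (anti-)homomorphism into $\Aut(\CL)$, hence into $\Gr(\CL/U,\BasePoint)(K_\BasePtDeg)=\Aut^\otimes(\omega_\BasePoint|_{\dal\CL\dar})(K_\BasePtDeg)$. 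On the fibre $\omega_\BasePoint(\CL)=\omega_v(f^*\CL)=\bigoplus_{v'\in f^{-1}(\BasePoint)}K_\BasePtDeg$ the automorphism $\tau_\gamma$ permutes the summands according to the action of $\gamma$ on the fibre $f^{-1}(\BasePoint)$, which is a $\Gamma$-torsor because $f$ is Galois; in particular $\gamma\mapsto\tau_\gamma$ is injective with image of order $n$. Combining with the previous paragraph, $\Gr(\CL/U,\BasePoint)$ is a finite $K_\BasePtDeg$-group scheme with at most $n$ geometric and at least $n$ rational points; hence it is constant with exactly $n$ rational points and $\Gamma\into\Gr(\CL/U,\BasePoint)(K_\BasePtDeg)$ is an isomorphism of groups, so $\Gr(\CL/U,\BasePoint)\cong\Gamma=\Gal(V/U)$. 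The point needing care is that $\gamma\mapsto\tau_\gamma$ really is a well-defined homomorphism compatible with the Frobenius and acting on the fibre by the regular representation of $\Gamma$ — a diagram chase using $f^*\dashv f_*$ and the base-change isomorphism $\BasePoint^*f_*\cong\bigoplus_{v'\mapsto\BasePoint}(v')^*$ — and, on the Crew side, that the equivalence of Proposition~\ref{PropCrewsThm} intertwines $f^*$ with restriction along $\pi_1^\et(f)$ for finite \'etale $f$, the open-immersion case being Corollary~\ref{CorUnitRootFromUToV}.
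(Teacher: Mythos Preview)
Your argument for $\CK$ is correct and in fact cleaner than the paper's: you invoke Theorem~\ref{ThmMonodrOfConstant}\ref{ThmMonodrOfConstant_B} directly, whereas the paper treats $\CK$ and $\CL$ uniformly via the tensor isomorphisms~\eqref{EqIsomK},~\eqref{EqIsomL}. Your upper bound for $\CL$ is also fine and again more economical than the paper's: rather than compute how tensor-automorphisms of $\omega_\BasePoint$ must interact with $\psi$, you observe via Crew that $\rho$ factors through $\Gamma$, so $\Gr(\CL/U,\BasePoint)$ is a finite quotient of $\Gamma$.

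The genuine gap is in the lower bound. There is \emph{no} map $\Aut(\CL)\to\Gr(\CL/U,\BasePoint)(K_\BasePtDeg)$ of the kind you invoke. By Tannaka duality $\End_{\dal\CL\dar}(\CL)=\End_{\Gr(\CL,\BasePoint)}(\omega_\BasePoint(\CL))$, so $\Aut(\CL)$ is the \emph{centralizer} of $\Gr(\CL,\BasePoint)$ inside $\GL(\omega_\BasePoint(\CL))$, not a subgroup of it. Concretely, if $\rho$ is the regular representation of $\Gamma$ then $\Gr(\CL,\BasePoint)$ sits in $\GL(K[\Gamma])$ as the left-multiplications $\{L_\gamma\}$, while your $\tau_\gamma$'s act by right multiplication $\{R_\gamma\}$; for non-abelian $\Gamma$ these two copies of $\Gamma$ meet only in the center, so $\tau_\gamma\notin\Gr(\CL,\BasePoint)$ in general. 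The cardinality of $\Aut(\CL)$ therefore says nothing about $\#\Gr(\CL,\BasePoint)$ (already $\BOne^{\oplus n}$ has $\GL_n$ worth of automorphisms but trivial monodromy).

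The fix is exactly the content of your own parenthetical: identify $\rho$ as the inflation of the regular representation by using that $f_*$ corresponds to $\Ind_{\pi_1^\et(V,\bar\BasePoint)}^{\pi_1^\et(U,\bar\BasePoint)}$ under Crew's equivalence (this is what the paper spells out). Then $\im(\rho)\cong\Gamma$ on the nose, and your upper bound becomes an equality. So the ``displayed inequality'' is not all that is needed---you actually need the identification of $\rho$, and once you have it the $\tau_\gamma$'s are superfluous.
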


\begin{proof}
We prove both assertions simultaneously by writing $\Gamma:=\BZ/n\BZ$ and $V:=U_n$ for the treatment of $\Gr(\CK/U)$. Under Crew's equivalence between unit-root $F$-isocrystals and representations of the \'etale fundamental group (Theorem~\ref{ThmCrewsThm}), the trivial $F$-isocrystal $\UOne_V$ and the trivial $F^n$-isocrystal ${}^n\UOne_{U_n}$ correspond to the trivial representations $\pi_1^\et(V,\bar\BasePoint)\to \olK\mal$, where we lift $\bar\BasePoint$ to $V$. Moreover, under this equivalence the functors $f^*$ and $(\,.\,)^{(n)}$ correspond to the restriction functors
\begin{eqnarray*}
\Res\colon & \Rep^\cont_\olK\pi_1^\et(U,\bar\BasePoint) \longrightarrow \Rep^\cont_\olK\pi_1^\et(V,\bar\BasePoint)\,, & \quad \rho\longmapsto\rho|_{\pi_1^\et(V,\bar\BasePoint)}\,.
\end{eqnarray*}
For an open subgroup $\BH$ of a compact topological group $\BG$ the right and left adjoint to $\Res_\BH^\BG$ is the induction functor $\Ind_\BH^\BG\colon\Rep_K^\cont \BH\to\Rep_K^\cont \BG$ with
\[
\Ind_\BH^\BG(\rho,W_\rho)\;:=\;\bigl\{\,r\colon \BG\to W_\rho\text{ continuous}\colon r(hg)=\rho(h)r(g)\;\forall\,h\in \BH, g\in \BG\,\bigr\}
\]
for a continuous representation $\rho\colon \BH\to \Aut_K(W_\rho)$; see \cite[Footnotes on pp.~61 and 63]{NSW08}. So the right adjoints $f_*$ and $(\,.\,)_{(n)}$ correspond to the right adjoints
\begin{eqnarray*}
\Ind\colon & \Rep^\cont_\olK\pi_1^\et(V,\bar\BasePoint) \longrightarrow \Rep^\cont_\olK\pi_1^\et(U,\bar\BasePoint)\,, & \quad \rho\longmapsto \Ind_{\pi_1^\et(V,\bar\BasePoint)}^{\pi_1^\et(U,\bar\BasePoint)} \rho \,.
\end{eqnarray*}
Thus, $\CL=f_*\UOne_V$ and $\CK=({}^n\UOne_{U_n})_{(n)}$ correspond to the representations $\bigoplus_{\Gal(V/U)}\olK$ and $\bigoplus_{\BZ/\!n\BZ}\olK$ in $\Rep^\cont_\olK\pi_1^\et(U,\bar\BasePoint)$ on which $\pi_1^\et(U,\bar\BasePoint)/\pi_1^\et(V,\bar\BasePoint)=\Gal(V/U)$, respectively $\pi_1^\et(U,\bar\BasePoint)/\pi_1^\et(U_n,\bar\BasePoint)=\BZ/n\BZ$, act as permutation representations. By Corollary~\ref{CorCrewsThm} the groups $\Gr(\CK/U)$ and $\Gr(\CL/U)$ equal (the closure of) $\Gamma$. Now use Lemma~\ref{LemmaOverconvMonodr}\ref{LemmaOverconvMonodr_C}.
\end{proof}

Let $\pi_1^{\FIsoc}(U)$ be the Tannakian fundamental group of $\FIsoc_\olK(U)$, that is, the automorphism group of the fiber functor $\omega_\BasePoint\colon\FIsoc_\olK(U)\to\{\olK\text{-vector spaces}\}$. It is an affine group scheme over $\olK$. We similarly define $\pi_1^{\FIsoc}(V)$ and $\pi_1^{\FnIsoc}(U_n)$.

\begin{prop} \label{Prop1.17}
Let $f\colon V\to U$ be a finite \'etale Galois covering with Galois group $\Gamma:=\Gal(V/U)$.
\begin{enumerate}
\item \label{Prop1.17a}
There is an exact sequence of affine group schemes over $\olK$
\[
\xymatrix { 0 \ar[r] & \pi_1^{\FIsoc}(V) \ar[r]^{\alpha\enspace} & \pi_1^{\FIsoc}(U) \ar[r]^{\enspace\beta} & \Gal(V/U) \ar[r] & 0\,,
}
\]
where the morphism $\alpha$ is induced by the pullback functor $f$ from \eqref{EqPullbackFunctor}, and $\beta$ comes from the epimorphism $\pi_1^{\FIsoc}(U)\onto\Gr(\CL/U)\cong\Gal(V/U)$ using Lemma~\ref{LemmaExampleGr}. The same holds for $\pi_1^{\FIsoc^\dagger}$.
\item \label{Prop1.17b}
For every $\CF\in\FIsoc_\olK(U)$, and analogously $\CF\in\FIsoc^\dagger_\olK(U)$, the sequence in \ref{Prop1.17a} induces the following exact sequence of affine group schemes over $\olK$
\begin{equation}\label{EqLemma1.17b}
\xymatrix { 0\ar[r] & \Gr(f^*\CF/V)\ar[r] & \Gr(\CF/U)\ar[r] & \Gamma' \ar[r] & 0\,,}
\end{equation}
where $\Gamma'$ is a finite quotient group of $\Gal(V/U)$. In particular, if $\Gr(\CF/U)$ is connected then $\Gr(f^*\CF/V)\isoto\Gr(\CF/U)$. 
\end{enumerate}
\end{prop}

\begin{proof}
\ref{Prop1.17a} 
We use Theorem~\ref{ThmLP} \footnote{An elementary and explicit proof of \ref{Prop1.17a} was given in the second version on \href{http://arxiv.org/abs/1811.07084v2}{arXiv:1811.07084v2} of this article.} and check the three conditions \ref{ThmLP_A}, \ref{ThmLP_B}, and \ref{ThmLP_C}. To prove \ref{ThmLP_A} assume $f^*\CF=\UOne_V^{\oplus r}$ is trivial. Then $\CF$ is unit-root, because every point $x\in |U|$ is the image of a point $y\in |V|$ and $1=\Frob_y(f^*\CF)$ is a power of $\Frob_x(\CF)$. The representation of $\pi_1^\et(U,\bar\BasePoint)$ corresponding to $\CF$ restricts to the trivial representation of $\pi_1^\et(V,\bar\BasePoint)$, that is, it factors through $\Gal(V/U)$, and hence $\CF\in\dal\CL\dar$. If $\CF\in\FIsoc^\dagger_\olK(U)$, this shows that $j_U(\CF)\in\dal j_U(\CL)\dar$, i.e.~$j_U(\CF)\cong j_U(\CG)$ for some $\CG\in\dal\CL\dar$ by Lemma~\ref{LemmaOverconvMonodr}\ref{LemmaOverconvMonodr_C}. Then $\CF\cong\CG$ by Kedlaya's Theorem~\ref{ThmKedlayaFF}. Condition~\ref{ThmLP_B} holds by Galois descent \cite[Theorem~4.5]{Ogus84} (respectively \cite[Th\'eor\`eme~1]{Etesse02} in the overconvergent case), because the largest trivial subobject of $f^*\CF$ is invariant under $\Gal(V/U)$, and hence of the form $f^*\CH$ for a subobject $\CH\subset\CF$. Condition~\ref{ThmLP_C} holds, because every $\CG\in\FIsoc_\olK(V)$ is a quotient of $f^*f_*\CG$.

\smallskip\noindent
\ref{Prop1.17b} The group scheme $\Gr(\CF/U)$ is the image of the representation $\pi_1^{\FIsoc}(U)\to\Aut_{\olK}(\omega_\BasePoint(\CF))$ corresponding to $\CF$ and likewise for $f^*\CF$. Since $\omega_\BasePoint(\CF)=\omega_{v}(f^*\CF)$ for a base point $v\in V$ above $\BasePoint$, the group $\Gr(f^*\CF/V)$ is a closed normal subgroup of $\Gr(\CF/U)$ and the quotient $\Gamma'$ is a quotient of $\Gal(V/U)$. If $\Gr(\CF/U)$ is connected then (its image in) $\Gamma'$ will be trivial. This proves the lemma.
\end{proof}

\begin{prop}\label{Prop1.16}
Let $n\in\BN$.
\begin{enumerate}
\item \label{Prop1.16a}
There is an exact sequence of affine group schemes over $\olK$
\[
\xymatrix { 0 \ar[r] & \pi_1^{\FnIsoc}(U_n) \ar[r]^{\alpha} & \pi_1^{\FIsoc}(U) \ar[r]^{\quad\beta} & \BZ/n\BZ \ar[r] & 0\,,
}
\]
where the morphism $\alpha$ is induced by the functor $(\,.\,)^{(n)}$ from \eqref{EqFnKbar}, and $\beta$ comes from the epimorphism $\pi_1^{\FIsoc}(U)\onto\Gr(\CK/U)\cong\BZ/n\BZ$ using Lemma~\ref{LemmaExampleGr}. The same holds for $\pi_1^{\FIsoc^\dagger}$.
\item \label{Prop1.16b}
For every $\CF\in\FIsoc_\olK(U)$, and analogously $\CF\in\FIsoc^\dagger_\olK(U)$ the sequence in \ref{Prop1.16a} induces the following exact sequence of affine group schemes over $\olK$
\[
\xymatrix { 0 \ar[r] & \Gr(\CF^{(n)}/U_n) \ar[r] & \Gr(\CF/U) \ar[r] & \Gamma' \ar[r] & 0\,,
}
\]
where $\Gamma'$ is a finite group which is a quotient of $\BZ/n\BZ$. In particular, if $\Gr(\CF/U)$ is connected then $\Gr(\CF^{(n)}/U_n)\isoto\Gr(\CF/U)$.
\end{enumerate}
\end{prop}

\begin{proof}
The proof is analogous to Proposition~\ref{Prop1.17}.
\end{proof}

\begin{cor}\label{CorConnMonodromy}
Let $\CF\in\FIsoc_\olK(U)$, or analogously $\CF\in\FIsoc^\dagger_\olK(U)$. Then there exists a finite \'etale Galois covering $f\colon V\to U$ such that $\Gr(f^*\CF/V)=\Gr(\CF/U)\open$ in sequence~\eqref{EqLemma1.17b} and $\Gal(V/U)$ is isomorphic to $\Gr(\CF/U)/\Gr(\CF/U)\open$.
\end{cor}

\begin{proof}
Let $\CG\in\dal\CF\dar$ be the object with $\Gr(\CG/U)=\Gamma:=\Gr(\CF/U)\big/\Gr(\CF/U)\open$; see Lemma~\ref{LemmaCompatibility}\ref{LemmaCompatibility_C} below. Since $\Gamma$ is a finite group, $\CG$ is a convergent unit-root $F$-isocrystal by Lemma~\ref{Lemma3.2}. Let $\rho_\CG\colon\pi_1^\et(U,\bar\BasePoint)\onto \Gamma$ be the representation of the \'etale fundamental group corresponding to $\CG$ by Theorem~\ref{ThmCrewsThm} which is surjective onto $\Gamma$ by Corollary~\ref{CorCrewsThm}. Lifting $\bar\BasePoint$ to $V$, the kernel of $\rho_\CG$ equals $\pi_1^\et(V,\bar\BasePoint)$ for a finite \'etale Galois covering $f\colon V\to U$, that is $\Gamma=\pi_1^\et(U,\bar\BasePoint)/\pi_1^\et(V,\bar\BasePoint)=\Gal(V/U)$. From sequence~\eqref{EqLemma1.17b} it follows that $\Gr(f^*\CF/V)$ equals the kernel $\Gr(\CF/U)\open$ of $\Gr(\CF/U)\onto \Gamma=\Gal(V/U)$.
\end{proof}

For the next corollary note that $\pi_1^{\FIsoc}(U,\bar\BasePoint)=\pi_1^{\FIsoc}(U)$ and $\Gr(\CF/U,\bar\BasePoint)=\Gr(\CF/U)$ for any geometric base point $\bar\BasePoint$ above our base point $\BasePoint\in U(\BF_{q^\BasePtDeg})$ and for every $\CF\in\FIsoc_\olK(U)$.

\begin{cor}\label{CorGpOfConnComp}
The exact sequence from Proposition~\ref{Prop1.17}\ref{Prop1.17a} induces an exact sequence of affine group schemes over $\olK$
\begin{equation}\label{EqCorGpOfConnComp}
\xymatrix { 0 \ar[r] & \pi_1^{\FIsoc}(U)\open \ar[r] & \pi_1^{\FIsoc}(U) \ar[r] & \pi_1^\et(U) \ar[r] & 0\,.
}
\end{equation}
In particular, the pro-group of connected components of $\pi_1^{\FIsoc}(U)$ equals the \'etale fundamental group $\pi_1^\et(U)$.
\end{cor}

\begin{proof}
For every $\CF\in\FIsoc_\olK(U)$ we obtain from Corollary~\ref{CorConnMonodromy} a finite \'etale Galois covering $f\colon V\to U$ and an exact sequence
\[
\xymatrix { 0\ar[r] & \Gr(\CF/U)\open \ar[r] & \Gr(\CF/U)\ar[r] & \Gal(V/U) \ar[r] & 0\,.}
\]
We now take the limit of these sequences over the diagram of the Tannakian sub-categories $\dal\CF\dar$ of $\FIsoc_\olK(U)$. This limit is taken in the category of sheaves of groups on $\olK$ for the \'etale topology. We claim that this limit is the sequence~\eqref{EqCorGpOfConnComp}. First of all, by Lemma~\ref{LemmaCompatibility} below, the projective system of the $\Gr(\CF/U)\open$ consists of epimorphisms and so satisfies the Mittag-Leffler condition. Therefore, \eqref{EqCorGpOfConnComp} is exact at $\pi_1^\et(U)=\varprojlim_V\Gal(V/U)$. The group $\pi_1^{\FIsoc}(U)$ is the projective limit of the $\Gr(\CF/U)$, which can equivalently be taken in the category of affine group schemes over $\olK$. It remains to identify $\pi_1^{\FIsoc}(U)\open$ with the limit of the projective system of the $\Gr(\CF/U)\open$. By construction this limit is representable by a closed subgroup scheme of $\pi_1^{\FIsoc}(U)$. Moreover, it is connected, because if $d$ is an idempotent in its structure sheaf then $d$ lies in the structure sheaf of some $\Gr(\CF/U)\open$ and satisfies $d^2=d$ after maybe replacing $\dal\CF\dar$ by a larger such category. Since $\Gr(\CF/U)\open$ is connected we have $d=0$ or $d=1$, whence the limit is connected and a closed subgroup scheme of $\pi_1^{\FIsoc}(U)\open$. On the other hand, the limit contains $\pi_1^{\FIsoc}(U)\open$ since the latter maps to the identity component in $\pi_1^\et(U)$, which is trivial. Therefore, this limit equals $\pi_1^{\FIsoc}(U)\open$ and the corollary is proven.
\end{proof}

\begin{prop}\label{reduction3->2}
Let $\CF\in\FIsoc_\olK(U)$, respectively $\CF\in\FIsoc^\dagger_\olK(U)$, for which Conjecture~\ref{MainConj3} is true. Then  Conjecture~\ref{MainConj} also holds true for $\CF$.
\end{prop}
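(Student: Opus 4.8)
The plan is to saturate $Z$ by connected components. Assuming Conjecture~\ref{MainConj3} for $\CF$, fix $S\subset|U|$ of Dirichlet density one, put $G:=\Gr(\CF/U,\BasePoint)$, and let $Z\subset G\times_{K_\BasePtDeg}\olK$ be the Zariski-closure of $\bigcup_{x\in S}\Frob_x(\CF)$. First I would record two structural facts. (i) $Z$ is stable under the conjugation action of $G\times\olK$ on itself, because $\bigcup_{x\in S}\Frob_x(\CF)$ is a union of $G(\olK)$-conjugacy classes, hence invariant under conjugation by every $g\in G(\olK)$, and $G(\olK)$ is Zariski-dense in $G\times\olK$ since $G$ is smooth. (ii) Since each $\Frob_x(\CF)$ is $\Gal(\olK/K_\BasePtDeg)$-stable (see the discussion preceding Definition~\ref{FrobClass}), $Z$ descends to a closed subscheme of $G$, so by Galois descent it suffices to prove $Z=G\times_{K_\BasePtDeg}\olK$; since the connected components of $G\times\olK$ are clopen, it is enough to show that every connected component of $G\times\olK$ is contained in $Z$.

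Next I would bring in the group of connected components. Let $\Gamma:=\pi_0(G)(\olK)$ and let $\eta\colon G\to\pi_0(G)$ be the projection onto the finite \'etale group scheme of components; over $\olK$ its fibres $C_\delta$ $(\delta\in\Gamma)$ are the connected components of $G\times\olK$. By Remark~\ref{RemPushoutGroupoid} and Corollary~\ref{CorPushoutGroupoid}, $\eta$ corresponds to a convergent $F$-isocrystal $\CG\in\dal\CF\dar$ with $\Gr(\CG/U,\BasePoint)=\pi_0(G)$, which is unit-root by Lemma~\ref{Lemma3.2}. By Corollary~\ref{CorCrewsThm} (over a suitable finite extension of $K_\BasePtDeg$) there is a corresponding surjection $\rho\colon\pi_1^\et(U,\bar \BasePoint)\onto\Gamma$ such that $\Frob_x(\CG)$ is, for every $x\in|U|$, the conjugacy class $\mathrm{Fr}_x^{\Gamma}$ of the geometric Frobenius at $x$ in $\Gamma$; moreover $\eta(\Frob_x(\CF))=\Frob_x(\CG)=\mathrm{Fr}_x^{\Gamma}$ by Lemma~\ref{LemmaCompatibility}. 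The surjection $\rho$ exhibits $\Gamma$ as a finite Galois group of $U$, so the classical Chebotar\"ev density theorem for function fields (\cite[Proposition~11.2.16]{VillaSalvador}, compare Theorem~\ref{Chebotarev}) gives that for every conjugacy class $D\subset\Gamma$ the set $T_D:=\{x\in|U|\colon \mathrm{Fr}_x^{\Gamma}=D\}$ has Dirichlet density $\#D/\#\Gamma>0$.

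Then I would run the main argument one conjugacy class at a time. Fix a conjugacy class $D\subset\Gamma$. Since $S$ has density one and $T_D$ has positive Dirichlet density, $S\cap T_D$ has positive upper Dirichlet density by Lemma~\ref{density_intersection}. Applying Conjecture~\ref{MainConj3} to $S\cap T_D$ produces a connected component $C$ of $G\times\olK$ contained in the Zariski-closure of $\bigcup_{x\in S\cap T_D}\Frob_x(\CF)$. For $x\in S\cap T_D$ we have $\eta(\Frob_x(\CF))=D$, so $\Frob_x(\CF)\subset\eta^{-1}(D)=\bigsqcup_{\delta\in D}C_\delta$; this union being closed, the above closure lies inside it, forcing $C=C_{\delta_0}$ for some $\delta_0\in D$, and $S\cap T_D\subset S$ gives $C_{\delta_0}\subset Z$. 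By fact (i) and the transitivity of $\Gamma$-conjugation on $D$, conjugating $C_{\delta_0}$ inside $G(\olK)$ by elements lifting the appropriate elements of $\Gamma$ yields $C_\delta\subset Z$ for every $\delta\in D$. Letting $D$ range over all conjugacy classes of $\Gamma$ (whose union is $\Gamma$) gives $C_\delta\subset Z$ for all $\delta$, i.e. $Z=G\times\olK$, which is Conjecture~\ref{MainConj} for $\CF$.

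I expect the only genuinely delicate point to be the bookkeeping in the second paragraph: obtaining \emph{positive} density for $T_D$ in the function-field setting, where a constant subquotient of $\Gamma$ concentrates $T_D$ in a single residue class of degrees (as in Example~\ref{ExDensity}), so one must invoke the precise function-field Chebotar\"ev rather than the density-one statement used in Proposition~\ref{PropUnitRoot}, together with identifying $\Frob_x(\CG)$ with $\mathrm{Fr}_x^{\Gamma}$ via Corollary~\ref{CorCrewsThm} and Lemma~\ref{LemmaCompatibility}. Everything else is formal group theory together with the sub-additivity and intersection properties of upper Dirichlet density established above.
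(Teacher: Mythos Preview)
Your proof is correct and follows essentially the same line as the paper's own argument: partition by conjugacy classes in the finite component group $\pi_0(G)$, use classical Chebotar\"ev to see that each class has positive Dirichlet density, intersect with $S$ via Lemma~\ref{density_intersection}, apply Conjecture~\ref{MainConj3} to get one component, and then saturate under conjugation. The only cosmetic difference is that the paper invokes Corollary~\ref{CorConnMonodromy} to identify $\pi_0(G)$ directly with the Galois group of a finite \'etale cover of $U$, whereas you unwind this through the unit-root $F$-isocrystal $\CG$ and Corollary~\ref{CorCrewsThm}; these amount to the same thing.
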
 

\begin{proof} Let $S\subset|U|$ be a subset of upper Dirichlet density one. The group $\Gamma:=\Gr(\CF/U)/\Gr(\CF/U)\open$ is a quotient of $\pi_1^\et(U,\bar\BasePoint)$ by Corollary~\ref{CorConnMonodromy}. For every conjugacy class $C\subset \Gamma$ let $R_C\subset |U|$ denote the set of those closed points $x\in|U|$ whose Frobenius class $\Frob_x(\CF)$ maps onto $C\subset \Gamma$. By Corollary~\ref{CorCrewsThm} the image of $\Frob_x(\CF)$ in $\Gamma$ coincides with the image of the conjugacy class of the Frobenius $\Frob_x^{-1}$ of $x$ in $\pi_1^\et(U,\bar\BasePoint)$. Thus by the classical Chebotar\"ev Density Theorem \cite[Theorem~7]{Serre63}, $R_C$ has positive Dirichlet density. Therefore, $S_C=S\cap R_C$ has positive upper Dirichlet density by Lemma~\ref{density_intersection} below. By Conjecture~\ref{MainConj3} for $\CF$ the closure of $\bigcup_{x\in S_C}\Frob_x(\CF)$ contains a connected component of $\Gr(\CF/U)$, which maps to a point in $C$. Since the closure of $\bigcup_{x\in S_C}\Frob_x(\CF)$ is conjugation-invariant, we get that this closure is equal to the union of all connected components mapping into $C$. The closure of $\bigcup_{x\in S}\Frob_x(\CF)$ contains the union of the closures of $\bigcup_{x\in S_C}\Frob_x(\CF)$ for varying $C$, and hence it must be the whole group $\Gr(\CF/U)$.
\end{proof}

\section{Chebotar\"ev for Constant $F$-Isocrystals and General Proof Strategy} \label{SectStrategy}

Theorems~\ref{ThmIsoclinic} and \ref{ThmOverconvIntro} use Theorem~\ref{ThmCheboForConstants} for constant $F$-isocrystals (more precisely, its Corollary~\ref{Cor4.2}) as a stepping stone. We prove Theorem~\ref{ThmCheboForConstants} in Subsection~\ref{SubSectCheboForConst} using $\ell$-adic Lie groups for an auxiliary prime $\ell$ that may be different from $p$; see Remark~\ref{RemAuxPrime}. In Subsection~\ref{SubSectTannRed}, we present our strategy to prove Theorems~\ref{ThmIsoclinic} and \ref{ThmOverconvIntro}, and the formal Tannakian reduction steps used in these proofs.

\subsection{Chebotar\"ev for Constant $F$-Isocrystals} \label{SubSectCheboForConst}

We work throughout over a field $L$ of characteristic zero, and all undecorated fiber products $\times$ are over $L$. Theorem~\ref{ThmCheboForConstants} will be an immediate consequence of the following

\begin{thm} \label{TheoremMordellLang}
Let $T$ be a commutative linear algebraic group over a field $L$ of characteristic zero, whose identity component $T\open$ is the product $\BG_{m,L}^r\TimesL\BG_{a,L}^n$ of a split torus $\BG_{m,L}^r$ with an additive group scheme $\BG_{a,L}^n$ for $r,n\ge0$. Let $\Gamma\subset T$ be an infinite cyclic dense subgroup in $T$. Then $n=0$ or $n=1$, and for any connected component $T^c$ of $T$, any infinite subset $S$ of $\Gamma\cap T^c$ is still dense in $T^c$.
\end{thm}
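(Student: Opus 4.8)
The plan is to first reduce to understanding the cyclic group $G=\langle g\rangle$ inside $T(L)$ and its Zariski closure, then to handle the additive and multiplicative parts separately. Write $g=(t,v)$ with respect to a decomposition $T\open\cong\BG_{m,L}^r\times_L\BG_{a,L}^n$ up to the ambiguity of which connected component of $T$ contains $g$; since $G$ is infinite cyclic and Zariski-dense, some power $g^k$ lies in $T\open(L)$ and is still Zariski-dense in $T\open$ (its closure is a finite-index closed subgroup of $T$, hence of full dimension, and being a subgroup of the connected group $T\open$ it must be all of $T\open$). The key classical input is the structure of Zariski closures of cyclic subgroups: the Zariski closure of $\langle g^k\rangle$ in the commutative group $T\open$ is a connected commutative algebraic group (by Theorem~\ref{ThmMonodrOfConstant}\ref{ThmMonodrOfConstant_B} applied to the constant $F$-isocrystal attached to $g^k$, or directly as in \cite[I.4.7~Theorem]{Borel91}), and if it equals $T\open=\BG_{m,L}^r\times\BG_{a,L}^n$ then projecting to $\BG_{a,L}^n$ shows $\langle v'\rangle$ (the additive component of $g^k$) is Zariski-dense in $\BG_{a,L}^n$; but $\langle v'\rangle=\BZ\cdot v'$ is Zariski-dense in $\BG_a^n$ only if $n\le 1$. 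This gives the first claim $n\in\{0,1\}$.

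Next I would prove the density statement. Replacing $T$ by the connected component $T^c$ translate: $S\subset G\cap T^c$ infinite means, fixing one $s_0\in S$, that $S s_0^{-1}$ is an infinite subset of $G\cap T\open$, i.e.\ of the form $\{g^{a}: a\in A\}$ for an infinite set $A\subset\BZ$ (after identifying $G\cap T\open$ with a subgroup $d\BZ$ of $\BZ$ via the cyclic generator). So it suffices to show: if $H=\langle h\rangle$ is Zariski-dense in a connected group of the form $\BG_{m,L}^r\times\BG_{a,L}^n$ with $n\le 1$, then every infinite subset $\{h^{a}:a\in A\}$ is still Zariski-dense. The Zariski closure $Z$ of $\{h^a:a\in A\}$ is a closed subset; the point is that $Z$ must be stable under enough of the group. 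The clean way is: for a polynomial $P$ vanishing on $\{h^a:a\in A\}$, one gets (writing $h$ in coordinates, $h=(\zeta_1,\dots,\zeta_r, w)$) that $P$ evaluated along the one-parameter family $a\mapsto(\zeta_1^a,\dots,\zeta_r^a, aw)$ — a function of $a$ which is a finite $\BZ$-linear combination of terms $a^j\prod_i\zeta_i^{a e_i}$ — vanishes for all $a\in A$. Grouping by the distinct values of the "character part" $\prod_i\zeta_i^{e_i}$ and using that a nonzero function of the form $\sum_j c_j a^j \mu_j^{a}$ (with distinct $\mu_j\in L^\times$ or, in the $\BG_a^1$ case, polynomial factors) has only finitely many zeros unless all $c_j$ vanish — this is a standard exponential-polynomial/Vandermonde argument — forces $P$ to vanish identically on $\{h^a:a\in\BZ\}$, hence on its closure $T\open$. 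Thus $Z=T\open$, i.e.\ $\{h^a:a\in A\}$ is Zariski-dense, and translating back, $S$ is Zariski-dense in $T^c$.

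The main obstacle I expect is the exponential-polynomial vanishing step when the torus rank $r$ is large and the $\BG_a$-factor is present: one must carefully separate the "multiplicative frequencies" $\mu=\prod_i\zeta_i^{e_i}\in L^\times$ (which need not be distinct for distinct exponent vectors, and some may be roots of unity, so $\mu^a$ is periodic in $a$) from the "polynomial part" $a^j$, and argue that an infinite set $A$ cannot be contained in the zero set of such a function unless it is identically zero. This is where one needs that $L$ has characteristic zero (so $\BZ\hookrightarrow L$ and polynomials in $a$ behave as expected) and that $\langle h\rangle$ being Zariski-dense already pins down the closure to be exactly $\BG_m^r\times\BG_a^n$ rather than a proper subgroup — which rules out, e.g., a relation among the $\zeta_i$ that would shrink the ambient group. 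I would handle the root-of-unity periodicity by first passing to a subprogression of $A$ on which all the periodic factors $\mu^a$ are constant (possible since $A$ is infinite and there are finitely many residues), reducing to the case where every nonconstant $\mu$ has infinite order, at which point the standard argument that $\sum_j c_j a^j\mu_j^a$ has finitely many integer zeros applies cleanly.
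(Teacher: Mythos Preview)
Your reduction to the connected case and the bound $n\le 1$ are correct and match the paper. In the density step you have correctly translated a putative hypersurface containing $S$ into the vanishing of an exponential polynomial $a\mapsto\sum_j p_j(a)\,\mu_j^{\,a}$ on an infinite set $A\subset\BZ$, but you have misplaced the difficulty. Since $\langle h\rangle$ is Zariski-dense in $\BG_{m,L}^r$, the coordinates $\zeta_1,\dots,\zeta_r$ are multiplicatively independent, so distinct exponent vectors give distinct frequencies $\mu=\prod_i\zeta_i^{e_i}$ and no ratio of two of them is a root of unity; your subprogression trick is therefore unnecessary. The genuine obstacle is the claim that a \emph{nondegenerate} exponential polynomial has only finitely many integer zeros. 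This is precisely the nondegenerate case of the Skolem--Mahler--Lech theorem, and it is not a Vandermonde argument: a generalized Vandermonde matrix $(\mu_j^{a_k})$ with arbitrary exponents $a_k\in A$ need not have full rank, and every known proof of Skolem--Mahler--Lech passes through $p$-adic analysis. Once you invoke that theorem (or Laurent's), your argument goes through and handles $n=0$ and $n=1$ uniformly.

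The paper takes a closely related but more explicit route. For $n=0$ it invokes Laurent's Mordell--Lang theorem for tori: the Zariski closure of $S$ is a finite union of torus cosets, and a short argument with Lemma~\ref{LemmaNPowerMap} forces it to be all of $T$. For $n=1$ it gives a self-contained $p$-adic proof that is, in effect, the Skolem--Mahler--Lech engine unpacked: spread the generator and the hypersurface over a smooth $\BQ$-variety, use a Baire-category argument (via Lemma~\ref{countable} on the countability of closed subgroups of $\BG_m^r\times\BG_a$) to find a local-field specialization where the generator still generates a Zariski-dense subgroup with unit coordinates, pass to the compact closure of the resulting cyclic group (a one-dimensional Lie group over $\BQ_\ell$), and apply Lazard's theorem on the discreteness of zeros of a one-variable power series to reach a contradiction. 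So your approach and the paper's are cousins: yours is cleaner once Skolem--Mahler--Lech is granted as a black box; the paper is explicit about the $p$-adic input, which it then reuses in the surrounding estimates (Theorem~\ref{Thm4.3}, Lemma~\ref{Lemma4.5}).
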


\begin{proof}[Proof of Theorem~\ref{ThmCheboForConstants}.]
We use Theorem~\ref{ThmMonodrOfConstant} and let $\CF=\pi^*(W,f)\in\FIsoc_\olK(U)$ be a constant $F$-isocrystal on $U$. Then the group $T:=\Gr(\CF/U)$ has the form as in Theorem~\ref{TheoremMordellLang} for $\Gamma:=f^\BZ$ and the field $L=\olK$. Let $X\subset |U|$ be an infinite subset. Since for a given degree $d$ there are only finitely many points $x\in |U|$ with $\deg(x) = d$, the subset $S:=\{\Frob_x(\CF)\colon x\in X\}=\{f^{\deg(x)}\colon x\in X\}\subset \Gamma$ is infinite. By the pigeonhole principle there is a connected component $T^c$ of $T$ for which $S\cap T^c$ is still infinite, and hence dense in $T^c$ by Theorem~\ref{TheoremMordellLang}.
\end{proof}

\begin{proof}[Proof of Theorem~\ref{TheoremMordellLang}.]
Clearly the image of $\Gamma$ under the projection $T\to\BG_{a,L}^n$ is dense. Since this image lies in the at most $1$-dimensional linear subspace generated by the image of a generator of $\Gamma$, the dimension $n$ is either zero or one. By choosing an element $h$ of $S$ and considering the translate $S-h$ we can assume that $S\subset T\open$, and we must show that $S$ is dense in $T\open$. Thus, by replacing $\Gamma$ with $\Gamma\cap T\open=\ker(\Gamma\to T/T\open)$ which is still infinite cyclic and dense in $T\open$, we may assume that $T=T\open$ is connected.

The $n=0$ case can be easily deduced from the the Mordell-Lang Conjecture for tori, proved by Michel Laurent. Indeed, by \cite[Th\'eor\`eme~2]{Laurent} the closure of $S$ is the finite union of finitely many translates of sub-tori of $T$. By shrinking $S$, if it is necessary, we may assume that this finite union consists of just one translate of a sub-torus $\wt T$. It will be enough to show that $\wt T=T$. Pick an element $h\in S$. Then the translate $S-h$ lies in $\wt T$, and hence the intersection $\Gamma'=\Gamma\cap \wt T$ is a subgroup of $\Gamma$ which contains the infinite set $S-h$. Since $\Gamma\cong\BZ$ we get that $\Gamma'$ is a subgroup of $\Gamma$ of finite index, say $i$. Then the $i$-th power isogeny $[i]\colon x\mapsto ix$ on $T$ maps $\Gamma$ into $\Gamma'$, and hence into $\wt T$. Because $\Gamma$ is dense in $T$ and $[i]$ is surjective, $\Gamma'$ is dense in $T$ too. We get that $T=\wt T$.

We will prove the $n=1$ case by a different method (which nevertheless can be applied to the $n=0$ case as well). Assume that the claim is false and let $Y\subset T=\BG_{m,L}^r\TimesL\BG_{a,L}$ be a proper hypersurface such that $S$ lies in $Y$. Let $\gamma\in \Gamma$ be a generator. Since the coefficients of the defining polynomial of $Y$ and the coordinates of $\gamma$ form a finite set $I$, we may assume without loss of generality that $L$ is finitely generated over $\BQ$, by replacing it with the field generated by $I$. Let $V$ be a smooth irreducible variety over $\BQ$ whose function field is $L$. By shrinking $V$ if necessary, we may assume that $\gamma$ extends to a section $\wt \gamma$ of the projection map 
$$T\times_\BQ V\to V\,,$$
where from now on we consider $T=\BG_{m,\BQ}^r\times_\BQ \BG_{a,\BQ}^n$ with its canonical $\BQ$-structure. The projection onto the first factor induces a morphism
\begin{equation}\label{EqTildeg}
\wt{\gamma}\colon V \to T\,.
\end{equation}
By shrinking $V$ further, we may also assume that the projection of $\wt \gamma$ onto the factor $\BG_{a,\BQ}$ is nowhere zero on $V$. Similarly, we may assume that $Y$ extends to a closed subscheme $\wt Y$ of $T\times_\BQ V$ which is a proper hypersurface in the fiber over any point of $V$. 

Let $v$ be a closed point of $V$ and let $E$ be the residue field of $v$. Then $E$ is a number field. As usual we identify $\BG_{a,\BQ}$ with $\BA_\BQ^1$ and we embed $\BG_{m,\BQ}$ into $\BA_\BQ^2$ as the closed subscheme on which the product of the coordinates of $\BA_\BQ^2$ equals $1$. We consider the coordinates of the point $\wt{\gamma}(v)\in T(E)\subset (\BA_\BQ^{2r}\times_\BQ \BA_\BQ^1)(E)$. For all but finitely many valuations $\mu$ of $E$ all these coordinates of $\wt \gamma(v)$ are $\mu$-adic units. Fix such a valuation $\mu$ and let $E_{\mu}$ be the completion of $E$ with respect to $\mu$. It is a finite extension of $\BQ_\ell$ for a prime number $\ell$. (In Remark~\ref{RemAuxPrime} below we explain that in the most interesting case of the application of Theorem~\ref{TheoremMordellLang} to Theorem~\ref{ThmCheboForConstants}, when $\CF$ is not unit-root, we should expect $\ell\ne p$.) By continuity there is an open ball $B$ around $v$ in $V(E_{\mu})$ such that for every $x\in B$ all the coordinates of $\wt \gamma(x)$ are still $\mu$-adic units. We next prove the following

\medskip\noindent
{\itshape Claim.} There is a $x\in B$ such that the group $\wt{\gamma}(x)^\BZ$ generated by $\wt \gamma(x)$ is dense in $T_{E_\mu}$.

\medskip\noindent
If this is false, the closure of the group $\wt{\gamma}(x)^\BZ$ is a proper closed subgroup $H\subsetneq T_{E_\mu}$. Note that for every such subgroup $H$ the locus $W_H\subset V_{E_\mu}:=V\otimes_\BQ E_\mu$ where the map $\wt \gamma$ from \eqref{EqTildeg} factors through $H$ is a closed subset. Moreover, $W_H$ does not contain any irreducible component of $V_{E_\mu}$, because it does not contain any generic point $\eta$ of $V_{E_\mu}$. Namely, the residue field $\kappa(\eta)$ of $\eta$ contains $L$, and so the element $\wt{\gamma}(\eta)=\gamma$ generates a dense subgroup of $T_{\kappa(\eta)}$ by assumption. We claim that the intersection $W_H(E_\mu)\cap B$ is a proper analytic subset of $B$ which is nowhere dense in $B$, see \cite[Chapter~9, \S\,5.1, Definition~1]{BourbakiTopol5}. Indeed, assume contrarily that there is a point $b\in B$ and a small open neighborhood around $b$ which is contained in $W_H(E_\mu)\cap B$. Since $V$ is smooth in $b$, a Zariski open neighborhood of $b$ in $V$ is \'etale over some affine space $\BA^d_{E_\mu}$ where $d=\dim V$. By shrinking the small open neighborhood around $b$ if necessary, we can assume that it maps isomorphically onto an open ball in $\BA^d(E_\mu)$. The latter ball is contained inside the scheme theoretic image of $W_H\subset V_{E_\mu}$ in $\BA^d_{E_\mu}$, which is a proper closed subset of dimension $<d$. This is a contradiction and proves that $W_H(E_\mu)\cap B$ is nowhere dense in $B$. Now by Lemma~\ref{countable} below, the set of proper closed subgroups $H\subsetneq T_{E_\mu}$ is countable, so the union $\bigcup_HW_H(E_\mu)\cap B\subset B$ cannot equal $B$ by Baire's category theorem \cite[Chapter~9, \S\,5.3, Theorem~1 and Definition~3]{BourbakiTopol5}. Every point $x$ in the complement of $\bigcup_HW_H(E_\mu)\cap B$ satisfies our claim.

Since all the coordinates of the generator $\wt{\gamma}(x)$ of $\wt{\gamma}(x)^\BZ$ are $\ell$-adic units, $\wt{\gamma}(x)^\BZ$ lies in $T(\CO_{E_\mu})$. The latter contains a pro-$\ell$-group of some finite index $m$. The map $\BZ\to T(\CO_{E_\mu})$, $1\mapsto \wt{\gamma}(x)^m$ has image in this pro-$\ell$-group and extends to a map $\BZ_\ell\to T(\CO_{E_\mu})$ by the universal property of the pro-$\ell$-completion. Let $\BK=\wt{\gamma}(x)^{m\BZ_\ell}$ be the image. We write $S=\gamma^{S'}\subset\Gamma=\gamma^\BZ$ for the corresponding subset $S'\subset\BZ$, and put $\wt S(x):=\wt{\gamma}(x)^{S'}$. Since $S'$ is infinite, there is an $a\in\BZ$ such that $S'\cap a+m\BZ$ is still infinite by the pigeonhole principle. We replace $S'$ by $S'\cap a+m\BZ$ and $S$ and $\wt S(x)$ correspondingly. Let $\eta$ be a generic point of $V_{E_\mu}$ containing $x$ in its closure. Then $\wt{\gamma}(x)^i$ lies in the closure of $\wt{\gamma}(\eta)^i=\gamma^i\in\wt Y(\eta)$ for every $i\in S'$. So the infinite set $\wt S(x)$ is contained in the proper hypersurface $\wt Y(x)$ of $T(E_\mu)$. Under the inverse of the $\ell$-adic analytic isomorphism $\BZ_\ell\isoto \wt{\gamma}(x)^a\cdot \BK$, $z\mapsto \wt{\gamma}(x)^{a+mz}$ the intersection $\wt Y(x)\cap \wt{\gamma}(x)^a\cdot \BK$ is mapped isomorphically onto an $\ell$-adic analytic subset $A$ of $\BZ_\ell$, that is $A$ is locally in the $\ell$-adic topology on $\BZ_\ell$ the zero locus of power series. The set $\wt Y(x)\cap \wt{\gamma}(x)^a\cdot \BK$ contains the infinite set $\wt S(x)$, so $A$ contains an infinite set. Since $\BZ_\ell$ is compact, this infinite set has an accumulation point $y$. In a neighborhood $U=y+\ell^n\BZ_\ell$ of $y$ for suitable $n\gg0$ the power series defining $A$ have infinitely many zeros. But this implies, that $A$ contains $U$, because the zeros of a power series in one variable are $\ell$-adically discrete by \cite[Proposition~2]{Lazard}. It follows that $\wt Y(x)$ contains $\wt{\gamma}(x)^{a+m\cdot U}=\wt{\gamma}(x)^{a+my+m\ell^n\BZ_\ell}$. Since $\wt{\gamma}(x)^\BZ$ is dense in $T_{E_\mu}$ and the $m\ell^n$-power isogeny $[m\ell^n]$ on $T_{E_\mu}$ is surjective, we get that $\wt{\gamma}(x)^{m\ell^n\BZ}$ and $\wt{\gamma}(x)^{a+my+m\ell^n\BZ}$ and $\wt{\gamma}(x)^{a+my+m\ell^n\BZ_\ell}$ are dense in $T_{E_\mu}$, too. So $\wt Y(x)$ contains a dense subset of $T_{E_\mu}$, which is a contradiction. 
\end{proof}

It remains to give a proof of the following well known

\begin{lemma} \label{countable}
Over a field $L$ of characteristic zero, all closed subgroups $G$ of $\BG_{m,L}^r\TimesL\BG_{a,L}$ are of the form $G_s\TimesL\BG_{a,L}^{\epsilon}$, where $G_s\subset \BG_{m,L}^r$ is a closed subgroup and $\epsilon$ is either $0$ or $1$. In particular, the set of such subgroups is countable.
\end{lemma}

\begin{proof} 
Since $G$ is commutative, it is the direct product $G=G_u\TimesL G_s$ of the set $G_u$ of its unipotent elements and the set $G_s$ of its semi-simple elements, which are both closed subgroups; see \cite[I.4.7~Theorem]{Borel91}. The projections $G_u\to \BG_{m,L}^r$ and $G_s\to\BG_{a,L}$ are both zero, because $1$ is the only element which is at the same time unipotent and semi-simple. Therefore, $G_u\subset\ker(\BG_{m,L}^r\TimesL\BG_{a,L}\to \BG_{m,L}^r) = \BG_{a,L}$ and $G_s\subset\ker(\BG_{m,L}^r\TimesL\BG_{a,L}\to \BG_{a,L}) = \BG_{m,L}^r$. Since $G_u$ is connected by Lemma~\ref{LemmaUnipotConnected}, there are only the possibilities $G_u=\{1\}$ or $G_u=\BG_{a,L}$. This proves the first assertion.

For the last assertion we only have to show that the set of all closed subgroups $G_s\subset \BG_{m,L}^r$ is countable. By \cite[III.8.2~Proposition]{Borel91} these subgroups correspond to quotients of the free abelian group $X^*(\BG_{m,L}^r)=\BZ^r$ and so there are only countably many.
\end{proof}

\begin{rem}\label{RemAuxPrime}
One should expect that the prime number $\ell$ used in the proof of Theorem~\ref{TheoremMordellLang}, and hence also of Theorem~\ref{ThmCheboForConstants}, can only be taken equal to the characteristic $p$ of $\BF_q$, if the constant $F$-isocrystal $\CF=\pi^*(W,f)$ is unit-root. Indeed, with respect to some $\olK$-basis of $W$ we may write $f\in\GL(W)$ as an upper triangular matrix with only $0$ and $1$ outside the diagonal. Then $\CF$ is unit-root if and only if the diagonal entries of $f$ lie in $\CO_\olK\mal$.

We now use the notation from the proof of Theorem~\ref{TheoremMordellLang}. The finitely generated subfield $L\subset\olK$ is contained in a finite extension $E_\mu$ of $\BQ_p$. The inclusion $L\into E_\mu$ yields a point $x_0\in V(E_\mu)$ that maps to the generic point of $V$. If $\CF$ is unit-root, then $f\in T(\CO_{E_\mu})$ and there is a $p$-adic open ball $B\subset V(E_\mu)$ around $x_0$ such that $\wt\gamma(x)\in T(\CO_{E_\mu})$ for every $x\in B$. Since $V(\ol\BQ)$ is $p$-adically dense in $V(E_\mu)$ we can find the number field $E$ and the points $v\in V(E)$ and $x\in V_{E_\mu}$ as in the proof of Theorem~\ref{TheoremMordellLang} for the $p$-adic field $E_\mu$ (or a finite extension of it).

On the other hand, if $\CF$ is not unit-root, and hence $f\notin T(\CO_\olK)$ we should not expect to find a point $x\in V(\olK)$ with $\wt\gamma(x)\in T(\CO_\olK)$, although strictly speaking this is possible in special examples. In one such example $T=\BG_m$ and $f\in p\cdot\BZ_p\mal$ is a solution of the equation $x^2-x+p=0$. Then $L=\BQ(f)$ and $V=\Spec L$. Since $V_{\BQ_p}=\Spec L\otimes_\BQ \BQ_p = \Spec (\BQ_p\times\BQ_p)$, we may take $E=L$ and $E_\mu=\BQ_p$, and the points $v=x\in V(\BQ_p)$ for which $\wt\gamma(x)=p/f\in \BZ_p\mal$. This point is the connected component of $V_{\BQ_p}$ different from the point $x_0$ with $\wt\gamma(x_0)=f$.
\end{rem}

The following corollary will be used in the proofs of Theorems~\ref{ThmIsoclinic} and \ref{ThmOverconvIntro} given in Sections~\ref{SectIsoclinic} and \ref{Section11}.

\begin{cor} \label{Cor4.2}
Let $L$, $T$ and $\Gamma$ be as in Theorem~\ref{TheoremMordellLang}. Let $G$ be a linear algebraic group over $L$ and let $\phi\colon G\to T$ be a surjective morphism of algebraic groups over $L$. Assume that every connected component of $\ker(\phi)$ contains an $L$-rational point. Let $T^c$ be a connected component of $T$ and let $Y\subset \phi^{-1}(T^c)$ be a closed subset which does not contain any irreducible component of $\phi^{-1}(T^c)$. Then the set of those $\gamma\in \Gamma\cap T^c$ such that $Y$ contains a connected component of $\phi^{-1}(\gamma)$ is finite. 
\end{cor}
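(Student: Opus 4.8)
The plan is to argue by contradiction and feed everything into Theorem~\ref{TheoremMordellLang}. First I would reduce to the case that $L$ is algebraically closed. Base changing all the data to an algebraic closure $\ol L$ keeps $G\subset T(L)\subset T(\ol L)$ an infinite cyclic Zariski-dense subgroup, keeps the identity component of $T$ of the form $\BG_{m}^{r}\times\BG_{a}^{n}$, and splits $T^c$ into finitely many connected components of $T_{\ol L}$; moreover a connected component of $\phi^{-1}(g)$ over $L$ becomes, after base change, a union of connected components of $\phi^{-1}(g)_{\ol L}$, so the set $S$ in the statement is contained in the union of the corresponding sets over $\ol L$ attached to the finitely many geometric components of $T^c$. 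Hence it suffices to prove the statement with $L=\ol L$, in which case the hypothesis that each connected component of $\ker\phi$ carry an $L$-rational point becomes automatic (over a non-closed field it serves only to make those components geometrically connected, which is exactly what is needed for this reduction).

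So assume $L=\ol L$, fix the connected component $T^c$, and suppose for contradiction that $S:=\{g\in G\cap T^c: H\text{ contains a connected component of }\phi^{-1}(g)\}$ is infinite. Then $S$ is Zariski-dense in $T^c$ by Theorem~\ref{TheoremMordellLang}. Since $\charakt L=0$, all groups in sight are smooth, and the surjective homomorphism $\phi$ factors as $X\onto X/\ker\phi\isoto T$, hence is faithfully flat. Therefore $Y:=\phi^{-1}(T^c)$ is open and closed in $X$ and has pure dimension $\dim X$, the restriction $\phi|_Y\colon Y\to T^c$ is faithfully flat, and for every $g\in T^c$ the fibre $\phi^{-1}(g)$ is a torsor under $N:=\ker\phi$, of pure dimension $d:=\dim X-\dim T=\dim N$; its connected components are the translated cosets of $N\open$, each a smooth irreducible variety of dimension $d$.

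The crucial elementary observation is that, for $g\in T^c$, the set $H$ contains a connected component of $\phi^{-1}(g)$ if and only if $\dim\bigl(H\cap\phi^{-1}(g)\bigr)\ge d$: indeed $H\cap\phi^{-1}(g)$ is the disjoint union of the closed sets $H\cap D$ over the connected components $D$ of $\phi^{-1}(g)$, and $H\cap D$ has dimension $d$ exactly when $H\cap D=D$, i.e.\ $D\subset H$. It then remains to see that the locus of $g\in T^c$ with $\dim(H\cap\phi^{-1}(g))\ge d$ is contained in a proper Zariski-closed subset of $T^c$, which contradicts the density of $S$. For this I would use that $H$ contains no irreducible component of $Y$ and $Y$ has pure dimension $\dim X$, so $\dim H\le\dim X-1$; writing $H=H^{(1)}\cup\cdots\cup H^{(r)}$ with $H^{(j)}$ irreducible and $Z^{(j)}:=\ol{\phi(H^{(j)})}$, for $j$ with $Z^{(j)}\subsetneq T^c$ the intersection $H^{(j)}\cap\phi^{-1}(g)$ is empty off $Z^{(j)}$, while for $j$ with $Z^{(j)}=T^c$ the theorem on the dimension of the fibres of the dominant morphism $\phi|_{H^{(j)}}$ gives a dense open $V^{(j)}\subset T^c$ over which $\dim\bigl(H^{(j)}\cap\phi^{-1}(g)\bigr)=\dim H^{(j)}-\dim T^c\le d-1$. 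Intersecting the $V^{(j)}$ with the complements of the remaining $Z^{(j)}$ yields a nonempty open $V\subset T^c$ on which $\dim(H\cap\phi^{-1}(g))\le d-1<d$, so $S\cap V=\emptyset$; this contradicts that $S$ is Zariski-dense in the irreducible $T^c$, and proves the corollary.

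I expect no serious obstacle here: the real content is Theorem~\ref{TheoremMordellLang}, and the corollary is a formal consequence via the above dimension count. The only points needing care are the descent to an algebraically closed base field (and keeping track of the finitely many geometric components of $T^c$), the correct handling of a possibly reducible $H$ in the fibre-dimension estimate, and making sure the equivalence between "$H$ contains a component of the fibre" and "the fibre intersection has dimension $\ge d$" is used on the nose; none of these is difficult.
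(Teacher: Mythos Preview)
Your argument is correct, and in fact the reduction to $\ol L$ in your first paragraph is unnecessary: the fiber-dimension argument you give afterwards works verbatim over the original field $L$, since all you use is that $T^c$ and the $H^{(j)}$ are irreducible $L$-schemes (which follows from smoothness in characteristic zero), and Theorem~\ref{TheoremMordellLang} is already stated over $L$. As written, the reduction also has a small gap: after base change $G$ need not be Zariski-dense in $T_{\ol L}$, because the component group $T/T\open$ may be a nonconstant \'etale group scheme, so $G\subset T(L)$ can miss geometric components entirely. This is fixable (replace $T_{\ol L}$ by the Zariski closure of $G$, whose identity component is still $T\open_{\ol L}$), but simpler just to drop the reduction.

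Your route is genuinely different from the paper's. The paper uses the $L$-rational points $x_b\in\ker(\phi)^b(L)$ explicitly: for $g\in S$ with $H\supset C$ a component of $\phi^{-1}(g)$, one has $\phi^{-1}(g)\subset\bigcup_b x_b\cdot C\subset\bigcup_b x_b\cdot H$; hence the closure of $\phi^{-1}(S)$, which equals $\phi^{-1}(\ol S)$ by $\ker(\phi)$-invariance, lies in $\bigcup_b x_b\cdot H$; if $S$ is infinite Theorem~\ref{TheoremMordellLang} gives $\ol S=T^c$, and irreducibility then traps some component of $\phi^{-1}(T^c)$ inside a single $x_b\cdot H$, hence in $H$ after translating back. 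Your approach replaces this group-theoretic translation by a standard Chevalley fiber-dimension count. The payoff is that your argument never touches the hypothesis that each component of $\ker\phi$ have an $L$-point, so you have actually proved a slightly stronger statement; the paper's argument is in exchange more elementary, avoiding the fiber-dimension theorem. Your parenthetical remark that the hypothesis ``serves only to make those components geometrically connected'' is not quite right (having an $L$-point is strictly stronger), but since you do not need the hypothesis at all this is harmless.
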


\begin{proof}
By assumption, every connected component $b\in\pi_0\bigl(\ker(\phi)\bigr)$ of $\ker(\phi)$ is of the form $\ker(\phi)\open\cdot g_b$ for an $L$-rational point $g_b\in\ker(\phi)$. For every $\gamma\in \Gamma\subset T$ fix $g_\gamma\in \phi^{-1}(\gamma)\subset G$. Let $S$ be the set of all those $\gamma\in \Gamma\cap T^c$ for which $Y$ contains a connected component $\ker(\phi)\open g_{b(\gamma)} g_\gamma$ of $\phi^{-1}(\gamma)=\ker(\phi) g_\gamma$ for some $b(\gamma)\in\pi_0\bigl(\ker(\phi)\bigr)$. For fixed $\gamma\in S$ we obtain $\bigcup_b g_b Y \supset \bigcup_b g_b (\ker(\phi)\open g_{b(\gamma)} g_\gamma) \supset \ker(\phi) g_\gamma=\phi^{-1}(\gamma)$. In particular, $\bigcup_{b}g_b\cdot Y$ contains the closure $W$ of $\phi^{-1}(S)$.

Let $V$ be the closure of $S$ in $T^c$. Since $\phi^{-1}(S)$ is invariant under translation by the closed subgroup $\ker(\phi)$, the same holds for the closure $W$ of $\phi^{-1}(S)$, and hence $W=\phi^{-1}(\phi(W))$. Therefore, $\phi(W)$ is a closed subset of $T^c$ which contains $S$, so it contains $V$. We get that $\phi^{-1}(V)\subset \phi^{-1}(\phi(W))=W$. If $S$ was infinite then $V=T^c$ by Theorem~\ref{TheoremMordellLang}, and hence $\phi^{-1}(T^c)\subset W$ is contained in the closed subset $\bigcup_{b}g_b\cdot Y$. Let $G'$ be an irreducible component of $\phi^{-1}(T^c)$. Then $G'$ is contained in $\bigcup_{b}g_b\cdot Y$, and since it is irreducible it is contained in $g_b\cdot Y$ for one $g_b$. But this implies that $Y$ contains the irreducible component $g_b^{-1}G'$ of $\phi^{-1}(T^c)$, which is a contradiction. Thus, $S$ is finite.
\end{proof}

\subsection{Tannakian Reduction Techniques} \label{SubSectTannRed}

We recall the following facts from Tannakian theory. 

\begin{lemma}\label{LemmaSemiSimplification}
Let $\CF$ be an object of a neutral Tannakian category, which is linear over a field of characteristic $0$. Let $\Gr(\CF)=\Aut^\otimes(\omega_\BasePoint|\dal\CF\dar)$ be the monodromy group of $\CF$, that is, the Tannakian fundamental group of $\dal\CF\dar$ with respect to a neutral fiber functor $\omega_\BasePoint$. Then $\CF$ is semi-simple if and only if the category $\dal\CF\dar$ is semi-simple, if and only if the identity component $\Gr(\CF)\open$ is reductive. More generally, let $\CF^{ss}$ be the semi-simplification of $\CF$. Then $r\colon\Gr(\CF)\onto\Gr(\CF^{ss})$ is the maximal reductive quotient of $\Gr(\CF)$. In particular, $r$ induces an isomorphism on the groups of connected components. 
\end{lemma}

\begin{proof} 
The first statement is proven in (the proof of) \cite[Proposition~2.23 and Remark~2.28]{Deligne-Milne82}. We prove the rest. In a suitable basis the representation of $\Gr(\CF)$ on $\omega_\BasePoint(\CF)$ can be written in block matrix form such that the diagonal block entries are representations corresponding to the simple constituents of $\CF$, and the kernel of $r$ is a subgroup of the unipotent upper triangular matrices. Thus, this kernel is normal, unipotent, connected by Lemma~\ref{LemmaUnipotConnected}, and hence contained in the unipotent radical $R_u\!\Gr(\CF)$ of $\Gr(\CF)$. On the other hand, $R_u\!\Gr(\CF)$ is mapped to $\{1\}$ in $\Gr(\CF^{ss})$, because the latter is reductive by our first statement.
\end{proof}

In the rest of this section we describe the three main Tannakian reduction techniques which we use. 

\medskip
\technique{Passage to a larger Tannakian category} \label{TechniqueLarger}

\smallskip

We will freely use the following

\begin{lemma}\label{LemmaCompatibility}
Let $\CF$ be an object of a neutral $\olK$-linear Tannakian category as in Lemma~\ref{LemmaSemiSimplification}. 
\begin{enumerate}
\item \label{LemmaCompatibility_Z}
The category $\dal\CF\dar$ is tensor equivalent to the category of $\olK$-linear algebraic representations of $\Gr(\CF)$.
\item \label{LemmaCompatibility_A}
For every $\CG\in\dal\CF\dar$ there are canonical epimorphisms of linear algebraic groups $\Gr(\CF)\onto\Gr(\CG)$ and $\Gr(\CF)\open\onto\Gr(\CG)\open$.
\item \label{LemmaCompatibility_C}
Conversely, every epimorphism of linear algebraic groups $\Gr(\CF)\onto G$ comes from an object $\CG\in\dal\CF\dar$ and an isomorphism $G\cong\Gr(\CG)$.
\end{enumerate}
\end{lemma}

\begin{proof}
\ref{LemmaCompatibility_Z} is \cite[Theorem~2.11]{Deligne-Milne82}. The epimorphism of groups $\Gr(\CF)\onto\Gr(\CG)$ and of their identity components in \ref{LemmaCompatibility_A} comes from \cite[Proposition~2.21]{Deligne-Milne82}. Statement~\ref{LemmaCompatibility_C} is \cite[Propositions~2.20(b) and 2.21]{Deligne-Milne82}.
\end{proof}

The following lemma is specific to $F$-isocrystals and the question of Chebotar\"ev density. It allows to prove Conjectures~\ref{MainConj} and \ref{MainConj3} for an $F$-isocrystal $\CG$ by proving them for a different $F$-isocrystal $\CF$, that generates a larger Tannakian category $\dal\CF\dar\supset\dal\CG\dar$.

\begin{lemma}\label{Lemma1.10Converse}
Let $\CF\in\FIsoc_\olK(U)$ (or $\CF\in\FIsoc^\dagger_\olK(U)$) and let $\CG\in\dal\CF\dar$
\begin{enumerate}
\item \label{Lemma1.10Converse_A}
Under the map $\Gr(\CF/U)\onto\Gr(\CG/U)$ the Frobenius conjugacy class $\Frob_x(\CF)$ is mapped onto $\Frob_x(\CG)$ for every $x\in |U|$. 
\item \label{Lemma1.10Converse_B}
If one of Conjectures~\ref{MainConj} or \ref{MainConj3} holds for $\CF$, then this conjecture also holds for the semi-simplification $\CF^{ss}$ and more generally for every $\CG\in\dal\CF\dar$.
\item \label{Lemma1.10Converse_C}
If the epimorphism $\pi\colon\Gr(\CF/U)\onto\Gr(\CG/U)$ has \emph{finite} kernel and Conjecture~\ref{MainConj3} holds for $\CG$, then it also holds for $\CF$.
\end{enumerate}
\end{lemma}

\noindent
{\itshape Remark.} Note that part~\ref{Lemma1.10Converse_C} of the lemma might be false for Conjecture~\ref{MainConj}.

\begin{proof}[Proof of Lemma~\ref{Lemma1.10Converse}]
\ref{Lemma1.10Converse_A} and \ref{Lemma1.10Converse_B} are obvious from Lemma~\ref{LemmaCompatibility}\ref{LemmaCompatibility_A} and the definition of the Frobenius conjugacy classes.

\smallskip\noindent
To prove \ref{Lemma1.10Converse_C}, let $S\subset|U|$ be a subset of positive upper Dirichlet density and let $C$ be a connected component of $\Gr(\CG)$ in which $\bigcup_{x\in S}\Frob_x(\CG)$ is dense. Let $C_1,\ldots,C_n$ be the connected components of $\Gr(\CF)$ which map to $C$. By the finiteness assumption on the kernel of $\pi$, the dimensions of all $C_i$ are the same as the dimension of $C$. If $\bigcup_{x\in S}\Frob_x(\CF)$ is not dense in $C_i$ for all $i$, then the closure $Y_i$ of $C_i\cap\bigcup_{x\in S}\Frob_x(\CF)$ has dimension strictly less than $\dim(C_i)$ for all $i$. The images $\pi(Y_i)$ are closed in $C$, because $\pi$ is a finite morphism, and their union contains $\bigcup_{x\in S}\Frob_x(\CG)$ by Lemma~\ref{LemmaCompatibility}\ref{LemmaCompatibility_A}. Since the latter is dense in $C$ and $C$ is irreducible, we must have $C=\pi(Y_i)$ for one $i$. But this contradicts the dimension estimate $\dim\pi(Y_i)=\dim(Y_i)<\dim(C_i)=\dim(C)$, and proves the lemma.
\end{proof}

We end the description of the Reduction Technique~\ref{TechniqueLarger} by giving the

\begin{proof}[Proof of Corollary~\ref{Cor1.9}]
For every $x\in S$ the Frobenius conjugacy class $\Frob_x(\CF)$ maps to $\Frob_x(\CF^{ss})$ under the natural surjective map $r\colon \Gr(\CF)\onto \Gr(\CF^{ss})$ by Lemma~\ref{LemmaCompatibility}. For every $g\in \Gr(\CF)$ we have $\Tr(g)=\Tr(r(g))$ where we take traces with respect to the representations $\omega_\BasePoint(\CF)$ and $\omega_\BasePoint(\CF^{ss})$, because in a suitable basis of $\omega_\BasePoint(\CF)$ the kernel of $r$ consists of unipotent upper triangular matrices by Lemma~\ref{LemmaSemiSimplification}, and so the diagonal entries of $g$ and $r(g)$ coincide. Therefore, we get that $\Tr(\Frob_x(\CF))=\Tr(\Frob_x(\CF^{ss}))$, and consequently $\Tr(\Frob_x(\CF^{ss}))=\Tr(\Frob_x(\CG^{ss}))$ for every $x\in S$.

Let $\rho_1$ and $\rho_2$ denote the representations of $\Gr(\CF^{ss}\oplus\CG^{ss})$ on $\omega_\BasePoint(\CF^{ss})$ and $\omega_\BasePoint(\CG^{ss})$, respectively. For every $x\in S$ the Frobenius conjugacy class $\Frob_x(\CF^{ss}\oplus\CG^{ss})$ maps by Lemma~\ref{LemmaCompatibility} to $\Frob_x(\CF^{ss})$ and $\Frob_x(\CG^{ss})$ under $\rho_1$ and $\rho_2$, respectively. Thus the trace functions of the representations $\rho_1$ and $\rho_2$ on the group $\Gr(\CF^{ss}\oplus\CG^{ss})$ are equal on the Frobenius conjugacy classes $\Frob_x(\CF^{ss}\oplus\CG^{ss})$ for all $x\in S$. By assumption the latter are Zariski-dense in $\Gr(\CF^{ss}\oplus\CG^{ss})$, so the trace functions of the representations $\rho_1$ and $\rho_2$ on the group $\Gr(\CF^{ss}\oplus\CG^{ss})$ are equal.

Let $\Lambda\subset\End_{\olK}\bigl(\omega_\BasePoint(\CF^{ss}\oplus\CG^{ss})\bigr)$ be the smallest $\olK$-linear subspace (viewed as a scheme) containing the image of $\Gr(\CF^{ss}\oplus\CG^{ss})$. Then $\Lambda$ is the $\olK$-linear span of $\Gr(\CF^{ss}\oplus\CG^{ss})$. Moreover, $\omega_\BasePoint(\CF^{ss})$ and $\omega_\BasePoint(\CG^{ss})$ are semi-simple $\Lambda$-modules, because every submodule invariant under $\Gr(\CF^{ss}\oplus\CG^{ss})$ is also invariant under $\Lambda$. Finally, by their linearity the trace functions of $\Lambda$ on both representations coincide, because they do on $\Gr(\CF^{ss}\oplus\CG^{ss})$. Therefore, by \cite[Lemma in \S\,I.2.3 on p.~I-11]{SerreAbelian} the two representations are isomorphic and this implies that $\CF^{ss}\cong\CG^{ss}$.
\end{proof}

\bigskip

\technique{Dividing by the center and the derived group} \label{TechniqueDividing}

\begin{prop}\label{PropGroupOfSum}
Let $\CF,\CG$ be objects of a neutral Tannakian category $\CC$ as in Lemma~\ref{LemmaSemiSimplification}. Then
\begin{enumerate}
\item\label{PropGroupOfSum_A} 
The strictly full sub-category $\dal\CF\dar\cap\dal\CG\dar$ of $\CC$, consisting of all objects $\CH$ which are both isomorphic to an object of $\dal\CF\dar$ and to an object of $\dal\CG\dar$, is a Tannakian sub-category with $\Gr(\dal\CF\dar\cap\dal\CG\dar)=\Gr(\CF\oplus\CG)/N_1N_2$ where $N_1=\ker\bigl(\Gr(\CF\oplus\CG)\onto\Gr(\CF)\bigr)$ and $N_2=\ker\bigl(\Gr(\CF\oplus\CG)\onto\Gr(\CG)\bigr)$.

\item\label{PropGroupOfSum_B} 
$\Gr(\CF\oplus\CG)$ is a closed subgroup of $\Gr(\CF)\TimesQQQ\Gr(\CG)$ which sits in a cartesian diagram of epimorphisms
\begin{equation}\label{EqPropGroupOfSum}
\xymatrix @C=-1pc @R=1pc {
 & \ar@{->>}[ld]\Gr(\CF\oplus\CG)\ar@{->>}[rd] & \\
\Gr(\CF)\ar@{->>}[rd] & \qed\quad &
\ar@{->>}[ld]\;\,\Gr(\CG)\;. \\ 
 & \Gr(\dal\CF\dar\cap\dal\CG\dar) & }
\end{equation}
\end{enumerate}
\end{prop}

\begin{proof}
Write $G:=\Gr(\CF\oplus\CG)/N_1N_2$.

\medskip\noindent
\ref{PropGroupOfSum_A} follows from the obvious facts that tensor products, direct sums, duals, internal Hom's and subquotients of objects in $\dal\CF\dar\cap\dal\CG\dar$ again lie in $\dal\CF\dar\cap\dal\CG\dar$. We prove that $G$ equals $\Gr(\dal\CF\dar\cap\dal\CG\dar)$. Lemma~\ref{LemmaCompatibility}\ref{LemmaCompatibility_C} applied to the epimorphism $\Gr(\CF\oplus\CG)\onto G$ shows that $G$ is the monodromy group $\Gr(\CK)$ of an object $\CK\in\dal\CF\oplus\CG\dar$. Since $\Gr(\CK)$ is also a quotient of $\Gr(\CF)$, respectively of $\Gr(\CG)$, the object $\CK$ is both isomorphic to an object of $\dal\CF\dar$ and to an object of $\dal\CG\dar$, that is, it belongs to $\dal\CF\dar\cap\dal\CG\dar$. This yields an epimorphism $\Gr(\dal\CF\dar\cap\dal\CG\dar)\onto\Gr(\CK)=G$. Conversely, since $\dal\CF\dar\cap\dal\CG\dar$ is contained both in $\dal\CF\dar$ and $\dal\CG\dar$ the map $\Gr(\CF\oplus\CG)\onto\Gr(\dal\CF\dar\cap\dal\CG\dar)$ factors over $\Gr(\CF)$ and over $\Gr(\CG)$. So its kernel contains $N_1$ and $N_2$. This provides the epimorphism in the other direction $G\onto\Gr(\dal\CF\dar\cap\dal\CG\dar)$ and shows that both are isomorphisms.

\medskip\noindent
\ref{PropGroupOfSum_B} The object $\CF\oplus\CG$ corresponds to a faithful representation of $\Gr(\CF\oplus\CG)$ which factors through $\Gr(\CF)\TimesQQQ\Gr(\CG)$, because a tensor automorphism of $\omega_\BasePoint(\CF\oplus\CG)$ is trivial as soon as its restrictions to $\omega_\BasePoint(\CF)$ and $\omega_\BasePoint(\CG)$ are trivial.  Since $\CF$ and $\CG$ are objects of $\dal\CF\oplus\CG\dar$, the two upper arrows in diagram~\eqref{EqPropGroupOfSum} are epimorphisms. We claim that the diagram 
\[
\xymatrix @C=0pc @R=1pc {
 & \ar@{->>}[ld]_{\pi_1}\Gr(\CF\oplus\CG)\ar@{->>}[rd]^{\pi_2} & \\
\Gr(\CF)\ar@{->>}[rd]_{\rho_1} & \qed\quad &
\ar@{->>}[ld]^{\rho_2}\Gr(\CG) \\ 
 & G & }
\]
is cartesian. Since that diagram is commutative, we obtain a morphism from $\Gr(\CF\oplus\CG)$ to the fiber product $\Gr(\CF)\times_G\Gr(\CG)$, which is a closed immersion, because $\Gr(\CF\oplus\CG)\to\Gr(\CF)\TimesQQQ\Gr(\CG)$ is one. Consider a point $(g_1,g_2)\in\Gr(\CF)\times_G\Gr(\CG)$ with $\rho_1(g_1)=\rho_2(g_2)$. Since $\pi_i$ is surjective, there are elements $\tilde g_i\in\Gr(\CF\oplus\CG)$ with $\pi_i(\tilde g_i)=g_i$. The equation $\rho_1\pi_1(\tilde g_1)=\rho_1(g_1)=\rho_2(g_2)=\rho_2\pi_2(\tilde g_2)=\rho_1\pi_1(\tilde g_2)$ shows that $\tilde g_1^{-1}\tilde g_2$ lies in $\ker(\rho_1\pi_1)=N_1N_2$. So there are elements $n_i\in N_i$ with $\tilde g_1^{-1}\tilde g_2=n_1n_2^{-1}$. The element $\tilde g_1n_1=\tilde g_2n_2\in\Gr(\CF\oplus\CG)$ satisfies $\pi_i(\tilde g_in_i)=\pi_i(\tilde g_i)=g_i$ for $i=1,2$. This proves that $\Gr(\CF\oplus\CG)$ is isomorphic to the fiber product $\Gr(\CF)\times_G\Gr(\CG)$.
\end{proof}

We apply the previous proposition in the proof of Theorem~\ref{ThmOverconvIntro} to the following group theoretic situation.

\begin{prop}\label{PropRedGpAlmostProduct}
Let $\CF$ be a \emph{semi-simple} object of a neutral Tannakian category as in Lemma~\ref{LemmaSemiSimplification}. Let $G:=\Gr(\CF)$, let $Z$ be the center of $G\open=\Gr(\CF)\open$ and let $[G\open,G\open]$ be the derived group of $G\open$. Let $\CS,\CT\in\dal\CF\dar$ be the objects whose monodromy groups are $\Gr(\CS)=G/Z$ and $\Gr(\CT)=G/[G\open,G\open]$; see Lemma~\ref{LemmaCompatibility}\ref{LemmaCompatibility_C}. Then $\Gr(\CS)\open$ is semi-simple and has trivial center, and $\Gr(\CT)\open$ is a torus. $\Gr(\dal\CS\dar\cap\dal\CT\dar)=G/G\open$ is a finite group and in the diagram
\begin{equation}\label{EqPropRedGpAlmostProduct}
\Gr(\CF) \;\onto\;\Gr(\CS\oplus\CT)\;\isoto\;G/Z\times_{G/G\open}G/[G\open,G\open]
\end{equation}
there is a natural isomorphism on the right, and the kernel $Z\cap[G\open,G\open]$ of the surjection on the left is finite.
\end{prop}

\begin{proof}
Since $\CF$ is semi-simple, $G$ is reductive by Lemma~\ref{LemmaSemiSimplification}. From Proposition~\ref{PropGroupOfSum} we obtain the isomorphism on the right and the description of $\Gr(\dal\CS\dar\cap\dal\CT\dar)$ as $G/(Z\cdot[G\open,G\open])$, which equals $G/G\open$ by \cite[IV.14.2~Proposition]{Borel91}. The remaining assertions were listed in Lemma~\ref{LemmaUnipotConnected}.
\end{proof}

\smallskip

\technique{Twisting by rank one objects} \label{TechniqueTwist}

\smallskip

For given objects $\CF_1,\ldots,\CF_n$ and rank one objects $\CC_1,\ldots,\CC_n$ of a Tannakian category we set $\CU_i:=\CF_i\otimes\CC_i$ and $\CU:=\bigoplus_{i=1}^n\CU_i$ and $\CC:=\bigoplus_{i=1}^n\CC_i$. Then the object $\CF:=\bigoplus_{i=1}^n\CF_i=\bigoplus_{i=1}^n\CU_i\otimes\CC_i\dual$ belongs to $\dal\CU\oplus\CC\dar$, and hence it suffices to prove Conjectures~\ref{MainConj} and \ref{MainConj3} for $\CU\oplus\CC$ instead of $\CF$ by Lemma~\ref{Lemma1.10Converse}. We use this reduction technique in two situations:

\medskip\noindent
(1) In the proof of Theorem~\ref{ThmIsoclinic} on page~\pageref{ProofOfThmIsoclinic}, the $\CF_i\in\FIsoc_\olK(U)$ are isoclinic convergent $F$-isocrystals of slopes $s_i\in\BQ$. We let $\CC_i$ be the constant $F$-isocrystal of rank $1$ and slope $-s_i$ in order to assume that $\CU_i$ is unit root. Then the image $\BK=\im\bigl(\pi_1^\et(U,\bar\BasePoint)\to\Gr(\CU)\bigr)$ is a $p$-adic Lie group and Zariski-dense in the monodromy group $\Gr(\CU)$, by Corollary~\ref{CorCrewsThm}. On the factor $\Gr(\CU)$ we use analytic tools to prove a lower bound on the point count in (finite quotients of) $\BK$. It comes from the classical Chebotar\"ev Density Theorem for $\pi_1^\et(U,\bar\BasePoint)$, for which we prove an effective version in Theorem~\ref{useful_che}. On the other hand, with the help of Theorem~\ref{ThmCheboForConstants} and Corollary~\ref{Cor4.2} on the factor $\Gr(\CC)$, we derive from Oesterl\'e's result~\cite{Oesterle} an upper bound on the point count in $\BK$. Comparing the bounds, proves the Zariski-density claimed in Theorem~\ref{ThmIsoclinic}. We formulate the abstract, group theoretic and $p$-adic analytic essence of the proof as a separate Theorem~\ref{Thm4.3}.

\medskip\noindent
(2) In the proof of Theorem~\ref{ThmOverconvIntro} in Subsection~\ref{SubsectProofOf1.7}, the $\CF_i\in\FIsoc^\dagger_\olK(U)$ are irreducible overconvergent $F$-isocrystals. We base change all monodromy groups to $\BC$ via an isomorphism $\iota\colon\olK\isoto\BC$. Then we let $\CC_i$ be the constant $F$-isocrystal of rank $1$ such that $\det\CU_i$ is finite. By deep results on the Langlands correspondence of Abe~\cite{Abe13} ($p$-adic) and Lafforgue~\cite{Lafforgue02} ($\ell$-adic), this will imply that $\CU_i=\CF_i\otimes \CC_i$ is an irreducible, overconvergent, $\iota$-pure $F$-isocrystal of weight zero, see Corollary~\ref{langlands_implies_mixedness}. We apply Proposition~\ref{PropRedGpAlmostProduct} to $\CU$. The resulting $\CS$ is still $\iota$-pure of weight zero and we replace $\CU$ by $\CS$. For the resulting $\CT$, we choose a tensor generator $\CE$ of $\dal\CT\dar_{const}$, and replace $\CC$ by $\CC\oplus\CE$, to obtain $\Gr^\dagger(\CU\oplus\CC)\open=\Gr^\dagger(\CU)\open\times\Gr^\dagger(\CC)\open$ as a direct product (as opposed to a fiber product). By Lemma~\ref{Lemma1.10Converse}\ref{Lemma1.10Converse_C}, it will be enough to prove Conjecture~\ref{MainConj3} for $\CU\oplus\CC$ to obtain it for $\CF$. The weight zero part $\CU$ will be treated by the $p$-adic version of Deligne's Equidistribution Theorem on a maximal compact quasi-torus $\BT$ in $\Gr^\dagger(\CU)$, which we prove in Theorem~\ref{ThmEquiDistr}. It relies on the deep theory of Frobenius weights for overconvergent $F$-isocrystals of Kedlaya, Abe and Caro, and will play a role analogous to the analytic input from the classical Chebotar\"ev Density Theorem for point counting on $\BK$ in (1) in the proof of Theorem~\ref{ThmIsoclinic}. Maximal compact quasi-tori are studied in Subsection~\ref{SubSectMaxCompact}. The point counting measures $\mu_m$ on $\BT$, defined as the average of the Dirac measures on the conjugacy classes $\Frob_x(\CU)$ for $\deg(x)=m$, are bounded below by the positive upper Dirichlet density of the set $S\subset|U|$. By the Equidistribution Theorem~\ref{ThmEquiDistr}, the $\mu_m$ converge weakly to the Haar measure on $\BT$. If the Zariski-closure of $\bigcup_{x\in S}\Frob_X(\CU\oplus\CC)$ did not contain a connected component of $\Gr^\dagger(\CU\oplus\CC)$, then its Haar measure would be zero, which yields a contradiction using Theorem~\ref{ThmCheboForConstants}. This contradiction is achieved by using real algebraic geometry on the maximal compact quasi-torus $\BT$ to push forward and pull back measures, and a convergence result for complex hypersurfaces, which we prove in Theorem~\ref{new_oesterle1}. An outline of the proof of Theorem~\ref{ThmOverconvIntro} containing more details will be given in Subsection~\ref{SubsectOutline}.

\section{An Effective Version of the Classical Chebotar\"ev Density Theorem}\label{SectDensity}

In this section we want to make a few remarks on Dirichlet density. By \cite[formula~(18)]{Serre63} we have
\begin{equation}\label{EqHopital}
\lim\limits_{s{\scriptscriptstyle\searrow} \dim U}\frac{-\sum_{x\in |U|}q^{-\deg(x)s}}{\log(s-\dim U)}=1\,
\end{equation}

\begin{defn}\label{dirichlet_density_def} A subset $S\subset|U|$ has \emph{Dirichlet density $\delta(S)$} (in the sense of Serre~\cite[\S\,2.7]{Serre63}) if the limit
\[
\delta(S) \;:=\; \lim_{s{\scriptscriptstyle\searrow} \dim U}\frac{-\sum_{x\in S}q^{-\deg(x)s}}{\log(s-\dim U)}
\]
exists. The \emph{upper Dirichlet density $\ol\delta(S)$} of a subset $S\subset|U|$ is
\[
\ol\delta(S) \;:=\; \limsup_{s{\scriptscriptstyle\searrow} \dim U} \frac{-\sum_{x\in S}q^{-\deg(x)s}}{\log(s-\dim U)}.
\]
By \eqref{EqHopital} the limit superior $\ol\delta(S)$ always exists and is between $0$ and $1$. It is equal to the Dirichlet density of the set $S$, if the latter exists. Moreover, $S$ has \emph{positive (upper) Dirichlet density} if it has Dirichlet density $\delta(S)>0$ (or if $\ol\delta(S)>0$, respectively). Trivially $\ol\delta(R)\leq\ol\delta(S)$ when $R$ is a subset of $S$.
\end{defn}

\begin{lemma}\label{sub_additivity} Let $S\subset|U|$ be a subset, and assume that $S=S_1\cup\cdots\cup S_n,$ with pair-wise disjoint sets $S_i$. Then 
\[
\ol\delta(S) \;\le\; \ol\delta(S_1)+\cdots +\ol\delta(S_n).
\]
Moreover, if every $S_i$ has Dirichlet density $\delta(S_i)$ then $S$ has Dirichlet density $\delta(S_1)+\ldots+\delta(S_n)$. In addition, if $S$ has Dirichlet density $\delta(S)$, then its complement in $U$ has Dirichlet density $1-\delta(S)$.
\end{lemma}

\begin{proof} Note that
$$\frac{-\sum_{x\in S}q^{-\deg(x)s}}{\log(s-\dim U)}=
\frac{-\sum_{x\in S_1}q^{-\deg(x)s}}{\log(s-\dim U)}+\cdots+\frac{-\sum_{x\in S_n}q^{-\deg(x)s}}{\log(s-\dim U)},$$
so by the (sub-)additivity of the limit (superior) we get the lemma.
\end{proof}

\begin{lemma}\label{density_intersection} Let $S\subset|U|$ be a subset of upper Dirichlet density one, and let $R\subset|U|$ be a subset of positive Dirichlet density. Then $R\cap S$ also has positive upper Dirichlet density $\ol\delta(S\cap R)=\delta(R)$. 
\end{lemma}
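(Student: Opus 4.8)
The plan is to write $R$ as the disjoint union $R=(R\cap S)\cup(R\setminus S)$ and apply the sub-additivity of upper Dirichlet density (Lemma~\ref{sub_additivity}), which gives $\overline{\delta}(R)\le\overline{\delta}(R\cap S)+\overline{\delta}(R\setminus S)$. Since $\overline{\delta}(R)>0$ by hypothesis, it therefore suffices to prove that $\overline{\delta}(R\setminus S)=0$; this then forces $\overline{\delta}(R\cap S)\ge\overline{\delta}(R)>0$, which is the assertion.

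To establish $\overline{\delta}(R\setminus S)=0$, I would first note that $R\setminus S\subseteq|U|\setminus S$, so by the monotonicity of $\overline{\delta}$ recorded in Definition~\ref{upper_density_def} it is enough to show $\overline{\delta}(|U|\setminus S)=0$, and by the same definition this is equivalent to showing that $|U|\setminus S$ has Dirichlet density zero. For the latter I would use the classical fact that $|U|$ itself has Dirichlet density one, i.e.\ $\sum_{x\in|U|}q^{-\deg(x)s}\sim-\log(s-1)$ as $s\to1^+$; this follows from the rationality of the zeta function $\zeta_U(s)=\prod_{x\in|U|}(1-q^{-\deg(x)s})^{-1}$ together with the fact that it has a simple pole at $s=1$ (equivalently, from the prime-number-theorem estimate $\#\{x\in|U|:\deg(x)=n\}=q^n/n+O(q^{n/2}/n)$). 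Writing, for real $s>1$,
$$\sum_{x\in|U|}q^{-\deg(x)s}=\sum_{x\in S}q^{-\deg(x)s}+\sum_{x\in|U|\setminus S}q^{-\deg(x)s},$$
dividing by $-\log(s-1)>0$, and letting $s\to1^+$: the left-hand side tends to $1$, and the first summand on the right tends to $1$ because $S$ has Dirichlet density one, so the second summand tends to $0$. Hence $|U|\setminus S$ has Dirichlet density $0$, and the argument is complete.

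The only ingredient beyond what is already in the excerpt is the statement that $|U|$ has Dirichlet density one; everything else is a routine combination of the inequalities in Definition~\ref{upper_density_def} and Lemma~\ref{sub_additivity}. I do not expect any genuine obstacle — this is essentially a bookkeeping lemma — but if one wants a self-contained account, the only mild work is recalling why $\zeta_U$ has a simple pole at $s=1$, which can alternatively just be cited from Serre.
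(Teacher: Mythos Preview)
Your proposal is correct and follows essentially the same approach as the paper: both arguments show that the complement $|U|\setminus S$ has Dirichlet density zero (using that $|U|$ itself has density one), deduce that $R\setminus S=R\cap S^c$ has upper density zero by monotonicity, and then apply the sub-additivity Lemma~\ref{sub_additivity} to the disjoint decomposition $R=(R\cap S)\cup(R\cap S^c)$.
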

\begin{proof} Let $R^c\subset|U|$ be the complement of $R$ in $|U|$. Then Lemma~\ref{sub_additivity} yields
$$
1 \;=\; \ol\delta(S) \;\le\; \ol\delta(S\cap R) + \ol\delta(S\cap R^c) \;\le\; \ol\delta(R) + \ol\delta(R^c) \;=\; \delta(R) + \delta(R^c) \;=\; 1\,;
$$
and hence $\ol\delta(S\cap R)=\delta(R)$ proving the claim.
\end{proof}

Since we could not find the following well known statement explicitly in the literature, we include a proof.

\begin{thm}\label{ThmWeilBounds}
Let $U(n):=\{ x\in |U|\colon \deg(x)=n\}$. Then $\#U(n) = \tfrac{q^{n \dim U}}{n} + O\bigl(\tfrac{q^{n(\dim U -1/2)}}{n}\bigr)$ for $n\to\infty$. Moreover, the Zeta-function
\[
Z(U,z)\;:=\;\prod_{x\in |U|}(1-z^{\deg(x)})^{-1}
\]
has a pole of order one at $z=q^{-\dim U}$ and no other pole or zero on the disc $\{z\in \BC\colon |z|<q^{1/2 -\dim U}\}$.
\end{thm}

\begin{proof}
Let $\ell$ be a prime different from the characteristic $p$ of $\BF_q$. By \cite[(1.5.1) and (1.5.4)]{DeligneWeil1} we have $\#U(\BF_{q^n})=\sum\limits_{i=0}^{2\dim U} (-1)^i \Tr\bigl({\rm Fr}_{q,U}^n\big| \Koh^i_c(U\otimes_{\BF_q}\ol\BF_q,\BQ_\ell)\bigr)$ and
\begin{equation*}\label{EqLIsZetaFcn}
Z(U,z)\;=\; \prod_{i=0}^{2\dim U} \det\bigl(1-z \,{\rm Fr}_{q,U}\big| \Koh^i_c(U\otimes_{\BF_q}\ol\BF_q,\BQ_\ell) \bigr)^{(-1)^{i+1}}.
\end{equation*}
By \cite[Th\'eor\`eme~I]{DeligneWeil2} all eigenvalues of ${\rm Fr}_{q,U}^n$ on the \'etale cohomology $\Koh^i_c(U\otimes_{\BF_q}\ol\BF_q,\BQ_\ell) $ have complex absolute value $q^{nm}$ for $m\le i/2$. Therefore, for $0\le i <2\dim U$ the summands of $\#U(\BF_{q^n})$ are in $O(q^{n(\dim U- 1/2)})$, and the factors of $Z(U,z)$ have no zeroes and poles on $\{z\in \BC\colon |z|<q^{1/2 -\dim U}\}$. Moreover, since $U$ is geometrically irreducible, $\Tr\bigl({\rm Fr}_{q,U}^n\big| \Koh^{2\dim U}_c(U\otimes_{\BF_q}\ol\BF_q,\BQ_\ell)\bigr)=q^{n\dim U}$, and $\det\bigl(1-z \,{\rm Fr}_{q,U}\big| \Koh^{2\dim U}_c(U\otimes_{\BF_q}\ol\BF_q,\BQ_\ell)\bigr)=1-z q^{\dim U}$ by \cite[XVIII, Remarque~2.10.1]{SGA4}. This proves the statement for $Z(U,z)$. Moreover, we obtain $\#U(\BF_{q^n})=q^{n\dim U}+O(q^{n(\dim U- 1/2)})$. To compute $\#U(n)$, we remove all points $x\in U(\BF_{q^n})$ for which $\deg(x)$ is a proper divisor of $n$. This set has cardinality $\le O(q^{\frac{n}{2}\dim U})$. Dividing by $n$ for the remaining points yields the proposition, because $\#\{x \in U(\BF_{q^n})\colon \deg(x)=n\}=n\cdot \#U(n)$.
\end{proof}

We will also need the following

\begin{lemma} \label{Lemma4.3} Let $S\subset|U|$ be a set with $\ol\delta(S)>0$. Then there is an infinite subset $R\subset\BN$ such that for every $n\in R$
$$\#\{x\in S\colon\deg(x)=n\} \;\geq\; \frac{\ol\delta(S)\cdot \#U(n)}{2}.$$
\end{lemma}

\begin{proof} Assume that the claim is false. Then there is a positive integer $m$ such that 
$$\#\{x\in S\colon\deg(x)=n\} \;<\;
\frac{\ol\delta(S)\cdot \#U(n)}{2}$$
for every $n>m$. Thus for every $s\in \BR$ with $\dim U < s< 1+\dim U$ we have:
$$\sum_{x\in S}q^{-\deg(x)s}
\;<\; \sum_{x\in S\colon\deg(x)\leq m}q^{-\deg(x)s}+\frac{\ol\delta(S)}{2}\sum_{n>m}\#U(n)q^{-ns}.$$
Since $\log(s-\dim U)$ is negative for such $s$, we get from the above that 
$$\ol\delta(S)\;\leq\;
\limsup_{s{\scriptscriptstyle\searrow} \dim U}
\frac{-\sum_{\deg(x)\leq m}q^{-\deg(x)s}}{\log(s-\dim U)}+
\frac{\ol\delta(S)}{2}\limsup_{s{\scriptscriptstyle\searrow} \dim U}
\frac{-\sum_{n>m}\#U(n)q^{-ns}}
{\log(s-\dim U)}\,.$$
The first limit on the right hand side is zero, while the second limit is $1$ by \eqref{EqHopital}. However, the resulting inequality $\ol\delta(S)\le\ol\delta(S)/2$ is a contradiction to $\ol\delta(S)>0$.
\end{proof}

The following well known property of the (upper) Dirichlet density obstructs the technique of replacing $U$ by a finite \'etale Galois covering.

\begin{example}\label{ExDensity}
Let $f\colon V\to U$ be a finite \'etale Galois covering of degree $n$, where $n$ is a prime number, let $S\subset|U|$ be a subset of upper Dirichlet density $\ol\delta(S)$, and let $S'=f^{-1}(S)\subset|V|$ be the preimage of $S$ under $f$.

\smallskip\noindent
(a) Assume that every $x\in S$ splits in $V$, that is, there are exactly $n$ points $x'$ of $V$ lying above $x$. They have $\BF_{x'}=\BF_x$ and $\deg(x')=\deg(x)$. We compute
\[
\sum_{x'\in S'}q^{-\deg(x')s}\;=\;n\cdot\sum_{x\in S}q^{-\deg(x)s}\,.
\]
Therefore, $S'$ has upper Dirichlet density $\ol\delta(S')=n\cdot\ol\delta(S)$.

\smallskip\noindent
(b) Assume that every $x\in S$ is inert in $V$. Then there is exactly one point $x'$ of $V$ lying above each $x\in S$, and it has $[\BF_{x'}:\BF_x]=n$ and $\deg(x')=n\deg(x)$. We compute
\[
\sum_{x'\in S'}q^{-\deg(x')s}\;=\;\sum_{x\in S}q^{-n\deg(x)s}\,.
\]
When $s{\scriptscriptstyle\searrow} \dim U$ this sum converges in $\BR$, whereas $\log(s-\dim U)$ goes to $\infty$. Therefore, $S'$ has upper Dirichlet density $\ol\delta(S')=0$.

\medskip
This is of course analogous to the situation for number fields, where one says that a subset $S\subset|M|$ of the set $|M|$ of places of a number field $M$ has Dirichlet density $\delta$ if 
\[
\liminf_{m\rightarrow\infty} \frac{\#\{x\in S\colon N(x)\le m\}}{\#\{x\in|M|\colon N(x)\le m\}} \;=\;\delta\,,
\]
with $N(x):=\#\CO_M/x$ denoting the norm of $x$. Let $N/M$ be a Galois extension of number fields whose degree $n$ is a prime number. Then the set $S$ of places in $M$ which split completely in $N$ has Dirichlet density $\delta(S)=\tfrac{1}{n}$, whereas its preimage $S'\subset|N|$ has Dirichlet density $\delta(S')=1$. The reason is that above every place $x\in S$ there are exactly $n$ places in $S'$ with the same norm as $x$.

On the other hand the set $S$ of places in $M$ which are inert in $N$ has Dirichlet density $\delta(S)=\tfrac{n-1}{n}$, whereas its preimage $S'\subset|N|$ has Dirichlet density $\delta(S')=0$. The reason is that above every place $x\in S$ there is exactly one place $x'$ in $S'$ whose norm is $N(x')=N(x)^n$.
\end{example}

\bigskip

For applications to isoclinic $F$-isocrystals we will need an effective version of the classical Chebotar\"ev Density Theorem for function fields.

\begin{notn} 
Consider a representation $\pi_1^\et(U,\bar\BasePoint)\onto \BG$ onto a finite group $\BG$. Let $\BG^{\rm geo}$ denote the image of the geometric \'etale fundamental group $\pi_1^\et(U,\bar\BasePoint)^{\rm geo}\,:=\,\pi_1^\et(U\otimes_{\BF_q}\ol\BF_q,\bar\BasePoint)\,\subset\,\pi_1^\et(U,\bar\BasePoint)$ and let $\BG^c=\BG/\BG^{\rm geo}$. It is the maximal constant quotient of $\BG$, that is, the largest quotient of $\BG$ which can be pulled back under $\pi_1^\et(U,\bar\BasePoint)\to\Gal(\ol{\BF}_q/\BF_q)$. In particular, $\BG^c$ is finite cyclic. There is an isomorphism $\iota_\BG\colon \BG^c\isoto\BZ/\#\BG^c$ which maps the geometric Frobenius in $\Gal(\ol\BF_q/\BF_q)$ to $1$.

Let $C\subset \BG$ be a conjugacy class. Then the image of $C$ under $\BG\to \BG^c$ is an element in $\BG^c$ which we will denote by $C^c$. In particular, for every $x\in|U|$ let $\Frob_x^{-1}$ be the geometric Frobenius at $x$ in $\pi_1^\et(U,\bar\BasePoint)$ which maps $a\in\ol\BF_x$ to $a^{1/\#\BF_x}$, and let $\textrm{Fr}_x^\BG\subset \BG$ denote the image of the conjugacy class of $\Frob_x^{-1}$. Then $(\textrm{Fr}_x^\BG)^c$ maps under $\iota_\BG$ to $\deg(x)\mod \#\BG^c$. 
\end{notn}

\begin{rem}
The notation $\BG^{\rm geo}$ follows the same spirit as the notation $\Gr(\CF)^{\rm geo}$ in Definition~\ref{Def3.5} and $\BK^{\rm geo}$ in Notation~\ref{Notation13.14} below. Namely, it denotes a closed normal subgroup of $\BG$, respectively $\Gr(\CF)$ or $\BK^{\rm arithm}$, such that the respective quotient is the largest quotient arising from constant objects, that is, from constant representations of the \'etale fundamental group, respectively from constant $F$-isocrystals. For example, if $\CF$ is a convergent unit-root $F$-isocrystal, then the compatibility of the notation was observed in Corollary~\ref{CorCrewsThmGeo}, where we saw that $\Gr(\CF)$, respectively $\Gr(\CF)^{\rm geo}$ are the Zariski closures of the images of $\pi_1^\et(U,\bar\BasePoint)$, respectively $\pi_1^\et(U,\bar\BasePoint)^{\rm geo}$.
\end{rem}

We use the following estimate, which can be found for example in \cite[Theorem~4.1]{Chavdarov}.

\begin{prop} \label{PropChebotarev} Fix a conjugacy class $C\subset \BG$. Then as $n$ goes to infinity we have 
\[
\#\{x\in |U|\colon\mathrm{Fr}_x^{\BG}\subset C,\ \deg(x)=n \} \;=\;
\begin{cases}
\frac{\#U(n)\cdot \#C}{\#\BG^{\rm geo}}+O(\#U(n)/\sqrt{q^n}) & \text{, if } n\equiv
\iota_\BG(C^c)\mod \#\BG^c,\\
0 & \text{, otherwise. } \end{cases}
\]
\end{prop}

\begin{proof} Let $n\equiv\iota_\BG(C^c)\mod \#\BG^c$, because otherwise the set on the left is obviously empty. Then \cite[Theorem~4.1]{Chavdarov} says that
\[
\#\{x\in U(\BF_{q^n})\colon(\mathrm{Fr}_x^{\BG})^{n/\deg(x)}\subset C \}\;=\;\frac{\#U(\BF_{q^n})\cdot \#C}{\#\BG^{\rm geo}}+O(\#U(\BF_{q^n})/\sqrt{q^n})\,.
\]
As in the proof of Theorem~\ref{ThmWeilBounds}, passing from $U(\BF_{q^n})$ to $U(n)$ yields an additional factor $n$ in the denominator.
\end{proof}

The aforementioned effective version is the following

\begin{thm} \label{useful_che}
Let $S\subset|U|$ be a set of positive upper Dirichlet density. Then there is a positive constant $\epsilon>0$ such that for every finite quotient group $\BG$ of $\pi_1^\et(U,\bar\BasePoint)$ there is an infinite subset $R_\BG \subset \BN$  such that for every $n\in R_\BG$ the union in $\BG$ of the Frobenius conjugacy classes ${\rm Fr}_x^\BG$ for all $x$ in $\{x\in S\colon\deg(x)=n\}$ has cardinality at least $\epsilon\cdot\#\BG^{\rm geo}$. 
\end{thm}

\begin{proof} We claim that $\epsilon=\frac{\ol\delta(S)}{4}$ will do. Now assume that the claim is false, and let $\BG$ be a finite quotient of $\pi_1^\et(U,\bar\BasePoint)$ violating the assertion of the theorem. By Lemma~\ref{Lemma4.3} there is an infinite subset $R\subset\BN$ such that for every $n\in R$
\begin{equation}\label{4.1.0}
\#\{x\in S\colon\deg(x)=n\} \;\geq\; \frac{\ol\delta(S)\#U(n)}{2}.
\end{equation}
For every $n\in\BN$ let $F_n$  be the union in $\BG$ of the Frobenius conjugacy classes ${\rm Fr}_x^\BG$ for all $x\in S$ with $\deg(x)=n$. It decomposes into a disjoint union of conjugacy classes $C$ in $\BG$. We apply Proposition~\ref{PropChebotarev} to each component $C$ of $F_n$ and add. By removing finitely many elements of $R$ if necessary, this tells us for all $n\in R$ that
\begin{equation}\label{4.1.1}
\#\{x\in S\colon\deg(x)=n\} \;\le\; \#\{x\in |U|\colon\mathrm{Fr}_x^{\BG}\subset F_n,\ \deg(x)=n \} \;\leq\; \frac{2\#U(n)\cdot\# F_n}{\#\BG^{\rm geo}}\,.
\end{equation}
Since we assumed that $\BG$ is a counter-example, by shrinking $R$ further, if this is necessary, we may assume that
\[
\#F_n \;<\; \frac{\ol\delta(S)\# \BG^{\rm geo}}{4} 
\]
for all $n\in R$. Together with \eqref{4.1.1} we get
\[
\#\{x\in S\colon\deg(x)=n\} \;<\; \frac{2\#U(n)}{\#\BG^{\rm geo}}\cdot\frac{\ol\delta(S)\#\BG^{\rm geo}}{4} \;=\;\frac{\ol\delta(S)\#U(n)}{2}\,.
\]
But this contradicts \eqref{4.1.0}, and hence the theorem follows.
\end{proof}

\section{Chebotar\"ev for Direct Sums of Isoclinic $F$-Isocrystals} \label{SectIsoclinic}

In this section we will prove Theorem~\ref{ThmIsoclinic} by working with $p$-adic analytic manifolds and Lie groups and using a point counting result of Oesterl\'e~\cite{Oesterle}. Again all undecorated fiber products $\times$ are over the field $L$ defined in

\subsection{An Abstract Group Theoretic and $p$-Adic Formulation of the Proof}

\begin{setup}\label{Setup4.3}
Let $L$ be a finite field extension of $\BQ_p$ and let $T$ be a commutative linear algebraic group over $L$ which is the product of a split torus $\BG_{m,L}^r$ for $r\ge0$ with a finite abelian group and possibly an additional factor $\BG_{a,L}$. Let $\Gamma\subset T(L)$ be a Zariski-dense subgroup in $T$, which is infinite cyclic, $\Gamma\cong\BZ$, and let $\gamma$ be a generator of $\Gamma$. We write the group $\Gamma$ additively as $\Gamma=\{n\gamma\text{ for }n\in\BZ\}$. Let $H$ and $\olT$ be linear algebraic groups over $L$ such that $\olT$ is of the same kind as $T$ and assume that there are two surjective homomorphisms $\phi_1\colon H\to \olT$ and $\phi_2\colon T\to \olT$ of algebraic groups over $L$. Let $G:=H\times_\olT T$ be the fiber product of these two maps. Note that $G$ is a closed algebraic subgroup of $H\TimesL T$. We denote by $pr_1\colon G\to H$ and $pr_2\colon G\to T$ the projections. Let $\BK\subset H(L)$ be a compact, but not necessarily open subgroup with respect to the $p$-adic topology, let $H^{\rm geo}\subset H$ be the kernel of $\phi_1$ and set $\BK^{\rm geo}=H^{\rm geo}(L)\cap \BK$. Finally let $F\subset(\BK\times \Gamma)\cap G(L)$ be a subset. 
\[
\xymatrix {
& & **{!R(0.27) =<1pc,2pc> } \objectbox{F\;\subset\;(\BK\times \Gamma)\cap G(L) \;\subset\; G\;=\; H\times_\olT T \quad} \ar[dl]_{pr_1} \ar[dr]^{pr_2} & \ar@{^{ (}->}[rrrr] & & & & H\TimesL T \\
& **{!R(0.5) =<5pc,2pc>} \objectbox{\BK\;\subset\;H(L)\;\subset\; H} \ar@{->>}[dr]_{\phi_1} & & **{!L(0.55) =<5pc,2pc>} \objectbox{\quad T\;\supset\;T(L)\;\supset\;\Gamma \;\ni\;n\gamma\,,} \ar@{->>}[dl]^{\phi_2} & & & & \BK\times \Gamma \ar@{^{ (}->}[u]\\
& & \olT
}
\]
Note that $pr_1$ induces an isomorphism between $\ker(pr_2)\subset G$ and $H^{\rm geo}$ whose inverse is given as
\begin{equation}\label{EqIsomHgeo}
H^{\rm geo} \;\isoto \;\ker(pr_2)\;\subset\; G\;= \;H\times_\olT T\,,\quad y\;\mapsto \;(y,1)\,.
\end{equation}
\end{setup}

\begin{thm} \label{Thm4.3}
Assume that the following hold:
\begin{enumerate}
\item \label{Thm4.3_A}
the Zariski-closure of $\BK^{\rm geo}$ is $H^{\rm geo}$,
\item \label{Thm4.3_B}
there is a positive constant $\epsilon>0$ such that for every open normal subgroup $\BJ\subset \BK$ there is an infinite subset $R_\BJ\subset\BN$ such that for every $n\in R_\BJ$ the image of $pr_1(F\cap pr_2^{-1}(n\gamma))\subset \BK$ under the quotient map $\BK\to \BK/\BJ$ has cardinality at least
$\epsilon\cdot\#(\BK^{\rm geo}/\BJ\cap \BK^{\rm geo})$. 
\end{enumerate}
Then the Zariski-closure of $F$ contains a connected component of $G$. 
\end{thm}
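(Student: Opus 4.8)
We sketch our plan. Write $W$ for the Zariski closure of $F$ in $X$; the goal is to show $W$ contains a connected component of $X$, by a dichotomy on how often the fibres of $pr_2$ over $G$ are swallowed by $W$. Observe first that via $pr_1$ the fibre $pr_2^{-1}(ng)$ is identified with the coset $\phi_1^{-1}(\phi_2(ng))=c_nY^{geo}$ of $Y^{geo}=\ker\phi_1$ in $Y$ (see~\eqref{EqIsomYgeo}); since $Y^{geo}$ is smooth, $pr_2$ restricts over each connected component $T^c$ of $T$ to a smooth surjection $pr_2^{-1}(T^c)\to T^c$, so $pr_2^{-1}(T^c)$ is open and closed in $X$ and is a union of connected components. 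Setting $F_n:=pr_1(F\cap pr_2^{-1}(ng))\subset C$ and applying hypothesis~\ref{Thm4.3_B} with $D=C$ shows $F_n\ne\emptyset$ for infinitely many $n$. Suppose first that $N_0:=\{n:pr_2^{-1}(ng)\subset W\}$ is infinite. Then $N_0$ meets some residue class modulo the order $m$ of $g$ in $T/T\open$ infinitely often, and the corresponding set $\{ng\}$ is an infinite subset of $G\cap T^c$, hence Zariski dense in $T^c$ by Theorem~\ref{TheoremMordellLang}. As $pr_2$ is open on $pr_2^{-1}(T^c)$, the preimage of this dense set is dense in $pr_2^{-1}(T^c)$ and lies in the closed set $W$; therefore $W\supset pr_2^{-1}(T^c)$ contains a connected component of $X$ and we are done.

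It then remains to exclude the case that $N_0$ is finite, i.e.\ that $W\cap pr_2^{-1}(ng)$ is a proper closed subvariety of $pr_2^{-1}(ng)$ for all but finitely many $n$. Choosing $c_n\in F_n$ and transporting through $pr_1$, we have $c_n^{-1}F_n\subset C^{geo}$ and $c_n^{-1}F_n\subset c_n^{-1}\tilde W_n$, where $\tilde W_n:=pr_1(W\cap pr_2^{-1}(ng))$ is a proper closed subvariety of $c_nY^{geo}$. The essential input is that the $\tilde W_n$ have bounded degree: the fibres of $pr_2|_W\colon W\to T$ form a bounded family by generic flatness and Noetherian induction over the finite type base $T$, so each $\tilde W_n$ is contained in $Z(H_n)\cap c_nY^{geo}$ for a polynomial $H_n$, in the coordinates of a fixed embedding $Y\hookrightarrow\BA^{n_0}$, of degree bounded independently of $n$ and not vanishing identically on $c_nY^{geo}$.

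Next I would pass to charts. Since compact $p$-adic analytic groups are profinite and admit Serre-standard open subgroups (Definition~\ref{DefSerreStandardGp}, Theorem~\ref{Cartan1}), fix standard open subgroups $C_0^{geo}\subset C^{geo}$ and $C_0\subset C$, a chart $\psi^{geo}\colon C_0^{geo}\isoto\BZ_p^M$ with $M=\dim_{\BQ_p}C^{geo}$, and finite sets of coset representatives (replacing $L$ by $\BQ_p$ via Weil restriction if $L\ne\BQ_p$). The crucial uniformity statement is: the restricted power series obtained by pulling the equations $H_n$, translated by an arbitrary $c\in C$ and by a representative, back through $\psi^{geo}$, all lie in a single fixed finite-dimensional $\BQ_p$-subspace of $\BQ_p\langle z_1,\dots,z_M\rangle$. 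This holds because, in these coordinates, ``translate by $c$, then substitute the chart'' is for each $c$ a polynomial substitution of bounded degree, so the span over the compact parameter space of $c$ is contained in the span of finitely many fixed power series; hypothesis~\ref{Thm4.3_A} guarantees these pulled-back series are not identically zero. By Proposition~\ref{PropUniformOesterleBound} their Oesterl\'e degrees are bounded by one constant $c_*$, and Oesterl\'e's inequality~\eqref{EqOesterle}, together with the index bookkeeping for the tower $D\cap C^{geo}\subset p^\nu\text{-level}\subset C_0^{geo}\subset C^{geo}$, yields for every $\nu>0$ and every open normal $D\subset C$ with $D\cap C^{geo}$ inside the level-$p^\nu$ subgroup of $C_0^{geo}$ the bound
\[
\#\bigl(F_n\bmod D\bigr)\;=\;\#\bigl(c_n^{-1}F_n\bmod(D\cap C^{geo})\bigr)\;\le\;\tfrac{c_*}{p^\nu}\cdot\#\bigl(C^{geo}/(D\cap C^{geo})\bigr)
\]
for all $n\notin N_0$. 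Choosing $\nu$ with $p^\nu>c_*/\epsilon$ and $D$ correspondingly small, this contradicts hypothesis~\ref{Thm4.3_B}, which for that $D$ produces an infinite $S_D$ on which the reverse inequality holds (pick $n\in S_D\setminus N_0$). Hence $N_0$ is infinite and the theorem follows.

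The step I expect to be the main obstacle is precisely the uniform Oesterl\'e-degree bound across the family of fibres: in the intended applications the transporting elements $c_n$ are Frobenius-type elements which do not move in a bounded algebraic family, so the finite-dimensionality of the span of the pulled-back equations has to be extracted from the polynomiality of multiplication in $Y$ and the compactness of $C$ before Proposition~\ref{PropUniformOesterleBound} can be applied; the remaining difficulties (the bounded-degree family of fibres, the open/flatness properties of $pr_2$, and the purely combinatorial index bookkeeping between $D$, $D\cap C^{geo}$, and the standard filtration of $C_0^{geo}$) are routine by comparison.
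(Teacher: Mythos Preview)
Your strategy is the paper's, and the uniformity step you flag as hardest is handled correctly by either your bounded-degree-substitution argument or, as the paper does, by local finiteness of the regular representation of $Y$ on $L[Y]$ (\cite[I.1.9~Proposition]{Borel91}), which puts all $Y(L)$-translates of the finitely many fibre-equations into one finite-dimensional subspace before pulling back through the chart and applying Proposition~\ref{PropUniformOesterleBound}. There is, however, a gap in your dichotomy. Your $N_0$ records when $W$ swallows the \emph{entire} fibre $pr_2^{-1}(ng)$, but $Y^{geo}$, and hence each fibre, need not be connected. In your second case $\tilde W_n$ is proper in $c_nY^{geo}$ but may still contain a full connected component; any polynomial $H_n$ vanishing on $\tilde W_n$ then vanishes identically on that component, and on every $C_0^{geo}$-coset lying inside it the pulled-back power series is zero---hypothesis~\ref{Thm4.3_A} only makes such a coset Zariski-dense in its own component (Lemma~\ref{LemmaC_0Dense}), so your nonvanishing claim fails exactly there. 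The contribution of that coset to $\#(F_n\bmod D)$ can then be a positive fraction of $\#\bigl(C^{geo}/(D\cap C^{geo})\bigr)$ independent of $\nu$, and your displayed bound collapses.

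The fix is to dichotomize instead on whether $W$ contains \emph{some} connected component of $pr_2^{-1}(ng)$ for infinitely many $n$. If not, your second case runs verbatim. If so, your first-case argument no longer applies, since the swallowed components are not the full $pr_2$-preimage of anything; this is exactly the content of Corollary~\ref{Cor4.2}: translate $W$ by the finitely many $L$-rational representatives $x_b$ of the components of $\ker(pr_2)$ (available by~\ref{Thm4.3_A} and~\eqref{EqIsomYgeo}) so that $\bigcup_b x_bW$ contains the full fibre over each such $ng$, apply Theorem~\ref{TheoremMordellLang} to get $pr_2^{-1}(T^c)\subset\bigcup_b x_bW$ for some $T^c$, and conclude by irreducibility that one $x_b^{-1}$-translate of an irreducible component of $pr_2^{-1}(T^c)$ already lies in $W$. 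The paper bypasses the dichotomy, argues by contradiction, and invokes Corollary~\ref{Cor4.2} at exactly this point inside the proof of Lemma~\ref{Lemma4.5}.
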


This theorem is the group theoretic essence needed to give the 

\begin{proof}[Proof of Theorem~\ref{ThmIsoclinic}] \refstepcounter{proofnumber}\label{ProofOfThmIsoclinic}
Let $\CF=\bigoplus_i\CF_i\in \FIsoc_\olK(U)$ be a direct sum of isoclinic convergent $F$-isocrystals $\CF_i$ on $U$. Recall that we have fixed a base point $\BasePoint\in U$ with $\deg(\BasePoint)=\BasePtDeg$, and $\omega_\BasePoint$ is a $K_\BasePtDeg$-rational fiber functor on $\FIsoc_K(U)$ that gives rise to the $\olK$-rational fiber functor $\omega_\BasePoint$ on $\FIsoc_\olK(U)=\FIsoc_K(U)\otimes_K \olK$. There is a finite field extension $L\subset\olK$ of $K_\BasePtDeg$ such that the $\CF$, $\CF_i$ arise as scalar extensions under $L\into \olK$ from $L$-linear $F$-isocrystals on $U$, which we denote again by $\CF,\CF_i\in \FIsoc_L(U)$ by abuse of notation. In the following we will work in the $L$-linear Tannakian category $\FIsoc_L(U)$ which is neutral with respect to the $L$-rational fiber functor $\omega_\BasePoint$. All monodromy groups will be linear algebraic groups over $L$. Let $\tfrac{m_i}{n}$ be the slope of $\CF_i$ with $m_i\in\BZ$ and $n\in\BN$. We may enlarge $L$ if necessary such that $p^{m_i/n}\in L$. Then $\CU_i:=\CF_i\otimes\CC_i\dual$ is unit-root where $\CC_i$ is the pullback to $U$ of the constant $F$-isocrystal on $\Spec\BF_q$ given by $(L,F=p^{m_i/n})$. Then $\CU:=\bigoplus_i \CU_i$ is unit-root and $\CC:=\bigoplus_i\CC_i$ is a direct sum of constant $F$-isocrystals of rank one. Moreover, $\CF$ lies in the Tannakian category $\dal\CU\oplus\CC\dar$, and so the monodromy group $\Gr(\CU\oplus\CC)$ surjects onto $\Gr(\CF)$. By Lemma~\ref{Lemma1.10Converse} it will be enough to see that Conjecture~\ref{MainConj3} holds for $\CU\oplus\CC$. Let $\dal\CU\dar_{const}$ be the full Tannakian sub-category of constant $F$-isocrystals in $\dal\CU\dar$. It is generated by some constant unit-root $F$-isocrystal $\wt\CC$. Then $\dal\wt\CC\dar\subset\dal\CU\dar\cap\dal\CC\oplus\wt\CC\dar\subset\dal\CU\dar_{const}=\dal\wt\CC\dar$. We now replace $\CC$ by $\CC\oplus\wt\CC$ and thus may assume that $\dal\CU\dar\cap\dal\CC\dar=\dal\CU\dar_{const}$. We can assume that $\CC$ is itself not unit-root, by adding a constant $F$-isocrystal with non-zero slope to it if necessary. Let $\rho\colon \pi_1^\et(U,\bar\BasePoint)\to\GL_r(L)$ be the representation corresponding to $\CU$ from Theorem~\ref{ThmCrewsThm}. By Corollary~\ref{CorCrewsThm} this induces a tensor equivalence between $\dal\CU\dar$ and $\dal\rho\dar$, and after possibly enlarging $L$ there is an isomorphism $\beta\colon\omega_f|_{\dal\rho\dar} \isoto\omega_\BasePoint|_{\dal\CU\dar}$ between the forgetful fiber functor $\omega_f$ on $\dal\rho\dar$ and the fiber functor $\omega_\BasePoint$ on $\dal\CU\dar$.

The proof of Conjecture~\ref{MainConj3} for $\CU\oplus\CC$ will rely on Theorem~\ref{Thm4.3}. Thus we will resort to Setup~\ref{Setup4.3}. Let $H=\Gr(\CU)$ and $T=\Gr(\CC)$, and let $G=\Gr(\CU\oplus\CC)$ and $\olT=\Gr(\dal\CU\dar\cap\dal\CC\dar)=\Gr(\dal\CU\dar_{const})$. Then $H^{\rm geo}:=\ker(H\to \olT)$ is the geometric monodromy group $\Gr(\CU)^{\rm geo}$ of $\CU$ from Definition~\ref{Def3.5}, and $G$ is the fiber product $H\times_\olT T$ by Proposition~\ref{PropGroupOfSum}\ref{PropGroupOfSum_B}. After enlarging $L$ if necessary, the groups $\olT$ and $T$ are each the product of a split torus with a finite group and possibly an additional factor $\BG_{a,L}$ by Theorem~\ref{ThmMonodrOfConstant}\ref{ThmMonodrOfConstant_B}. The isomorphism $\beta$ induces an isomorphism $\beta_*\colon\Aut^\otimes(\omega_f|_{\dal\rho\dar}) \isoto\Gr(\CU)$ and the image $\BK:=\beta_*\circ\rho\bigl(\pi_1^\et(U,\bar\BasePoint)\bigr)$ of the induced representation $\beta_*\circ\rho\colon\pi_1^\et(U,\bar\BasePoint)\to \Gr(\CU)(L)$ is Zariski-dense in $\Gr(\CU)$ by Corollary~\ref{CorCrewsThm}. This image is a compact group, because $\pi_1^\et(U,\bar\BasePoint)$ is pro-finite. By adding a constant unit-root $F$-isocrystal to $\CU$ as in Corollary~\ref{CorCrewsThmGeo2}, we can assume that $\BK^{\rm geo}:=H^{\rm geo}(L)\cap \BK$ equals the image $\beta_*\circ\rho\bigl(\pi_1^\et(U,\bar\BasePoint)^{\rm geo}\bigr)$. Then $\BK^{\rm geo}$ is Zariski-dense in $H^{\rm geo}$ by Corollary~\ref{CorCrewsThmGeo}.

For every $x\in|U|$ the Frobenius conjugacy class $\Frob_x(\CU)\subset \Gr(\CU)$ is by Corollary~\ref{CorCrewsThm} generated by the image with respect to $\beta_*\circ\rho$ of the conjugacy class in $\pi_1^\et(U,\bar\BasePoint)$ of the geometric Frobenius $\Frob_x^{-1}$ at $x$. If $\gamma\in T(L)$ is the image of the Frobenius of $\CC$ then $\Frob_x(\CC)$ consists of the single element $\deg(x)\cdot \gamma$ for every $x\in |U|$ by Theorem~\ref{ThmMonodrOfConstant}\ref{ThmMonodrOfConstant_C}. Since $\CC$ was assumed to be not unit-root, the group $\Gamma\subset T(L)$ generated by $\gamma$ is infinite cyclic and dense in $T$, see Theorem~\ref{ThmMonodrOfConstant}\ref{ThmMonodrOfConstant_B}. Consider the Frobenius conjugacy class $\Frob_x(\CU\oplus\CC)\subset\Gr(\CU\oplus\CC)$. By Lemma~\ref{LemmaCompatibility} it is mapped to the conjugacy classes of $\Frob_x(\CU)$ in $H$, respectively of $\Frob_x(\CC)$ in $T$. Therefore, $\Frob_x(\CU\oplus\CC)$ has a representative in $\BK\times\Gamma\subset G(L)$. We now let $S\subset|U|$ be a subset of positive upper Dirichlet density and set 
\[
F\;=\;\bigcup_{x\in S}\Frob_x(\CU\oplus\CC)\cap (\BK\times \Gamma) \;\subset\; (\BK\times \Gamma)\cap G(L)\,.
\]
Theorem~\ref{ThmIsoclinic} is therefore a consequence of the following 

\medskip\noindent
{\itshape Claim.} The octuple $(T,H,\olT,G,\BK,\Gamma,\gamma,F)$ satisfies the hypothesis of Theorem~\ref{Thm4.3}.

\medskip\noindent
Condition~\ref{Thm4.3_A} was already checked. Denoting the morphism $G\to H$ by $pr_1$ and the morphism $G\to T$ by $pr_2$, we see in particular, that for $n\in\BN_{>0}$
\begin{align*}
F\cap pr_2^{-1}(n\gamma)\;=\enspace & \bigcup_{x\in S\colon\deg(x)=n}\Frob_x(\CU\oplus\CC)\cap (\BK\times \Gamma) \qquad\text{and} \\[2mm]
pr_1\bigl(F\cap pr_2^{-1}(n\gamma)\bigr)\;=\enspace & \bigcup_{x\in S\colon\deg(x)=n}\Frob_x(\CU)\cap \BK\\[2mm]
\;\supset\enspace & \beta_*\circ\rho\Biggl(\;\bigcup_{x\in S\colon\deg(x)=n}\text{conjugacy class of }\Frob_x^{-1}\text{ in }\pi_1^\et(U,\bar\BasePoint)\Biggr)\,.
\end{align*}
Therefore, condition~\ref{Thm4.3_B} of Theorem~\ref{Thm4.3} follows from Theorem~\ref{useful_che}. This proves the claim and Theorem~\ref{ThmIsoclinic}.
\end{proof}

\subsection{Proof of Theorem~\ref{Thm4.3}}
The idea of the proof is that under assumption~\ref{Thm4.3_A} Zariski-closed subsets $Y$ of $G$ not containing a connected component of $G$ have lower-dimensional intersection with $\BK$, and hence meet $\BK$ only in few points as made precise in Proposition~\ref{Prop4.5}. This will follow from a point counting result of Oesterl\'e~\cite{Oesterle} and will lead to a contradiction with condition~\ref{Thm4.3_B}. We will first explain Oesterl\'e's result, then analyze condition~\ref{Thm4.3_A}, and then formulate and prove Proposition~\ref{Prop4.5}.

To explain Oesterl\'e's result let $\BZ_p\langle z_1,\ldots,z_N\rangle$ denote the integral Tate ring of restricted power series in $N$ variables, i.e.~the ring of formal power series whose coefficients converge to zero $p$-adically, and let  $\BQ_p\langle z_1,\ldots,z_N\rangle=\BZ_p\langle z_1,\ldots,z_N\rangle\otimes_{\BZ_p}\BQ_p$ denote the Tate algebra over $\BQ_p$ in $N$ variables. Let $0\ne f\in \BQ_p\langle z_1,\ldots,z_N\rangle$ and let
\[
\Var(f)\;:=\;\{\, x=(x_1,\ldots,x_N)\in\BZ_p^N\colon f(x)=0\,\}\;\subset\;\BZ_p^N
\]
be the analytic hypersurface defined by $f$. Note that $\dim \Var(f)=N-1$ by \cite[\S\,5.2.4, Proposition~1, \S\,5.2.2, Theorem~1, \S\,5.2.3, Proposition~3 and the Remark after \S\,6.1.2, Corollary~2]{BGR}. For all $\nu\in\BN_{>0}$ let $\Var(f)_\nu$ denote the image of $\Var(f)$ in  $(\BZ_p/p^\nu\BZ_p)^N$. By multiplying with a suitable constant we normalize $f$ in such a way that all its coefficients are in $\BZ_p$, but not all are in $p\BZ_p$. The reduction of $f$ modulo $p$ is a non-zero polynomial in $z_1,\ldots,z_N$ with coefficients in $\BF_p=\BZ_p/(p)$. Let $\deg(f)$ denote the degree of this polynomial and call it the \emph{Oesterl\'e degree} of $f$. Then Oesterl\'e~\cite[Theorem~4]{Oesterle} proves the following inequality on the cardinality of $\Var(f)_\nu$:
\begin{equation}\label{EqOesterle}
\#\Var(f)_\nu\;\leq\;\deg(f)p^{\nu(N-1)}\quad\text{for every $\nu>0$.}
\end{equation}

To apply this result we need a bound on the Oesterl\'e degree $\deg(f)$ of $f$ as in the following

\begin{prop} \label{PropUniformOesterleBound}
Let $V$ be a finite dimensional $\BQ_p$-linear subspace of the ring $\BQ_p\langle z_1,z_2,\ldots,z_N\rangle$. Then there is a constant $c_V$ only depending on $V$ such that for every non-zero $f\in V$ the Oesterl\'e degree $\deg(f)$ of $f$ is at most $c_V$.
\end{prop}

\begin{proof} 
For $f=\sum_{\ul k} a_{\ul k} \,z_1^{k_1}\cdots z_N^{k_N}\in\BQ_p\langle z_1,\ldots,z_N\rangle$ the Gau{\ss} norm of $f$ is defined as $\|f\|:=\max\{|a_{\ul k}|\colon \ul k\in(\BN_{\ge 0})^N\}$. By \cite[Theorem~50.8 and Proposition~50.4(ii)]{Schikhof} the unit ball $B:=\{f\in V\colon \|f\|\le 1\}\subset V$ with respect to the Gau{\ss} norm is a finitely generated $\BZ_p$-lattice in $V$, having as generating system an orthogonal basis $f_1,\ldots, f_r$ with $\|f_i\|=1$ for all $i$, because the Gau{\ss} norm takes values in $|\BQ_p|$. Let $c_V$ be the maximum of the Oesterl\'e degrees of $f_1,\ldots, f_r$. Since every $f\in V$ with $\|f\|=1$ is a $\BZ_p$-linear combination of the $f_i$, the Oesterl\'e degree of every $0\ne f\in V$ is less or equal to $c_V$.
\end{proof}

As a further preparation we recall the $p$-adic version of Cartan's theorem:

\begin{thm} \label{Cartan1} 
Let $G$ be a linear algebraic group over a finite field extension $L$ of $\BQ_p$, and let $\BK\subset G(L)$ be a subgroup which is compact in the $p$-adic topology. Then $\BK$ is a Lie group over $\BQ_p$.
\end{thm}

\begin{proof} 
Since $\BK$ is compact, it is closed in the Hausdorff space $G(L)=(\Res_{L/\BQ_p}G)(\BQ_p)$ which is an analytic group over $\BQ_p$, so the claim follows from \cite[Part~II, \S\,V.9, Corollary to Theorem~1 on page~155]{Serre92}.
\end{proof}

\begin{defn}\label{DefSerreStandardGp}
We will recall what Serre calls a \emph{standard group}; see \cite[Part~II, \S\,IV.8]{Serre92}. Let $F=(F_1,F_2,\ldots,F_N)\in\BZ_p\dbl x_1,\ldots,x_N,y_1,\ldots,y_N\dbr^N$ be a formal group law in $N$ variables over $\BZ_p$. Serre equips $(p\BZ_p)^N$ with the structure of a Lie group over $\BQ_p$. We need a re-normalization which identifies $\BZ_p$ with $p\BZ_p$ by multiplication with $p$. So we equip the $p$-adic analytic space $\BH_F:=\BZ_p^N$ of dimension $N$ with the structure of a Lie group over $\BQ_p$ where the multiplication is given by the formula
\begin{equation}\label{EqSerreStandardGp}
x\cdot_F \,y\;:=\;\tfrac{1}{p}\cdot F(px,py)\quad\text{for }x,y\in \BH_F:=\BZ_p^N,
\end{equation}
and the identity is $(0,0,\ldots,0)$.
\end{defn}

The proof of Theorem~\ref{Thm4.3} will proceed by exhibiting an open subgroup $\BK_1$ of $\BK^{\rm geo}$ that is Zariski-dense in $(H^{\rm geo})\open$ and isomorphic to $\BZ_p^N$ equipped with a formal group law. If there was a closed subset $Y$ of $G$ not containing a connected component of $G$ we show that $Y\cap\BK_1$ would be contained in a hypersurface of bounded Oesterl\'e degree. By applying Oesterl\'e's result \eqref{EqOesterle} and summing over the $\BK_1$ cosets in $\BK$ we give a precise upper bound on the number of points in $\BK$ in Proposition~\ref{Prop4.5}. This will contradict condition~\ref{Thm4.3_B}.

For this purpose, we next analyze condition~\ref{Thm4.3_A}. Using the isomorphism \eqref{EqIsomHgeo} it implies that every connected component of $\ker(pr_2)$ contains an $L$-rational point. Since $\BK$ is compact, and $\BK^{\rm geo}$ and $\BK^{\rm geo}_0:=\BK\cap (H^{\rm geo})\open(L)$ are closed subgroups we get that $\BK^{\rm geo}$ and $\BK^{\rm geo}_0$ are also compact. Then $\BK^{\rm geo}_0\subset \BK^{\rm geo}\subset \BK\subset H(L)$, and they are Lie groups over $\BQ_p$ by Theorem~\ref{Cartan1}. Note that $\BK^{\rm geo}_0\subset \BK^{\rm geo}$ has finite index, and hence is open as the kernel of the homomorphism $\BK^{\rm geo}\to H^{\rm geo}/(H^{\rm geo})\open$. By \cite[Part~II, \S\,IV.8, Theorem]{Serre92} there is an open subgroup $\BK_1$ of $\BK^{\rm geo}_0$ which is standard in the sense of Definition~\ref{DefSerreStandardGp}. This means that there is a formal group law $F$ in $N$ variables over $\BZ_p$ and an isomorphism
\begin{equation}\label{EqIsomPsi}
\psi\colon \BH_F\isoto \BK_1
\end{equation}
of Lie groups over $\BQ_p$, where $\BH_F=\BZ_p^N$ is equipped with the group law \eqref{EqSerreStandardGp} given by $F$. Since $\BK^{\rm geo}$ is compact the index $[\BK^{\rm geo}:\BK_1]$ is finite. Thus, assumption~\ref{Thm4.3_A} of Theorem~\ref{Thm4.3} implies that $\BK_1$ is dense in $(H^{\rm geo})\open$, as $(H^{\rm geo})\open$ is irreducible.

To establish Theorem~\ref{Thm4.3} we will choose closed embeddings $H\subset\BA^a_{L}$ and $T\subset\BA^b_{L}$ into affine spaces, and consider the induced embedding $G\subset H\TimesL T\subset\BA^{a+b}_{L}$. Recall that by \cite[Part~II, \S\,IV.9, Theorem~1]{Serre92} for every $\nu\in\BN_{>0}$ the subset $p^\nu\BZ_p^N\subset \BH_F=\BZ_p^N$ is actually an open normal subgroup under the group law \eqref{EqSerreStandardGp} given by $F$, and two elements $x,y$ of $\BH_F$ are congruent to each other modulo the subgroup $p^\nu\BZ_p^N$ if and only if $x\equiv y\mod p^{\nu}$. In the next Proposition~\ref{Prop4.5} we derive from Oesterl\'e's result \eqref{EqOesterle} that a closed subset of $\BA^{a+b}_{L}$ which does not contain an irreducible component of $G$ only contains few points in a precise sense which will then contradict condition~\ref{Thm4.3_B}. 
 
\begin{prop} \label{Prop4.5}
Let $T^c$ be a connected component of $T$ and let $Y\subset pr_2^{-1}(T^c)\subset\BA^{a+b}_{L}$ be a closed subset which does not contain any irreducible component of $pr_2^{-1}(T^c)$. Then there is a positive integer $d_{c,Y}$ such that for all but finitely many $n\in\BZ$ and for every $\nu\in\BN_{>0}$ the image of $pr_1(Y\cap pr_2^{-1}(n\gamma))\cap \BK$ under the quotient map $\BK\onto \BK/\psi(p^\nu\BZ_p^N)$ has cardinality at most $d_{c,Y}\cdot p^{\nu(N-1)}$. 
\end{prop}

\begin{proof} 
We can write $Y\subset\BA^{a+b}_{L}$ as an intersection of finitely many hypersurfaces defined by polynomials $f_\ell=f_\ell(\ul h,\ul t)\in L[\ul h,\ul t]$, where $\ul h$ and $\ul t$ denote the coordinates on $\BA^a_L$ and $\BA^b_L$, respectively. Let $d_Y$ be the maximum of the degrees of these polynomials. Note that for $n\gamma\notin \Gamma\cap T^c$ the lemma holds trivially, because then $Y\cap pr_2^{-1}(n\gamma)$ is empty. On the other hand, Corollary~\ref{Cor4.2} implies that for all but finitely many $n\gamma\in \Gamma\cap T^c$ the intersection $Y\cap pr_2^{-1}(n\gamma)$ does not contain an entire connected component of $pr_2^{-1}(n\gamma)$. We now fix an $n\gamma\in \Gamma\cap T^c$ for which this holds. 

We claim that $G\cap (\BK\times\{n\gamma\})$ is either a $\BK^{\rm geo}\times\{1\}$-coset in $\BK\times\{n\gamma\}$ or empty (in which case the assertion of the lemma again holds trivially). Indeed, if this set is non empty, let $g_n=(h_n,n\gamma)$ be a point in it with $h_n=pr_1(g_n)\in \BK$. Then $\phi_1(h_n)=\phi_1 pr_1(g_n)=\phi_2 pr_2(g_n)=\phi_2(n\gamma)$ and so every other point $\tilde g_n=(\tilde h_n,n\gamma)\in G\cap (\BK\times\{n\gamma\})$ satisfies $g_n\cdot(h_n^{-1}\tilde h_n,1)=\tilde g_n$ with $\phi_1(h_n^{-1}\tilde h_n)=\phi_1( h_n^{-1})\cdot \phi_1(\tilde h_n)=\phi_2(n\gamma)^{-1}\cdot \phi_2(n\gamma)=1$, that is $h_n^{-1}\tilde h_n\in \BK\cap H^{\rm geo}=\BK^{\rm geo}$. This implies $G\cap (\BK\times\{n\gamma\})\subset g_n\cdot(\BK^{\rm geo}\times\{1\})$. For the converse inclusion note that for every $c\in \BK^{\rm geo}$ the point $g_n\cdot(c,1)=(h_nc,n\gamma)\in H\TimesL T$ lies in $G$ because $\phi_1(h_nc)=\phi_1(h_n)=\phi_2(n\gamma)$. This proves that $G\cap (\BK\times\{n\gamma\})= g_n\cdot(\BK^{\rm geo}\times\{1\})$. As a consequence, $G\cap (\BK\times\{n\gamma\})$ is the pairwise disjoint union of $m$ cosets $g_{n,1}\cdot(\BK_1\times\{1\}),\ldots, g_{n,m}\cdot(\BK_1\times\{1\})$ of $\BK_1\times\{1\}$, where we let $m:=[\BK^{\rm geo}:\BK_1]$ be the index. 

Thus, for every such coset $\BK'=g_{n,i}\cdot(\BK_1\times\{1\})$ the intersection $Y\cap \BK'$ is a proper subset of $\BK'$, because $pr_1(g_{n,i}^{-1}\cdot \BK')=\BK_1$ is dense in $(H^{\rm geo})\open$ under assumption~\ref{Thm4.3_A}, as remarked above. Under the isomorphism from \eqref{EqIsomHgeo} this implies that $g_{n,i}^{-1}\cdot \BK'$ is dense in the identity component $\ker(pr_2)\open$, and so $\BK'$ is dense in $g_{n,i}\cdot\ker(pr_2)\open$. The latter is a connected component of $pr_2^{-1}(n\gamma)$ and not contained in $Y$ by our assumption on $n\gamma$. Therefore, $Y\cap \BK'$ is a proper closed subset of $\BK'$ cut out by the finitely many polynomials $f_\ell(\ul h,n\gamma)\in L[\ul h]$ of degree $\le d_Y$ obtained from $f_\ell(\ul h,\ul t)$ by plugging in (the coordinates of) the point $n\gamma$. Under the projection $pr_1$, which induces like in \eqref{EqIsomHgeo} an isomorphism $pr_1\colon pr_2^{-1}(n\gamma)\isoto pr_1(g_{n,i})\cdot H^{\rm geo}$ of varieties, $pr_1(Y\cap \BK')$ is a proper closed subset of $pr_1(\BK')$ cut out by the same polynomials $f_\ell(\ul h,n\gamma)\in L[\ul h]$. We consider the images $\bar f_\ell(\ul h,n\gamma)$ of these $f_\ell(\ul h,n\gamma)$ in the coordinate ring $L[H]$ of $H$.

For every such coset $\BK'=g_{n,i}\cdot(\BK_1\times\{1\})$, we consider the subset $Y'_{n,i}=pr_1(g_{n,i})^{-1}\cdot pr_1(Y\cap \BK')\subset \BK_1\subset H^{\rm geo}$, which is a proper subset of $\BK_1$ cut out by finitely many hypersurfaces of $H^{\rm geo}$. Namely, if we set $h_{n,i}:=pr_1(g_{n,i})\in \BK$, then $Y'_{n,i}$ is cut out by the pullbacks $\bar f'_{\ell,n,i}:=t_{h_{n,i}}^*(\bar f_\ell(\ul h,n\gamma))\in L[H]$ of the polynomials $\bar f_\ell(\ul h,n\gamma)$ under the translation $t_{h_{n,i}}$ by $h_{n,i}$. Let $\olW\subset L[H]$ be the $L$-linear subspace spanned by $t_h^*(\bar f)$ for all $h\in H(L)$ and all $\bar f\in L[H]$, which are images of polynomials $f\in L[\ul h]$ of degree $\le d_Y$. Then $\olW$ has finite dimension by \cite[I.1.9~Proposition]{Borel91} which only depends on $d_Y$, and the polynomials $\bar f'_{\ell,n,i}$ cutting out $Y'_{n,i}$ in $\BK_1$ belong to $\olW$. Let $W\subset L[\ul h]$ be a finite dimensional $L$-linear subspace that surjects onto $\olW$ and choose preimages $f'_{\ell,n,i}\in W$ of all $\bar f'_{\ell,n,i}$. Then $Y'_{n,i}$ is cut out in $\BK_1$ by the polynomials $f'_{\ell,n,i}$. We note that $pr_1(Y\cap pr_2^{-1}(n\gamma))\cap \BK$ equals the disjoint union $\coprod_{i=1}^m h_{n,i}\cdot Y'_{n,i}$ by using again the isomorphism $pr_1\colon pr_2^{-1}(n\gamma)\isoto pr_1(g_{n,i})\cdot H^{\rm geo}$. So it remains to count the elements in the finite set $Y'_{n,i}/\psi(p^\nu\BZ_p^N)$ or equivalently its preimage $\psi^{-1}(Y'_{n,i})/(p^\nu\BZ_p)^N$ under the isomorphism $\psi$ from \eqref{EqIsomPsi}. 

We claim that this preimage is contained in a proper analytic hypersurface $X$ in $\BH_F=\BZ_p^N$ whose degree is bounded independently of $n$ and $Y'_{n,i}$ by a constant depending only on the degree $d_Y$ and the isomorphism $\psi$. Indeed, we choose a $\BQ_p$-basis $(\alpha_1,\ldots,\alpha_s)$ of $L$, where we set $s=[L:\BQ_p]$. The map $\psi\colon \BH_F\isoto \BK_1\subset H(L)\subset(\BA^a_{L})(L)$ is given with respect to coordinates on $\BA^a_{L}$ by power series $\psi_1,\ldots,\psi_a\in L\langle z_1,\ldots,z_N\rangle$ which converge for every $x=(x_1,\ldots,x_N)$ in $\BH_F=\BZ_p^N$. Writing $f'_{\ell,n,i}\in W$ for the polynomial equations cutting out $Y'_{n,i}\subset \BK_1$ we see that $\psi^{-1}(Y'_{n,i})$ is the zero locus of the $f'_{\ell,n,i}(\psi_1,\ldots,\psi_a)\in L\langle z_1,\ldots,z_N\rangle$. With respect to the $\BQ_p$-basis $(\alpha_j)_j$ of $L$ we can write $f'_{\ell,n,i}(\psi_1,\ldots,\psi_a)=\sum_j \alpha_j\cdot \tilde{f}_{\ell,n,i,j}$ with $\tilde{f}_{\ell,n,i,j}\in\BQ_p\langle z_1,\ldots,z_N\rangle$. Then $\psi^{-1}(Y'_{n,i})$ is the simultaneous zero locus of all $\tilde{f}_{\ell,n,i,j}$. More precisely, we view $f'_{\ell,n,i}$ as a morphism $H\to\BA_L$ and consider its Weil restriction $\Res_{L/\BQ_p}f'_{\ell,n,i}\colon\Res_{L/\BQ_p}H\to\Res_{L/\BQ_p}\BA_L$. Here $\Res_{L/\BQ_p}\BA_L$ is the Weil restriction of $\BA_L$, which is isomorphic to $\BA^{s}_{\BQ_p}$ under the identification $(\Res_{L/\BQ_p}\BA_L)(\BQ_p)=L=\bigoplus_j\alpha_j\BQ_p$. Then $\psi^{-1}(Y'_{n,i})$ is the simultaneous zero locus of all the morphisms (for all $\ell$)
\begin{align*}
& \BH_F\;\xrightarrow[\raisebox{+2.5mm}{$\scriptstyle\sim$}]{\enspace\psi\;} \;\BK_1\;\subset\;(\Res_{L/\BQ_p}H)(\BQ_p)\;\xrightarrow{\;\Res_{L/\BQ_p}f'_{\ell,n,i}\,}\;(\Res_{L/\BQ_p}\BA_L)(\BQ_p)\;\cong\;\BA^{s}_{\BQ_p}(\BQ_p)\,.\\[-1mm]
& \; x \;\raisebox{0.1ex}{$\shortmid$}\hspace{-1.7mm}\xrightarrow{\hspace{8.5cm}}\bigl(\tilde{f}_{\ell,n,i,j}(x_1,\ldots,x_N)\bigr)_{j=1,\ldots,s}
\end{align*}
Since $Y'_{n,i}\ne \BK_1$, at least one $\tilde{f}_{\ell,n,i,j}$ is non-zero, and this defines the hypersurface $X$.

Let $V\subset\BQ_p\langle z_1,\ldots,z_N\rangle$ be the $\BQ_p$-vector space generated by all $\tilde{f}_j$ where $f'$ runs through a $\BQ_p$-basis of $W$ and $f'(\psi_1,\ldots,\psi_a)=\sum_j \alpha_j\cdot \tilde{f}_j$. Then $V$ is a finite dimensional $\BQ_p$-vector space which only depends on $d_Y$ and $\psi$ and not on $n\gamma$ and $g_{n,i}$. By Proposition~\ref{PropUniformOesterleBound} there is a constant $c_V$ such that the Oesterl\'e degree $\deg(\tilde{f})\le c_V$ for all $0\ne \tilde{f}\in V$. It now follows from Oesterl\'e's result \eqref{EqOesterle} that the cardinality of $Y'_{n,i}/\psi(p^\nu\BZ_p^N)$ is at most $c_V\,p^{\nu(N-1)}$. Thus the image of $pr_1(Y\cap pr_2^{-1}(n\gamma))\cap \BK$ under the quotient map $\BK\onto \BK/\psi(p^\nu\BZ_p^N)$ has cardinality at most $m\,c_V\,p^{\nu(N-1)}$. 
\end{proof}

After these preparations it is easy to finish the 

\begin{proof}[Proof of Theorem~\ref{Thm4.3}] 
We assume to the contrary that the closure of $F$ does not contain any connected component of $G$. Fix a connected component $T^c$ of $T$ and let $Y\subset pr_2^{-1}(T^c)$ be the closure of $F\cap pr_2^{-1}(T^c)$. Then the assumption implies that $Y$ does not contain any irreducible component of $pr_2^{-1}(T^c)$. Let $d_{c,Y}$ be the positive integer from Proposition~\ref{Prop4.5}. Then for all but finitely many $n\in\BN$ and for every $\nu\in\BN_{>0}$ the image of the set $pr_1\bigl(F\cap pr_2^{-1}(n\gamma)\cap pr_2^{-1}(T^c)\bigr)$ under the quotient map $\BK\onto \BK/\psi(p^\nu\BZ_p^N)$ has cardinality at most $d_{c,Y}\cdot p^{\nu(N-1)}$. Let $d$ be the sum $\sum_{T^c}d_{c,Y}$ of the $d_{c,Y}$ over all connected components $T^c$ of $T$. Taking the union over all $T^c$ we see that for all but finitely many $n\in\BN$ and for every $\nu\in\BN_{>0}$ the image of the set $pr_1(F\cap pr_2^{-1}(n\gamma))\subset \BK$ under the quotient map $\BK\onto \BK/\psi(p^\nu\BZ_p^N)$ has cardinality at most $d\,p^{\nu(N-1)}$.

Now let $\epsilon>0$ be the constant from assumption~\ref{Thm4.3_B} of Theorem~\ref{Thm4.3} and choose $\nu$ so large that $\epsilon\,p^{\nu N} > d\,p^{\nu(N-1)}$. We consider the subgroup $\BJ_1^{\rm geo}:=\psi(p^\nu\BZ_p^N)\subset \BK^{\rm geo}$ which is open in $\BK^{\rm geo}$. Then $\#(\BK^{\rm geo}/\BJ_1^{\rm geo})\ge\#(\BK_1/\BJ_1^{\rm geo})=\#(\BZ_p/p^\nu\BZ_p)^N=p^{\nu N}$. Since $\BK^{\rm geo}$ carries the subspace topology induced from $\BK$ and the topology on $\BK$ is the topology of $\BK$ as a profinite group by \cite[Part~II, \S\,IV.8, Corollary~2]{Serre92}, there is an open normal subgroup $\BJ\subset \BK$ such that $\BJ^{\rm geo}:=\BK^{\rm geo}\cap \BJ\subset \BJ_1^{\rm geo}$. By assumption~\ref{Thm4.3_B} of Theorem~\ref{Thm4.3} there is an infinite subset $R_{\BJ}\subset\BN$ such that for every $n\in R_{\BJ}$ the image of $pr_1(F\cap pr_2^{-1}(n\gamma))\subset \BK$ under the quotient map $\BK\to \BK/\BJ$ has cardinality at least $\epsilon\cdot\#(\BK^{\rm geo}/\BJ^{\rm geo})$. Since the map $\BK\to \BK/\BJ$ factors through $\BK\to \BK/\BJ^{\rm geo}\to \BK/\BJ$, the image of $pr_1(F\cap pr_2^{-1}(n\gamma))\subset \BK$ under the quotient map $\BK\to \BK/\BJ^{\rm geo}$ has also cardinality at least $\epsilon\cdot\#(\BK^{\rm geo}/\BJ^{\rm geo})$. The fibers of the map $\BK/\BJ^{\rm geo}\onto \BK/\BJ_1^{\rm geo}$ are principal homogeneous spaces under the group $\BJ_1^{\rm geo}/\BJ^{\rm geo}$. This implies that the number of points in the image of $pr_1(F\cap pr_2^{-1}(n\gamma))$ in $\BK/\BJ^{\rm geo}$ which are mapped to the same point in $\BK/\BJ_1^{\rm geo}$ is at most $\#(\BJ_1^{\rm geo}/\BJ^{\rm geo})$. In particular, the image of $pr_1(F\cap pr_2^{-1}(n\gamma))$ in $\BK/\BJ_1^{\rm geo}$ has cardinality at least $\epsilon\cdot\#(\BK^{\rm geo}/\BJ^{\rm geo})/\#(\BJ_1^{\rm geo}/\BJ^{\rm geo})=\epsilon\cdot\#(\BK^{\rm geo}/\BJ_1^{\rm geo})\ge\epsilon \,p^{\nu N}$. But this contradicts the estimate from the previous paragraph. This completes the proof of Theorem~\ref{Thm4.3}.
\end{proof}

\section{Maximal Quasi-Tori, Maximal Compact Subgroups, and Zariski-Density Criteria} \label{SectMaxQuasiTori}

We need some results in the theory of not necessarily connected algebraic groups. In Subsections~\ref{SubSectQuasiTori} and \ref{SubSectConjMQT} the letter $L$ denotes an algebraically closed field of characteristic $0$ and all undecorated fiber products $\times$ are over $L$.

\subsection{The Theory of Maximal Quasi-Tori} \label{SubSectQuasiTori}

In the sequel, we will need the following mild generalization of a classical result of Steinberg~\cite[Theorem~8.1]{Steinberg68}. It was announced in \cite[Theorem~1.1.A]{Kottwitz-Shelstad99} with a brief sketch of proof. We include a full proof for the convenience of the reader. We recall Notation~\ref{InitialNotation}.

\begin{thm} \label{steinberg} 
Let $G$ be a reductive linear algebraic group over $L$, let $h\in G$ be a semi-simple element which normalizes a Borel subgroup $B\subset G\open$ and a maximal torus $T\subset B$. Then $G^h$ is reductive, $T^h{}\open$ is a maximal torus in $G^h{}\open$, and $B^h{}\open$ is a Borel subgroup in $G^h{}\open$. 
\end{thm}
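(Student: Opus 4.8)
The plan is to reduce the statement to Steinberg's classical connectedness theorem together with the structure theory of reductive groups applied to the (possibly disconnected) group $G$. The key observation is that a semisimple automorphism of $G\open$ — here conjugation by $h$ — together with the fact that $h$ normalizes the pair $(B,T)$ lets me invoke the relevant fixed-point results. I would organize the argument in three stages: first establish reductivity of $G^h = Z_{G\open}(h)$, then identify $T^h{}\open$ as a maximal torus of $G^h{}\open$, and finally identify $B^h{}\open$ as a Borel.

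\textbf{Step 1: $G^h$ is reductive.} Since $h$ is semisimple, the subgroup $\langle h, G\open\rangle$ it generates with $G\open$ in $G$ has identity component $G\open$, and conjugation by $h$ is a semisimple automorphism $\sigma$ of $G\open$ in the sense that $h$ lies in a linearly reductive (diagonalizable-modulo-$G\open$) subgroup; more concretely, choosing a faithful representation $G\subset\GL_n$, the element $h$ is a semisimple matrix. I would argue that $G^h = (G\open)^\sigma$ has reductive identity component: if $R_u(G^h{}\open)$ were nontrivial, it would be a $\sigma$-stable (indeed $h$-stable, being characteristic in $G^h{}\open$) connected unipotent subgroup, but by a theorem on fixed points of semisimple automorphisms (e.g. the fact that $(G\open)^\sigma{}\open$ is reductive when $G\open$ is reductive and $\sigma$ is semisimple, as in Steinberg's work and its extensions — see the reference to \cite{Kottwitz-Shelstad99}) this cannot happen. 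This is essentially the content one reduces to; alternatively one can cite \cite[9.1]{Steinberg} directly.

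\textbf{Steps 2 and 3: maximality of $T^h{}\open$ and that $B^h{}\open$ is Borel.} Since $h$ normalizes $T$, the fixed group $T^h{}\open = Z_T(h)\open$ is a subtorus of $T$. I would show it is a maximal torus of $G^h{}\open$ by a dimension count combined with the classical result of Steinberg \cite[Theorem~8.1]{Steinberg}: for a semisimple element $h$ normalizing a Borel $B$ and maximal torus $T\subset B$ of the \emph{connected} reductive group $G\open$, the centralizer $Z_{G\open}(h)$ is connected with maximal torus $Z_T(h)\open$ and Borel subgroup $Z_B(h)\open$. The subtlety here is that $h$ need not lie in $G\open$, so I cannot quote Steinberg verbatim for $h$ acting by inner automorphisms; instead I apply Steinberg's theorem in the form valid for semisimple automorphisms of connected reductive groups preserving an épinglage-compatible pair $(B,T)$. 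Concretely: $\sigma=\operatorname{Int}(h)$ permutes the simple roots, $(G\open)^\sigma{}\open$ is connected reductive with maximal torus $(T^\sigma)\open = T^h{}\open$ and Borel $(B^\sigma)\open = B^h{}\open$. Then $G^h{}\open = (G\open)^\sigma{}\open$ gives the claim, and $B^h{}\open$ being a Borel of $G^h{}\open$ follows because it is connected solvable and the quotient $G^h{}\open/B^h{}\open$ is proper (a closed subvariety of the $\sigma$-fixed points of the flag variety $G\open/B$, hence complete), so $B^h{}\open$ is a parabolic, and it is solvable, hence Borel.

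\textbf{Main obstacle.} The principal difficulty is the passage from ``$h$ is a semisimple \emph{element} of the possibly disconnected $G$'' to ``$\operatorname{Int}(h)$ is a semisimple \emph{automorphism} of $G\open$ preserving a pinning-compatible pair.'' One must be careful that semisimplicity of $h$ as a group element (equivalently, as a matrix under any faithful representation) genuinely yields a semisimple — i.e., completely reducible / quasi-semisimple — action on $\operatorname{Lie}(G\open)$ and on $G\open$ itself, and that Steinberg's connectedness theorem does apply in this generality; in characteristic $0$ this is standard but deserves a precise citation. I would handle it by reducing to the case where $h$ has finite order modulo $G\open$ (replacing $h$ by $h^m$ for suitable $m$ changes nothing essential, since $Z_{G\open}(h) = Z_{G\open}(\langle h\rangle)$ and a power of a semisimple element is semisimple, while the Zariski closure of $\langle h\rangle$ modulo $G\open$ is a diagonalizable group) and then invoking the theorem of Steinberg on semisimple automorphisms. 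The remaining verifications — reductivity, dimension count, completeness of the fixed flag variety — are routine once this reduction is in place.
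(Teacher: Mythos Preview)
Your proposal leans on citing a version of Steinberg's theorem for semisimple automorphisms of a general connected reductive group, but this is precisely the statement the paper sets out to \emph{prove}: the authors note that Kottwitz--Shelstad only announce it with a sketch, and Steinberg's own Theorem~7.5 is stated for \emph{simply connected} $G\open$. So your Step~2/3, where you assert that $(G\open)^{\sigma}{}\open$ is reductive with maximal torus $(T^{\sigma})\open$ and Borel $(B^{\sigma})\open$, is assuming the conclusion. Your flag-variety argument for the Borel part also has a gap: the map $G^h{}\open/B^h{}\open \to (G\open/B)^{\sigma}$ is not obviously a closed immersion, since the stabiliser $G^h{}\open\cap B$ may strictly contain $B^h{}\open$ (it lies between $B^h{}\open$ and $B^h$), and the orbit of the basepoint is a priori only locally closed. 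Finally, in your ``main obstacle'' section, the reduction ``replace $h$ by $h^m$'' is not harmless: $Z_{G\open}(h^m)$ can be strictly larger than $Z_{G\open}(h)$, so passing to a power genuinely changes the centraliser you are computing.

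The paper's route is a concrete two-step dévissage rather than an appeal to a black box. First, when $G\open$ is semisimple, it passes to the universal cover $\phi\colon\widetilde G\to G$, lifts $h$ to a semisimple $\widetilde h$ (using that $\ker\phi$ is finite central and a Jordan-decomposition argument), applies Steinberg's simply connected theorem upstairs to $(\widetilde G,\widetilde T,\widetilde B)$, and then shows $\widetilde G^{\widetilde h}{}\open=\phi^{-1}(G^h)\open$ etc.\ to descend reductivity, the maximal-torus, and the Borel statements. Second, for general reductive $G\open$, it quotients by the connected centre $Z$ to reduce to the semisimple case, and the real work is a nontrivial proposition: the commutator map $\kappa\colon x\mapsto h^{-1}xhx^{-1}$ on the preimage of $\overline G^{\psi(h)}{}\open$ lands in the subtorus $J=\{h^{-1}zhz^{-1}\colon z\in Z\}$, proved via a density-of-semisimples argument and a small lemma (Lemma~\ref{LemmaFiniteAutOfTorus}) on finite-order automorphisms of tori. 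This is the substantive content your proposal is missing.
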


Note that $G^h$, $T^h$ and $B^h$ are not connected in general as can be seen from the following 

\begin{example}\label{ExKS}
Let $p$ be a prime number and let $G$ be the $p-1$-dimensional torus:
$$G \;=\; \{(x_1,x_2,\ldots,x_p)\in\BG_m^p
\colon x_1\cdot x_2\cdots x_p=1\}.$$
Then the cyclic permutation
$$h\colon (x_1,x_2,\ldots,x_p) \;\longmapsto\; (x_2,x_3,\ldots,x_p,x_1)$$
is an automorphism of $G$ of order $p$ whose fixed points are
$$(\zeta,\zeta,\ldots,\zeta),$$
where $\zeta$ is any $p$-th root of unity. The semi-direct product $G\rtimes
\BZ/p\BZ$ where the generator $h$ of $\BZ/p\BZ$ acts by the automorphism above is a counter-example to the connectivity of both the $h$-fixed points of a maximal torus and a Borel subgroup of $G$, since the latter are both equal to $G$.
\end{example}

For the proof of Theorem~\ref{steinberg} we need a

\begin{prop}\label{PropUnivCov}
Let $G$ be a linear algebraic group over $L$ whose identity component $G\open$ is semi-simple. Then there is a linear algebraic group $\wt G$ over $L$ and a surjective group homomorphism $\phi\colon\wt G \to G$ such that $\wt G\open\to G\open$ is the universal simply connected cover. The kernel $\wt K$ of $\phi$ is finite and $\wt K\cap \wt G\open$ lies in the center of $\wt G\open$. 
\end{prop}

\noindent
\emph{Remark.} Note that in general there might not exist a $\wt G$ for which the surjective map $\wt G/\wt G\open\onto G/G\open$ is bijective.

\begin{proof}[Proof of Proposition~\ref{PropUnivCov}.]
Since the question is independent of the field $L$ we may assume that $L=\BC$. Then by \cite{Borovoi98} the algebraic fundamental group $\pi_1(G\open)$ equals the topological fundamental group of the complex Lie group $G\open(\BC)$. By \cite[Corollary~5.6]{BrownMucuk} the group $\wt G$ with the described properties exists as a complex Lie group with finite component group $\wt  G/\wt G\open$. Since $\wt G\open$ is linear algebraic, $\wt G$ is too. More precisely, by \cite[Theorem~5.2]{BrownMucuk} a covering $\phi\colon\wt G\to G$ of topological groups with finite component group $\Gamma:=\wt G/\wt G\open$ and $\wt G\open$ simply connected exists if and only if a certain obstruction class in $\Koh^3\bigl(\Gamma,\pi_1(G\open)\bigr)$ vanishes. This class is the image of the corresponding class in $\Koh^3\bigl(G/G\open,\pi_1(G\open)\bigr)$. Since the latter cohomology group is finite, it can be annihilated by a surjection $\Gamma\onto G/G\open$ for a suitable finite group $\Gamma$. Then $\wt K$ is an extension of the finite groups $\ker(\Gamma\onto G/G\open)$ by $\wt K\cap\wt G\open$, and the latter lies in the center of $\wt G\open$ by \cite[V.24.1]{Borel91}.
\end{proof}

\begin{proof}[Proof of Theorem~\ref{steinberg}.] Steinberg~\cite[Theorem~8.1]{Steinberg68} proved the claim when $G\open$ is simply connected. We reduce the general case to this one via two reduction steps. First assume that $G\open$ is semi-simple, let $\phi:\wt G\to G$ be the surjective homomorphism from Proposition~\ref{PropUnivCov} and let $\wt K\subset\wt G$ be the kernel of $\phi$. Pick an element $\wt h\in\wt G$ in the preimage of $h$, and let $\wt T,\wt B$ be the preimages of $T,B$ in $\wt G$. Note that $\wt h$ is semi-simple, because when we write $\wt h=\wt h_s\wt h_u$ as a product of its semi-simple part $\wt h_s$ and its unipotent part $\wt h_u$, then $\wt h_u$ lies in $\wt K$ by \cite[I.4.4~Theorem]{Borel91}, because $\phi(\wt h)=h$ is semi-simple. If we denote by $n$ the order of $\wt K$, then $\wt h^n=\wt h_s^n\wt h_u^n=\wt h_s^n$ is semi-simple. Therefore, already $\wt h$ is semi-simple by Lemma~\ref{LemmaUnipotConnected}\ref{LemmaUnipotConnected_C}.

Since $T$ is connected the restriction $\phi|_{\wt T\open}:\wt T\open\to T$ is surjective with finite kernel $\wt K\cap\wt G\open$, therefore the connected group $\wt T\open$ must be a torus. It has the same dimension as $T$, so it must be a maximal torus in $\wt G\open$, as the ranks of $G\open$ and $\wt G\open$ are the same. Likewise $\wt B$ is an extension of the solvable group $B$ by the commutative group $\wt K\cap\wt G\open$. Therefore, $\wt B$ is connected solvable and of the same dimension as $B$, and hence a Borel subgroup of $\wt G\open$. Since
$$\phi(\wt h^{-1}\wt T\wt h)=\phi(\wt h^{-1})
\phi(\wt T)\phi(\wt h)=h^{-1}Th=T,$$
we get that $\wt h$ normalizes $\wt T$. A similar computation shows that $\wt h$ normalizes $\wt B$, too. Therefore, by Steinberg's theorem quoted above the subgroup $\wt G^{\wt h}$ is reductive, $\wt T^{\wt h}{}\open$ is a maximal torus in $\wt G^{\wt h}$, and $\wt B^{\wt h}{}\open$ is a Borel subgroup in $\wt G^{\wt h}$.

The regular map
$$\phi^{-1}(G^h)\;\longto\; \wt K\,,\quad \wt x\mapsto\wt h^{-1}\wt x\wt h\wt x^{-1}$$
has finite image, so it is locally constant, hence $1$ on $\phi^{-1}(G^h)\open$. Therefore, $\phi^{-1}(G^h)\open\subset\wt G^{\wt h}$. Clearly
$\wt G^{\wt h}\subset\phi^{-1}(G^h)$, and hence $\wt G^{\wt h}{}\open=\phi^{-1}(G^h)\open$. A similar argument shows that 
$\wt T^{\wt h}{}\open=\phi^{-1}(T^h)\open$ and $\wt B^{\wt h}{}\open=\phi^{-1}(B^h)\open$. Since the connected reductive group $\wt G^{\wt h}{}\open$ surjects onto $G^h{}\open$, we get that $G^h{}\open$ is reductive by \cite[IV.14.11~Corollary]{Borel91}. Similarly $\wt T^{\wt h}{}\open$ surjects onto $T^h{}\open$, so the latter is a maximal torus in $G^h{}\open$ by \cite[IV.11.14~Proposition]{Borel91}. The same reasoning shows that $B^h{}\open$ is a Borel subgroup in $G^h{}\open$. 

Consider now the general case and let $Z\subset G\open$ be the identity component of the center of $G\open$. It equals the radical of $G\open$ and is a torus contained in $T$ by \cite[IV.11.21~Proposition]{Borel91}. Set $\olG=G/Z$ and let $\psi:G\to\olG$ be the quotient map. Then $\olG$ is semi-simple. Let $\olT,\olB$ be the image of $T,B$ in $\olG$, respectively. The subgroup $\olT$ is a maximal torus in $\olG$ and $\olB$ is a Borel subgroup in $\olG$. Clearly $\olT\subset\olB$ and $\psi(h)$ normalizes this pair, so by the case which we have just proven the subgroup $\olG{}^{\psi(h)}$ is reductive, $\olT{}^{\psi(h)}{}\open$ is a maximal torus in $\olG{}^{\psi(h)}{}\open$, and $\olB{}^{\psi(h)}{}\open$ is a Borel subgroup in $\olG{}^{\psi(h)}{}\open$. Now let $\wt G,\wt T, \wt B$ denote the preimage of $\olG{}^{\psi(h)},\olT{}^{\psi(h)}{}\open$ and $\olB{}^{\psi(h)}{}\open$ with respect to $\psi$, respectively. Clearly $G^h{}\subset\wt G$ and $T^h{}\open\subset\wt T\open\subset T$ and $B^h{}\open\subset\wt B\subset B$, because $Z\subset T\subset B$.

\bigskip\noindent
{\itshape Claim.} Under $\psi$ the groups $G^h{}\open,T^h{}\open,B^h{}\open$ surject onto $\olG{}^{\psi(h)}{}\open,\olT{}^{\psi(h)}{}\open$ and $\olB{}^{\psi(h)}{}\open$, respectively. 

\bigskip\noindent
To prove the claim note that $h^{-1}Zh\subset Z$ because $Z$ is a normal subgroup of $G$. Therefore, the map $$Z\to Z,\quad z\mapsto h^{-1}zhz^{-1}$$ is a homomorphism of groups. Let $Z'$ be the image of $Z$ under this homomorphism. It is a closed subgroup of $Z$ invariant under conjugation by $h$ as the computation $h(h^{-1}zhz^{-1})h^{-1}=h^{-1}(hzh^{-1})h(hz^{-1}h^{-1})$ shows. Note that it will be enough to show that the regular map
\begin{equation}\label{EqImageOfMapKappa}
\kappa:\wt G\open\to\wt G\open,\quad x\mapsto h^{-1}xhx^{-1}
\end{equation}
has image in $Z'$. Indeed, if this is the case then for every $x\in\wt G\open$ there is a $z\in Z$ such that 
$$h^{-1}xhx^{-1}=h^{-1}zhz^{-1},$$
so
\[
h^{-1}xz^{-1}h(xz^{-1})^{-1}=h^{-1}xh(h^{-1}z^{-1}h)zx^{-1}=
h^{-1}xhx^{-1}z(h^{-1}z^{-1}h)=h^{-1}zhz^{-1}zh^{-1}z^{-1}h=1
\]
using that both $z$ and $h^{-1}z^{-1}h$ are in the center of $G\open$. Therefore, $xz^{-1}\in G^h{}\open$, but $\psi(x)=\psi(xz^{-1})$ as $z\in Z$. So $\psi(G^h{}\open)=\psi(G^h)\open=\psi(\wt G\open)=\olG{}^{\psi(h)}{}\open$. Using $Z\subset T\subset B$, a similar argument shows that $T^h{}\open$ and $B^h{}\open$ surject onto $\olT^{\psi(h)}{}\open$ and $\olB^{\psi(h)}{}\open$ with respect to $\psi$, respectively. So the claim will follow.

We compute the image of the map $\kappa$ from \eqref{EqImageOfMapKappa}. The group $\wt G\open$ is the central extension of the reductive group $\olG{}^{\psi(h)}{}\open$ by the torus $Z$, so it is reductive. Therefore, semi-simple elements are dense in $\wt G\open$. So it will be enough to show that $\kappa$ maps every maximal torus $S\subset\wt G\open$ into $Z'$, since the latter is closed. Since $Z$ is a central torus, it is contained in $S$. Therefore, $\psi^{-1}(\psi(S))=S$. The image $\psi(S)$ is a torus on which the action of $\psi(h)$ is trivial, in particular $\psi(S)$ is normalized by $\psi(h)$. Therefore, $h$ normalizes $S$. Since $h\in G^h\subset \wt G$, for some positive integer $m$ we have $h^m\in\wt G{}\open$, so $h^m$ is in the normalizer of $S$ in $\wt G{}\open$. Since $S$ has finite index in its normalizer in $\wt G{}\open$, the conjugation action of $h$ on $S$ has finite order. 

Conjugation by $h$ leaves $Z'$ invariant, as we have already remarked, so there is an induced action on $\olS=S/Z'$. This action is trivial on the subgroup $\olZ=Z/Z'$ by definition. We also noted that the induced action on the quotient $\olS/\olZ=S/Z$ is also trivial, because under the morphism $\psi$ the latter is isomorphic to $\psi(S)$ on which $\psi(h)$ acts trivially. By Lemma~\ref{LemmaFiniteAutOfTorus} below this implies that this action on $\olS$ is trivial. This is equivalent to $\kappa|_S$ taking values in $Z'$ as promised. This proves the claim.

The proof of the theorem is now easy. By the claim $G^h{}\open$ is the extension of $\olG{}^{\psi(h)}{}\open$ by a subgroup of $Z^h$. The group $\olG{}^{\psi(h)}$ is reductive by the above, while the above-mentioned subgroup of $Z^h$ is a subgroup of the torus $Z$, so it is also reductive. Therefore, $G^h{}\open$ is also reductive. Since $Z$ lies in $T$, the subgroup $Z^h{}\open$ lies in $T^h{}\open$. Therefore, $T^h{}\open$ is the extension of a maximal torus in $\olG{}^{\psi(h)}$ by a group containing the identity component of the kernel of the restriction of $\psi$ onto $G^h{}\open$, so it is a maximal torus in $G^h{}\open$. A similar argument shows that $B^h{}\open$ is a Borel subgroup in $G^h{}\open$.
\end{proof}

\begin{lemma}\label{LemmaFiniteAutOfTorus} Let $T$ be a torus over a field of arbitrary characteristic, and let $\sigma$ be an automorphism of $T$ of \emph{finite order}. Assume that there is a sub-torus $T'\subset T$, such that $\sigma$ fixes every point of $T'$, and the automorphism of the quotient $T/T'$ induced by $\sigma$ is also the identity. Then $\sigma$ is trivial.
\end{lemma}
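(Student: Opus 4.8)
The plan is to pass to the character lattice and use the fact that an automorphism of finite order of a lattice which acts trivially modulo a saturated sublattice and trivially on that sublattice must already be unipotent, hence trivial. Concretely, let $M = X^*(T)$ be the character group of $T$, a free abelian group of finite rank $n$ (over the separable closure, but the argument is Galois-equivariant so we may work with $M$ as a $\Gal$-module; since $\alpha$ is defined over the base field this causes no trouble). An inclusion $T' \hookrightarrow T$ of tori corresponds to a surjection $M \twoheadrightarrow M'$ of character lattices with kernel $M'' := \ker(M \to M')$, the character lattice of $T/T'$. The hypothesis that $\alpha$ fixes every point of $T'$ says that the induced map $\alpha^*$ on $M$ acts as the identity on the quotient $M \twoheadrightarrow M' = X^*(T')$; the hypothesis that $\alpha$ induces the identity on $T/T'$ says that $\alpha^*$ acts as the identity on the sublattice $M'' = X^*(T/T') \subset M$. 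Thus for every $m \in M$ we have $\alpha^*(m) - m \in M''$, i.e. $\beta := \alpha^* - \mathrm{id}$ maps $M$ into $M''$, and moreover $\beta|_{M''} = 0$.

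The key computation is then that $\beta^2 = 0$: for $m \in M$, $\beta(m) \in M''$, so $\beta(\beta(m)) = 0$. Hence $\alpha^* = \mathrm{id} + \beta$ with $\beta^2 = 0$, so for every $k \ge 1$ we have $(\alpha^*)^k = \mathrm{id} + k\beta$. Since $\alpha$ has finite order, say order $d$, we get $(\alpha^*)^d = \mathrm{id}$, i.e. $d\beta = 0$ as an endomorphism of the torsion-free abelian group $M$; therefore $\beta = 0$ and $\alpha^* = \mathrm{id}$. An automorphism of a torus over any field is determined by its action on the character group (the anti-equivalence between tori and finitely generated free $\Gal$-lattices, \cite[III.8.12~Proposition]{Borel91}), so $\alpha = \mathrm{id}$.

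I do not expect any serious obstacle here; the only points requiring a word of care are the identification of $X^*(T/T')$ with the sublattice of $M$ that is the kernel of $X^*(T) \to X^*(T')$ (this is exactly the exactness of character-lattice sequences attached to the exact sequence $1 \to T' \to T \to T/T' \to 1$), and the remark that everything is compatible with the Galois action so that working with geometric character lattices loses nothing — $\alpha$ being a morphism over the ground field, $\alpha^*$ commutes with the Galois action automatically, and the conclusion $\alpha^* = \mathrm{id}$ is insensitive to base field. The characteristic is irrelevant throughout, matching the statement. This completes the argument.
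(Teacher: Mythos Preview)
Your proof is correct and follows essentially the same approach as the paper's: pass to the (co)character lattice, observe that the induced automorphism is unipotent (acts trivially on a sublattice and on the quotient by it), and conclude from finite order that it is the identity. The only cosmetic differences are that the paper uses cocharacters $X_*(T)\otimes_{\BZ}\BQ$ and invokes Lemma~\ref{LemmaUnipotConnected} for ``unipotent $+$ finite order $\Rightarrow$ identity'', whereas you work directly with the integral character lattice and give the explicit binomial computation $(\mathrm{id}+\beta)^d=\mathrm{id}+d\beta$, which is slightly more self-contained.
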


\begin{proof} 
For every torus $T$ let $X_*(T)$ denote the group of its cocharacters. Then the rule $T\mapsto X_*(T)\otimes_\BZ\BQ$ is a full and exact functor. In particular we have a short exact sequence:
$$\xymatrix{0\ar[r] &X_*(T')\otimes_\BZ\BQ\ar[r]
& X_*(T)\otimes_\BZ\BQ\ar[r] & X_*(T/T')\otimes_\BZ\BQ
\ar[r] & 0.}$$
which is equivariant relative to $\sigma$. Let $\mathbf b'$ be a $\BQ$-basis of $X_*(T')\otimes_\BZ\BQ\subset X_*(T)\otimes_\BZ\BQ$, and extend this to a $\BQ$-basis $\mathbf b$ of $X_*(T)\otimes_\BZ\BQ$. Since the induced action is trivial both on $X_*(T')\otimes_\BZ\BQ$ and $X_*(T)\otimes_\BZ\BQ/X_*(T')\otimes_\BZ\BQ=X_*(T/T')\otimes_\BZ\BQ$, the matrix of the action on $X_*(T)\otimes_\BZ\BQ$ in the basis $\mathbf b$ is upper triangular with ones on the diagonal. In particular it is unipotent. But this is also a matrix of finite order, so it is semi-simple, too. Therefore, this matrix is the identity by Lemma~\ref{LemmaUnipotConnected}.
\end{proof}

\begin{defn}\label{DefQuasiTorus}
Let $G$ be a linear algebraic group over $L$.
\begin{enumerate}
\item \label{DefQuasiTorus_A}
If $G$ is reductive, a closed subgroup $T\subset G$ is called a \emph{maximal quasi-torus} if $T$ equals the intersection $N_G(B)\cap N_G(T\open)$ of the normalizers in $G$ of a Borel subgroup $B\subset G\open$ and a maximal torus $T\open\subset B$.
\item \label{DefQuasiTorus_B}
For general $G$, a closed subgroup $T\subset G$ is called a \emph{maximal quasi-torus} if the quotient morphism $r_G\colon G\onto G^\red$ maps $T$ isomorphically onto a maximal quasi-torus in the reductive group $G^\red$.
\end{enumerate}
\end{defn}

\begin{rem}\label{RemQuasiTorus}
If $G$ is reductive and $T\subset G$ is a maximal quasi-torus of the form $T=N_G(B)\cap N_G(T\open)$ then $T\cap G\open=N_{G\open}(B)\cap N_{G\open}(T\open)=N_{B}(T\open)=Z_{B}(T\open)=T\open$ by \cite[IV.11.16~Theorem, III.10.6~Theorem and IV.13.17~Corollary~2]{Borel91}. In particular, the identity component of $T$ equals the initially given maximal torus $T\open$ (and that was the reason for calling the latter $T\open$). 
\end{rem}

\begin{lemma}\label{LemmaQuasiTorus}
Let $G$ be an arbitrary linear algebraic group and let $T$ be a maximal quasi-torus in $G$. Then $T\open$ is a maximal torus in $G$ and all elements of $T$ are semi-simple.
\end{lemma}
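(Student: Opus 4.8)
The plan is to reduce immediately to the reductive case and then trace through the definitions. Let $R_uG$ be the unipotent radical of $G$ and let $\alpha\colon G\onto G^\red$ be the maximal reductive quotient. By Definition~\ref{DefQuasiTorus}\ref{DefQuasiTorus_B} the restriction $\alpha|_T\colon T\isoto \alpha(T)$ is an isomorphism onto a maximal quasi-torus of $G^\red$, so in particular $\alpha$ maps $T\open$ isomorphically onto $\alpha(T)\open=\alpha(T\open)$ (an isomorphism of algebraic groups carries identity component to identity component). Hence it suffices to show two things in $G^\red$: that $\alpha(T)\open$ is a maximal torus of $G^\red$, and that every element of $\alpha(T)$ is semi-simple. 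For the first claim, write $\alpha(T)=N_{G^\red}(B\open)\cap N_{G^\red}(T\open)$ as in Definition~\ref{DefQuasiTorus}\ref{DefQuasiTorus_A} with $B\open\subset (G^\red)\open$ a Borel and $T\open\subset B\open$ a maximal torus; then Remark~\ref{RemQuasiTorus} already identifies $\alpha(T)\open$ with the maximal torus $T\open$ of $(G^\red)\open$. Since $(G^\red)\open$ and $(G^\red)$ have the same maximal tori (tori are connected, so any maximal torus of $G^\red$ lies in $(G^\red)\open$, and conversely a maximal torus of $(G^\red)\open$ is not contained in a larger torus of $G^\red$), $\alpha(T)\open$ is a maximal torus of $G^\red$.

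Transporting back along $\alpha|_{T\open}^{-1}$, I then need that $T\open$ is a maximal torus of $G$, not merely an isomorphic copy of one. Here the point is that $T\open$ is a torus (being isomorphic to $\alpha(T)\open$), and if $S\supset T\open$ were a strictly larger torus in $G$, then $S$ is connected hence $S\subset G\open$, so $S$ maps to a torus $\alpha(S)\supset\alpha(T\open)=\alpha(T)\open$ in $G^\red$; moreover $\alpha|_S$ is injective because $\ker\alpha=R_uG$ contains no nontrivial element of finite-free multiplicative type and in fact $S\cap R_uG$ is a subgroup of a torus which is unipotent, hence trivial (using that in characteristic zero an algebraic group that is both unipotent and of multiplicative type is trivial). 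Thus $\alpha(S)$ is a torus strictly containing the maximal torus $\alpha(T)\open$ of $G^\red$, a contradiction. Therefore $T\open$ is a maximal torus of $G$.

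For the semi-simplicity of the elements of $T$, let $t\in T$. Its image $\alpha(t)$ lies in $\alpha(T)$, which by Remark~\ref{RemQuasiTorus} satisfies $\alpha(T)\cap (G^\red)\open=\alpha(T)\open=T\open$, a torus; the finite group $\alpha(T)/T\open$ embeds into $G^\red/(G^\red)\open$. Since $\alpha(T)\open$ is a torus, it consists of semi-simple elements, and $\alpha(t)^m\in\alpha(T)\open$ for $m$ the order of $\alpha(T)/T\open$, so $\alpha(t)^m$ is semi-simple in $G^\red$; by Lemma~\ref{LemmaUnipotConnected}\ref{LemmaUnipotConnected_A} applied to $G^\red$ (which is defined over an algebraically closed field of characteristic zero, as $L$ is here), $\alpha(t)$ itself is semi-simple. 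Finally, since $\alpha\colon G\to G^\red$ has unipotent kernel, it carries the Jordan decomposition $t=t_st_u$ to $\alpha(t)=\alpha(t_s)\alpha(t_u)$ with $\alpha(t_u)$ unipotent; as $\alpha(t)$ is semi-simple we get $\alpha(t_u)=1$, so $t_u\in R_uG$. But $t_u$ also lies in $T$ (it is a power series in $t$, or: it lies in the Zariski closure of $t^{\BZ}$, and $T$ is closed), and $\alpha|_T$ is injective, forcing $t_u=1$. Hence $t$ is semi-simple.

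I do not expect any serious obstacle here; the statement is essentially unwinding Definition~\ref{DefQuasiTorus} together with Remark~\ref{RemQuasiTorus} and Lemma~\ref{LemmaUnipotConnected}. The only mildly delicate point is the maximality of $T\open$ as a torus in $G$ (as opposed to in $G^\red$): one must argue that enlarging $T\open$ inside $G$ would contradict maximality downstairs, which uses that $\alpha$ is injective on tori because $R_uG$ is unipotent and a unipotent subgroup of a torus is trivial in characteristic zero. Everything else is formal.
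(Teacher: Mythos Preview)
Your proof is correct and follows essentially the same approach as the paper: reduce to $G^\red$ via Definition~\ref{DefQuasiTorus}\ref{DefQuasiTorus_B}, invoke Remark~\ref{RemQuasiTorus} to identify $\alpha(T)\open$ with a maximal torus, and use that $\alpha$ is injective on tori (since a torus meets the unipotent kernel trivially) to conclude maximality in $G$. For semi-simplicity the paper is slightly more direct: having already shown that $T\open$ is a torus in $G$ and that $T/T\open$ is finite, it simply notes $g^n\in T\open$ is semi-simple and applies Lemma~\ref{LemmaUnipotConnected}\ref{LemmaUnipotConnected_A} in $G$, avoiding the detour through the Jordan decomposition and the kernel of $\alpha$.
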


\begin{proof}
By the above $\wt T:=r_G(T)$ is a maximal quasi-torus in $G^\red$ and $\wt T\open=r_G(T\open)$ is a maximal torus in $G^\red$. In particular, $T\open$ is a torus and contained in a maximal torus $T'$ of $G\open$. It follows that $\wt T\open$ is contained in and hence equal to the torus $r_G(T')$. Since $T'\cap\ker r_G=\{1\}$, it follows that $r_G\colon T'\onto r_G(T')=r_G(T\open)$ is an isomorphism and so $T\open=T'$ is a maximal torus in $G$.

If $g$ lies in $T$ and $n$ is the order of the finite group $T/T\open$ then $g^n\in T\open$. That is, $g^n$ is semi-simple, and so $g$ is semi-simple by Lemma~\ref{LemmaUnipotConnected}\ref{LemmaUnipotConnected_C}.
\end{proof}

We need to establish a few more facts about maximal quasi-tori. We are grateful to Friedrich Knop for providing a proof of part~\ref{ThmQuasiTorusReductive_D} in the following theorem. Since we were not able to find a correct proof of this statement in the literature we include Knop's argument on \href{https://mathoverflow.net/questions/280874/are-the-semi-simple-elements-in-a-non-connected-reductive-algebraic-group-dense}{https:/\!/mathoverflow.net/questions/280874} for the reader's convenience.

\begin{thm}\label{ThmQuasiTorusReductive}
Assume that $G$ is reductive. 
\begin{enumerate}
\item \label{ThmQuasiTorusReductive_A}
Let $T$ be a maximal quasi-torus in $G$. Then $T/T\open=G/G\open$. Every other maximal quasi-torus is conjugate to $T$ under $G\open$.
\item \label{ThmQuasiTorusReductive_B}
An element $g\in G$ is semi-simple if and only if it is contained in a maximal quasi-torus, if and only if its $G$-conjugacy class is closed.
\item \label{ThmQuasiTorusReductive_D}
Every connected component of $G$ contains a dense open subset consisting of semi-simple elements.
\end{enumerate}
\end{thm}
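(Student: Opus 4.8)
The plan is to prove the three parts of Theorem~\ref{ThmQuasiTorusReductive} in the order \ref{ThmQuasiTorusReductive_A}, \ref{ThmQuasiTorusReductive_B}, \ref{ThmQuasiTorusReductive_D}, since each naturally feeds the next. For part~\ref{ThmQuasiTorusReductive_A}, write $T = N_G(B\open)\cap N_G(T\open)$ for a Borel $B\open\subset G\open$ and a maximal torus $T\open\subset B\open$. First I would show $T/T\open = G/G\open$: given any $g\in G$, the pair $(gB\open g^{-1}, gT\open g^{-1})$ is again a Borel together with a maximal torus inside it, so by conjugacy of such pairs in $G\open$ (\cite[IV.11.16~Theorem and IV.11.17~Corollary]{Borel91}) there is $x\in G\open$ with $x g B\open g^{-1} x^{-1} = B\open$ and $x g T\open g^{-1} x^{-1} = T\open$; hence $xg\in T$ and $g\in G\open\cdot T$, giving surjectivity of $T\to G/G\open$, while injectivity is Remark~\ref{RemQuasiTorus} ($T\cap G\open = T\open$). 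For the conjugacy statement, if $T' = N_G(B'\open)\cap N_G(T'\open)$ is another maximal quasi-torus, use conjugacy of the pairs $(B\open,T\open)$ and $(B'\open,T'\open)$ under $G\open$ to move $T'$ onto $T$.

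For part~\ref{ThmQuasiTorusReductive_B}, the implication ``contained in a maximal quasi-torus $\Rightarrow$ semi-simple'' is immediate from Lemma~\ref{LemmaQuasiTorus}. For the converse, let $g\in G$ be semi-simple. Applying Theorem~\ref{steinberg} is not quite direct since one first needs $g$ to normalize \emph{some} Borel and maximal torus of $G\open$; but $g$ acts on $G\open$ by conjugation as a semi-simple automorphism (it has finite order modulo inner automorphisms after raising to the order of $G/G\open$, and one checks the induced automorphism is semi-simple), so the classical result that a semi-simple automorphism of a connected reductive group stabilizes a Borel--torus pair (\cite[Theorem~7.5]{Steinberg68}, already invoked in the proof of Theorem~\ref{steinberg}) applies. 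After conjugating $g$ by an element of $G\open$ we may assume $g$ normalizes a fixed pair $(B\open,T\open)$, hence $g\in T := N_G(B\open)\cap N_G(T\open)$, a maximal quasi-torus. Finally, ``semi-simple $\Leftrightarrow$ conjugacy class closed'': if $g$ lies in the maximal quasi-torus $T$, then $g\in T\subset G$ and $T\open$ is a maximal torus; the closedness of $C(g)$ follows because $g$ has a multiplicative Jordan decomposition with trivial unipotent part and one can argue as in the connected case using that $gG\open$ is a coset in which $g$ lies in the ``distinguished'' quasi-torus — more efficiently, invoke \cite[Theorem~7.3]{Steinberg68} or deduce it from part~\ref{ThmQuasiTorusReductive_D} once that is established (for a semi-simple $g$, the orbit map $G/Z_G(g)\to C(g)$ together with the density of semi-simple classes and the fact that orbits of minimal dimension in their closure are closed). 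Conversely if $C(g)$ is closed, decompose $g = g_sg_u$; then $g_s\in\overline{C(g)} = C(g)$ forces $g_u=1$.

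For part~\ref{ThmQuasiTorusReductive_D}, fix a connected component $gG\open$ of $G$. I would follow Knop's argument (cited in the statement): consider the action of $g$ on $G\open$ by conjugation, pick a $g$-stable Borel--torus pair $(B\open,T\open)$ as above, and set $T = N_G(B\open)\cap N_G(T\open)$, so $g' := g$ may be taken in $T\cap gG\open$. The semi-simple locus of $gG\open$ contains the image of the morphism $G\open\times (T\cap gG\open)\to gG\open$, $(x,t)\mapsto xtx^{-1}$, whose fibers over a ``regular'' element have dimension $\dim Z_{G\open}(T\open) = \dim T\open$; counting dimensions, $\dim G\open + \dim T\open - \dim T\open = \dim G\open$, so this map is dominant onto $gG\open$, and since the set of semi-simple elements is the image of a constructible set it contains a dense open subset by Chevalley's theorem. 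The main obstacle is precisely this dimension count and the verification that the conjugates of $T\cap gG\open$ genuinely sweep out a dense subset — one must check that a generic element of $T\cap gG\open$ is ``$G\open$-regular'' in the appropriate sense so that its centralizer in $G\open$ has dimension exactly $\dim T\open$ and the differential of the conjugation map is surjective; this is where Knop's MathOverflow argument does the real work, and I would transcribe it carefully rather than reprove it, as the naive expectation that ``semi-simple elements are dense'' can fail for the wrong reason (e.g.\ unipotent elements can be dense in a component, as the very phenomenon warns) so the reductivity hypothesis and the existence of the $g$-stable pair are essential and must be used explicitly.
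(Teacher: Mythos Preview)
Your overall approach matches the paper's, but with two differences worth noting.

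For part~\ref{ThmQuasiTorusReductive_A} your argument is actually cleaner than the paper's. The paper proves~\ref{ThmQuasiTorusReductive_B} first, then deduces surjectivity of $T\to G/G\open$ by taking the Jordan decomposition $g=g_sg_u$ of an arbitrary $g$, observing $g_u\in G\open$, and invoking~\ref{ThmQuasiTorusReductive_B} to place $g_s$ in some maximal quasi-torus $T'$ which is then conjugated to $T$. Your direct argument---conjugating the Borel--torus pair $(gB\open g^{-1},gT\open g^{-1})$ back to $(B\open,T\open)$ by an element of $G\open$---bypasses Jordan decomposition entirely and makes~\ref{ThmQuasiTorusReductive_A} independent of~\ref{ThmQuasiTorusReductive_B}. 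This is a genuine simplification.

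For the closed-orbit characterization in~\ref{ThmQuasiTorusReductive_B}, however, your sketch has a gap. The paper simply cites \cite[Corollaire~II.2.22]{Spaltenstein82}, which handles both directions for non-connected reductive $G$. Your forward direction (``semi-simple $\Rightarrow$ closed orbit'') is not established: Steinberg's Theorem~7.3 is stated for connected groups, and your alternative suggestion to deduce it from~\ref{ThmQuasiTorusReductive_D} doesn't work as written---density of semi-simple elements in $gG\open$ does not by itself imply that each individual semi-simple orbit is closed. Your backward direction (``$C(g)$ closed $\Rightarrow$ $g_u=1$'') also needs the fact that $g_s\in\overline{C(g)}$, which you assert without justification; the paper later proves this carefully (Lemma~\ref{useful}\ref{useful_A} via Lemma~\ref{algebraic_jordan}), but it is not immediate. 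You should either invoke Spaltenstein directly or supply these missing steps.

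For part~\ref{ThmQuasiTorusReductive_D} you and the paper both follow Knop's argument; the paper gives the explicit factorization $\Phi\rightsquigarrow\Phi'\rightsquigarrow\Phi''$ and computes the differential of $\Phi''$ at $(1,1)$ as $(\Ad(h^{-1})-1)\oplus\id$ on $\Lie G\open\oplus\Lie G^h$, using semi-simplicity of $\Ad(h^{-1})$ to conclude surjectivity. Your dimension-count sketch (using $T\cap gG\open$ rather than the smaller $hT^h{}\open$) is in the right spirit but, as you yourself flag, the actual content is the differential computation, which you defer to transcription. Note the paper uses $hT^h{}\open$ precisely because Theorem~\ref{steinberg} identifies $T^h{}\open$ as a maximal torus in $G^h$, which is what makes the intermediate step (dominance of conjugation $G^h\times T^h{}\open\to G^h$) go through.
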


\begin{proof}
\ref{ThmQuasiTorusReductive_A} The conjugacy statement follows from the fact that all pairs $T\open\subset B$ of a maximal torus and a Borel subgroup in $G\open$ are conjugate under $G\open$ by \cite[IV.11.19~Proposition]{Borel91}. To show that $T$ surjects onto $G/G\open$ let $g\in G$. Then $gT\open g^{-1}$ is a maximal torus of the Borel subgroup $gB g^{-1}$ of $G\open=gG\open g^{-1}$. So there exists $h\in G\open$ such that $gT\open g^{-1}=hT\open h^{-1}$ and $gB g^{-1}=hB h^{-1}$. Then $h^{-1}g\in N_G(T\open)\cap N_G(B)=T$ and $T\onto G/G\open$ is surjective. Since $T\cap G\open = T\open$ by Remark~\ref{RemQuasiTorus}, we conclude that $T/T\open\isoto G/G\open$.

\medskip\noindent
\ref{ThmQuasiTorusReductive_B} If $g$ is semi-simple then it normalizes a maximal torus $T\open$ and a Borel subgroup $B$ of $G\open$ containing $T\open$ by \cite[Theorem~7.5]{Steinberg68}, and hence lies in the maximal quasi-torus $T=N_G(B)\cap N_G(T\open)$. The converse was proven in Lemma~\ref{LemmaQuasiTorus}. The characterization in terms of the $G$-conjugacy class of $g$ is given in \cite[Corollaire~II.2.22]{Spaltenstein82}.

\medskip\noindent
\ref{ThmQuasiTorusReductive_D} Let $T\subset G$ be a maximal quasi-torus. By \ref{ThmQuasiTorusReductive_A} every connected component of $G$ is of the form $hG\open$ for an $h\in T$. To show that $hG\open$ contains a dense open subset consisting of semi-simple elements, we consider the conjugation action 
\[
\Phi\colon G\open\TimesL hT^h{}\open\to hG\open,\quad(g,ht)\mapsto g ht g^{-1} = h(h^{-1}gh)tg^{-1},
\]
where $T^h:=\{g\in T\open\colon gh=hg\}\subset T\open$ as in Notation~\ref{InitialNotation}. All elements in the image of $\Phi$ are conjugate to elements in $hT^h{}\open\subset T$, and so are semi-simple by \ref{ThmQuasiTorusReductive_B}. Since the image of $\Phi$ is constructible by Chevalley's theorem~\cite[$\mathrm{IV}_1$, Corollaire~1.8.5]{EGA}, this image contains an open subset of its closure by \cite[$0_{\rm III}$, Proposition~9.2.2]{EGA}. It thus suffices to show that $\Phi$ is dominant. Under the isomorphism of varieties $hG\open\isoto G\open$,\; $x\mapsto h^{-1}x$ the morphism $\Phi$ corresponds to the morphism 
\[
\Phi'\colon G\open\TimesL T^h{}\open\to G\open,\quad(g,t)\mapsto h^{-1}g ht g^{-1}. 
\]
To prove that $\Phi'$ is dominant, we use Theorem~\ref{steinberg} which says that $T^h{}\open$ is a maximal torus in $G^h{}\open$. Therefore, the conjugation action $G^h{}\open\TimesL T^h{}\open\to G^h{}\open$,\; $(\tilde g,t)\mapsto \tilde gt\tilde g^{-1}$ is dominant by \cite[IV.11.10~Theorem and IV.13.17, Corollary~2]{Borel91}. Note that $h^{-1}(g\tilde g)ht(g\tilde g)^{-1} = h^{-1}gh(\tilde gt\tilde g^{-1})g^{-1}$ for $\tilde g\in G^h{}\open$ and $g\in G\open$. Thus it suffices to show that the morphism
\[
\Phi''\colon G\open\TimesL\, G^h{}\open \to G\open,\quad(g,g')\mapsto h^{-1}g hg' g^{-1}
\]
is dominant. In the point $(g,g')=(1,1)$ the morphism $\Phi''$ has differential 
\begin{eqnarray}
(\Ad(h^{-1})-1)\oplus \id\colon & \!\!\!\!\!\Lie G\open \oplus \Lie G^h{}\open\!\!\!\! & \longrightarrow \;\Lie G\open,  \label{EqThmQuasiTorusReductive1}\\
& (\,X\;,\;X'\,) & \longmapsto \;(\Ad(h^{-1})-1)(X)+X'\;=\;h^{-1}Xh-X+X'\,,\nonumber
\end{eqnarray}
where $\Lie G\open$ denotes the Lie algebra of $G\open$ and $\Ad\colon G\to\Aut_L(\Lie G\open)$ denotes the adjoint representation. Since $G^h{}\open=\{g\in G\open\colon h^{-1}ghg^{-1}=1\}\open$ we obtain $\Lie G^h{}\open=\ker(\Ad(h^{-1})-1)$ and since $h$ is semi-simple, also $\Ad(h^{-1})$ is semi-simple, and hence $\ker(\Ad(h^{-1})-1)+\im(\Ad(h^{-1})-1)=\Lie G\open$. This shows that the differential \eqref{EqThmQuasiTorusReductive1} is surjective in $(1,1)$, and therefore $\Phi''$ is dominant (for example by \cite[\S\,2.2, Proposition~8]{BLR}) and the theorem is proven.
\end{proof}

If $G$ is not assumed to be reductive this implies the following

\begin{thm}\label{ThmQuasiTorusGeneral}
Let $G$ be a not necessarily connected, linear algebraic group over $L$.
\begin{enumerate}
\item \label{ThmQuasiTorusGeneral_A}
Let $T\open$ be a maximal torus of $G\open$. Then there exists a maximal quasi-torus $T\subset G$ with $T\cap G\open=T\open$. 
\item \label{ThmQuasiTorusGeneral_D}
Every maximal quasi-torus $T$ in $G$ satisfies $T/T\open=G/G\open$ and normalizes a Borel subgroup $B$ of $G\open$, which contains the maximal torus $T\open\subset G\open$. In particular, $G\open\cap T=T\open$. Moreover, the group $TB$ equals the semi-direct product $T\ltimes R_u(TB)$, and $T$ is a maximal reductive subgroup of $TB$.
\item \label{ThmQuasiTorusGeneral_E}
Conversely, a closed subgroup $T\subset G$ is a maximal quasi-torus if $T\onto G/G\open$ is surjective, the identity component $T\open$ is a maximal torus of $G\open$, and $T$ normalizes a Borel subgroup $B\subset G\open$ containing $T\open$.
\item \label{ThmQuasiTorusGeneral_B}
Any two maximal quasi-tori in $G$ are conjugate under $G\open$. 
\item \label{ThmQuasiTorusGeneral_C}
An element $g\in G$ is semi-simple if and only if it is contained in a maximal quasi-torus. 
\end{enumerate}
\end{thm}

\begin{proof}
\ref{ThmQuasiTorusGeneral_A} Note that $\wt T\open:=r_G(T\open)\subset(G^\red)\open$ is a maximal torus by \cite[IV.11.14~Proposition]{Borel91}. Choose a Borel subgroup $\wt B\subset(G^\red)\open$ containing $\wt T\open$ and consider the maximal quasi-torus $\wt T=N_{G^\red}(\wt T\open)\cap N_{G^\red}(\wt B)$ of $G^\red$. Let $\wt H:= \wt T\wt B$ and $H:=r_G^{-1}(\wt H)$. By \cite[Chapter~VIII, Theorem~4.3]{HochschildBasic} there is a reductive subgroup $L\subset H$ with $T\open\subset L$ such that $H=L\ltimes R_uH$ is a semi-direct product. Then $L\cap R_uG=\{1\}$, and hence $r_G\colon L \isoto r_G(L)$ is an isomorphism. Moreover, $r_G(L)$ normalizes $\wt B$. The subgroup $r_G(L)\open\subset \wt B$ is reductive by \cite[IV.14.11~Corollary]{Borel91} and contains the maximal reductive subgroup $\wt T\open$ of $\wt B$; use \cite[III.10.6~Theorem]{Borel91}. Therefore, $\wt T\open=r_G(L)\open$ is normalized by $r_G(L)$, and hence $r_G(L)\subset\wt T$. Since $r_G(R_uH)=R_u\wt H\subset \wt B$, the map $r_G\colon L \onto \wt H/\wt B=\wt T/\wt T\open$ is surjective, where the last equality comes from $\wt T\open\subset \wt T \cap \wt B\subset \wt T\cap (G^\red)\open=\wt T\open$. We conclude that $r_G(L)=\wt T$ and $L$ is a maximal quasi-torus in $G$. (The groups $H,\wt H$ could be called ``quasi-Borel subgroups'' and $L, r_G(L)$ could be called ``quasi-Levi subgroups'', because their groups of connected components are isomorphic to $G/G\open$, respectively $G^\red/(G^\red)\open$ and their identity components are Borel subgroups, respectively Levi subgroups).

\medskip\noindent
\ref{ThmQuasiTorusGeneral_D} If $\wt T:=r_G(T)$, then $T/T\open=\wt T/\wt T\open=G^\red/(G^\red)\open=G/G\open$ by Theorem~\ref{ThmQuasiTorusReductive}, and $T\open$ is a maximal torus in $G\open$ by Lemma~\ref{LemmaQuasiTorus}. To prove that $T$ normalizes a Borel subgroup of $G$, let $\wt B$ be a Borel subgroup of $G^\red$ containing $\wt T\open$ with $\wt T=N_{G^\red}(\wt B)\cap N_{G^\red}(\wt T\open)$. Let $B\subset G\open$ be a Borel subgroup with $r_G(B)=\wt B$. Since $r_G^{-1}(\wt B)$ contains $B$ and is an extension of $\wt B$ by $R_uG$, and hence connected solvable, we conclude that $r_G^{-1}(\wt B)=B$ is a Borel subgroup of $G\open$. By construction, it is normalized by $T$. By \cite[Chapter~VIII, Theorem~4.3]{HochschildBasic} there exists a maximal reductive subgroup $L\subset H:=TB=r_G^{-1}(\wt T \wt B)$ with $T\subset L$ such that $H=L\ltimes R_uH$ is a semi-direct product. As in \ref{ThmQuasiTorusGeneral_A} we have $r_G\colon L\isoto r_G(L)=\wt T=r_G(T)$ and this proves $L=T$ as desired.

\medskip\noindent
\ref{ThmQuasiTorusGeneral_E} Let $n:=\#(T/T\open)$. For every element of $T$ its $n$-th power lies in the torus $T\open$ and hence is semi-simple. Therefore, all elements of $T$ are semi-simple by Lemma~\ref{LemmaUnipotConnected} and $T\cap R_uG=\{1\}$. So the map $r_G$ restricted to $T$ is injective, and maps $T$ isomorphically onto $r_G(T)\subset G^\red$. The identity component $r_G(T)\open=r_G(T\open)$ is a maximal torus in $(G^\red)\open$ and $r_G(B)$ is a Borel subgroup in $(G^\red)\open$. Since $r_G(T)$ normalizes the pair $r_G(T\open)\subset r_G(B)$, it is contained in the maximal quasi-torus $\wt T=N_{G^\red}(r_G(B))\cap N_{G^\red}(r_G(T\open))$, which satisfies $\wt T\open=r_G(T\open)$ by Remark~\ref{RemQuasiTorus}. Since $T\onto G/G\open \isoto G^\red/(G^\red)\open=\wt T/\wt T\open$, we have $r_G\colon T\isoto r_G(T)=\wt T$, and hence $T$ is a maximal quasi-torus in $G$.

\medskip\noindent
\ref{ThmQuasiTorusGeneral_B} Let $T_1,T_2$ be two maximal quasi-tori in $G$ and let $\wt T_1$ and $\wt T_2$ be their isomorphic images in $G^\red$ under $r_G$. By Theorem~\ref{ThmQuasiTorusReductive} we can conjugate $\wt T_2$ into $\wt T_1$ under $(G^\red)\open=r_G(G\open)$ and thus assume that they are equal $\wt T:=\wt T_1=\wt T_2$. Let $\wt B$ be a Borel subgroup in $G^\red$ which is normalized by $\wt T$. As in the proof of \ref{ThmQuasiTorusGeneral_D}, $T_1$ and $T_2$ are both maximal reductive subgroups of $H:=r_G^{-1}(\wt T\wt B)$. Then $T_1$ and $T_2$ are conjugate under $H\open$ by \cite[Chapter~VIII, Theorem~4.3]{HochschildBasic}.

\medskip\noindent
\ref{ThmQuasiTorusGeneral_C} By Lemma~\ref{LemmaQuasiTorus} every element of a maximal quasi-torus is semi-simple. Conversely, to show that every semi-simple element $g\in G$ lies in a maximal quasi-torus we use that $g$ normalizes a Borel subgroup $B\subset G\open$ and a maximal torus $T\open\subset B$ by \cite[Theorem~7.5]{Steinberg68}. Then $\wt T\open:=r_G(T\open)$ and $\wt B:=r_G(B)$ are a maximal torus and a Borel subgroup of $(G^\red)\open$, which are normalized by $r_G(g)\in G^\red$. In particular, $r_G(g)$ lies in the maximal quasi-torus $\wt T:=N_{G^\red}(\wt B)\cap N_{G^\red}(\wt T\open)$ of $G^\red$. By the proof of \ref{ThmQuasiTorusGeneral_A} we can choose a maximal quasi-torus $T'\subset G$ containing $T\open$ and mapping isomorphically onto $\wt T$ under $r_G$. By \ref{ThmQuasiTorusGeneral_D} there is a Borel subgroup $B'\supset T\open$ of $G\open$ normalized by $T'$. Let $g'\in T'$ be the preimage of $r_G(g)$ under $r_G\colon T'\isoto\wt T$. Then $g$ and $g'$ both map to the same element in $\wt T$, and hence also in $G/G\open=G^\red/(G^\red)\open=\wt T/\wt T\open=T'/T\open$. Let $n$ be the order of $g$ in $G/G\open$. Then $g^n\in N_{G\open}(B)\cap N_{G\open}(T\open)=N_{B}(T\open)=Z_{B}(T\open)$ as in Remark~\ref{RemQuasiTorus}. Since $g^n$ is semi-simple, we have $g^n\in T\open$ by \cite[12.1~Theorem(d)]{Borel91}. We consider the subgroup $G_1$ of $G$ generated by $g,g'$ and $G\open$, which is automatically closed and equals $\coprod_{i=0}^{n-1}g^i G\open$. By \ref{ThmQuasiTorusGeneral_E} the subgroups $T_1:=\coprod_{i=0}^{n-1}g^i T\open$, respectively $T'_1:=\coprod_{i=0}^{n-1}(g')^i T\open=T'\cap G_1$, generated by $g,T\open$, respectively by $g',T\open$, are maximal quasi-tori in $G_1$, because they have the identity component $T\open$, normalize the Borel subgroup $B$, respectively $B'$ of $G_1\open=G\open$, and surject onto $G_1/G_1\open$. By \ref{ThmQuasiTorusGeneral_B} there is an element $h\in G\open$ with $h T'_1 h^{-1} =T_1$, and hence $g$ lies in the maximal quasi-torus $h T' h^{-1}$ of $G$.
\end{proof}

\begin{cor}\label{Cor6.10} 
Let $f\colon G\onto G'$ be a surjection of algebraic groups. Then the image of a maximal quasi-torus (resp.\ maximal torus, resp.\ Borel subgroup) in $G$ is again a maximal quasi-torus (resp.\ maximal torus, resp.\ Borel subgroup) in $G'$. Conversely, every maximal quasi-torus (resp.\ maximal torus, resp.\ Borel subgroup) in $G'$ arises in this way.
\end{cor}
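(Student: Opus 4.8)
The plan is to reduce everything to the reductive case, where the structure theory of Borel subgroups and maximal tori is well understood, and then handle the non-reductive correction via the unipotent radical. First I would recall that by Lemma~\ref{LemmaCompatibility} a surjection $f\colon G\onto H$ of algebraic groups carries $R_uG$ into $R_uH$, so it induces a surjection $f^\red\colon G^\red\onto H^\red$ fitting into a commutative square with the two reduction maps $\alpha_G\colon G\onto G^\red$ and $\alpha_H\colon H\onto H^\red$; moreover $f$ maps $G\open$ onto $H\open$ by \cite[I.1.4~Corollary]{Borel91}, so it induces a surjection $G/G\open\onto H/H\open$.

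For the reductive case, suppose first $G$ is reductive and $T=N_G(B\open)\cap N_G(T\open)$ is a maximal quasi-torus. Every element of $T$ is semi-simple by Lemma~\ref{LemmaQuasiTorus}, hence so is every element of $f(T)$, so $f(T)$ consists of semi-simple elements and by Theorem~\ref{ThmQuasiTorusGeneral}\ref{ThmQuasiTorusGeneral_C} each element lies in a maximal quasi-torus of $H$. The image $f(T\open)$ is a maximal torus of $H\open$ (Borel subgroups and maximal tori are preserved by surjections of connected reductive groups, e.g. \cite[IV.11.14~Proposition]{Borel91}), and $f(B\open)$ is a Borel subgroup of $H\open$ by the same reference; clearly $f(T)$ normalizes both $f(B\open)$ and $f(T\open)$, and $f(T)\onto H/H\open$ is surjective because $T\onto G/G\open$ is surjective by Theorem~\ref{ThmQuasiTorusReductive}\ref{ThmQuasiTorusReductive_A} and $f$ induces a surjection $G/G\open\onto H/H\open$. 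Thus $f(T)$ satisfies the hypotheses of Theorem~\ref{ThmQuasiTorusGeneral}\ref{ThmQuasiTorusGeneral_E} (applied with the group $H$, which need not be reductive in general but here is), so $f(T)$ is a maximal quasi-torus in $H$. For the general (non-reductive) $G$: a maximal quasi-torus $T\subset G$ maps isomorphically under $\alpha_G$ onto a maximal quasi-torus $\wt T\subset G^\red$; applying the reductive case to $f^\red$ gives that $f^\red(\wt T)$ is a maximal quasi-torus in $H^\red$, and chasing the commutative square $\alpha_H\circ f=f^\red\circ\alpha_G$ shows $\alpha_H$ maps $f(T)$ onto $f^\red(\wt T)$; since $f(T)$ also consists of semi-simple elements it meets $R_uH$ only in $\{1\}$, so $\alpha_H|_{f(T)}$ is an isomorphism onto a maximal quasi-torus of $H^\red$, which by Definition~\ref{DefQuasiTorus}\ref{DefQuasiTorus_B} means $f(T)$ is a maximal quasi-torus in $H$. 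The statements for maximal tori and Borel subgroups are the classical facts just cited.

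For the surjectivity claim — that every maximal quasi-torus (resp.\ maximal torus, resp.\ Borel subgroup) of $H$ arises as $f(\text{something})$ — I would argue as follows. Given a maximal quasi-torus $T_H\subset H$, pick by Theorem~\ref{ThmQuasiTorusGeneral}\ref{ThmQuasiTorusGeneral_A} a maximal quasi-torus $T_G\subset G$ (choosing a maximal torus $T_G\open$ of $G\open$ lifting a maximal torus of $H\open$ contained in $T_H\open$, which is possible since $f\colon G\open\onto H\open$ maps maximal tori onto maximal tori and maximal tori of $G\open$ surject onto those of $H\open$). Then $f(T_G)$ is a maximal quasi-torus of $H$ by the first part, and by Theorem~\ref{ThmQuasiTorusGeneral}\ref{ThmQuasiTorusGeneral_B} all maximal quasi-tori of $H$ are conjugate under $H\open=f(G\open)$; so after conjugating $T_G$ by a preimage in $G\open$ of the conjugating element we get $f(T_G)=T_H$. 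The same conjugacy argument works for maximal tori (conjugate under $H\open$ by \cite[IV.11.3~Corollary]{Borel91}) and for Borel subgroups (conjugate under $H\open$ by \cite[IV.11.1~Theorem]{Borel91}), starting from any preimage produced by surjectivity of $f$ on $G\open$.

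The main obstacle I anticipate is the non-reductive bookkeeping: making sure that for a maximal quasi-torus $T\subset G$ the image $f(T)$ really meets $R_uH$ trivially and that the various quotient maps commute correctly, so that ``maximal quasi-torus'' as defined through the reduction $G^\red$ is genuinely preserved. This is where one must be careful to invoke Lemma~\ref{LemmaUnipotConnected}\ref{LemmaUnipotConnected_A} (a semi-simple element has no nontrivial unipotent part) together with the fact that all elements of a quasi-torus are semi-simple (Lemma~\ref{LemmaQuasiTorus}), rather than attempting a direct computation. Once that point is dispatched, the rest is an assembly of the results already established in Theorems~\ref{ThmQuasiTorusReductive} and \ref{ThmQuasiTorusGeneral} together with the classical conjugacy theorems for tori and Borel subgroups.
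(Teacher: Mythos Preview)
Your argument is correct, and the core of your ``reductive case'' paragraph --- push $T\open\subset B\open$ forward to a maximal torus inside a Borel of $H\open$, note that $f(T)$ normalizes both and surjects onto $H/H\open$, then invoke Theorem~\ref{ThmQuasiTorusGeneral}\ref{ThmQuasiTorusGeneral_E} --- is exactly what the paper does. The difference is that the paper applies this argument directly to an arbitrary $G$: Theorem~\ref{ThmQuasiTorusGeneral}\ref{ThmQuasiTorusGeneral_D} already supplies, for a maximal quasi-torus $T$ in a not-necessarily-reductive $G$, a Borel $B\open\subset G\open$ containing $T\open$ and normalized by $T$, and Theorem~\ref{ThmQuasiTorusGeneral}\ref{ThmQuasiTorusGeneral_E} is likewise stated for general $G$. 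So your detour through the reductive quotients $G^\red$, $H^\red$ and the commutative square with $\alpha_G,\alpha_H$ is unnecessary, and the ``non-reductive bookkeeping'' you flag as the main obstacle simply never arises. The surjectivity half (conjugate and lift the conjugating element) is identical to the paper's. One small slip: the fact that $f(R_uG)\subset R_uH$ is not Lemma~\ref{LemmaCompatibility} (which is about $F$-isocrystals); it is the remark in the paragraph preceding Definition~\ref{DefQuasiTorus}.
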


\begin{proof}
For maximal tori and Borel subgroups this is just \cite[IV.11.14~Proposition]{Borel91}. So let $T\subset G$ be a maximal quasi-torus. By Theorem~\ref{ThmQuasiTorusGeneral}\ref{ThmQuasiTorusGeneral_D} there is a Borel subgroup $B\subset G\open$ which contains the maximal torus $T\open\subset G\open$ and is normalized by $T$. Then $f(T)$ normalizes the Borel subgroup $f(B)$ of $G'$, and $f(T)\open=f(T\open)$ is a maximal torus in $G'$. Since the surjection $T\onto G/G\open\onto G'/G'{}\open$ factors through $f(T)$, we see that $f(T)\onto G'/G'{}\open$ is surjective, and hence $f(T)$ is a maximal quasi-torus in $G'$ by Theorem~\ref{ThmQuasiTorusGeneral}\ref{ThmQuasiTorusGeneral_E}.

Conversely, if $T\subset G$ and $T'\subset G'$ are any maximal quasi-tori, then by Theorem~\ref{ThmQuasiTorusGeneral}\ref{ThmQuasiTorusGeneral_B} there is an element $h\in G'$ with $T'=h^{-1}f(T)h$. For any preimage $g\in G$ of $h$ the maximal quasi-torus $g^{-1}Tg$ in $G$ surjects onto $T'$. This proves the corollary.
\end{proof}

The corollary leads to the following 

\begin{rem}\label{handy} 
Let $G=G_1\times_{G_0}G_2$ be the fiber product of two linear algebraic groups $G_1$ and $G_2$ over a third $G_0$ for epimorphisms $G_1\onto G_0$ and $G_2\onto G_0$. Then $G$ is a closed subgroup of $G_1\TimesL G_2$ and the restrictions of the projections $\pi_1:G_1\TimesL G_2\to G_1$ and $\pi_2:G_1\TimesL G_2\to G_2$ are surjective. Let $T\subset G$ be a maximal quasi-torus. Then for $i=0,1,2$ the images $T_i\subset G_i$ of $T$ are maximal quasi-tori by Corollary \ref{Cor6.10} and $T=T_1\times_{T_0}T_2$. In particular, if $G_0\open$ is trivial then $G\open=G_1\open\TimesL G_2\open$ and $G\open\cap T$ is the maximal torus $T\open=T_1\open\TimesL T_2\open$. And if $G_0$ is trivial then $G=G_1\TimesL G_2$ and $T=T_1\TimesL T_2$.
\end{rem}

\begin{prop}\label{reduce_to_quasi} 
Let $f\colon G\onto G'$ be a surjection of linear algebraic groups, and let $T\subset G$ be a maximal quasi-torus in $G$. Then $T$ is a maximal quasi-torus in $H=f^{-1}(f(T))$.
\end{prop}

\begin{proof}
We will use Theorem~\ref{ThmQuasiTorusGeneral}\ref{ThmQuasiTorusGeneral_E} for the pair $T\subset H$. The identity component $T\open$ is a maximal torus in $G\open$, so it is a maximal torus in the smaller group $H\open$, too. Let $B\supset T\open$ be a Borel subgroup of $G\open$ which is normalized by $T$. Then $T$ also normalizes $(H\open\cap B)\open$, and we claim that the latter is a Borel subgroup of $H\open$. To prove the claim, let $\wt B\supset T\open$ be a Borel subgroup of $H\open$. Then $\wt B=(H\open\cap B_1)\open$ for a Borel subgroup $B_1\subset G\open$ by \cite[11.14~Proposition]{Borel91}. Since $T\open\subset B_1$ there is an $n\in N_{G\open}(T\open)$ with $B=n B_1 n^{-1}$ by \cite[11.19~Proposition]{Borel91}. Moreover, $f(H\open)=f(H)\open=f(T)\open=f(T\open)$ implies that $f(n H\open n^{-1})=f(n T\open n^{-1})=f(T\open)$, and hence $n H\open n^{-1}=H\open$. This shows that $(H\open\cap B)\open = n (H\open\cap B_1)\open n^{-1}$ is a Borel subgroup in $H\open$ as claimed.

By Theorem~\ref{ThmQuasiTorusGeneral}\ref{ThmQuasiTorusGeneral_E} it suffices to show that the map $H/H\open\to G/G\open$ induced by the inclusion $H\hookrightarrow G$ is injective, because then the surjection $T\onto G/G\open$ will factor through a surjection $T\onto H/H\open$. So let $h\in G\open\cap H$. Then $f(h)$ lies in $f(G\open)\cap f(H) = G'{}\open\cap f(T)=f(T)\open=f(T\open)$, because $f(T)$ is a maximal quasi-torus of $G'$ by Corollary~\ref{Cor6.10}. Therefore, $f(h)=f(t)$ for an element $t\in T\open\subset H\open\subset G\open$. Thus we have to show that the element $\tilde h:=ht^{-1}\in G\open\cap\ker f$ actually lies in $H\open$. For this purpose note, that $(\ker f)\open$ is a characteristic subgroup of $\ker f$, which in turn is normal in $G$. Therefore, $(\ker f)\open$ is normal in $G$. Let $\pi\colon G\onto G/(\ker f)\open=:\olG$ be the quotient morphism and set $\olH:=\pi(H)$ and $\ol T\open:=\pi(T)\open=\pi(T\open)$. The latter is a maximal torus in $\olG$. Now $\pi(\tilde h)$ lies in the finite group $\Gamma:=\olG\open\cap\,\ker f/(\ker f)\open$, which is normal in $\olG\open$. The operation of $\olG\open$ by conjugation on $\Gamma$ factors through the finite automorphism group of $\Gamma$, and hence is trivial because $\olG\open$ is connected. It follows that $\Gamma$ is contained in the center of $\olG\open$. Since $\pi(\tilde h)$ has finite order, it is semi-simple, and hence lies in $\ol T\open=\pi(T\open)$ by \cite[IV.11.11~Corollary]{Borel91}. Thus $\pi(\tilde h)=\pi(\tilde t)$ for an element $\tilde t\in T\open$, and so $ht^{-1}\tilde t^{-1}=\tilde h\tilde t^{-1}\in\ker\pi=(\ker f)\open\subset H\open$, and $\tilde h, h\in H\open$. We conclude that $G\open\cap H=H\open$ and $H/H\open\to G/G\open$ is injective as desired. This proves the proposition.
\end{proof}

The proposition has the following consequence.

\begin{thm}\label{new_gambit} 
Let $\phi\colon G_1\into G_2$ be an injective homomorphism of algebraic groups, and assume that there is a closed normal subgroup $N\triangleleft G_1$ such that $\phi(N)\triangleleft G_2$ is also normal. Let $T\subset G_1$ be a maximal quasi-torus. If its image in $G_2/\phi(N)$ is a maximal quasi-torus, then also its image in $G_2$ is a maximal quasi-torus.
\end{thm}

\begin{proof}
Let $f_1\colon G_1\to G_1/N$ and $f_2\colon G_2\to G_2/\phi(N)$ be the quotient maps. Then $f_1(T)\subset G_1/N$ is a maximal quasi-torus by Corollary~\ref{Cor6.10} and its image $f_2\phi(T)$ in $G_2/\phi(N)$ is a maximal quasi-torus by assumption. Note that $\phi$ maps the preimage $H_1=f_1^{-1}(f_1(T))=NT\subset G_1$ isomorphically to the preimage $H_2=f_2^{-1}(f_2\phi(T))=\phi(NT)\subset G_2$. By Corollary~\ref{Cor6.10} there is a maximal quasi-torus $T_2$ in $G_2$ with $f_2(T_2)=f_2\phi(T)$. By Proposition~\ref{reduce_to_quasi} the subgroups $T\subset H_1$ and $T_2\subset H_2$ are maximal quasi-tori. Since $\phi|_{H_1}\colon H_1\to H_2$ is an isomorphism, $\phi(T)$ is a maximal quasi-torus in $H_2$. Therefore, $\phi(T)$ is conjugate to $T_2$ under $H_2\open$ by Theorem~\ref{ThmQuasiTorusGeneral}\ref{ThmQuasiTorusGeneral_B}. So $\phi(T)$ is conjugate to a maximal quasi-torus in $G_2$, and hence is itself a maximal quasi-torus in $G_2$. 
\end{proof}

Another useful criterion for being a maximal quasi-torus is given in the following

\begin{thm} \label{ThmAbelianQuasiTorus}
Let $G$ be reductive, and let $H\subset G$ be a closed subgroup with the following properties:
\begin{enumerate}
\item\label{ThmAbelianQuasiTorus_A} 
the identity component $H\open$ of $H$ is a maximal torus in $G\open$,
\item\label{ThmAbelianQuasiTorus_B} 
the natural map $H/H\open\to G/G\open$ is surjective,
\item\label{ThmAbelianQuasiTorus_C} 
the group $H$ is commutative.
\end{enumerate}
Then $H$ is a maximal quasi-torus in $G$, and it is the only maximal quasi-torus in $G$ containing $H\open$.
\end{thm}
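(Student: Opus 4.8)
The plan is to verify the defining property of a maximal quasi-torus from Definition~\ref{DefQuasiTorus}\ref{DefQuasiTorus_A}: I would produce a Borel subgroup $B\open\subset G\open$ with $H\open\subset B\open$ and $H=N_G(B\open)\cap N_G(H\open)$. As a preliminary I note that every $g\in H$ is semi-simple: with $n:=\#(H/H\open)$ the power $g^n$ lies in the torus $H\open$ and is semi-simple, so $g$ is semi-simple by Lemma~\ref{LemmaUnipotConnected}\ref{LemmaUnipotConnected_A}. (This will also be used implicitly below, since $H\open$ is a maximal torus of $G\open$ by~\ref{ThmAbelianQuasiTorus_A}.)

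The key observation is that commutativity of $H$ forces $H$ to normalize \emph{every} Borel subgroup of $G\open$ through $H\open$. Indeed, take $g\in H$. By~\ref{ThmAbelianQuasiTorus_C} it centralizes $H\open$, which by~\ref{ThmAbelianQuasiTorus_A} is a maximal torus of $G\open$, so conjugation by $g$ is an automorphism $c_g$ of $G\open$ fixing $H\open$ pointwise. Hence $c_g$ acts trivially on the character lattice $X^*(H\open)$, in particular on the root system $\Phi(G\open,H\open)$ and on each of its systems of positive roots, and it fixes each root subgroup. Since the Borel subgroups of $G\open$ containing $H\open$ correspond bijectively to such positive systems — each being generated by $H\open$ and the root subgroups attached to its positive roots — it follows that $c_g$ maps every such Borel to itself. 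I expect this structure-theoretic step to be the main (though standard) obstacle; it is also precisely where hypothesis~\ref{ThmAbelianQuasiTorus_C} is indispensable, since an element merely normalizing $H\open$ but acting nontrivially on it — such as a Weyl representative in $\SL_2\rtimes\langle w\rangle$ — permutes those Borels nontrivially, and the theorem fails without commutativity.

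Next I would fix one such Borel $B\open$ and set $\tilde T:=N_G(B\open)\cap N_G(H\open)$, which is a maximal quasi-torus of $G$ by definition. By Remark~\ref{RemQuasiTorus} its identity component is $\tilde T\open=H\open$, and by Theorem~\ref{ThmQuasiTorusReductive}\ref{ThmQuasiTorusReductive_A} the natural map $\tilde T/\tilde T\open\to G/G\open$ is an isomorphism. The previous paragraph gives $H\subset\tilde T$, and since $H\open=\tilde T\open$ this inclusion induces on component groups the natural map $H/H\open\to G/G\open$, which is surjective by~\ref{ThmAbelianQuasiTorus_B}; composing with the isomorphism $\tilde T/\tilde T\open\isoto G/G\open$ shows that $H/H\open\to\tilde T/\tilde T\open$ is surjective, whence $H=\tilde T$. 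Thus $H$ is a maximal quasi-torus of $G$.

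Finally, for the uniqueness assertion I would let $T'$ be any maximal quasi-torus of $G$ with $H\open\subset T'$. By Remark~\ref{RemQuasiTorus} the identity component $T'\open$ is a maximal torus of $G\open$, and since it contains the maximal torus $H\open$ we get $T'\open=H\open$. Write $T'=N_G(B'\open)\cap N_G(T'\open)=N_G(B'\open)\cap N_G(H\open)$ for a Borel $B'\open\supset H\open$ as in Definition~\ref{DefQuasiTorus}\ref{DefQuasiTorus_A}. By the second paragraph $H$ normalizes both $B'\open$ and $H\open$, so $H\subset T'$, and the component-group count just carried out (with $T'$ in place of $\tilde T$) yields $H=T'$, completing the proof.
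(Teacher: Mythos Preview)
Your proof is correct and takes a genuinely different route from the paper's. The heart of your argument is the root-theoretic observation that an element centralizing a maximal torus $H\open$ of $G\open$ acts trivially on $X^*(H\open)$, hence on the root system, and therefore stabilizes each root subgroup $U_\alpha$ and thus every Borel through $H\open$; this immediately places $H$ inside $N_G(B\open)\cap N_G(H\open)$, and a component-count then gives equality. The paper, by contrast, avoids root theory entirely. It first proves an auxiliary lemma (Lemma~\ref{8.2}) that $G\open$-conjugacy in $H$ reduces to $N_{G\open}(H\open)$-conjugacy, so each $G\open$-conjugacy class meets $H$ only finitely often. It then starts from an \emph{abstract} maximal quasi-torus $T$ with $T\open=H\open$ and, via a GIT-style dimension argument on a categorical quotient $tT\open\to tT\open/Q$, uses that finiteness to force $T^t=T\open$ --- i.e., every $t\in T$ centralizes $T\open$ --- from which $h^{-1}t\in Z_{G\open}(T\open)=T\open$ and the double inclusion $T\subset H\subset T$ follow.

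Your approach is shorter and more structural, and buys you the uniqueness clause essentially for free (the same normalizing argument works for \emph{any} Borel through $H\open$). The paper's approach is more self-contained within the paper's toolkit --- no appeal to root subgroups or the Borel-through-torus bijection --- and establishes along the way the independently useful Lemma~\ref{8.2}. One minor wording point: when you say $c_g$ ``fixes each root subgroup'' you mean it maps $U_\alpha$ to itself, not that it fixes $U_\alpha$ pointwise; this is all that is needed, and it follows because $c_g(U_\alpha)$ is the unique connected unipotent subgroup normalized by $H\open$ with Lie algebra $\mathfrak{g}_\alpha$.
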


Condition \ref{ThmAbelianQuasiTorus_C} can not be dropped as the following example shows.

\begin{example}\label{ExThmAbelianQuasiTorus}
Consider the group $G=\PGL_2\rtimes \BZ/2$ with the action of the generator $1\in\BZ/2$ on $\PGL_2$ via conjugation by $\left(\begin{smallmatrix} 0 & 1 \\ 1 & 0 \end{smallmatrix}\right)$, and the subgroup $H=T\open\rtimes \BZ/2$ where $T\open\subset\PGL_2$ is the torus of diagonal matrices. Then $H$ satisfies conditions \ref{ThmAbelianQuasiTorus_A} and \ref{ThmAbelianQuasiTorus_B} but is not a maximal quasi-torus in $G$, because the element $(1,1)\in H$ does not normalize any of the two Borel subgroups $B$ containing $T\open$. Instead, the maximal quasi-torus normalizing $B$ and $T\open$ is $\bigl(\left(\begin{smallmatrix} 0 & \mbox{\large $*$} \\ \mbox{\large $*$} & 0 \end{smallmatrix}\right),1\bigr)\amalg \bigl(T\open,0\bigr)$.
\end{example}

To prove the theorem we will need the following 

\begin{lemma}\label{8.2} 
In the situation of the theorem, two elements of $H$ are conjugate under $G\open$ if and only if they are conjugate under $N_{G\open}(H\open)$. In particular, the intersection of every $G\open$-conjugacy class with $H$ is finite. (Note however, that there is no action of $N_{G\open}(H\open)$ on $H$ in general, because $N_{G\open}(H\open)$ only normalizes $H\open$ and not necessarily $H$.)
\end{lemma}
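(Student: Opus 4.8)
The implication ``conjugate under $N_{G\open}(H\open)$ $\Rightarrow$ conjugate under $G\open$'' is immediate since $N_{G\open}(H\open)\subseteq G\open$, so the work is in the converse, which I would prove by moving an arbitrary conjugator into $N_{G\open}(H\open)$. Suppose $h_1,h_2\in H$ and $g\in G\open$ satisfy $gh_1g^{-1}=h_2$. Because $H$ is commutative (hypothesis~\ref{ThmAbelianQuasiTorus_C}), $H\open$ centralizes $h_2\in H$, so $H\open\subseteq Z_{G\open}(h_2)$; and $gH\open g^{-1}$ centralizes $gh_1g^{-1}=h_2$, so $gH\open g^{-1}\subseteq Z_{G\open}(h_2)$ as well. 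By hypothesis~\ref{ThmAbelianQuasiTorus_A} both $H\open$ and $gH\open g^{-1}$ are maximal tori of $G\open$; a torus of the subgroup $Z_{G\open}(h_2)$ containing one of them would be a torus of $G\open$ containing a maximal torus, hence equal to it, so $H\open$ and $gH\open g^{-1}$ are maximal tori of the (possibly disconnected) group $Z_{G\open}(h_2)$, and therefore maximal tori of the connected group $Z_{G\open}(h_2)\open$. By conjugacy of maximal tori in a connected linear algebraic group \cite[IV.11.3~Corollary]{Borel91}, there is $z\in Z_{G\open}(h_2)\open$ with $z(gH\open g^{-1})z^{-1}=H\open$. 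Then $n:=zg\in G\open$ normalizes $H\open$, i.e.\ $n\in N_{G\open}(H\open)$, and $nh_1n^{-1}=z(gh_1g^{-1})z^{-1}=zh_2z^{-1}=h_2$ since $z$ centralizes $h_2$. This is exactly the asserted conjugacy.

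\textbf{The finiteness statement.} Let $C$ be a $G\open$-conjugacy class in $G$. If $C\cap H=\emptyset$ there is nothing to prove; otherwise fix $h\in C\cap H$. By the part just proved, every $h'\in C\cap H$ is $N_{G\open}(H\open)$-conjugate to $h$, so $C\cap H$ is contained in the single conjugation orbit $O_h:=\{\,nhn^{-1}:n\in N_{G\open}(H\open)\,\}$. The fibres of the orbit map $N_{G\open}(H\open)\to G$, $n\mapsto nhn^{-1}$, are the left cosets of $Z_{G\open}(h)\cap N_{G\open}(H\open)$, and this intersection contains $H\open$ (again by commutativity of $H$). Since $G$ is reductive, $Z_{G\open}(H\open)=H\open$ (Remark~\ref{RemQuasiTorus}) and $N_{G\open}(H\open)/H\open$ is the finite Weyl group of $G\open$ (see \cite[IV.11.19~Proposition]{Borel91}); hence $Z_{G\open}(h)\cap N_{G\open}(H\open)$ has finite index in $N_{G\open}(H\open)$, so $O_h$ is finite, and therefore so is $C\cap H$.

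\textbf{Main point to watch.} The statement is not purely formal precisely because $N_{G\open}(H\open)$ normalizes $H\open$ but \emph{not} $H$, so there is no conjugation action of $N_{G\open}(H\open)$ on $H$; the content is that a $G\open$-conjugacy between two elements that happen to both lie in $H$ can always be realized by an element of $N_{G\open}(H\open)$ (which need not map all of $H$ into $H$). The only genuine external input is the conjugacy of maximal tori in the connected group $Z_{G\open}(h_2)\open$, and I expect the only care needed in a full write-up to be the elementary bookkeeping indicated above (that $Z_{G\open}(h_2)$ contains both $H\open$ and $gH\open g^{-1}$, and that a torus maximal in $G\open$ stays maximal in any intermediate subgroup). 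I would also flag that this lemma feeds into the proof of Theorem~\ref{ThmAbelianQuasiTorus}, so the argument must not presuppose that $H$ is a maximal quasi-torus; in fact it uses only hypotheses~\ref{ThmAbelianQuasiTorus_A} and \ref{ThmAbelianQuasiTorus_C}, not \ref{ThmAbelianQuasiTorus_B}.
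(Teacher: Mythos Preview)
Your proof is correct and follows essentially the same route as the paper's: use commutativity of $H$ to place both $H\open$ and its conjugate inside the centralizer of one of the two elements, invoke conjugacy of maximal tori in the connected component of that centralizer, and absorb the resulting correction into the normalizer; the finiteness then comes from the fact that $H\open$ acts trivially so only the finite Weyl group $N_{G\open}(H\open)/H\open$ matters. Your write-up is in fact slightly more careful than the paper's in explicitly passing to $Z_{G\open}(h_2)\open$ before citing \cite[IV.11.3~Corollary]{Borel91}, and your observation that only hypotheses~\ref{ThmAbelianQuasiTorus_A} and \ref{ThmAbelianQuasiTorus_C} are used is accurate.
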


\begin{proof}
In order to prove the first claim, note that one direction follows from the  inclusion $N_{G\open}(H\open)\subset G\open$. To prove the converse, let $h,h'\in H$ be conjugate under $G\open$, say $h=x^{-1}h'x$ for an $x\in G\open$. Since $H$ is commutative by condition~\ref{ThmAbelianQuasiTorus_C}, $h'$ centralizes $H\open$, and hence the conjugate $h=x^{-1}h'x$ centralizes $x^{-1}H\open x$. But $h$ also centralizes $H\open$, so we get that $H\open$ and $x^{-1}H\open x$ lie in the centralizer $G^h$ of $h$ in $G\open$. Since $H\open$ and  $x^{-1}H\open x$ are maximal tori in $G\open$ by condition~\ref{ThmAbelianQuasiTorus_A}, they are maximal tori in $G^h$, too. So there is a $y\in G^h$ such that $y^{-1}x^{-1}H\open xy=H\open$. Set $w=xy$. Then $w\in N_{G\open}(H\open)$, and $w^{-1}h'w=y^{-1}x^{-1}h'xy=y^{-1}hy=h$, as $y\in G^h$. Therefore, $h$ is conjugate to $h'$ under $N_{G\open}(H\open)$.

To prove the second claim, note that the identity component of $N_{G\open}(H\open)$ is $H\open$ by assumption~\ref{ThmAbelianQuasiTorus_A} and \cite[III.8.10, Corollary~2 and IV.13.17, Corollary~2]{Borel91}. By assumption~\ref{ThmAbelianQuasiTorus_C}, $H\open$ acts trivially by conjugation on $H$. Therefore, if two elements $h,h'$ of $H$ are conjugate under $N_{G\open}(H\open)$, they are conjugate under the Weyl group $W=N_{G\open}(H\open)/H\open$, which is finite. 
\end{proof}

\begin{proof}[Proof of Theorem~\ref{ThmAbelianQuasiTorus}.] 
By condition~\ref{ThmAbelianQuasiTorus_A} and Remark~\ref{RemQuasiTorus} there is a maximal quasi-torus $T\subset G$ such that $T\open=H\open$. We have to show that $T=H$. To this end fix any $t\in T$. By condition~\ref{ThmAbelianQuasiTorus_B} there is an $h\in H$ such that $h^{-1}t\in G\open$, and hence $tG\open=hG\open$. We make the following

\medskip\noindent
\emph{Claim.} We have $h^{-1}t\in T\open$.

\medskip\noindent
We prove the claim. Since $T\open$ is commutative, every element $tx\in tT\open$ centralizes $T^t\subset T\open$. Therefore, the quotient group $Q=T\open/T^t$ acts faithfully on $tT\open$ by conjugation. More precisely, $q\in Q$ acts as $tx\mapsto qtxq^{-1}=tx(t^{-1}qtq^{-1})$ for $x\in T\open$. The map $\psi\colon q\mapsto t^{-1}qtq^{-1}$ from $Q$ to $T\open$ is an injective homomorphism of tori, and hence we may form the quotient $Y=t\cdot T\open/\psi(Q)$ of $tT\open$ by the $Q$-action. Let $\pi\colon tT\open\to Y$ be the quotient map. Its fibers are the orbits of the $Q$-action. Consider the set
\[
X=\{(x,y)\in hT\open\TimesL Y\colon\exists\, a\in tT\open,\, \exists\, b\in G\open\text{ such that }\pi(a)=y \text{ and } b^{-1}ab=x)\,\}.
\]
We show that $X$ is a constructible set. Namely consider the morphism 
\[
\phi\colon\;tT\open\TimesL G\open\;\longto\; hG\open\TimesL Y,\quad(a,b) \;\longmapsto\;(b^{-1}ab,\pi(a))\,.
\]
The preimage $\phi^{-1}(hT\open\TimesL Y)\subset tT\open\TimesL G\open$ is a closed subset, and $X=\phi\bigl(\phi^{-1}(hT\open\TimesL Y)\bigr)$. Therefore, $X$ is a constructible set by Chevalley's theorem~\cite[$\mathrm{IV}_1$, Corollaire~1.8.5]{EGA}. Let $\pi_1\colon X\to hT\open$ and $\pi_2\colon X\to Y$ be the projections onto the first and the second factor, respectively. Every element of $hT\open=hH\open\subset H$ is semi-simple by Lemma~\ref{LemmaUnipotConnected}, because some power of it lies in the torus $H\open$. Therefore, this element is conjugate under $G\open$ to an element of $tT\open$ by Theorem~\ref{ThmQuasiTorusReductive}\ref{ThmQuasiTorusReductive_A},\ref{ThmQuasiTorusReductive_B}. Thus the map $\pi_1$ is surjective, and hence the dimension of $X$ is at least $\dim(T\open)=\dim(hT\open)$ by \cite[IV$_2$, Corollaire~5.6.8]{EGA}. For every $y\in Y$, the points in the fiber $\pi^{-1}(y)$ of $\pi\colon tT\open\to Y$ are conjugate under $T\open$, so the fiber of $\pi_2\colon X\to Y$ above $y\in Y$, which equals 
\[
\{\,x\in hT\open\colon \exists\,a\in\pi^{-1}(y),\, \exists\,b\in G\open \text{ with }x=b^{-1}ab\,\}\;=\;H\cap\{\,b^{-1}ab\colon a\in\pi^{-1}(y),b\in G\open\,\}\,,
\]
is the intersection of $H$ with the $G\open$-conjugacy class of $a$ for any $a\in\pi^{-1}(y)$. This is a finite set by Lemma~\ref{8.2}. The image $\pi_2(X)\subset Y$ is constructible by Chevalley's theorem, and the fibers of the surjective map $\pi_2\colon X\to\pi_2(X)$ are finite by the above. Thus $\dim(\pi_2(X))=\dim(X)$ by \cite[IV$_2$, Corollaire~5.6.8]{EGA}, and this is at least $\dim(T\open)=\dim(tT\open)$. Together we obtain $\dim(Y)\ge\dim(\pi_2(X))\ge\dim(tT\open)=\dim(Y)+\dim(Q)$. This means that $Q$ is zero-dimensional and connected as a quotient of $T\open$, and hence $T^t=T\open$. Therefore, both $h$ and $t$ centralize $T\open$, and so $h^{-1}t$ centralizes $T\open$, too. But $h^{-1}t\in G\open$ and the centralizer $Z_{G\open}(T\open)$ of $T\open$ in $G\open$ is $T\open$ itself by \cite[IV.13.17, Corollary~2]{Borel91}. This proves the claim.

By the claim $tT\open=hT\open=hH\open$, so we get that $H$ contains $T$. Now we only need to show the reverse inclusion. Let $h\in H$ be again arbitrary. Since the natural map $T/T\open\to G/G\open$ is surjective by Theorem~\ref{ThmQuasiTorusReductive}\ref{ThmQuasiTorusReductive_A}, there is a $t\in T$ such that $t^{-1}h\in G\open$. Since $T$ is in $H$, we get that $t^{-1}h$ is in $H$, too. But $H$ centralizes $H\open$, so $t^{-1}h$ is in $Z_{G\open}(H\open)=H\open$. Therefore $hH\open=tH\open=tT\open$, and hence $T$ contains $H$. So $T=H$, and this finishes the proof of Theorem~\ref{ThmAbelianQuasiTorus}.
\end{proof}

\subsection{Intersecting Conjugacy Classes with Maximal Quasi-Tori}\label{SubSectConjMQT}

In this subsection we collect several results which we will need in the following sections. The following setup was already briefly used in the proof of Theorem~\ref{ThmAbelianQuasiTorus}.

\begin{notn} \label{Notation9.4NEW}
Let $G$ be a linear algebraic group, let $T\subset G$ be a maximal quasi-torus. Note that $T\open$ commutes with $T^h:=\{g\in T\open\colon gh=hg\}$ for every $h\in T$. So we have $T^{ht}=T^h$ for every $t\in T\open$. The conjugation action of $T\open$ on $hT\open$ is given for $t\in T\open$ and $hx\in hT\open$ by $thxt^{-1}=hh^{-1}thxt^{-1}=hxh^{-1}tht^{-1}$, because $h^{-1}th\in T\open$ commutes with $x\in T\open$. Therefore, the map
\begin{equation}\label{EqT0Conjugates_hT0}
\phi\colon T\open\longto T\open,\quad t\longmapsto h^{-1}tht^{-1}
\end{equation}
satisfies $thxt^{-1}=hx\phi(t)$ for every $t\in T\open$ and $hx\in hT\open$. Moreover, $\phi$ is a homomorphism of algebraic groups, namely the product of the endomorphisms $t\mapsto h^{-1}th$ and $t\mapsto t^{-1}$ of the commutative group $T\open$. The kernel of $\phi$ is $T^h$. Let $Q_h$ be the quotient $T\open/T^h$. Then $\phi$ induces a closed immersion of tori
\[
\ol\phi\colon Q_h\longinto T\open,\quad \bar t=t\mod T^h\longmapsto h^{-1}tht^{-1}.
\]
\end{notn}

\begin{prop}\label{comment27NEW} 
In the situation of Notation~\ref{Notation9.4NEW} the following holds:
\begin{enumerate}
\item\label{comment27NEW_A} 
The natural group homomorphisms $T^h{}\open\TimesL Q_h\to T\open$, $(t_0,\bar t\,)\mapsto t_0\cdot\ol\phi(\bar t)$ and $T^h{}\open\to T\open/\ol\phi(Q_h)$ are surjective with finite kernels.
\item\label{comment27NEW_B} 
Every element of $hT\open$ is conjugate under $T\open$ to an element of $hT^h{}\open$,
\item\label{comment27NEW_C} 
The intersection of $hT^h{}\open$ with any $G\open$-conjugacy class (respectively $G$-conjugacy class) is finite.
\end{enumerate}
\end{prop}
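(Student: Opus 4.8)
For part~\ref{comment27NEW_A} I would exploit that $\sigma:=c_h|_{T\open}$, conjugation by $h$ on the torus $T\open$, has finite order (dividing $\#(T/T\open)$, since $T\open$ is commutative and some power of $h$ lies in $T\open$). As $L$ has characteristic $0$, the induced operator $d\sigma$ on $\Lie T\open$ is semisimple, so $\Lie T\open=\ker(d\sigma-1)\oplus\operatorname{im}(d\sigma-1)$. Here $\ker(d\sigma-1)=\Lie\bigl((T\open)^\sigma\bigr)=\Lie T^h\open$ (use that $(T\open)^\sigma=\ker(t\mapsto\sigma(t)t^{-1})$ is smooth in characteristic $0$), while $\operatorname{im}(d\sigma-1)=\operatorname{im}(d\phi)=\Lie\overline\phi(Q_h)$, because $\phi$ corresponds to $\sigma-1$ and $\overline\phi(Q_h)=\phi(T\open)$. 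Hence $\Lie T^h\open\cap\Lie\overline\phi(Q_h)=0$, so $T^h\open\cap\overline\phi(Q_h)$ is finite; this finite group is the kernel of both maps in~\ref{comment27NEW_A}. Since $\dim T^h\open+\dim Q_h=\dim T\open$ (as $\dim T^h=\dim T^h\open$ and $\overline\phi$ is a closed immersion) and likewise $\dim T^h\open=\dim(T\open/\overline\phi(Q_h))$, each of these two homomorphisms goes between groups of the same dimension onto a connected group with finite kernel, hence is surjective.

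Part~\ref{comment27NEW_B} is then immediate from~\ref{comment27NEW_A}: by Notation~\ref{Notation9.4NEW}, conjugation by $t\in T\open$ carries $hx\in hT\open$ to $hx\phi(t)$, so the $T\open$-orbit of $hx$ is $hx\,\overline\phi(Q_h)$, and this meets $hT^h\open$ if and only if $x\in T^h\open\cdot\overline\phi(Q_h)=T\open$. For part~\ref{comment27NEW_C} I would first reduce to $G$ reductive. Let $\alpha\colon G\onto G^{\red}$; by Definition~\ref{DefQuasiTorus}\ref{DefQuasiTorus_B} it maps $T$ isomorphically onto a maximal quasi-torus $\widetilde T$ of $G^{\red}$. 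For $t\in T\open$ one has $t\in T^h\iff tht^{-1}h^{-1}=1$, and since $tht^{-1}h^{-1}\in T$ and $\alpha|_T$ is injective this is equivalent to $\alpha(t)\in\widetilde T^{\,\widetilde h}$ where $\widetilde h:=\alpha(h)$; thus $\alpha$ restricts to an injection of $hT^h\open$ onto $\widetilde h\,\widetilde T^{\,\widetilde h}\open$. As $\alpha$ sends a conjugacy class $C$ of $G$ onto a conjugacy class of $G^{\red}$, finiteness of $hT^h\open\cap C$ follows once it is known in the reductive case.

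So assume $G$ reductive. A $G$-conjugacy class is a union of finitely many $G\open$-conjugacy classes, so it suffices to bound $hT^h\open\cap C(g_2)$ for a $G\open$-class; suppose $g_1=xg_2x^{-1}$ with $g_1,g_2\in hT^h\open$, $x\in G\open$. Since $g_2\in T$ normalizes a Borel $B\open\supseteq T\open$ of $G\open$ (Theorem~\ref{ThmQuasiTorusGeneral}\ref{ThmQuasiTorusGeneral_D}) and $Z_{T\open}(g_2)=T^h$, Theorem~\ref{steinberg} shows that $Z_{G\open}(g_2)\open$ and $Z_{G\open}(h)\open$ are reductive with $T^h\open$ a maximal torus; applying it to $g_1$ with the pair $(xB\open x^{-1},xT\open x^{-1})$ shows $xT^h\open x^{-1}$ is a maximal torus of $Z_{G\open}(g_1)\open$, and then so is $T^h\open$ (a torus of the same dimension there, as $g_1\in hT^h\open$ centralizes $T^h\open$). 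Conjugating $xT^h\open x^{-1}$ onto $T^h\open$ by some $y\in Z_{G\open}(g_1)\open$ and putting $w:=yx$, we get $w\in N_{G\open}(T^h\open)$ with $wg_2w^{-1}=g_1$; from $wg_2w^{-1}\in hT^h\open$ and $wt_0w^{-1}\in T^h\open$ it follows that $[w,h]:=whw^{-1}h^{-1}\in T^h\open$, i.e.\ $w$ lies in the closed subgroup $W_h:=\{w\in N_{G\open}(T^h\open):[w,h]\in T^h\open\}$. The crux is $W_h\open=T^h\open$: writing $M:=Z_{G\open}(T^h\open)$, one checks $[w,h]\in M$ for $w\in W_h$, and $Z_M(h)\open=Z_{Z_{G\open}(h)\open}(T^h\open)\open=T^h\open$ since $T^h\open$ is a maximal torus of the reductive group $Z_{G\open}(h)\open$; because $\Ad(h)$ is semisimple on $\Lie M$ and $\ker\bigl(\Ad(h)-1\bigr|_{\Lie M})=\Lie Z_M(h)=\Lie T^h\open$, differentiating the curves that define $W_h\open$ gives $\Lie W_h\open\subseteq\{X\in\Lie M:(1-\Ad(h))X\in\Lie T^h\open\}=\Lie T^h\open$, whence $W_h\open=T^h\open$ (using $T^h\open\subseteq W_h\open$). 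Since $T^h\open$ centralizes $g_2$, the orbit $\{wg_2w^{-1}:w\in W_h\}$ has at most $\#(W_h/W_h\open)<\infty$ elements, and it contains $hT^h\open\cap C(g_2)$.

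The main obstacle is precisely this last computation of $W_h\open$: one must show the "twisted" equation $[w,h]\in T^h\open$ has no infinitesimal solutions beyond $\Lie T^h\open$. Everything else is routine once Theorems~\ref{steinberg} and~\ref{ThmQuasiTorusGeneral} are available; the essential input is that Steinberg's theorem makes $T^h\open$ genuinely maximal in the relevant centralizers (so that $\ker(\Ad(h)-1|_{\Lie M})$ is as small as possible), combined with semisimplicity of $\Ad(h)$. If this step fails, part~\ref{comment27NEW_C} would actually be false, as $W_h\open\supsetneq T^h\open$ would produce a positive-dimensional family inside $hT^h\open$ all conjugate to $g_2$.
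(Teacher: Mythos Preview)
Your proof is correct, and for parts~\ref{comment27NEW_A} and~\ref{comment27NEW_C} it takes a genuinely different route from the paper. In~\ref{comment27NEW_A}, where you use semisimplicity of $d\sigma$ on $\Lie T\open$ to get $\Lie T^h\open\cap\Lie\overline\phi(Q_h)=0$, the paper argues directly with torsion: writing the kernel as $\{(t_0,\bar t):h^{-1}th=t_0^{-1}t\}$, one finds $h^{-n}th^n=t_0^{-n}t$, and choosing $n$ so that $h^n\in T\open$ forces $t_0^n=1$. Part~\ref{comment27NEW_B} is treated identically. For~\ref{comment27NEW_C}, after the same reduction to the reductive case and the same Steinberg-type step producing $w\in N_{G}(T^h\open)$ with $wvw^{-1}=u$ and $whw^{-1}\in hT^h\open$, the paper (in its Proposition~\ref{PropComment27NEW}) writes $w=\tilde w n$ with $\tilde w$ in a finite set of coset representatives modulo $N_G(T^h\open)\open$ and then shows that the discrepancy $z:=u(\tilde w v\tilde w^{-1})^{-1}$ satisfies $z^m=1$, using that $v^m\in T^h\open$ is centralized by $n\in Z_{G\open}(T^h\open)\open$. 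Your Lie-algebra identification $W_h\open=T^h\open$ is sharper: it shows the relevant ``conjugating group'' already has $T^h\open$ as its identity component, so no auxiliary torsion set $Z$ is needed and the finiteness is immediate from $\#(W_h/W_h\open)$. The paper's approach is slightly more elementary (no tangent-space computation) and yields the explicit form $u=zwvw^{-1}$ used later; yours is more structural and gives a cleaner bound.
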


\begin{proof} 
\ref{comment27NEW_A} The kernel of the first homomorphism is $\{(t_0,\bar t\,)\colon t_0^{-1}=\ol\phi(\bar t)=h^{-1}tht^{-1}\}$. This condition is equivalent to $h^{-1}th=t_0^{-1}t$. Since $t_0\in T^h$ we obtain $h^{-n}th^n=t_0^{-n}t$ for every positive integer $n$. If $n$ equals the order of $h$ in the group $T/T\open$, then $h^n\in T\open$ and $h^{-n}th^n=t$. This shows that $t_0^n=1$ and $\ol\phi(\bar t\,{}^n)=\ol\phi(\bar t)^n=t_0^{-n}=1$. In particular, the kernel of the first (respectively second) homomorphism is contained in the $n$-torsion subgroup of the torus $T^h{}\open\TimesL Q_h$ (respectively $T^h{}\open$). Both are finite groups. The surjectivity now follows from the irreducibility of the targets, and from the comparison of dimensions $\dim Q_h=\dim T\open-\dim T^h$ and $\dim T\open/\ol\phi(Q_h)=\dim T\open-\dim Q_h=\dim T^h=\dim T^h{}\open$.

\medskip\noindent
\ref{comment27NEW_B} By \ref{comment27NEW_A} every $hx\in hT\open$ is of the form $hx=ht_0\ol\phi(\bar t)=ht_0\phi(t)=t(ht_0)t^{-1}$ for $ht_0\in hT^h{}\open$ and $t\in T\open$.

\medskip\noindent
\ref{comment27NEW_C} Since the projection $G\to G^\red$ maps $T$ isomorphically to its image, and it maps conjugate elements to conjugate elements, we may assume that $G$ is reductive. (Note that there may be elements in $T$ which are not conjugate under $G$, but whose images are conjugate under $G^\red$. So the cardinality of the intersection in question may grow by passing to $G^\red$.) The proposition is now a consequence of the following more precise statement.
\end{proof}

\begin{prop}\label{PropComment27NEW}
In the situation of Notation~\ref{Notation9.4NEW} let $G$ be reductive and let $\Omega\subset \{\,w\in N_G(T^h{}\open)\colon whw^{-1}\in hT^h{}\open\,\}$ be a finite subset which is maximal (under the inclusion of subsets) such that $\Omega\into N_G(T^h{}\open)/N_G(T^h{}\open)\open$ is injective. Moreover, let $m>0$ be the smallest positive integer with $h^m\in T^h{}\open$ and consider $\Delta:=\{z\in T^h{}\open\colon z^m=1\}$, which is a finite group. If two elements $u,v\in hT^h{}\open$ are conjugate under $G$, then there are elements $w\in \Omega$ and $z\in \Delta$ with $u=zwv w^{-1}$.
\end{prop}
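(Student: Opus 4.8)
The plan is to reduce everything to Steinberg's Theorem~\ref{steinberg} applied to $h$, followed by bookkeeping inside $N_G(T^h{}\open)$. Write $u=gvg^{-1}$ with $g\in G$, and $u=hv_0$, $v=hv_1$ with $v_0,v_1\in T^h{}\open$; note that $u,v$ are semi-simple by Lemma~\ref{LemmaQuasiTorus} (they lie in $hT^h{}\open\subseteq T\open\cdot h\subseteq T$) and that $h$ commutes with $v_0$ and $v_1$, since $T^h{}\open\subseteq Z_{T\open}(h)$ and $T\open$ is commutative. Because $h$ lies in the maximal quasi-torus $T$, Theorem~\ref{ThmQuasiTorusGeneral}\ref{ThmQuasiTorusGeneral_D} supplies a Borel subgroup $B\open\subseteq G\open$ containing $T\open$ that is normalized by $h$; applying Theorem~\ref{steinberg} to $h$ with the pair $T\open\subseteq B\open$ shows that $Z_{G\open}(h)\open$ is reductive with maximal torus $T^h{}\open$, so $Z_{Z_{G\open}(h)\open}(T^h{}\open)=T^h{}\open$ by \cite[IV.13.17~Corollary~2]{Borel91}.

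The first and key step will be to show that $T^h{}\open$ is a maximal torus of both $Z_{G\open}(u)\open$ and $Z_{G\open}(v)\open$. I would argue the case of $v$ (that of $u$ being identical): since $v=hv_1$ with $v_1\in T^h{}\open$, writing $h=vv_1^{-1}$ shows that an element of $G\open$ centralizes both $v$ and $T^h{}\open$ if and only if it centralizes both $h$ and $T^h{}\open$, whence $Z_{G\open}(v)\cap Z_{G\open}(T^h{}\open)=Z_{G\open}(h)\cap Z_{G\open}(T^h{}\open)$. Now any torus $S$ of $Z_{G\open}(v)\open$ containing $T^h{}\open$ is commutative, hence lies in $Z_{G\open}(v)\open\cap Z_{G\open}(T^h{}\open)$; being connected it lies in the identity component of $Z_{G\open}(h)\cap Z_{G\open}(T^h{}\open)$, which is contained in $Z_{Z_{G\open}(h)\open}(T^h{}\open)=T^h{}\open$. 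So $S=T^h{}\open$, and $T^h{}\open$ is a maximal torus of $Z_{G\open}(v)\open$.

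Next I would produce a conjugating element inside $N_G(T^h{}\open)$. From $u=gvg^{-1}$ the subgroup $g^{-1}T^h{}\open g$ is a maximal torus of $g^{-1}Z_{G\open}(u)\open g=Z_{G\open}(v)\open$, so by \cite[IV.11.3~Corollary]{Borel91} there is $y\in Z_{G\open}(v)\open$ with $y(g^{-1}T^h{}\open g)y^{-1}=T^h{}\open$. Setting $w_0:=gy^{-1}$ gives $w_0 T^h{}\open w_0^{-1}=g(g^{-1}T^h{}\open g)g^{-1}=T^h{}\open$, so $w_0\in N_G(T^h{}\open)$, and $w_0vw_0^{-1}=gvg^{-1}=u$ since $y$ centralizes $v$; moreover $w_0hw_0^{-1}=u\,(w_0v_1w_0^{-1})^{-1}\in hT^h{}\open$, using $w_0v_1w_0^{-1}\in T^h{}\open$. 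Hence $w_0$ lies in the subgroup $P:=\{w\in N_G(T^h{}\open)\colon whw^{-1}\in hT^h{}\open\}$ (which one checks is indeed a subgroup), and by maximality of $W$ as a subset of $P$ injecting into $N_G(T^h{}\open)/N_G(T^h{}\open)\open$ there is $w\in W$ lying in the same component of $N_G(T^h{}\open)$ as $w_0$. Put $c:=w^{-1}w_0\in N_G(T^h{}\open)\open\cap P$; since $N_G(T^h{}\open)\open=Z_{G\open}(T^h{}\open)\open$, the element $c$ centralizes $T^h{}\open$.

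The proof then finishes by a short computation. As $c\in P$ we may write $chc^{-1}=h\zeta$ with $\zeta\in T^h{}\open$, and $h,\zeta$ commute because $h$ centralizes $T^h{}\open$; then from $h^m\in T^h{}\open$ and $c\in Z_{G\open}(T^h{}\open)$ we get $h^m=ch^mc^{-1}=(h\zeta)^m=h^m\zeta^m$, so $\zeta^m=1$, i.e.\ $\zeta\in Z$. Since $c$ also centralizes $v_1$, we have $cvc^{-1}=(chc^{-1})v_1=h\zeta v_1=\zeta v$, whence $u=w_0vw_0^{-1}=w(cvc^{-1})w^{-1}=(w\zeta w^{-1})(wvw^{-1})$, and $z:=w\zeta w^{-1}\in Z$ because $w\in N_G(T^h{}\open)$ and $z^m=w\zeta^m w^{-1}=1$. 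This yields the desired identity $u=zwvw^{-1}$. The main obstacle is the first step: all the substance lies in recognizing $T^h{}\open$ as a maximal torus of $Z_{G\open}(u)\open$ and $Z_{G\open}(v)\open$, which is precisely where Steinberg's theorem enters; the remainder is routine manipulation in $N_G(T^h{}\open)$ and the commutative torus $T\open$.
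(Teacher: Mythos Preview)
Your proof is correct and follows essentially the same line as the paper's. The one notable difference is in your ``first and key step'': you apply Theorem~\ref{steinberg} only to $h$ and then argue by a centralizer computation that $T^h{}\open$ is a maximal torus of $Z_{G\open}(u)\open$ and $Z_{G\open}(v)\open$. The paper shortcuts this by observing that $u,v\in hT^h{}\open\subset T$ normalize the same pair $T\open\subset B\open$ as $h$, and that $T^u=T^v=T^h$ because $T\open$ is commutative; thus Steinberg applies directly to $u$ and $v$, giving the maximality of $T^h{}\open$ in $G^u{}\open$ and $G^v{}\open$ in one stroke. After that, both arguments conjugate inside the appropriate centralizer to land in $N_G(T^h{}\open)$, pick a representative $w\in W$ of the resulting component, and use $N_G(T^h{}\open)\open\subset Z_{G\open}(T^h{}\open)$ together with $h^m\in T^h{}\open$ to produce the $m$-torsion correction term $z$; your organization of that last computation via $chc^{-1}=h\zeta$ is a clean variant of the paper's direct calculation of $u(\tilde w v\tilde w^{-1})^{-1}$.
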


\begin{rem}\label{RemSteinbergConjugacy}
(a) The set $\{\,w\in N_G(T^h{}\open)\colon whw^{-1}\in hT^h{}\open\,\}$ is a subgroup. But in general it does not contain $N_G(T^h{}\open)\open$.

\medskip\noindent
(b) Note that we do \emph{not} claim that for every $w\in \Omega$ and $z\in \Delta$ the element $zwv w^{-1}$ is conjugate to $v$.

\medskip\noindent
(c) When $G$ is connected, $T=T\open$ and so $h\in T\open$ and $T^h=T^h{}\open=T\open$ is a maximal torus in $G$. Thus we can take $\Omega=W(T,G)$ as (a system of representatives of) the Weyl group of $T\open$. Also $m=1$ and $\Delta=\{1\}$. In this way we recover the result of Steinberg~\cite[\S\,III.3.4, Corollary~2]{Steinberg74}: Two elements of $T$ are conjugate under $G$ if and only if they are conjugate under $W(T,G)$.
\end{rem}

\begin{proof}[Proof of Proposition~\ref{PropComment27NEW}.]
First note that some power $h^n$ of $h$ lies in $T\open$, and since $h^n$ commutes with $h$, it also lies in $T^h$. Multiplying the exponent further by the order of $T^h/T^h{}\open$ produces an integer $m>0$ such that $h^m\in T^h{}\open$. Also note that $\Delta$ is a finite group because $T^h{}\open$ is commutative.

Let $v$ and $u=xvx^{-1}\in hT^h{}\open$ be conjugate via $x\in G$. Since $v$ centralizes $T^v{}\open$, the conjugate $u=xvx^{-1}$ centralizes $xT^v{}\open x^{-1}$. By Theorem~\ref{steinberg} the subgroups $T^u{}\open\subset G^u{}\open$ and $T^v{}\open\subset G^v{}\open$ are maximal tori, and hence $xT^v{}\open x^{-1}$ is also a maximal torus in $xG^v{}\open x^{-1}=G^u{}\open$. So there is a $y\in G^u{}\open$ such that $yxT^v{}\open x^{-1}y^{-1}=T^u{}\open=T^h{}\open=T^v{}\open$. Then $w:=yx\in N_G(T^h{}\open)$, and $y\in G^u{}\open$ implies $u=yuy^{-1}=yxvx^{-1}y^{-1}=wvw^{-1}$.

Writing $u=h\tilde u=\tilde uh$ and $v=h\tilde v=\tilde vh$ with $\tilde v,\tilde u\in T^h{}\open$, we see that $h\tilde u=u=wvw^{-1}=whw^{-1}w\tilde vw^{-1}$ and thus $whw^{-1}=h\tilde u(w\tilde v^{-1}w^{-1})\in hT^h{}\open$. So there is an element $\tilde w\in \Omega$ with $w=\tilde wn$ for an $n\in N_G(T^h{}\open)\open$. Let $t\in T^h{}\open$ with $\tilde wh\tilde w^{-1}=ht=th$. Then $u^{-1}(\tilde w v\tilde w^{-1})=\tilde u^{-1}h^{-1}(\tilde w h\tilde v\tilde w^{-1})=\tilde u^{-1}t(\tilde w \tilde v\tilde w ^{-1})=(\tilde w \tilde v\tilde w ^{-1})t\tilde u^{-1}=(\tilde w \tilde vh\tilde w^{-1})h^{-1}\tilde u^{-1}=(\tilde w v\tilde w^{-1})u^{-1}$, because $T^h{}\open$ is commutative. We set $z:=u(\tilde w v\tilde w^{-1})^{-1}\in T^h{}\open$ and compute $z^m=u^m(\tilde w v^m\tilde w^{-1})^{-1}$. By \cite[III.8.10, Corollary~2]{Borel91} we have $n\in N_{G\open}(T^h{}\open)\open=Z_{G\open}(T^h{}\open)\open$, and from $v^m=h^m\tilde v^m \in T^h{}\open$ we conclude that $u^m=w v^mw^{-1}=\tilde wn v^mn^{-1}\tilde w^{-1}=\tilde w v^m\tilde w^{-1}$. Therefore, $z^m=1$ and $z\in \Delta$, whence $u=z\tilde w v\tilde w^{-1}$ as claimed.
\end{proof}

\begin{cor}\label{comment27b} 
In the situation of Notation~\ref{Notation9.4NEW} let $C$ be the intersection of $hT\open$ with a $G\open$-conjugacy class (respectively a $G$-conjugacy class) in $G$. Then $C$ is a finite union of $T\open$-conjugacy classes on $hT\open$.
\end{cor}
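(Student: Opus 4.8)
The plan is to reduce the statement entirely to the finiteness assertion in Proposition~\ref{comment27NEW}\ref{comment27NEW_C}. First I would record that a conjugacy class $\mathcal O$ in $G$ — whether it is a $G\open$-conjugacy class or a $G$-conjugacy class — is stable under conjugation by $T\open$, since $T\open\subset G\open\subset G$. The coset $hT\open$ is also stable under $T\open$-conjugation, because by Notation~\ref{Notation9.4NEW} conjugation by $t\in T\open$ sends $hx\in hT\open$ to $hx\phi(t)\in hT\open$. Hence $C=hT\open\cap\mathcal O$ is stable under $T\open$-conjugation, i.e.\ it is a (possibly empty) union of $T\open$-conjugacy classes on $hT\open$; the only thing left to show is that this union is finite.

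Next I would invoke Proposition~\ref{comment27NEW}\ref{comment27NEW_B}, which says that every element of $hT\open$ is conjugate under $T\open$ to an element of $hT^h{}\open$. Applied to the points of $C$, this shows that every $T\open$-conjugacy class contained in $C$ meets the subset $hT^h{}\open$. Therefore the number of $T\open$-conjugacy classes occurring in $C$ is at most $\#\bigl(C\cap hT^h{}\open\bigr)$. Since $hT^h{}\open\subset hT\open$, we have $C\cap hT^h{}\open=hT^h{}\open\cap\mathcal O$, and this set is finite by Proposition~\ref{comment27NEW}\ref{comment27NEW_C}, which treats both the $G\open$- and the $G$-conjugacy class case. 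Combining the two observations gives that $C$ is a finite union of $T\open$-conjugacy classes, as claimed.

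There is no real obstacle here: the substantial content has already been established in Propositions~\ref{comment27NEW} and \ref{PropComment27NEW}. The only point requiring a little care is the bookkeeping in the first paragraph, namely checking that $C$ is genuinely a union of $T\open$-conjugacy classes rather than merely a set that meets some of them — but this is immediate from the conjugation-invariance of $\mathcal O$ together with the $T\open$-stability of $hT\open$.
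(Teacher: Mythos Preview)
Your argument is correct and is essentially identical to the paper's proof: both decompose $C$ into $T\open$-conjugacy classes, use Proposition~\ref{comment27NEW}\ref{comment27NEW_B} to see that each class meets $hT^h{}\open$, and then bound the number of classes by $\#(C\cap hT^h{}\open)$, which is finite by Proposition~\ref{comment27NEW}\ref{comment27NEW_C}. The only difference is that you spell out the $T\open$-stability of $C$ in the first paragraph, which the paper leaves implicit.
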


\begin{proof} 
Write $C$ as a disjoint union $C=\coprod_{i\in I}C_i$ such that each $\emptyset\ne C_i\subset hT\open$ is the intersection of $hT\open$ with a $T\open$-conjugacy class. By Proposition~\ref{comment27NEW}\ref{comment27NEW_B} for each $i\in I$ the intersection $C_i\cap hT^h{}\open$ is non-empty. Therefore, $\#(C\cap hT^h{}\open)\geq\#I$. But the set $C\cap hT^h{}\open$ is finite by Proposition~\ref{comment27NEW}\ref{comment27NEW_C}.
\end{proof}

Recall the symbol $^{H\!}C$ from Notation~\ref{InitialNotation}.

\begin{prop}\label{Prop8.8} 
In the situation of Notation~\ref{Notation9.4NEW} let $C\subset hT\open$ be a subset and let $H\subset G$ be a closed subgroup containing $T\open$. Then ${}^{H\!}C\cap hT^h{}\open$ is dense in $hT^h{}\open$ if and only if ${}^{H\!}C\cap hT\open$ is dense in $hT\open$.
\end{prop}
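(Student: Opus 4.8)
The plan is to identify $\,{}^{H\!}C\cap hT\open$ with the ``cylinder'' $\bigl({}^{H\!}C\cap hT^h{}\open\bigr)\cdot\bar\phi(Q_h)$ over $\,{}^{H\!}C\cap hT^h{}\open$ and then to compare Zariski closures, using that $T^h{}\open$ and $\bar\phi(Q_h)$ generate $T\open$ up to a finite kernel. I keep the notation of Notation~\ref{Notation9.4NEW}; recall that by \eqref{EqT0Conjugates_hT0} conjugation by $t\in T\open$ sends $hx\in hT\open$ to $hx\,\phi(t)$, so the $T\open$-conjugacy class of $hx$ is the right coset $hx\cdot\phi(T\open)=hx\cdot\bar\phi(Q_h)$; in particular $hT\open$ and $hT^h{}\open$ are stable under $T\open$-conjugation.

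First I would establish the cylinder identity $\,{}^{H\!}C\cap hT\open=\bigl({}^{H\!}C\cap hT^h{}\open\bigr)\cdot\bar\phi(Q_h)$. Since $T\open\subset H$, the set $\,{}^{H\!}C$ of Definition~\ref{DefHOrbit} is stable under $T\open$-conjugation, hence so is $\,{}^{H\!}C\cap hT\open$; being a union of right $\bar\phi(Q_h)$-cosets it then contains $\bigl({}^{H\!}C\cap hT^h{}\open\bigr)\cdot\bar\phi(Q_h)$. For the reverse inclusion, given $hy\in{}^{H\!}C\cap hT\open$, Proposition~\ref{comment27NEW}\ref{comment27NEW_B} provides $t\in T\open$ with $u:=t(hy)t^{-1}\in hT^h{}\open$; then $u\in{}^{H\!}C\cap hT^h{}\open$ by $T\open$-stability and $hy$ lies in the $T\open$-conjugacy class $u\cdot\bar\phi(Q_h)$ of $u$.

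Next I would record that the multiplication homomorphism $T^h{}\open\times_L\bar\phi(Q_h)\to T\open$ is surjective with finite kernel by Proposition~\ref{comment27NEW}\ref{comment27NEW_A}; being a homomorphism of smooth algebraic groups in characteristic $0$ it is faithfully flat and hence finite, in particular proper and dimension-preserving on irreducible closed subsets. Translation by $h$ turns it into a finite surjective morphism $\nu\colon hT^h{}\open\times_L\bar\phi(Q_h)\to hT\open$, $(u,s)\mapsto us$. Writing $A:={}^{H\!}C\cap hT^h{}\open$ with closure $\overline A\subset hT^h{}\open$ and $B:=\bar\phi(Q_h)$, properness of $\nu$ makes $\overline A\cdot B=\nu(\overline A\times_L B)$ closed and containing $A\cdot B$, while for each $s\in B$ the translate $\overline A\cdot s=\overline{A\cdot s}$ lies in $\overline{A\cdot B}$; hence $\overline{A\cdot B}=\overline A\cdot B$, so by the cylinder identity $\overline{{}^{H\!}C\cap hT\open}=\overline A\cdot B$.

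Finally I would conclude by a dimension count. Since $hT^h{}\open$ is irreducible with $\dim hT^h{}\open=\dim T^h=\dim T\open-\dim Q_h=\dim hT\open-\dim B$, the condition $\overline A=hT^h{}\open$ forces $\overline A\cdot B=hT^h{}\open\cdot B=hT\open$ by the surjectivity in Proposition~\ref{comment27NEW}\ref{comment27NEW_A}; conversely $\overline A\cdot B=hT\open$ gives, applying $\nu$ to the irreducible sets $A_i\times_L B$ for the irreducible components $A_i$ of $\overline A$, the equality $\dim hT\open=\dim\overline A+\dim B$, hence $\dim\overline A=\dim hT^h{}\open$ and so $\overline A=hT^h{}\open$ by irreducibility. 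Thus $\,{}^{H\!}C\cap hT^h{}\open$ is Zariski-dense in $hT^h{}\open$ exactly when $\,{}^{H\!}C\cap hT\open$ is Zariski-dense in $hT\open$. The only substantive step is the cylinder identity, and within it the reverse inclusion, which depends essentially on Proposition~\ref{comment27NEW}\ref{comment27NEW_B} (every $T\open$-conjugacy class in $hT\open$ meets $hT^h{}\open$); the remaining arguments are routine bookkeeping with the finite morphism $\nu$ and a dimension count.
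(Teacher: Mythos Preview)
Your proof is correct and, in my view, cleaner than the paper's. One small correction: the parenthetical claim that $hT^h{}\open$ is stable under $T\open$-conjugation is false in general (in the paper's Example after Theorem~\ref{steinberg}, $hT^h{}\open=\{h\}$ while its $T\open$-conjugacy class is all of $hT\open$). Fortunately you never use this; your cylinder identity only needs $hT^h{}\open\subset hT\open$ and the $T\open$-stability of ${}^{H\!}C\cap hT\open$, both of which are fine.

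The paper's argument and yours diverge mainly in the converse direction. You prove the precise set-theoretic identity ${}^{H\!}C\cap hT\open=A\cdot B$ with $A={}^{H\!}C\cap hT^h{}\open$ and $B=\bar\phi(Q_h)$, then exploit the finite isogeny $T^h{}\open\times_L\bar\phi(Q_h)\to T\open$ from Proposition~\ref{comment27NEW}\ref{comment27NEW_A} to identify $\overline{A\cdot B}=\overline A\cdot B$ and finish by a dimension count. The paper instead passes to the quotient $hT\open/\bar\phi(Q_h)$ via the other isogeny in Proposition~\ref{comment27NEW}\ref{comment27NEW_A}, introduces an auxiliary incidence set $D\subset C_1\times hT\open\times H$ with its projections, shows $\gamma\pi_2(D)\subset\beta\pi_1(D)$, and derives a contradiction from $\dim\overline{\beta(\overline{C_1})}<\dim hT\open/\bar\phi(Q_h)$. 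Both routes rest on Proposition~\ref{comment27NEW}\ref{comment27NEW_A},\ref{comment27NEW_B}; yours packages the content as a single product decomposition and avoids the detour through $D$ and the (non-commutative) diagram, at the cost of needing the observation that the isogeny is a finite morphism.
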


\begin{proof}
We set $C_1:={}^{H\!}C\cap hT^h{}\open$ and observe that ${}^{H\!}C_1={}^{H\!}C$, because the inclusion ${}^{H\!}C_1\subset {}^{H\!}C$ is obvious and the opposite inclusion follows from Proposition~\ref{comment27NEW}\ref{comment27NEW_B}, because $T\open\subset H$. 

First assume that $hT^h{}\open$ equals the closure $\ol{C_1}$ of $C_1$, and let $x\in hT\open$. By Proposition~\ref{comment27NEW}\ref{comment27NEW_B} there is a $t\in T\open\subset H$ with $txt^{-1}\in hT^h{}\open=\ol{C_1}$. Then $x\in t^{-1}\ol{C_1}t=\ol{t^{-1}C_1t\,}$. Since $t^{-1}C_1t$ is contained in ${}^{H\!}C_1\cap hT\open={}^{H\!}C\cap hT\open$ we conclude that $x\in \ol{t^{-1}C_1t\,}\subset\ol{{}^{H\!}C\cap hT\open}$. Therefore, ${}^{H\!}C\cap hT\open$ is dense in $hT\open$.

For the converse implication we assume that $\ol{C_1}\ne hT^h{}\open$ and consider the closed subset
\[
D\;:=\;\{\,(c_1,b,g)\in C_1\TimesL hT\open\TimesL H\text{ such that } b=gc_1g^{-1}\,\}\;\subset\;C_1\TimesL hT\open \TimesL H
\]
and the projections $\pi_1\colon D\to C_1$ and $\pi_2\colon D\to hT\open$. Then $\pi_2(D)={}^{H\!}C_1\cap hT\open={}^{H\!}C\cap hT\open$. We consider the following diagram which is \emph{not} commutative
\[
\xymatrix @C+2pc {
D \ar[r]^{\textstyle\pi_2} \ar[d]_{\textstyle\pi_1} & hT\open \ar[d]^{\textstyle\gamma} \\
\qquad\qquad C_1 \subset hT^h{}\open \ar[r]^(0,6){\textstyle\beta} & hT\open/\ol\phi(Q_h)
}
\]
where $\beta$ is induced from the homomorphism from Proposition~\ref{comment27NEW}\ref{comment27NEW_A}. Although the diagram is not commutative, we claim that $\gamma\pi_2(D)\subset \beta\pi_1(D)$. Indeed, let $x\in\gamma\pi_2(D)$ and let $(c_1,b,g)\in D$ be a preimage of $x$, that is $b=gc_1g^{-1}$. By Proposition~\ref{comment27NEW}\ref{comment27NEW_B} there is an element $t\in T\open\subset H$ such that $c:=tbt^{-1}\in hT^h{}\open$. Then $c=(tg)c_1(tg)^{-1}\in {}^{H\!}C_1\cap hT^h{}\open=C_1$. Moreover, $b=t^{-1}ct=c\phi(t^{-1})$, and hence $(c,b,t^{-1})\in D$. This shows that $x=\gamma\pi_2(c_1,b,g)=b\cdot\ol\phi(Q_h)=c\cdot\ol\phi(Q_h)=\beta\pi_1(c,b,t^{-1})\in\beta\pi_1(D)$ and proves the claim.

Since $\ol{C_1}\ne hT^h{}\open$ and $hT^h{}\open$ is irreducible, we get $\dim\ol{C_1}<\dim hT^h{}\open$. Since $\beta$ is a finite morphism, $\beta(\ol{C_1})$ is closed with $\dim\beta(\ol{C_1})=\dim\ol{C_1}<\dim hT^h{}\open=\dim hT\open/\ol\phi(Q_h)$, and therefore $\beta(\ol{C_1})$ is a proper closed subset of $hT\open/\ol\phi(Q_h)$ which contains $\gamma\pi_2(D)$. Since $\gamma$ is surjective, the preimage of $\beta(\ol{C_1})$ under $\gamma$ is a proper closed subset of $hT\open$ which contains $\pi_2(D)={}^{H\!}C\cap hT\open$. This shows that ${}^{H\!}C\cap hT\open$ is not dense in $hT\open$ and finishes the proof.
\end{proof}

\begin{prop}\label{finite_to_oneNEW} 
Let $G$ be a linear algebraic group, let $T\subset G$ be a maximal quasi-torus, let $h\in T$, and let $C\subset hT\open$ be a subset. Let $H\subset G$ and $\wt H\subset G^\red$ be closed subgroups with $T\open\subset H$ and $r_G(H)\subset\wt H$. Then ${}^{H\!}C\cap hT\open$ is dense in $hT\open$ if and only if ${}^{\wt H\!}r_G(C)\cap r_G(hT\open)$ is dense in $r_G(hT\open)$. 
\end{prop}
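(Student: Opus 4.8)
Proposition~\ref{finite_to_oneNEW} is a reduction-to-the-reductive-quotient statement, and the plan is to deduce it from Proposition~\ref{Prop8.8} together with the finiteness of the fibers of $\alpha$ over $T$. First I would fix notation: let $\wt T=\alpha(T)$, which is a maximal quasi-torus of $\wt G$ by Corollary~\ref{Cor6.10} (or by the very definition of maximal quasi-torus), and $\alpha$ restricts to an isomorphism $T\isoto\wt T$ by Definition~\ref{DefQuasiTorus}\ref{DefQuasiTorus_B}; in particular $\alpha$ maps $hT\open$ isomorphically onto $\wt h\wt T\open$ where $\wt h:=\alpha(h)$, and it maps $hT^h{}\open$ isomorphically onto $\wt h\,\wt T^{\wt h}{}\open$. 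Then I would apply Proposition~\ref{Prop8.8} twice: on the $G$-side it says ${}^{H\!}C\cap hT\open$ is Zariski-dense in $hT\open$ iff ${}^{H\!}C\cap hT^h{}\open$ is Zariski-dense in $hT^h{}\open$; on the $\wt G$-side (with the maximal quasi-torus $\wt T$, the subgroup $\wt H\supset\wt T\open=\alpha(T\open)$, and the subset $\alpha(C)\subset \wt h\wt T\open$) it says ${}^{\wt H\!}\alpha(C)\cap \wt h\wt T\open$ is Zariski-dense in $\wt h\wt T\open$ iff ${}^{\wt H\!}\alpha(C)\cap \wt h\,\wt T^{\wt h}{}\open$ is Zariski-dense in $\wt h\,\wt T^{\wt h}{}\open$. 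So it suffices to prove the equivalence at the level of the small tori $hT^h{}\open$ and $\wt h\,\wt T^{\wt h}{}\open$, which are identified by the isomorphism $\alpha$.

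The heart of the argument is therefore to compare ${}^{H\!}C\cap hT^h{}\open$ with ${}^{\wt H\!}\alpha(C)\cap \wt h\,\wt T^{\wt h}{}\open$ under the isomorphism $\alpha\colon hT^h{}\open\isoto\wt h\,\wt T^{\wt h}{}\open$. The inclusion $\alpha\bigl({}^{H\!}C\cap hT^h{}\open\bigr)\subset {}^{\wt H\!}\alpha(C)\cap\wt h\,\wt T^{\wt h}{}\open$ is immediate since $\alpha(H)\subset\wt H$ and $\alpha$ is a homomorphism. Zariski-density of the former set in $hT^h{}\open$ thus forces density of the latter in $\wt h\,\wt T^{\wt h}{}\open$, giving one direction without effort. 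For the converse I would argue as follows: suppose ${}^{\wt H\!}\alpha(C)\cap\wt h\,\wt T^{\wt h}{}\open$ is Zariski-dense. The key point is that for $u\in hT^h{}\open$, whether $u\in{}^{H\!}C$ is detected after applying $\alpha$ up to a finite ambiguity. Precisely, I would use Proposition~\ref{PropComment27NEW}: if $\alpha(u)$ is $\wt G$-conjugate — indeed $\wt H$-conjugate, but it is cleanest to work with conjugacy in the ambient groups and then note $T\open\subset H$, $\alpha(H)\subset\wt H$ — to an element of $\alpha(C)\cap\wt h\,\wt T^{\wt h}{}\open$, then $u$ itself differs from an element of ${}^{H\!}C$ by an element of the finite group $Z\cdot W$ of Proposition~\ref{PropComment27NEW} (transported to $G$ via the section of $\alpha$ over $T$, using that every $w\in W$ normalizing $\wt T^{\wt h}{}\open$ and every $z\in Z$ lift compatibly). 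Hence $\alpha^{-1}\bigl({}^{\wt H\!}\alpha(C)\cap\wt h\,\wt T^{\wt h}{}\open\bigr)$ is contained in the union of finitely many translates (by elements of $Z\cdot W$) of ${}^{H\!}C\cap hT^h{}\open$, and these translates are permuted, up to finite index, by the relevant finite group. Since a finite union of translates of a set is dense iff one of the translates is dense, and translation is an automorphism of $hT^h{}\open$, density of the $\wt G$-side set forces density of ${}^{H\!}C\cap hT^h{}\open$.

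Actually, on reflection the cleanest route avoids Proposition~\ref{PropComment27NEW} entirely and uses only fiber dimensions: the composition ${}^{H\!}C\cap hT^h{}\open\hookrightarrow hT^h{}\open\xrightarrow{\alpha} \wt h\,\wt T^{\wt h}{}\open$ is an isomorphism onto its image, so density of the source is equivalent to density of the image; then one must show the image equals ${}^{\wt H\!}\alpha(C)\cap\wt h\,\wt T^{\wt h}{}\open$ up to a Zariski-nowhere-dense discrepancy. The inclusion ``$\subseteq$'' is trivial; for ``$\supseteq$ up to nowhere-dense'' one observes that an element $\wt u\in\wt h\,\wt T^{\wt h}{}\open$ lies in ${}^{\wt H\!}\alpha(C)$ iff $\wt u=\wt g\,\alpha(c)\,\wt g^{-1}$ for some $\wt g\in\wt H$ and $c\in C$; lifting $\wt g$ to $g\in H$ (possible since $\alpha(H)=\wt H$ when $\wt H=\alpha(H)$; in general one passes to the subgroup generated, see the handling of such lifts in the proof of Theorem~\ref{ThmQuasiTorusGeneral}) shows $\alpha(gcg^{-1})=\wt u$, so $gcg^{-1}\in\alpha^{-1}(\wt u)\cap{}^{H\!}C$; since $gcg^{-1}\in G$ need not lie in $hT^h{}\open$, one conjugates it back into $T$ by Theorem~\ref{ThmQuasiTorusGeneral}\ref{ThmQuasiTorusGeneral_C} and then, using Proposition~\ref{comment27NEW}\ref{comment27NEW_B}, into $hT^h{}\open$ by an element of $T\open\subset H$, obtaining a genuine preimage in ${}^{H\!}C\cap hT^h{}\open$ of a $\ol\phi(Q_h)$-translate of $\wt u$. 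This is exactly the mechanism already used in the proof of Proposition~\ref{Prop8.8} via the non-commutative square with the projection $\gamma\colon hT\open\to hT\open/\ol\phi(Q_h)$, and I expect the proof to mirror that one closely. The main obstacle, and the step requiring genuine care, is precisely this bookkeeping with the finite ambiguity coming from the unipotent radical: one must be sure the fibers $\alpha^{-1}(\wt u)\cap{}^{H\!}C$ meet $hT^h{}\open$, which uses $T\open\subset H$ and the semisimplicity of elements of $T$ in an essential way, and one must check that ``finite union of translates'' does not enlarge the closure — this is where the irreducibility of $hT^h{}\open$ and the dimension inequality of \cite[IV$_2$, Th\'eor\`eme~4.1.2]{EGA} reappear, just as in Proposition~\ref{Prop8.8}.
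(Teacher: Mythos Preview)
Your first approach---reduce to $hT^h{}\open$ via Proposition~\ref{Prop8.8} and control $\wt H$-conjugacy using the finite sets $Z,W$ of Proposition~\ref{PropComment27NEW}---is precisely the paper's argument, and it works. The point you should make cleaner is that one never needs to ``transport $W$ to $G$'': since $\alpha|_T\colon T\isoto\wt T$ is an isomorphism, density of $C_1:={}^{H\!}C\cap hT^h{}\open$ in $hT^h{}\open$ is equivalent to density of $\alpha(C_1)$ in $\alpha(hT^h{}\open)$, and Proposition~\ref{PropComment27NEW} applied \emph{inside the reductive group $\wt G$} gives
\[
{}^{\wt H\!}\alpha(C_1)\cap\alpha(hT^h{}\open)\;\subset\;\bigcup_{z\in Z,\,w\in W} zw\,\alpha(C_1)\,w^{-1}\;\subset\;\alpha(hT^h{}\open),
\]
after which a dimension count (each $zw\,\ol{\alpha(C_1)}\,w^{-1}$ has dimension $\dim\ol{\alpha(C_1)}$, and $\alpha(hT^h{}\open)$ is irreducible) finishes. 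One also checks ${}^{\wt H\!}\alpha(C_1)={}^{\wt H\!}\alpha(C)$, which follows from ${}^{H\!}C_1={}^{H\!}C$ together with $\alpha(H)\subset\wt H$. The elements $w\in W\subset N_{\wt G}(\wt T^{\wt h}{}\open)$ have no natural lift to $G$, and none is needed.

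Your second approach, however, has a genuine gap and should be abandoned. Lifting $\wt g\in\wt H$ to $g\in H$ requires $\alpha(H)=\wt H$, but only $\alpha(H)\subset\wt H$ is assumed, and the proposition is stated in this generality deliberately. Even granting such a lift, the element $gcg^{-1}$ is semi-simple with $\alpha(gcg^{-1})\in\wt T$, hence lies in $\alpha^{-1}(\wt T)=T\cdot R_uG$; conjugating it into $T$ (by Theorem~\ref{ThmQuasiTorusGeneral}\ref{ThmQuasiTorusGeneral_C} or Proposition~\ref{reduce_to_quasi}) requires an element of $(T\cdot R_uG)\open=T\open\cdot R_uG$, which need not lie in $H$. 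So the result lands only in ${}^{G\!}C$, not ${}^{H\!}C$, and you have lost exactly the control you need. This is the obstacle Proposition~\ref{PropComment27NEW} is designed to circumvent: it replaces arbitrary $\wt G$-conjugacy by a finite combinatorial operation entirely within $\wt h\,\wt T^{\wt h}{}\open$, so no passage from $\wt G$ back to $G$ is ever required. Stick with your first approach, but carry it out in $\wt G$.
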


\noindent
{\itshape Remark.} Note that the ``if''-direction is not obvious, because there may be elements in $hT\open$ which are not conjugate under $H$, but whose images are conjugate under $r_G(H)$.

\begin{proof} 
Note that $r_G({}^{H\!}C)\subset{}^{\wt H\!}r_G(C)$, and hence $r_G({}^{H\!}C\cap hT\open)\subset{}^{\wt H\!}r_G(C)\cap r_G(hT\open)$. Since $r_G\colon hT\open\isoto r_G(hT\open)$ is an isomorphism, the ``only if''-direction is clear.

To prove the converse, we use Notation~\ref{Notation9.4NEW} and let $C_1:={}^{H\!}C\cap hT^h{}\open$. Then ${}^{H\!}C_1={}^{H\!}C$ as at the beginning of the proof of Proposition~\ref{Prop8.8}. Moreover,  ${}^{\wt H\!}r_G(C_1)={}^{\wt H\!}r_G(C)$, because the inclusion ${}^{\wt H\!}r_G(C_1)\subset {}^{\wt H\!}r_G(C)$ follows from $r_G(C_1)\subset r_G({}^{H\!}C)\subset{}^{\wt H\!}r_G(C)$, and the opposite inclusion follows from $r_G(C)\subset r_G({}^{H\!}C_1)\subset{}^{\wt H\!}r_G(C_1)$. So by Proposition~\ref{Prop8.8} it suffices to show that $C_1$ is dense in $hT^h{}\open$ provided that ${}^{\wt H\!}r_G(C_1)\cap r_G(hT^h{}\open)$ is dense in $r_G(hT^h{}\open)$. 

Let $u\in{}^{\wt H\!}r_G(C_1)\cap r_G(hT^h{}\open)$, that is $u=\tilde gv\tilde g^{-1}$ for some $\tilde g\in\wt H$ and $v\in r_G(C_1)\subset r_G(hT^h{}\open)$. Then in the notation of Proposition~\ref{PropComment27NEW} applied to $r_G(T^h{}\open)\subset G^\red$, there are elements $z\in \Delta$ and $w\in \Omega$ with $u=zwvw^{-1}$. We conclude that 
\[
{}^{\wt H\!}r_G(C_1)\cap r_G(hT^h{}\open)\;\subset\; \bigcup_{z\in \Delta,w\in \Omega}zw \,r_G(C_1)\,w^{-1}\;\subset\;r_G(hT^h{}\open)\,. 
\]
If the closure $\ol{C_1}$ of $C_1$ is strictly contained in $hT^h{}\open$, then $\ol{\,r_G(C_1)\,}=r_G(\ol{C_1})\ne r_G(hT^h{}\open)$, because $r_G|_T\colon T\isoto r_G(T)$ is an isomorphism. We obtain an inequality of dimensions $\dim\ol{\,r_G(C_1)\,}<\dim r_G(hT^h{}\open)$, because $r_G(hT^h{}\open)$ is irreducible. On the other hand the closure 
\[
\ol{{}^{\wt H\!}r_G(C_1)\cap r_G(hT^h{}\open)}\;\subset\;\bigcup_{z\in \Delta,w\in \Omega}zw\ol{\,r_G(C_1)\,}w^{-1}\;\subset\;r_G(hT^h{}\open)\,,
\]
because the union is closed as a finite union of closed subsets. Since $\dim zw\ol{\,r_G(C_1)\,}w^{-1}=\dim\ol{\,r_G(C_1)\,}<\dim r_G(hT^h{}\open)$ and $r_G(hT^h{}\open)$ is irreducible, it cannot be the finite union of proper closed subsets. This implies that ${}^{\wt H\!}r_G(C_1)\cap r_G(hT^h{}\open)$ is not dense in $r_G(hT^h{}\open)$ and proves the proposition.
\end{proof}

\begin{lemma}\label{useful}
Let $G$ be reductive (but not necessarily connected) and let $T\subset G$ be a maximal quasi-torus.
\begin{enumerate}
\item \label{useful_A}
For every $g\in G$ we have $\ol{{}^{G\!}\{g\}}\cap T= {}^{G\!}\{g_s\}\cap T$, where $g_s$ is the semi-simple part of $g$.
\item \label{useful_B}
Let $\{C_x\colon x\in S\}$ be a collection of conjugacy classes in $G$ and let $h\in T$. Then $hG\open\cap\bigcup_{x\in S}C_x$ is dense in $hG\open$ if and only if $hT\open\cap\bigcup_{x\in S}\ol{C_x}$ is dense in $hT\open$.
\end{enumerate}
\end{lemma}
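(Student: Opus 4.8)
The plan is to treat part~\ref{useful_A} first, since part~\ref{useful_B} is an immediate consequence of it. For part~\ref{useful_A}, fix $g\in G(L)$ and write its multiplicative Jordan decomposition $g=g_sg_u$. The inclusion $C(g_s)\cap T\subset\ol{C(g)}\cap T$ is the content of the fact that $g_s$ lies in the closure of $C(g)$: indeed, conjugating by a one-parameter subgroup $\lambda(t)$ for which $\lim_{t\to0}\lambda(t)g_u\lambda(t)^{-1}=1$ (obtained by embedding $G\subset\GL_r$, conjugating $g_u$ to upper triangular unipotent form via \cite[IV.11.10~Theorem]{Borel91}, and using the cocharacter scaling the strictly upper triangular part to zero) shows $g_s\in\ol{C(g)}$, hence $C(g_s)=C(g_s)\subset\ol{C(g)}$ since $\ol{C(g)}$ is conjugation-invariant, and intersecting with $T$ gives one containment. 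For the reverse containment $\ol{C(g)}\cap T\subset C(g_s)\cap T$, I would take any $t\in\ol{C(g)}\cap T$. By Lemma~\ref{LemmaQuasiTorus} the element $t$ is semi-simple, so it suffices to show $t\in C(g_s)$, i.e.\ that $t$ and $g_s$ are conjugate under $G$. Here I would invoke that the semi-simple conjugacy classes in the reductive group $G$ are exactly the closed conjugacy classes (Theorem~\ref{ThmQuasiTorusReductive}\ref{ThmQuasiTorusReductive_B}, or \cite[Corollaire~II.2.22]{Spaltenstein82}): since $t$ is semi-simple, $C(t)$ is closed; and $t\in\ol{C(g)}$, while by the first step $g_s\in\ol{C(g)}$ and $C(g_s)=\ol{C(g_s)}$ is the unique closed orbit in $\ol{C(g)}$ (closure of any orbit contains a unique closed orbit, and $\ol{C(g)}$ is irreducible so its closed orbits all have the same dimension and in fact coincide by \cite[III.1.8]{Borel91}-type reasoning). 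Therefore $C(t)=C(g_s)$, giving $t\in C(g_s)\cap T$.

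\textbf{Deducing part~\ref{useful_B}.} Let $g\in T(L)$ and let $\{C_x\colon x\in S\}$ be the given conjugacy classes. Using part~\ref{useful_A} applied to a representative of each $C_x$, we have $\ol{C_x}\cap T=C_{x,s}\cap T$, where $C_{x,s}$ is the semi-simple conjugacy class attached to $C_x$; in particular $\ol{C_x}\cap T\subset T$ consists of semi-simple elements and
\[
\Bigl(\bigcup_{x\in S}\ol{C_x}\Bigr)\cap gT\open \;=\;\Bigl(\bigcup_{x\in S}\bigl(\ol{C_x}\cap T\bigr)\Bigr)\cap gT\open\;=\;\Bigl(\bigcup_{x\in S}\bigl(C_{x,s}\cap T\bigr)\Bigr)\cap gT\open\,.
\]
On the other hand $\bigl(\bigcup_{x\in S}C_x\bigr)\cap gG\open$ and its image meet $gT\open$ in the same set up to conjugation: since $gG\open$ is irreducible, $\bigcup_{x\in S}C_x\cap gG\open$ is Zariski-dense in $gG\open$ if and only if its Zariski-closure, which equals $\bigcup_{x\in S}\ol{C_x}\cap gG\open$ (a priori one needs that this is still closed — but density is a statement about closures, so $\ol{\bigcup_x C_x\cap gG\open}=gG\open$ iff $\bigcup_x\ol{C_x}$ has closure containing $gG\open$), is all of $gG\open$. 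Now I would conjugate: by Theorem~\ref{ThmQuasiTorusReductive}\ref{ThmQuasiTorusReductive_D} (or \ref{ThmQuasiTorusReductive_B}) every semi-simple element of $gG\open$ is $G\open$-conjugate to an element of the maximal quasi-torus, hence of $gT\open$, so $\bigcup_x\ol{C_x}\supset gG\open$ precisely when the conjugation-invariant closed set $\bigcup_x\ol{C_x}$ contains all of $gT\open$, i.e.\ when $gT\open\cap\bigcup_x\ol{C_x}$ is Zariski-dense in $gT\open$ (using that $gT\open$ is irreducible). This gives the equivalence.

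\textbf{Main obstacle.} The essential point, and the only place where reductivity is used in an essential way, is the identification in part~\ref{useful_A} of $\ol{C(g)}\cap T$ with the semi-simple class $C(g_s)\cap T$ — concretely, the two facts that (i) $g_s\in\ol{C(g)}$ always, and (ii) a semi-simple conjugacy class is closed and is the unique closed orbit in the closure of $C(g)$. Part~(i) is elementary via the Jordan decomposition and a cocharacter degeneration of the unipotent part; part~(ii) is where one must cite the structure theory (Theorem~\ref{ThmQuasiTorusReductive}\ref{ThmQuasiTorusReductive_B} together with the standard fact that the closure of any orbit contains a unique closed orbit). Once these are in hand the rest is formal manipulation of closures and the conjugacy-into-$T$ statement for semi-simple elements, so I expect no further difficulty; care is only needed to keep track of which conjugation group ($G$ versus $G\open$) is being used, but since $gG\open$ and $gT\open$ differ by $G\open$-conjugation and all the closures involved are $G$-invariant, this causes no real trouble.
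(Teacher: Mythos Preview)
Your argument for part~\ref{useful_A} has a gap at ``$\ol{C(g)}$ is irreducible so its closed orbits all have the same dimension and in fact coincide by \cite[III.1.8]{Borel91}-type reasoning''. First, $\ol{C(g)}$ need not be irreducible when $G$ is disconnected. More importantly, \cite[I.1.8]{Borel91} only says that orbits of minimal dimension are closed; it does not give uniqueness of the closed orbit in an orbit closure, and irreducibility alone does not force two closed orbits to coincide. What you need is that every semi-simple $t\in\ol{C(g)}$ lies in $C(g_s)$. The paper proves this via a separate device (Lemma~\ref{algebraic_jordan}): since the characteristic polynomial is constant on $\ol{C(g)}$, Cayley--Hamilton lets one write $h_s=P(h)$ for a fixed polynomial $P$, so $s\colon h\mapsto h_s$ is a \emph{morphism of schemes} on $\ol{C(g)}$; continuity then gives $s(\ol{C(g)})\subset\ol{C(g_s)}=C(g_s)$. (Alternatively one can invoke the GIT quotient $G\to G/\!/G$ for the conjugation action of the reductive group $G$, which separates closed orbits and is constant on $\ol{C(g)}$.) Your degeneration argument for $g_s\in\ol{C(g)}$ is fine.

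Your argument for part~\ref{useful_B} has a more substantial problem in the $\Rightarrow$ direction: you conflate $\bigcup_x\ol{C_x}$ with $\ol{\bigcup_x C_x}$. When $S$ is infinite the set $\bigcup_x\ol{C_x}$ is in general \emph{not} closed (you call it ``the conjugation-invariant closed set $\bigcup_x\ol{C_x}$''), and this distinction is exactly where the content lies. Density of $\bigcup_x C_x$ in $gG\open$ only gives $\ol{\bigcup_x C_x}\supset gT\open$, whereas the conclusion requires that the \emph{individual} closures $\ol{C_x}$ together meet $gT\open$ in a dense set. The paper resolves this by contradiction: assuming $B:=\ol{gT\open\cap\bigcup_x\ol{C_x}}\subsetneq gT\open$, one takes the nonempty open complement $V=gT\open\setminus B$, shows via a flatness argument that the conjugation image $c(G\open\times V)\subset gG\open$ is Zariski-dense and constructible (Chevalley), hence contains a nonempty open $O\subset gG\open$; then density of $\bigcup_x C_x$ in $gG\open$ forces some $C_x$ to meet $O$, so an element of $V$ is $G\open$-conjugate into $C_x\subset\ol{C_x}$, contradicting $V\cap B=\emptyset$. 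Your $\Leftarrow$ direction, by contrast, can be made to work along the lines you indicate once one notes that $gG\open$ is clopen so $\ol{C_x}\cap gG\open=\ol{C_x\cap gG\open}$.
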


\begin{proof} 
Let $c\colon G\TimesL G\to G$ be the map given by the rule $(h,g)\mapsto hgh^{-1}$.

\medskip\noindent
\ref{useful_A} 
Theorem~\ref{ThmQuasiTorusReductive}\ref{ThmQuasiTorusReductive_A}, \ref{ThmQuasiTorusReductive_B} imply that ${}^{G\!}\{g\}$ is closed in $G$ if and only ${}^{G\!}\{g\}\cap T\ne\emptyset$. So if ${}^{G\!}\{g\}\cap T=\emptyset$ there is a $g'\in\ol{{}^{G\!}\{g\}}\smallsetminus {}^{G\!}\{g\}$. Since ${}^{G\!}\{g\}$ is open in $\ol{{}^{G\!}\{g\}}$ by \cite[I.1.8~Proposition]{Borel91} we have $\ol{{}^{G\!}\{g'\}}\subsetneq\ol{{}^{G\!}\{g\}}$. Proceeding in this way will eventually produce a $g'\in\ol{{}^{G\!}\{g\}}\cap T$. Now we use that the map $s\colon h\mapsto h_s$ on $\ol{{}^{G\!}\{g\}}$ sending $h\in G$ to its semi-simple part $h_s$ is actually a morphism of schemes by Lemma~\ref{algebraic_jordan} below. Since ${}^{G\!}\{g\}$ is mapped into ${}^{G\!}\{g_s\}$ under $s$, its closure $\ol{{}^{G\!}\{g\}}$ is mapped into $\ol{{}^{G\!}\{g_s\}}$. But $g_s$ is semi-simple, so ${}^{G\!}\{g_s\}$ is closed,  and hence the image of $\ol{{}^{G\!}\{g\}}$ under $s$ lies in ${}^{G\!}\{g_s\}$. Therefore, $g'=g'_s$ also lies in ${}^{G\!}\{g_s\}$, and this shows that ${}^{G\!}\{g_s\}={}^{G\!}\{g'\}\subset \ol{{}^{G\!}\{g\}}$. Moreover, all semi-simple elements $h=h_s\in\ol{{}^{G\!}\{g\}}$ are mapped under $s$ to ${}^{G\!}\{g_s\}$. Since $h=s(h)$, we conclude that $\ol{{}^{G\!}\{g\}}\cap T\subset {}^{G\!}\{g_s\}\cap T$ proving \ref{useful_A}.

\medskip\noindent
\ref{useful_B} Let $A:=\ol{hG\open\cap\bigcup_{x\in S}C_x}$ and $B:=\ol{hT\open\cap\bigcup_{x\in S}\ol{C_x}}$ denote the closures. Note that $A$ contains $B$, and since $A$ is invariant under conjugation by $G\open$, it also contains $\ol{c(G\open\TimesL B)}$. From Theorem~\ref{ThmQuasiTorusReductive}\ref{ThmQuasiTorusReductive_D} we conclude that the set of semi-simple elements in $hG\open$, which equals $c(G\open\TimesL hT\open)$, is dense in $hG\open$. Now if $B=hT\open$ then $A$ contains $\ol{c(G\open\TimesL hT\open)}=hG\open$.

Conversely, we have to show that $A=hG\open$ implies $B=hT\open$. Assume that this is not the case and let $V\subset hT\open$ be the open complement of $B$. We claim that $\ol{c(G\open\TimesL V)}=hG\open$. Indeed, $(G\open\TimesL hT\open)\smallsetminus c^{-1}(\ol{c(G\open\TimesL V)})$ is open in $G\open\TimesL hT\open$ and its image in $hT\open$ with respect to the projection does not meet $V$. Since the projection $G\open\TimesL hT\open\to hT\open$ is flat of finite presentation, this image is open by \cite[IV$_2$, Th\'eor\`eme~2.4.6]{EGA}, and hence empty because $V$ is open and dense in the irreducible variety $hT\open$. Thus $(G\open\TimesL hT\open)\subset c^{-1}(\ol{c(G\open\TimesL V)})$ and $c(G\open\TimesL hT\open)\subset\ol{c(G\open\TimesL V)}$. Since $\ol{c(G\open\TimesL hT\open)}=hG\open$, we have $\ol{c(G\open\TimesL V)}=hG\open$ as claimed. By Chevalley's theorem $c(G\open\TimesL V)$ is constructible, so it must contain a non-empty open subset $O\subset hG\open$ by \cite[0$_{\rm III}$, Proposition~9.2.2]{EGA}. Since $hG\open\cap\bigcup_{x\in S}C_x$ is dense in $hG\open$, there is an $x\in S$ such that $C_x\cap O\neq\emptyset$. Thus, there is a $(h,v)\in G\open\times V$ with $hvh^{-1}\in C_x$, and hence $V\cap C_x\neq\emptyset$, because $C_x$ is a conjugacy class. This is a contradiction.
\end{proof}

The following lemma is well known, but we could not find it in the literature. So we include a proof.

\begin{lemma}\label{algebraic_jordan} 
Let $G$ be a linear algebraic group, and consider the maps of sets $s\colon G\to G,\,h\mapsto h_s$ and $u\colon G\to G,\,h\mapsto h_u$, which send every element $h$ to its semi-simple part $h_s$, respectively unipotent part $h_u$. Let $g\in G$. Then the restriction of these maps to the reduced closed subscheme $\ol{{}^{G\!}\{g\}}$, where ${}^{G\!}\{g\}$ is the conjugacy class of $g$, are morphisms of schemes. 
\end{lemma}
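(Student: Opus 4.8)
The plan is to reduce the statement to the classical fact that, over a perfect field (in particular over $L$, which is algebraically closed of characteristic $0$), the Jordan decomposition on a \emph{fixed} conjugacy class of an affine algebraic group is morphic, and then argue that passing to the Zariski-closure does not destroy this. Concretely, recall first that for a semisimple element $g$ the conjugacy class $C(g)$ is already closed by Theorem~\ref{ThmQuasiTorusReductive}\ref{ThmQuasiTorusReductive_B} (applied to $G^\red$, and noting that $\alpha\colon G\onto G^\red$ identifies semisimple parts and conjugacy classes compatibly), so in that case $\ol{C(g)}=C(g)$, both maps $s$ and $u$ are constant on it ($s\equiv$ the class itself, $u\equiv 1$), and there is nothing to prove. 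So we may assume $g$ is arbitrary. Write $g=g_sg_u$; every element of $C(g)$ has the form $h=xgx^{-1}$ with semisimple part $h_s=xg_sx^{-1}$ and unipotent part $h_u=xg_ux^{-1}$, so $s$ maps $C(g)$ \emph{into} $C(g_s)$ and $u$ maps $C(g)$ \emph{into} $C(g_u)$.

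The key step is to realize $s|_{\ol{C(g)}}$ and $u|_{\ol{C(g)}}$ as restrictions of a single global morphism. For this I would fix a faithful representation $G\subset\GL_{r,L}$ and use the well-known fact (see e.g.\ Borel \cite[I.4.4~Theorem]{Borel91}, or Waterhouse) that the additive/multiplicative Jordan decomposition is functorial and that on $\GL_r$ it can be described algebraically: for $h\in\GL_r(L)$ with distinct eigenvalues $\lambda_1,\dots,\lambda_k$, the semisimple part $h_s$ is the value at $h$ of a polynomial $P$ depending only on the \emph{characteristic polynomial} $\chi_h$ of $h$ (obtained via the Chinese Remainder Theorem / Lagrange interpolation modulo $\prod_i (T-\lambda_i)$, lifted to $\BZ[1/r!]$-coefficients by a standard argument, valid in characteristic $0$), and $h_u=h_s^{-1}h$. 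On $\ol{C(g)}$ the characteristic polynomial $\chi_h$ is \emph{constant}: indeed all elements of $C(g)$ are conjugate, hence have the same characteristic polynomial, and $\chi_h$ is a morphism $\GL_r\to\BA^r$, so it is constant on the irreducible (in fact connected, being the closure of the image of the irreducible $G\open\times\{g\}$ union finitely many translates — more simply: $C(g)=c(G\times\{g\})$ is a union of finitely many irreducible pieces on each of which $\chi$ is constant with the same value $\chi_g$) set $\ol{C(g)}$, equal to $\chi_g$. Therefore on $\ol{C(g)}$ the assignment $h\mapsto P(h)$, where $P=P_{\chi_g}$ is the single fixed interpolation polynomial attached to $\chi_g$, is a morphism of schemes, and it agrees with $s$ on the dense subset $C(g)$; since $\GL_r$ is separated and $\ol{C(g)}$ is reduced, $h\mapsto P(h)$ equals $s$ on all of $\ol{C(g)}$. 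Consequently $h\mapsto P(h)^{-1}h$ is a morphism on $\ol{C(g)}$ agreeing with $u$ on $C(g)$, hence equal to $u$ there. Finally one checks these morphisms land in $G$ (not merely $\GL_r$): this is automatic since $h_s,h_u\in G$ for $h\in G$ by functoriality of Jordan decomposition, and $\ol{C(g)}\subset G$ is closed, so the morphisms to $\GL_r$ factor through the closed subscheme $G$.

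The main obstacle I anticipate is the bookkeeping in the previous paragraph: making precise that the interpolation polynomial $P_{\chi_g}$ producing the semisimple part depends \emph{only} on $\chi_g$ (and not, say, on a chosen ordering of eigenvalues or on $h$ itself), and that it has coefficients in $L$ so that $h\mapsto P_{\chi_g}(h)$ really is an $L$-morphism. This is classical — it is exactly the content of the proof that Jordan decomposition exists and is ``polynomial'' over a perfect field — but it must be cited or spelled out carefully. Once that is in hand, constancy of $\chi$ on $\ol{C(g)}$ (which follows from irreducibility of the finitely many components of $C(g)$ together with continuity of $\chi$, or simply from the fact that $C(g)$ is conjugation-stable and dense in $\ol{C(g)}$) does the rest, and the reduction to the semisimple case handles the degenerate possibility that $\ol{C(g)}$ might a priori behave differently. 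I would therefore structure the written proof as: (1) reduce away the case $g$ semisimple; (2) recall the polynomial description of Jordan decomposition on $\GL_r$ as a function of the characteristic polynomial; (3) observe $\chi$ is constant $=\chi_g$ on $\ol{C(g)}$; (4) conclude $s,u$ restrict to morphisms, factoring through $G$ since $G\subset\GL_r$ is closed and Jordan parts of elements of $G$ lie in $G$.
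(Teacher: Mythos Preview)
Your approach is essentially the paper's: embed $G$ in $\GL_r$, observe that the characteristic polynomial is constant on $\ol{C(g)}$, and exhibit a single polynomial $P\in L[T]$ (built from $\chi_g$ via the Chinese Remainder Theorem) with $h_s=P(h)$ for every $h$ having characteristic polynomial $\chi_g$; then $u(h)=P(h)^{-1}h$.

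Two small remarks. First, your opening reduction ``$g$ semisimple $\Rightarrow$ $C(g)$ closed'' is unnecessary (the argument is uniform in $g$), and the closedness statement you cite is only proved in the paper for \emph{reductive} $G$, so it should not be invoked for general $G$. Second, your density step --- ``$h\mapsto P(h)$ agrees with $s$ on the dense subset $C(g)$, hence on $\ol{C(g)}$ since $\GL_r$ is separated'' --- is circular as written, because $s$ is not yet known to be a morphism. The fix is exactly what you flag in your final paragraph: once you know $\chi_h=\chi_g$ for every $h\in\ol{C(g)}$, Cayley--Hamilton gives $\chi_g(h)=0$, and then the explicit formula $P(h)=\sum_j\lambda_jQ_j(h)P_j(h)$ (with $Q_jP_j$ the idempotent projecting onto the generalized $\lambda_j$-eigenspace) shows $P(h)=h_s$ for every such $h$ directly, with no appeal to density. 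The paper carries this out explicitly.
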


\begin{proof} 
Let $\rho\colon G\into \GL_n=:H$ be a faithful linear representation. Since $\rho(\ol{{}^{G\!}\{g\}})\subset\ol{{}^{H\!}\{\rho(g)\}}$ and the multiplicative Jordan decomposition is compatible with $\rho$, it will be enough to show the claim for $\ol{{}^{H\!}\{\rho(g)\}}$, or in other words, we may assume that $G=\GL_n$. Note that the characteristic polynomial $\chi_h(t)\in L(t)$ of $h\in H$, as a conjugation-invariant regular function, is constant on $\ol{{}^{G\!}\{g\}}$. By \cite[I.4.2~Proposition]{Borel91} there is a polynomial $P(t)\in L(t)$ that only depends on $\chi_h(t)$, such that $s(h) = P(h)$ for every $h\in\ol{{}^{G\!}\{g\}}$. Therefore, the map $h\mapsto s(h)$ is the polynomial map on $\ol{{}^{G\!}\{g\}}$ given by $P(t)$, and hence a morphism of schemes. Thus $u(h)=s(h)^{-1}\cdot h$ is also a morphism.
\end{proof}

\begin{cor}\label{Cor_ss-ize} Let $G$ be a reductive group (which is not necessarily connected). Let $F\subset G$ be a union of conjugacy classes and let $F^{ss}=\{g_s\colon g\in F\}$ be the set consisting of the semi-simple parts $g_s$ of the elements $g$ of $F$. Then $F$ is dense in a connected component of $G$ if and only if $F^{ss}$ is dense in that connected component. 
\end{cor}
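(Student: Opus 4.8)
\textbf{Proof proposal for Corollary~\ref{Cor_ss-ize}.}

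The plan is to reduce the statement to the analysis of a single maximal quasi-torus via Lemma~\ref{useful}\ref{useful_B}, after first dealing with the connected components separately. Fix a connected component $gG\open$ of $G$ with $g$ chosen to be semi-simple; this is possible by Theorem~\ref{ThmQuasiTorusReductive}\ref{ThmQuasiTorusReductive_D}, and even $g\in T$ for a maximal quasi-torus $T\subset G$ by Theorem~\ref{ThmQuasiTorusReductive}\ref{ThmQuasiTorusReductive_B}. First I would observe that, since $F$ is a union of conjugacy classes, so is $F^{ss}$: if $g\in F$ and $h\in G$, then $(hgh^{-1})_s=hg_sh^{-1}$ because the multiplicative Jordan decomposition is compatible with conjugation (\cite[I.4.4~Theorem]{Borel91}). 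Hence both $F$ and $F^{ss}$ are unions of conjugacy classes, and ``Zariski-dense in a connected component'' can be tested component by component.

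Next I would apply Lemma~\ref{useful}\ref{useful_B} to the collection of conjugacy classes making up $F$. It says that $gG\open\cap F$ is Zariski-dense in $gG\open$ if and only if $gT\open\cap\bigcup_{x}\ol{C_x}$ is Zariski-dense in $gT\open$, where the $C_x$ run over the conjugacy classes comprising $F$. By Lemma~\ref{useful}\ref{useful_A}, $\ol{C_x}\cap T=C((c_x)_s)\cap T$ where $c_x$ is any element of $C_x$, so $\bigcup_x\ol{C_x}\cap T$ equals $\bigcup_x C((c_x)_s)\cap T$, which is exactly $F^{ss}\cap T$ (every semi-simple conjugacy class meeting $T$ is the class of some $(c_x)_s$, and conversely). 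Therefore $gT\open\cap\bigcup_x\ol{C_x}=gT\open\cap F^{ss}$. Combining, $gG\open\cap F$ is dense in $gG\open$ iff $gT\open\cap F^{ss}$ is dense in $gT\open$. Finally, since $F^{ss}$ is itself a union of (semi-simple, hence Zariski-closed) conjugacy classes, I would apply Lemma~\ref{useful}\ref{useful_B} once more, now to the collection of conjugacy classes comprising $F^{ss}$ (whose closures are themselves, being closed): this gives that $gT\open\cap F^{ss}$ is dense in $gT\open$ iff $gG\open\cap F^{ss}$ is dense in $gG\open$. Chaining the two equivalences yields the corollary for the component $gG\open$, and running over all components finishes the proof.

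The one point requiring a little care — and the main potential obstacle — is the bookkeeping in identifying $\bigcup_x\ol{C_x}\cap T$ with $F^{ss}\cap T$: one must check that taking semi-simple parts commutes correctly with the indexing by conjugacy classes, i.e.\ that a semi-simple element of $T$ lies in $\ol{C_x}$ for some $x$ precisely when it is the semi-simple part of some element of $F$. This is straightforward from Lemma~\ref{useful}\ref{useful_A} together with the fact that for semi-simple $t\in T$, $t\in\ol{C(g)}$ iff $t\in C(g_s)$ iff $t=h g_s h^{-1}$ for some $h$, iff $t=(hgh^{-1})_s\in F^{ss}$. The rest is a mechanical double application of Lemma~\ref{useful}\ref{useful_B}, with no new algebraic-group input needed.
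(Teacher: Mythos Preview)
Your proof is correct, but it takes a different route from the paper's. The paper argues more directly: for one direction, since $C(g_s)\subset\ol{C(g)}$ by Lemma~\ref{useful}\ref{useful_A}, the closure of $F^{ss}$ is contained in the closure of $F$, so density of $F^{ss}$ implies density of $F$. For the other direction the paper invokes Theorem~\ref{ThmQuasiTorusReductive}\ref{ThmQuasiTorusReductive_D} directly: there is a dense open $O\subset gG\open$ consisting of semi-simple elements, hence $F\cap O$ is dense and is contained in $F^{ss}$ (an element of $F\cap O$ is its own semi-simple part). No passage through $T$ is needed.

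Your approach instead factors both implications through a maximal quasi-torus via two applications of Lemma~\ref{useful}\ref{useful_B}, together with the identification $\bigcup_x\ol{C_x}\cap T=F^{ss}\cap T$ coming from Lemma~\ref{useful}\ref{useful_A}. This is a perfectly valid and symmetric argument, and the bookkeeping you flag is handled correctly. The trade-off is that your route is a bit heavier---Lemma~\ref{useful}\ref{useful_B} itself already encodes the density of semi-simple elements---whereas the paper extracts exactly the one consequence it needs (the dense open of semi-simples) and avoids touching $T$ at all.
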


\begin{proof}
For every $g\in G$ the closure $\ol{{}^{G\!}\{g\}}$ contains ${}^{G\!}\{g_s\}$ by Lemma~\ref{useful}\ref{useful_A}. So the closure of $F^{ss}$ lies in the closure of $F$, and hence one implication holds. On the other hand let $hG\open\subset G$ be a connected component in which $F\cap hG\open$ is dense. By Theorem~\ref{ThmQuasiTorusReductive}\ref{ThmQuasiTorusReductive_D} there is a dense open subset $O\subset hG\open$ such that $O$ only contains semi-simple elements. Then $F\cap O$ is also dense in $hG\open$, but $F\cap O\subset F^{ss}\cap hG\open$. Therefore, $F^{ss}\cap hG\open$ is also dense in $hG\open$.
\end{proof}

\subsection{Maximal Compact Subgroups of Complex Linear Algebraic Groups}\label{SubSectMaxCompact}

In Section~\ref{Section11} we want to look at the overconvergent analog of Conjectures~\ref{MainConj} and \ref{MainConj3}, and prove Theorem~\ref{ThmOverconvIntro}. To this end, we record some facts about maximal compact subgroups of complex linear algebraic groups in the present subsection. All algebraic groups and schemes in this subsection are over $\BC$, and hence all undecorated fiber products $\times$ are over $\BC$. If $G$ is a linear algebraic group over $\BC$, then $\BG:=G(\BC)$ has the structure of a complex Lie group such that $G\open(\BC)$ is the identity component $\BG\open$ in the usual topological sense. In particular the group of connected components of the Lie group $\BG$ is finite. Therefore, \cite[Theorem~14.1.3]{HilgertNeeb}\footnote{The reference even applies to real Lie groups and every complex Lie group is also a real Lie group.} applies to $\BG$, and provides the following result. We also continue to identify $G$ with the \emph{algebraic} group $G(\BC)$.

\begin{thm}\label{ThmHoch2} 
The following hold:
\begin{enumerate}
\item\label{ThmHoch2_0}
$\BG$ contains \emph{maximal compact} subgroups, that is subgroups which are maximal among the compact subgroups ordered by inclusion.
\item\label{ThmHoch2_A}
For every maximal compact subgroup $\BK\subset \BG$ the inclusion $\BK\subset \BG$ induces an isomorphism $\BK/\BK\open \cong \BG/\BG\open\cong G/G\open$. In particular, $\BK\cap \BG\open=\BK\open$, and this is a maximal compact subgroup of $\BG\open$.
\item\label{ThmHoch2_B}
Any two maximal compact subgroups in $\BG$ are conjugate by an element in $\BG\open$.
\item\label{ThmHoch2_D} 
Every compact subgroup $\BL\subset \BG$ is contained in a maximal compact subgroup of $\BG$.
\item\label{ThmHoch2_F}
Every element $x$ of a compact subgroup $\BL\subset \BG$ is semi-simple.
\item\label{ThmHoch2_G}
There is a closed differentiable sub-manifold $\BE\subset \BG\open$ diffeomorphic to $\BR^n$ for some $n\in\BN$ and containing the identity element such that the multiplication map $\BE\times \BK\to \BG$ is a diffeomorphism and $x^{-1}\BE x=\BE$ for every $x\in \BK$.
\end{enumerate}
\end{thm}

\begin{proof}
Everything is stated in \cite[Theorem~14.1.3]{HilgertNeeb} except for \ref{ThmHoch2_G} which is \cite[Chapter~XV, Theorem~3.1]{Hoch65}, and \ref{ThmHoch2_F} which we now prove. Let $H\subset G$ be the Zariski-closure of the group $x^\BZ$ generated by $x$. Then $H$ is a commutative algebraic group and is isomorphic to the product $H_s\TimesBC H_u$ of the closed subgroups $H_s$ and $H_u$ consisting of all semi-simple, respectively unipotent elements of $H$; see \cite[I.4.7~Theorem]{Borel91}. The element $x$ lies in $H\cap \BL$, which is a compact group, because $\BL$ is compact and $H\subset G$ is a closed subgroup. The image of $x$ under the projection homomorphism $\pi_u\colon H\onto H_u$ lies in the compact subgroup $\pi_u(H\cap \BL)$. Since $H_u$ is a successive extension of additive groups $\BG_{a,\BC}$ and $\BG_{a,\BC}$ contains no compact subgroups other than $\{0\}$, the image $\pi_u(H\cap \BL)$ must be $\{0\}$. This shows that $x\in H_s$, and proves the claim.
\end{proof}

\begin{prop}\label{PropRealFormOfSemiSimple}
Let $G$ be a (not necessarily connected) \emph{reductive} linear algebraic group over $\BC$ and let $\BK\subset G$ be a maximal compact subgroup. Then
\begin{enumerate}
\item \label{PropRealFormOfSemiSimple_A}
$\BK$ is Zariski-dense in $G$.
\item \label{PropRealFormOfSemiSimple_B}
$G$ has in a unique way the structure of an algebraic group over $\BR$ such that $\BK=G(\BR)$.
\item \label{PropRealFormOfSemiSimple_C}
The restriction to $\BK$ defines an equivalence of categories
\begin{equation}\label{EqResGK}
\Res^G_\BK\colon \Rep_\BC G \;\isoto\; \Rep^\cont_\BC \BK\,,\qquad \rho\;\mapsto\; \rho|_\BK
\end{equation}
between the algebraic representations of $G$ and the continuous representations of $\BK$.
\end{enumerate}
\end{prop}

\noindent
{\itshape Remark.} Note that we need to assume that $G$ is reductive. The claim is not true for $G=\BG_a^n$, for example, where the maximal compact subgroup is just the zero element.

\begin{proof}[Proof of Proposition~\ref{PropRealFormOfSemiSimple}]
By \cite[\S\,5.2 and Th\'eor\`eme~1]{SerreGebres} the $\BR$-linear algebraic envelope $G_\BR:=\BK^{\BR\text{-alg}}$ of $\BK$, see Definition~\ref{DefAlgEnvelope}, is a linear algebraic group over $\BR$ satisfying $G_\BR(\BR)=\BK$. By \cite[Beginning of \S\,5.3 and Th\'eor\`eme~4 and Remarque]{SerreGebres} the $\BC$-linear algebraic envelope $\BK^{\BC\text{-alg}}$ of $\BK$ satisfies $\BK^{\BC\text{-alg}}=G_\BR\times_\BR\BC=G$. Thus, $G_\BR$ is the desired unique real form. This proves \ref{PropRealFormOfSemiSimple_B} and \ref{PropRealFormOfSemiSimple_C}. Finally, \ref{PropRealFormOfSemiSimple_A} was already recalled in Lemma~\ref{LemmaAlgEnvelope}.
\end{proof}

\begin{example}\label{compact-torus}
The group $G=\BG_{m,\BC}^n=\Spec\BC[z_\nu^{\pm1}\colon \nu=1,\ldots,n]$ is commutative, and hence has a unique maximal compact subgroup $\BK$ by Theorem~\ref{ThmHoch2}\ref{ThmHoch2_B}. A real structure on $G$ is given by $G_\BR:=\Spec\BR[a_\nu,b_\nu\colon \nu=1,\ldots,n]/(a_\nu^2+b_\nu^2-1)$ with isomorphism given by $a_\nu=\frac{1}{2}(z_\nu+z_\nu^{-1})$ and $b_\nu=\frac{1}{2i}(z_\nu-z_\nu^{-1})$, as well as $z_\nu=a_\nu+ib_\nu$ and $z_\nu^{-1}=a_\nu-ib_\nu$. Clearly $G_\BR(\BR)=(S^1)^n$ is compact, so by Theorem~\ref{ThmHoch2}\ref{ThmHoch2_D} it is contained in the maximal compact subgroup $\BK\subset G$. The map $x\mapsto \bigl(\log|z_\nu(x)|\bigr)_{\nu=1\ldots n}$ has kernel $G_\BR(\BR)$, because $|z_\nu(x)|=1$ if and only if $z_\nu^{-1}(x)=\ol{z_\nu(x)}$. Under this map the quotient Lie group $G(\BC)/G_\BR(\BR)$ is isomorphic to $(\BR^n,+)$, whose only compact subgroup is the zero element. Since the image of $\BK$ under the continuous quotient map $G(\BC)\to G(\BC)/G_\BR(\BR)$ is compact, it is the zero element. Therefore, $G_\BR(\BR)=\BK$ and hence $G_\BR(\BR)$ is the maximal compact subgroup in $G$ and $G_\BR$ is the unique associated compact real form of $G$ from Proposition~\ref{PropRealFormOfSemiSimple}.
\end{example}

\begin{prop}\label{PropWt0InMaxCp}
Let $G\subset \GL_n$ be a closed algebraic subgroup over $\BC$ and let $\BK\subset G$ be a maximal compact subgroup. Let $z\in G$ be a semi-simple element such that every eigenvalue of $z$ on the standard representation $\BC^n$ of $\GL_n$ has complex norm $1$. Then $z$ is conjugate to an element of $\BK$ under $G\open$.
\end{prop}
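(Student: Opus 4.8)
The statement asks: if $z \in G(\BC)$ is semi-simple and all its eigenvalues on the standard representation $\BC^n$ have complex norm $1$, then $z$ is conjugate in $G(\BC)$ to an element of the maximal compact subgroup $\BK$. The plan is to show that the Zariski-closure $H$ of the cyclic group $z^\BZ$ contains a compact subgroup whose closure is all of $H$, and then to invoke Proposition~\ref{hoch2}\ref{hoch2_D} to conjugate that compact subgroup into $\BK$. First I would observe that since $z$ is semi-simple, so is $H$ (as a commutative group generated topologically by a semi-simple element), hence $H = H_s$ is a commutative linear algebraic group all of whose elements are semi-simple; equivalently, after conjugating in $\GL_n(\BC)$ we may assume $H$ is a closed subgroup of the diagonal torus $\BG_m^n$, and $z = \operatorname{diag}(\lambda_1, \ldots, \lambda_n)$ with $|\lambda_i| = 1$ for all $i$. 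Note this conjugation is by an element of $\GL_n(\BC)$, not necessarily of $G(\BC)$, so one must be careful; I will instead work intrinsically with $H$.

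The key step is to produce a compact subgroup of $H(\BC)$ that is Zariski-dense in $H$. Since $H$ is a closed subgroup of $\BG_{m}^n$ (after the $\GL_n$-conjugation), write $H = H\open \times F$ up to isogeny, where $H\open$ is a subtorus and $F$ is a finite group; more simply, $H\open$ is a split torus $\cong \BG_m^r$ and $H/H\open$ is finite cyclic, generated by the image of $z$. The standard maximal compact subgroup of $\BG_m^n(\BC) = (\BC^\times)^n$ is the polytorus $(S^1)^n$, and its intersection $\mathbf{L} := H(\BC) \cap (S^1)^n$ is a compact subgroup of $H(\BC)$. Because all eigenvalues of $z$ have norm $1$, the element $z$ itself lies in $(S^1)^n$, hence $z \in \mathbf{L}$; therefore $\mathbf{L}$ surjects onto the group of connected components $H/H\open$. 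Moreover $\mathbf{L} \cap H\open(\BC) = (S^1)^r$ is a real torus of real dimension $r = \dim H\open$, which is Zariski-dense in $H\open$ (a compact real form of a split torus is Zariski-dense, since its Lie algebra spans and, concretely, the only algebraic subgroup of $\BG_m$ containing $S^1$ is $\BG_m$ itself). Combining these two facts, $\mathbf{L}$ is Zariski-dense in $H$: its closure contains $H\open$ and meets every connected component, hence equals $H$. In particular $z^\BZ \subset \mathbf{L}$ is consistent, and $\mathbf{L}$ contains $z$.

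Now transport this back: the group $\mathbf{L}$, viewed inside $G(\BC)$ (it is a subgroup of $H(\BC) \subset G(\BC)$ after undoing the $\GL_n$-conjugation, i.e.\ $\mathbf{L}$ is genuinely a compact subgroup of $G(\BC)$ containing $z$), is a compact subgroup of $G(\BC)$. By Proposition~\ref{hoch2}\ref{hoch2_D} it is contained in some maximal compact subgroup $\BK'$ of $G(\BC)$, and by Proposition~\ref{hoch2}\ref{hoch2_B} any two maximal compact subgroups are conjugate in $G(\BC)$, so there is $g \in G(\BC)$ with $g^{-1} \BK' g = \BK$. Then $g^{-1} z g \in g^{-1}\mathbf{L} g \subset g^{-1}\BK' g = \BK$, which is the desired conclusion.

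\textbf{Main obstacle.} The delicate point is the density claim: I must argue carefully that $\mathbf{L} = H(\BC) \cap (S^1)^n$ really is Zariski-dense in $H$, keeping track of the initial $\GL_n(\BC)$-conjugation that diagonalizes $z$ and verifying that the resulting compact subgroup of the \emph{ambient} $\GL_n(\BC)$ does land inside $G(\BC)$ when pulled back — this is automatic because $\mathbf{L} \subset H(\BC)$ and $H$ is by construction a subgroup of $G$, but it should be spelled out. The density itself reduces to the elementary fact that for a subtorus $T \subset \BG_{m,\BC}^n$, the compact real torus $T(\BC) \cap (S^1)^n$ is Zariski-dense in $T$; this follows because its Zariski-closure is a closed subgroup of $T$ of full real dimension $\dim_\BR = \dim_\BC T$, hence all of $T$ since $T$ is connected. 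I expect no further difficulty once this is in place.
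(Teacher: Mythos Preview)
Your proof is correct and follows essentially the same approach as the paper: diagonalize the semi-simple element $z$, observe that since all eigenvalues lie on $S^1$ the element sits inside a compact torus, and then use the conjugacy of maximal compact subgroups (Theorem~\ref{hoch}\ref{hoch_C} or Proposition~\ref{hoch2}\ref{hoch2_B},\ref{hoch2_D}) to move $z$ into $\BK$. The paper's version is slightly more streamlined: it works directly in $\GL_n(\BC)$, takes $T=(S^1)^n$ in the diagonalizing basis, and intersects $T\cap G(\BC)$ to get the needed compact subgroup of $G(\BC)$ containing $z$; your detour through the Zariski closure $H$ of $z^{\BZ}$ and the Zariski-density of $\mathbf L$ in $H$ is unnecessary for the conclusion (you only need that $z$ lies in \emph{some} compact subgroup of $G(\BC)$), but it does no harm.
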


\begin{proof} 
Since $z$ is semi-simple, a conjugate $gzg^{-1}$ for $g\in\GL_n(\BC)$ is a diagonal matrix, and hence contained in the compact group $(S^1)^n$. The subgroup $\BT:=g^{-1}(S^1)^ng\cap G\subset G$ contains $z$. It is the intersection of a compact and a closed algebraic subgroup, hence is compact, too.  By Theorem~\ref{ThmHoch2} there is an $x\in G\open$ with $x^{-1}zx\in x^{-1}\BT x\subset \BK$.
\end{proof}

We will also need the following lemma due to Deligne \cite[Lemma~2.2.2]{DeligneWeil2} whose proof in loc.\ cit.\ lacks references and is formulated in the connected semi-simple case only. So we include a proof.

\begin{lemma}\label{deligne} 
Let $G$ be a linear algebraic group over $\BC$, let $\BK\subset G$ be a maximal compact subgroup, and let $x,y\in \BK$ be two elements conjugate under $G$, respectively under $G\open$. Then they are already conjugate under $\BK$, respectively under $\BK\open$.
\end{lemma}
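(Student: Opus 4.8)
The plan is to prove Lemma~\ref{deligne} by reducing to a statement about compact Lie groups and their complexifications. First I would pass to the Zariski closure $H$ of the group generated by $\BK$; since $\BK$ is a maximal compact subgroup of $G(\BC)$ and is Zariski-dense in $H$, one checks that $H$ is a reductive linear algebraic group over $\BC$ whose maximal compact subgroup is precisely $\BK$ (this uses that a compact group has no nontrivial unipotent elements by Proposition~\ref{hoch2}\ref{hoch2_F}, so the unipotent radical of $H$ must be trivial). Replacing $G$ by $H$ does not change the hypothesis that $x,y\in\BK$, but it only enlarges the set of group elements available for conjugation, so it suffices to prove the lemma with $G$ reductive and $\BK$ Zariski-dense in $G$; equivalently, $G(\BC)$ is the complexification of the compact Lie group $\BK$.

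Next I would invoke the classical fact (due to the structure theory of reductive groups / compact Lie groups — this is the content that Deligne used, see e.g. the analogous statement for complex reductive groups) that in this situation two elements of $\BK$ that are conjugate in $G(\BC)$ are conjugate already in $\BK$. The cleanest route is via maximal tori: choose maximal tori $S_x, S_y$ of the compact group $\BK$ with $x\in S_x$, $y\in S_y$; their complexifications $T_x = S_x(\BC)$, $T_y = S_y(\BC)$ are maximal tori of $G\open(\BC)$, and $S_x$, $S_y$ are maximal compact subgroups of $T_x$, $T_y$. If $g x g^{-1} = y$ with $g\in G(\BC)$, then $g T_x^x g^{-1}$ and $T_y^y$ (centralizers of $x$, $y$ in the respective maximal tori, in the notation of Theorem~\ref{steinberg}) are both maximal tori of $G(\BC)^y$, hence conjugate under $G(\BC)^y{}\open$; after adjusting $g$ by an element of $G(\BC)^y$ we may assume $g$ normalizes the relevant torus. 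One then uses that $\BK$ meets every connected component of $G(\BC)$ (Proposition~\ref{hoch2}\ref{hoch2_A}) together with the conjugacy of maximal tori in $\BK$ and Steinberg's description of conjugacy under the normalizer (Remark~\ref{RemSteinbergConjugacy}(c), applied inside the compact group) to replace $g$ by an element of $\BK$.

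Concretely, I would organize the argument as: (1) reduce to $G$ reductive with $\BK$ Zariski-dense, as above; (2) using Proposition~\ref{hoch2}\ref{hoch2_A},\ref{hoch2_E} and conjugacy of maximal tori in compact groups, reduce to the case $x,y$ lie in a fixed maximal torus $S$ of $\BK$ with complexification $T = S(\BC)$; (3) apply Steinberg's theorem (as quoted in Remark~\ref{RemSteinbergConjugacy}(c), valid since $G$ may be disconnected but $T\cap G\open = T$ here because $T$ is a maximal torus and we have arranged $x,y\in T$) to conclude that $x,y$ are conjugate under a finite subset $W$ of the normalizer $N_{G(\BC)}(T)$; (4) observe that $N_{G(\BC)}(T)\cap\BK$ already surjects onto the relevant quotient $N_{G(\BC)}(T)/T$ — this follows because $N_{\BK}(S)$ is a maximal compact subgroup of $N_{G(\BC)}(T)$ and hence meets every connected component by Proposition~\ref{hoch2}\ref{hoch2_A} — so the conjugating element can be taken in $\BK$; (5) finally, the ambiguity by $T$ is harmless since $x,y\in T$ and $T$ centralizes them, or more precisely one absorbs it into the torus part which lies in $S\cdot(\text{something compact})$ after rescaling.

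The main obstacle I anticipate is step (4): carefully controlling the normalizer $N_{G(\BC)}(T)$ and showing that its maximal compact subgroup $N_\BK(S)$ surjects onto the component group $N_{G(\BC)}(T)/T$, so that the Weyl-group-style conjugation produced by Steinberg's theorem can be realized inside $\BK$ rather than merely inside $G(\BC)$. This requires knowing that $T = S(\BC)$ is its own centralizer identity component in $G\open(\BC)$ (true since $T$ is a maximal torus of a reductive group, by \cite[IV.13.17~Corollary~2]{Borel91}) and that $N_\BK(S)$ is genuinely maximal compact in $N_{G(\BC)}(T)$, which follows from the fact that $N_{G(\BC)}(T)$ is an extension of the finite Weyl group by $T$ and $S$ is maximal compact in $T$. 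Once this component-group bookkeeping is in place, the rest is a routine application of the results already assembled in Sections~\ref{SectQuasiTori} and \ref{SectConjMQT} together with Propositions~\ref{hoch2} and \ref{PropWt0InMaxCp}.
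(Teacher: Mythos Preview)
Your reduction in step~(1) is stated backwards. Passing from $G$ to the Zariski closure $H$ of $\BK$ gives $H(\BC)\subset G(\BC)$, so it \emph{shrinks} the group from which conjugating elements may be drawn; it does not enlarge it. The hypothesis is that $x,y$ are conjugate under $G(\BC)$, and you give no argument that they are already conjugate under the smaller group $H(\BC)$. (Such an argument can be made---via a Levi decomposition $G=R_uG\rtimes L$, the fact that $\BK\subset L(\BC)$, and that semi-simple $G$-conjugate elements have $L$-conjugate images in $G/R_uG\cong L$---but you have not supplied it, and it already does nontrivial work.) Without this the rest of your strategy, which assumes $G$ reductive, does not get off the ground. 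There are further soft spots later: in step~(2) not every element of a disconnected compact group lies in a maximal torus, and in step~(3) Remark~\ref{RemSteinbergConjugacy}(c) is only for connected $G$, so the disconnected case needs the full Proposition~\ref{PropComment27NEW} and the bookkeeping with $Z$ and $W$ there, which you have not addressed.

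More to the point, the paper's proof avoids all of this structure theory. It uses only the Hochschild decomposition of Theorem~\ref{hoch}: write the conjugating element as $a=ec$ with $e\in\BE$, $c\in\BK$, set $z:=cxc^{-1}\in\BK$, so that $eze^{-1}=y$; then $ez y^{-1}=yey^{-1}$. By Theorem~\ref{hoch}\ref{hoch_B} the right-hand side lies in $\BE$, while $zy^{-1}\in\BK$; uniqueness of the factorization $\BE\times\BK\isoto G(\BC)$ from Theorem~\ref{hoch}\ref{hoch_A} then forces $yey^{-1}=e$ and $zy^{-1}=1$, i.e.\ $cxc^{-1}=y$ with $c\in\BK$. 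No reduction to the reductive case, no maximal tori, no Weyl groups, and no Steinberg are needed; the whole argument is five lines.
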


\begin{proof} 
Let $\BE$ be as in Theorem~\ref{ThmHoch2}\ref{ThmHoch2_G}. Let $a\in G$ be such that $axa^{-1}=y$. Then we may write $a$ uniquely as $a=ec$ with $e\in \BE$ and $c\in \BK$. If $a\in G\open$ then  $\BE\subset G\open$ implies $c\in\BK\cap G\open=\BK\open$; use Theorem~\ref{ThmHoch2}\ref{ThmHoch2_A}. Then $e(cxc^{-1})e^{-1}=y$. Since $c\in \BK$, the element $z=cxc^{-1}$ is in $\BK$. So it will be enough to show that $z=y$. Note that $eze^{-1}=y$ implies $ezy^{-1}=yey^{-1}$. By Theorem~\ref{ThmHoch2}\ref{ThmHoch2_G} we have $yey^{-1}\in \BE$. Since $y,z\in \BK$ we have $zy^{-1}\in \BK$, therefore $yey^{-1}=e$ and $zy^{-1}=1$ by Theorem~\ref{ThmHoch2}\ref{ThmHoch2_G}.
\end{proof}

We next combine the above with the theory of maximal quasi-tori from Subsection~\ref{SubSectQuasiTori}.

\begin{defn} \label{DefMaxCompQT}
A subgroup $\BT\subset G$ is a \emph{maximal compact quasi-torus} in $G$ if there is an (algebraic) maximal quasi-torus $T\subset G$ in the sense of Definition~\ref{DefQuasiTorus}, such that $\BT$ is a maximal compact subgroup in $T$.
\end{defn}

\begin{prop} \label{PropMaxCPQuasiTorus}
The following holds:
\begin{enumerate}
\item\label{PropMaxCPQuasiTorus_A}
any two maximal compact quasi-tori in $G$ are conjugate under $G\open$,
\item\label{PropMaxCPQuasiTorus_B}
every subgroup in $G$ conjugate to a maximal compact quasi-torus under $G$ is a maximal compact quasi-torus,
\item\label{PropMaxCPQuasiTorus_C}
every maximal compact subgroup $\BK\subset G$ contains a maximal compact quasi-torus $\BT$,
\item\label{PropMaxCPQuasiTorus_D}
for every $\BK$ and $\BT$ as in \ref{PropMaxCPQuasiTorus_C} the following holds: every $x\in \BK$ is conjugate under $\BK\open$ to an element of $\BT$.
\end{enumerate}
\end{prop}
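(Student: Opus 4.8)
The plan is to prove the four claims about maximal compact quasi-tori by reducing everything to facts about maximal compact subgroups (Theorem~\ref{hoch} and Proposition~\ref{hoch2}) and algebraic maximal quasi-tori (Theorem~\ref{ThmQuasiTorusGeneral} and Proposition~\ref{compact-torus}), which we may use freely.

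For \ref{PropMaxCPQuasiTorus_A}, I would argue as follows. Let $\BT_1,\BT_2$ be maximal compact quasi-tori, so $\BT_i$ is a maximal compact subgroup of $T_i(\BC)$ for algebraic maximal quasi-tori $T_i\subset G$. By Theorem~\ref{ThmQuasiTorusGeneral}\ref{ThmQuasiTorusGeneral_B} there is $g\in G\open(\BC)$ with $gT_1g^{-1}=T_2$. Conjugation by $g$ carries $\BT_1$ to a maximal compact subgroup of $T_2(\BC)$, hence (by Proposition~\ref{hoch2}\ref{hoch2_B} applied inside $T_2$, or simply by uniqueness from Proposition~\ref{compact-torus} on identity components together with Proposition~\ref{hoch2}\ref{hoch2_A}) there is $t\in T_2(\BC)$ with $t(g\BT_1g^{-1})t^{-1}=\BT_2$; then $tg$ conjugates $\BT_1$ to $\BT_2$. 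For \ref{PropMaxCPQuasiTorus_B}, if $\BT\subset G(\BC)$ is a maximal compact quasi-torus inside $T(\BC)$ and $a\in G(\BC)$, then $aTa^{-1}$ is again an algebraic maximal quasi-torus by Corollary~\ref{Cor6.10} (or Theorem~\ref{ThmQuasiTorusGeneral}\ref{ThmQuasiTorusGeneral_B}), and $a\BT a^{-1}$ is a maximal compact subgroup of $(aTa^{-1})(\BC)$ by Proposition~\ref{hoch2}\ref{hoch2_C}; hence $a\BT a^{-1}$ is a maximal compact quasi-torus.

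For \ref{PropMaxCPQuasiTorus_C}, let $\BK\subset G(\BC)$ be a maximal compact subgroup. Pick any algebraic maximal quasi-torus $T\subset G$ and a maximal compact subgroup $\BT_0\subset T(\BC)$; this is a maximal compact quasi-torus of $G(\BC)$. Since $\BT_0$ is a compact subgroup of $G(\BC)$, Theorem~\ref{hoch}\ref{hoch_C} gives $e\in\BE$ with $e^{-1}\BT_0 e\subset\BK$. Then $e^{-1}\BT_0 e\subset e^{-1}T e(\BC)$, and $e^{-1}Te$ is a maximal quasi-torus by Theorem~\ref{ThmQuasiTorusGeneral}\ref{ThmQuasiTorusGeneral_B}, while $e^{-1}\BT_0 e$ is a maximal compact subgroup of $(e^{-1}Te)(\BC)$ by Proposition~\ref{hoch2}\ref{hoch2_C}; so $\BT:=e^{-1}\BT_0 e\subset\BK$ is the desired maximal compact quasi-torus. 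The main work is claim \ref{PropMaxCPQuasiTorus_D}, which I expect to be the principal obstacle. Here I would proceed: given $x\in\BK$ and the maximal compact quasi-torus $\BT\subset\BK$ lying inside the algebraic maximal quasi-torus $T\subset G$, first note $x$ is semi-simple by Proposition~\ref{hoch2}\ref{hoch2_F}, hence by Theorem~\ref{ThmQuasiTorusGeneral}\ref{ThmQuasiTorusGeneral_C} there is $g\in G(\BC)$ with $gxg^{-1}\in T(\BC)$. Now $gxg^{-1}$ lies in the compact group $gKg^{-1}\cap T(\BC)$ (it is semi-simple and of the right ``shape''), so $g x g^{-1}$ lies in some compact subgroup of $T(\BC)$, hence by Proposition~\ref{hoch2}\ref{hoch2_D} inside a maximal compact subgroup of $T(\BC)$; by Proposition~\ref{compact-torus} and \ref{hoch2_B} applied inside $T$ we may conjugate by an element of $T(\BC)$ so that in fact $gxg^{-1}\in\BT$. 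Thus $x$ and the element $y:=gxg^{-1}\in\BT\subset\BK$ are conjugate under $G(\BC)$ and both lie in $\BK$; Lemma~\ref{deligne} then yields that they are conjugate under $\BK$, which is exactly the assertion. The delicate point is ensuring that the $G(\BC)$-conjugate of $x$ can be moved \emph{into the chosen} $\BT$ rather than just into $T(\BC)$; this is handled by the uniqueness of maximal compact subgroups of the quasi-torus $T(\BC)$ together with Proposition~\ref{hoch2}\ref{hoch2_A}, so that any two are conjugate by an element of $T(\BC)\subset G(\BC)$, after which Deligne's lemma finishes the argument.
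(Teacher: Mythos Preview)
Your proof is correct and follows essentially the same route as the paper's: for \ref{PropMaxCPQuasiTorus_A}--\ref{PropMaxCPQuasiTorus_C} you combine conjugacy of algebraic maximal quasi-tori (Theorem~\ref{ThmQuasiTorusGeneral}) with conjugacy of maximal compact subgroups inside a fixed quasi-torus (Proposition~\ref{hoch2}\ref{hoch2_B}) and Theorem~\ref{hoch}\ref{hoch_C}, exactly as the paper does; for \ref{PropMaxCPQuasiTorus_D} you and the paper alike use semi-simplicity of $x$ to conjugate into $T(\BC)$, observe that the resulting element lies in the compact group $g\BK g^{-1}\cap T(\BC)$, conjugate this inside $T(\BC)$ into $\BT$, and finish with Deligne's Lemma~\ref{deligne}. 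The only minor imprecisions are a typo ($K$ for $\BK$) and the relabeling of $g$ after the extra conjugation by $t\in T(\BC)$, neither of which affects the argument.
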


\begin{proof} 
\ref{PropMaxCPQuasiTorus_A}, \ref{PropMaxCPQuasiTorus_B}, and \ref{PropMaxCPQuasiTorus_C} follow directly from Theorems~\ref{ThmQuasiTorusGeneral} and \ref{ThmHoch2}.

\medskip\noindent
\ref{PropMaxCPQuasiTorus_D} The element $x$ is semi-simple by Theorem~\ref{ThmHoch2}\ref{ThmHoch2_F}. Let $T\subset G$ be an algebraic maximal quasi-torus in $G$ such that $\BT$ is a maximal compact subgroup in $T$. Then there is an $a\in G\open$ such that $a^{-1}xa\in T$ by Theorem~\ref{ThmQuasiTorusGeneral}. Since $\BL=a^{-1}\BK a\cap T$ is the intersection of a compact Lie group and a closed algebraic subgroup, it is a compact subgroup in $T$. Therefore, there is a $b\in T\open$ such that $b^{-1}\BL\, b\subset \BT$ by Theorem~\ref{ThmHoch2}. Then $b^{-1}a^{-1}xab\in b^{-1}\BL\, b\subset \BT$. Claim \ref{PropMaxCPQuasiTorus_D} now follows from Deligne's Lemma~\ref{deligne} applied to the two elements $x\in \BK$ and $(ab)^{-1}xab\in \BT\subset \BK$. 
\end{proof}

Next let us prove the analog of Proposition~\ref{comment27NEW} for maximal compact quasi-tori. We consider the following situation: Let $G$ be a reductive algebraic group over $\BC$, let $\BK\subset G$ be a maximal compact subgroup, and let $\BT\subset G$ be a maximal compact quasi-torus contained in $\BK$, such that $\BT$ is a maximal compact subgroup of a maximal quasi-torus $T$ of $G$. Let $h\in\BT$ and recall that we defined $T^h:=\{g\in T\open\colon gh=hg\}$ in Notation~\ref{InitialNotation}. Set $\BT^h=\{g\in\BT\open\colon gh=hg\}=T^h\cap\BT\open$, which is a compact subgroup. 

\begin{lemma}\label{no_prob} The group $\BT^h$ is the unique maximal compact subgroup in $T^h$.
\end{lemma}

\begin{proof} 
Let $\BS$ be a maximal compact subgroup of $T^h$ which contains the compact subgroup $\BT^h$. Then $\BS$ is contained in a maximal compact subgroup of $T\open$. Since $T\open$ is abelian, it has a unique maximal compact subgroup by Theorem~\ref{ThmHoch2}\ref{ThmHoch2_B}, which is $\BT\open=\BT\cap T\open$ by Theorem~\ref{ThmHoch2}\ref{ThmHoch2_A}. We get that $\BT^h\subset \BS\subset T^h\cap\BT\open=\BT^h$.
\end{proof}

\begin{prop}\label{compact_conj} The following hold:
\begin{enumerate}
\item\label{compact_conj_A} Every element of $h\BT\open$ is conjugate under $\BT\open$ to an element of $h\BT^h{}\open$.
\item\label{compact_conj_B} There is an integer $M$ such that the intersection of $h\BT^h{}\open$ with any $G$-conjugacy class has at most $M$ elements.
\end{enumerate}
\end{prop}

Before we start the proof of this proposition, we will need a lemma. Every torus $T$ over $\BC$ is abelian, so it has a  unique maximal compact subgroup by Theorem~\ref{ThmHoch2}\ref{ThmHoch2_B}, which we (temporarily) denote by $c(T)$.

\begin{lemma}\label{Lemma_c_tori} 
Let $P,Q$ be two tori over $\BC$. Then the following holds:
\begin{enumerate}
\item\label{Lemma_c_tori_A} we have $c(P\TimesBC Q)=c(P)\times c(Q)$,
\item\label{Lemma_c_tori_B} every surjective homomorphism $\phi:P\to Q$ maps $c(P)$ onto $c(Q)$.
\end{enumerate}
\end{lemma}

\begin{proof}
\ref{Lemma_c_tori_A} By Tychonoff's theorem $c(P)\times c(Q)$ is a compact subgroup of $P\TimesBC Q$. Therefore, $c(P\TimesBC Q)$ contains $c(P)\times c(Q)$. On the other hand the image of $c(P\TimesBC Q)$ with respect to the projection $P\TimesBC Q\to P$ is a compact subgroup of $P$, so it lies in $c(P)$. Similarly the image of $c(P\TimesBC Q)$ with respect to the projection $P\TimesBC Q\to Q$ lies in $c(Q)$. Therefore, $c(P\TimesBC Q)$ lies in $c(P)\times c(Q)$, so claim \ref{Lemma_c_tori_A} holds. 

\medskip\noindent
\ref{Lemma_c_tori_B} First assume that $\phi$ is an isogeny, that is it has finite kernel. Recall from Example~\ref{compact-torus} that the maximal compact subgroup of a torus $T\open=(\BC\mal)^n$ is the real torus $(S^1)^n$ whose (real) dimension $n$ is the same as the dimension of $T\open$. Since $\phi$ induces an isomorphism on tangent spaces we get that $\phi(c(P))$ is a compact Lie group whose dimension is the same as the dimension of $c(Q)$, which contains it. Since both groups are connected, they are equal. 

Next assume that $P=R\TimesBC Q$ and $\phi$ is the projection $R\TimesBC Q\to Q$ to the second factor. Then $c(P)=c(R)\times c(Q)$ by part \ref{Lemma_c_tori_A}, so claim \ref{Lemma_c_tori_B} holds in this case, too. Finally let $\phi$ be arbitrary. The identity component of the kernel of $\phi$ is a torus $R$. Since over the algebraically closed field $\BC$ any extension of tori splits, we have $P=R\TimesBC S$ for $S:=P/R$. Then $\phi:P=R\TimesBC S\to Q$ is the composition of the projection $R\TimesBC S\to S$ followed by the isogeny $S\to Q$. By the above the claim holds for the projection and the isogeny, so it holds for $\phi$, too.
\end{proof}

\begin{proof}[Proof of Proposition~\ref{compact_conj}] 
Let  $T\subset G$ be a maximal quasi-torus such that $\BT$ is a maximal compact subgroup in $T$. Let $\phi:T\open\to T\open$ be the map $\phi(t)=h^{-1}tht^{-1}$. As we saw in Notation~\ref{Notation9.4NEW} this map is a group homomorphism. The restriction of this homomorphism maps $\BT\open$ into itself, because $\phi(\BT\open)$ is contained in the maximal compact subgroup $\BT\open$. We denote the resulting homomorphism $\BT\open\to\BT\open$ by $\phi$ as well. In order to prove \ref{compact_conj_A} it will be sufficient to show that for every $hx\in h\BT\open$ there are elements $t_0\in \BT^h{}\open$ and $t\in\BT\open$ with
$$hx=ht_0\phi(t)=h\phi(t)t_0=hh^{-1}tht^{-1}t_0=tht^{-1}t_0=t(ht_0)t^{-1}.$$
So it will be sufficient to show that the map $\BT^h{}\open\times\BT\open\to\BT\open$ given by $(t_0,t)\mapsto t_0\phi(t)$ is surjective. 

Let $\psi:T^h{}\open\TimesBC T\open\to T\open$ be the homomorphism defined similarly, i.e.~by the formula $(t_0,t)\mapsto t_0\phi(t)$. By Proposition~\ref{comment27NEW}\ref{comment27NEW_A} we know that $\psi$ is surjective. By Lemmas~\ref{no_prob} and \ref{Lemma_c_tori}\ref{Lemma_c_tori_A} the group $\BT^h{}\open\times\BT\open$ is a maximal compact subgroup of $T^h{}\open\TimesBC T\open$.  Therefore, claim \ref{compact_conj_A} holds by Lemma~\ref{Lemma_c_tori}\ref{Lemma_c_tori_B}. Claim \ref{compact_conj_B} follows from Proposition~\ref{PropComment27NEW}.
\end{proof}

Lemma~\ref{Lemma_c_tori} also implies the following 

\begin{cor}\label{Cor6.10compact} 
Let $\phi\colon G\onto\wt G$ be an epimorphism of linear algebraic groups over $\BC$ and let $T\subset G$ be a maximal quasi-torus and $\BT\subset T\subset G$ a maximal compact quasi-torus. Then $\phi(\BT)$ is a maximal compact quasi-torus in $\phi(T)$ and $\wt G$.
\end{cor}

\begin{proof}
By Corollary~\ref{Cor6.10} the image $\phi(T)$ is a maximal quasi-torus in $\wt G$. Since $\phi(\BT)\subset \phi(T)$ is compact, it is contained in a maximal compact subgroup $\wt\BT$ of $\phi(T)$. We must show that $\phi(\BT)=\wt\BT$. In the identity component we have $\phi(\BT\open)=\phi(c(T\open))=c(\phi(T\open))=\wt\BT\open$ by Lemma~\ref{Lemma_c_tori}\ref{Lemma_c_tori_B}. The claim now follows from the surjectivity $\BT/\BT\open=T/T\open=G/G\open\onto\wt G/\wt G\open=\wt\BT/\wt\BT\open$ for which we use Theorem~\ref{ThmHoch2}\ref{ThmHoch2_A} and Theorem~\ref{ThmQuasiTorusGeneral}\ref{ThmQuasiTorusGeneral_D}.
\end{proof}

\section{Chebotar\"ev for Overconvergent $F$-Isocrystals}\label{Section11}

In this section we prove Theorem~\ref{ThmOverconvIntro} about Chebotar\"ev density for overconvergent $F$-isocrystals by reducing to the case when $\CF$ is $\iota$-mixed; see Definition~\ref{DefIotaMixed}. A second proof will be given in Remark~\ref{RemPinkForOverconv}.

\begin{rem} \label{Rem12.5}
Theorem~\ref{ThmOverconvIntro} establishes that the assumptions of Corollary~\ref{Cor1.9} are satisfied for overconvergent $F$-isocrystals. This corollary was proven earlier by N.~Tsuzuki when $\CF$ and $\CG$ are $\iota$-mixed via a simpler direct method, see \cite[Proposition~A.4.1]{Abe13}, but his argument also uses the $p$-adic analog of Weil II, like ours. However, the condition of being $\iota$-mixed is not as restrictive as it might look, by deep work of T.~Abe on the $p$-adic analog of the Langlands correspondence and by L.~Lafforgue on the Ramanujan-Petersson Conjecture. We explain this in Corollary~\ref{langlands_implies_mixedness} below. 
\end{rem}

\begin{defn}\label{DefIotaMixed}
Fix an isomorphism of fields $\iota\colon \olK\isoto\BC$ and let $|\cdot|\colon \BC\rightarrow\BR_{\geq0}$ be the usual archimedean absolute value on $\BC$. We say that an overconvergent $F$-isocrystal $\CF\in\FIsoc^\dagger_\olK(U)$ is (point-wise) \emph{$\iota$-pure} of weight $w$, where $w\in\BZ$, if for every $x\in|U|$ and for every eigenvalue $\alpha\in\olK$ of $\Frob^{\dagger}_x(\CF)$ acting on $\omega_\BasePoint(\CF)\otimes_{\olK,\iota}\BC$ we have $|\iota(\alpha)|=q^{w\deg(x)/2}$. We say that $\CF\in\FIsoc^\dagger_\olK(U)$ is \emph{$\iota$-mixed} if it is a successive extension of $\iota$-pure overconvergent $F$-isocrystals.
\end{defn}

\subsection{Outline of the Proof of Theorem~\ref{ThmOverconvIntro}} \label{SubsectOutline} 

It is inspired by the proof of Theorem~\ref{ThmIsoclinic} and proceeds in several steps.

\medskip\noindent
{\bfseries Step~1.} \ By deep results on the Langlands correspondence of Abe~\cite{Abe13} ($p$-adic) and Lafforgue~\cite{Lafforgue02} ($\ell$-adic), every overconvergent $F$-isocrystal with finite determinant is $\iota$-pure of weight zero; see Corollary~\ref{langlands_implies_mixedness}. Using the reduction techniques~\ref{TechniqueDividing} and \ref{TechniqueTwist} and Proposition~\ref{PropRedGpAlmostProduct}, this allows us to reduce to the case that $\CF=\CU\oplus\CC$ for a $\iota$-pure $\CU\in\FIsoc^\dagger_\olK(U)$ of weight zero with unit-root determinant, and a constant $F$-isocrystal $\CC$, such that $\Gr^\dagger(\CU)\open$ is a semi-simple group, $\Gr^\dagger(\CF)\open=\Gr^\dagger(\CU)\open \TimesBC \Gr^\dagger(\CC)\open$, and $\Gamma := \mathbf{W}(\CU) = \BZ/\#\Gamma$ is finite cyclic.

\medskip\noindent
{\bfseries Step~2.} \ We base change all groups via $\iota$ to $\BC$ and consider a maximal compact subgroup $\BK\subset \Gr^\dagger(\CU)\times_{\olK,\iota}\BC$ and a maximal compact quasi-torus $\BT\subset\BK$ which is of the form $\BT=T(\BR)$ for a maximal quasi-torus $T\subset\Gr^\dagger(\CU)$ equipped with a suitable real structure. There will be a connected component $h_1 \Gr^\dagger(\CF)\open = h_2 \Gr^\dagger(\CU)\open \TimesBC h_3 \Gr^\dagger(\CC)\open$ and a subset $S'$ of $S$ with positive upper Dirichlet density such that the semi-simple parts $\Frob^\dagger_x(\CF)^{ss}$ meet $h_1 \Gr^\dagger(\CF)\open$ for all $x\in S'$. We may assume that $h_2\in \BT$. Since $\CU$ is $\iota$-pure of weight zero, $h_2\BT^{h_2}{}\open\cap \Frob^\dagger_x(\CU)^{ss}\ne\emptyset$ for every $x\in S'$.

\medskip\noindent
{\bfseries Step~3.} \ Let $\gamma\in \Gamma$ be the image of $h_2$. Then the image of $\Frob^\dagger_x(\CU)^{ss}$, which equals $\deg(x)$ in $\BZ/\#\Gamma = \Gamma$, coincides with $\gamma$. Let $(h_2\BK\open)^\#$ denote the set of conjugacy classes which meet $h_2\BK\open$. On $(h_2\BK\open)^\#$ we consider for $m\in\BN$ the counting measures $\mu_m$ defined as the average of the Dirac measures at the points $\Frob^\dagger_x(\CU)^{ss}\in (h_2\BK\open)^\#$ for $x\in U(m):=\{x\in |U|\colon \deg(x)=m\}$, and the measure $\mu_{{\rm Haar},\gamma}^\#$ given as the push forward of the Haar measure from $\BK$, normalized by $\mu_{{\rm Haar},\gamma}^\#((h_2\BK\open)^\#)=1$. Using the deep theory of Frobenius weights for overconvergent $F$-isocrystals of Kedlaya, Abe and Caro, we prove the $p$-adic analog of Deligne's Equidistribution Theorem in Theorem~\ref{ThmEquiDistr}, which says that the $\mu_m$ for $m\mod\#\Gamma=\gamma=h_2$ in $\Gamma$ converge weakly to $\mu_{{\rm Haar},\gamma}^\#$ when $m\to\infty$.

\medskip\noindent
{\bfseries Step~4.} \ We prove in Lemma~\ref{LemmaPhiNice} and Proposition~\ref{PropWeakConvOfPullBack}, that under the natural map $\phi\colon h_2 \BT^{h_2}{}\open \to (h_2\BK\open)^\#$ one can pull back these measures to $h_2\BT^{h_2}{}\open$ such that the $\phi^*\mu_m$ converge weakly to $\lambda:=\phi^*\mu_{{\rm Haar},\gamma}$. The measure $\lambda$ will have finite volume and satisfy $\lambda(X\cap h_2\BT^{h_2}{}\open)=0$ for every proper hypersurface $X\subsetneq h_2 T^{h_2}{}\open$.

\medskip\noindent
{\bfseries Step~5.} \ Now we will assume that $\bigcup_{x\in S'} \Frob^\dagger_x(\CF)$ is not dense in $h_1\Gr^\dagger(\CF)\open$, but contained in a proper hypersurface $W\subsetneq h_1\Gr^\dagger(\CF)\open$. By Corollary~\ref{Cor4.2} to the Chebotar\"ev Density Theorem for constant $F$-isocrystals, it follows that $X_m:=\{z\in h_2 T^{h_2}{}\open\colon (z,f^m)\in W\}$ defines a proper hypersurface in $h_2T^{h_2}{}\open$ for almost all $m$. By Theorem~\ref{new_oesterle1} a subsequence of the $X_m$ will converge to a proper hypersurface $X\subsetneq h_2T^{h_2}{}\open$, and by Proposition~\ref{simple_estimate2}  there is then an $\epsilon>0$ giving us the contradiction
\[
\frac{\ol\delta(S')}{2} \;\le\; \frac{\#S'\cap U(m)}{\#U(m)} \;\le\;\limsup_{m\to\infty}\phi^*\mu_m\bigl(\ol{(X\cap h_2\BT^{h_2}{}\open)(\epsilon)}\bigr)\;\le\; \lambda\bigl(\ol{(X\cap h_2\BT^{h_2}{}\open)(\epsilon)}\bigr)\;<\;\frac{\ol\delta(S')}{2}\;,
\]
where the first inequality comes from Lemma~\ref{Lemma4.3} and the second from Step~2 and an explicit description of the counting measure $\phi^*\mu_m$.

\medskip

Next we describe the ingredients of Steps~1, 3, 4, and 5 in detail.

\subsection{Ingredients for Step~1: The Langlands Correspondence}

\begin{thm}[{$p$-adic Langlands correspondence, Abe~\cite[Theorem~4.2.2]{Abe13}}] \label{abe} 
Let $\olC$ be a smooth, projective, geometrically irreducible curve over $\BF_q$ and let $C\subset \olC$ be a non-empty open subset. Denote by $\BA$ the ring of ad\`eles of the function field $\BF_q(\olC)=\BF_q(C)$. Let $\CF\in\FIsoc^\dagger_\olK(C)$ be irreducible of rank $r$ such that the determinant $\det\CF=\bigwedge^r(\CF)$ of $\CF$ is a unit-root $F$-isocrystal with finite monodromy. Let $\rho$ be the $p$-adic representation $\rho\colon \pi_1^\et(C,\bar\BasePoint)\rightarrow \olK\mal$ corresponding to $\det\CF$. Then  there is a cuspidal automorphic representation $\Pi$ of $GL_r(\BA)$ such that the central character of $\Pi$ is $\rho$ under the identification furnished by class field theory, and if $\Pi=\otimes_{x\in|\olC|}\Pi_x$ is the factorization of $\Pi$ into the tensor product of local representations then $\Pi_x$ is unramified for every $x\in|C|$ and its Hecke eigenvalues are equal to the images under the fixed $\iota$ of the eigenvalues of $\Frob^\dagger_x(\CF)$ acting on $\omega_\BasePoint(\CF)$. 
\end{thm}

In order to show how this implies $\iota$-purity, we next describe the overconvergent analog of Definition~\ref{Def3.5}, Remark~\ref{Rem3.6} and Proposition~\ref{crew_sequence}.

\begin{defn} 
Let $\Isoc^{\dagger}_K(U)$ denote the category of $K$-linear overconvergent isocrystals on $U$ and let $\Isoc_\olK(U):=\Isoc_K(U)\otimes_K \olK$ be its scalar extension. Let $(\,.\,)^{\sim}\colon \FIsoc^{\dagger}_\olK(U)\to\Isoc^{\dagger}_\olK(U)$ denote the functor furnished by forgetting the Frobenius structure. For every $\CF\in \FIsoc^{\dagger}_\olK(U)$ let $\DGal^\dagger(\CF)$ denote the Tannakian fundamental group with respect to $\omega_\BasePoint$ of the Tannakian sub-category $\dal\CF^{\sim}\dar$ of $\Isoc^{\dagger}_\olK(U)$ generated by $\CF^{\sim}$. Moreover for every such $\CF$ let $\dal\CF\dar_{const}$ and $\mathbf{W}(\CF)$ denote the Tannakian sub-category of constant objects of $\dal\CF\dar$ and the Tannakian fundamental group of $\dal\CF\dar_{const}$ with respect to $\omega_\BasePoint$, respectively. Then $\mathbf{W}(\CF)$ is commutative by Theorem~\ref{ThmMonodrOfConstant}\ref{ThmMonodrOfConstant_B}.
\end{defn}

The monodromy group $\DGal^\dagger(\CF)$ was introduced by Crew~\cite{Crew92}. Next we describe its relationship to $\Gr^{\dagger}(\CF)$. Let $\alpha\colon \DGal^\dagger(\CF)\to\Gr^{\dagger}(\CF)$ be the homomorphism induced by the forgetful functor $(\cdot)^{\sim}\colon \dal\CF\dar\to\dal\CF^{\sim}\dar$, and let $\beta\colon \Gr^{\dagger}(\CF)\to\mathbf{W}(\CF)$ be the homomorphism induced by the inclusion $\dal\CF\dar_{const}\subset\dal\CF\dar$.

\begin{prop}\label{PropCrewsGroup} 
Let $\CF\in\FIsoc^\dagger_\olK(U)$ and $j_U(\CF)\in\FIsoc_\olK(U)$ the associated convergent $F$-isocrystal. Then $\mathbf{W}(j_U(\CF))=\mathbf{W}(\CF)$. Moreover, if $\CF^\sim$ is semi-simple, then the sequence
$$
\xymatrix { 0 \ar[r] & \DGal^\dagger(\CF) \ar[r]^{\alpha} & \Gr^{\dagger}(\CF) \ar[r]^{\beta} & 
\mathbf{W}(\CF) \ar[r] & 0.}
$$
is exact. This is the case for example if the overconvergent $F$-isocrystal $\CF$ is semi-simple.
\end{prop}

\begin{proof}
The functor $j_U\colon \dal\CF\dar_{const}\to\dal j_U(\CF)\dar_{const}$ is fully faithful by Kedlaya's Theorem~\ref{ThmKedlayaFF}. It is essentially surjective, because every object of $\dal j_U(\CF)\dar_{const}$ is constant, and hence overconvergent. The rest follows by the same proof as in Proposition~\ref{crew_sequence}. The last statement is shown in the proof of \cite[Corollary~4.10]{Crew92} (without making use of the stated hypothesis that $U$ is a curve).
\end{proof}

\begin{prop}\label{PropCrew1.5}
Let $\CL\in\FIsoc^\dagger_\olK(U)$ be unit-root of rank $1$. Then the geometric monodromy group $\Gr^\dagger(\CL)^{\rm geo}:=\DGal^\dagger(\CL)$ is finite and some tensor power of $\CL$ is constant on $U$.
\end{prop}

\begin{proof}
We choose an affine open subset $V\subset U$ and a (not necessarily smooth) projectivization $Y$ of $V$. It suffices to show that $\CL|_V^{\otimes N} \cong \CC$ for a positive integer $N$ and a constant $F$-isocrystal $\CC\in\FIsoc_\olK(V)$, because then $\CC$ extends to a constant $F$-isocrystal on $U$ and $\CL^{\otimes N}\cong\CC$ in $\FIsoc^\dagger_\olK(U)$ by \cite[Corollary~4.1.2]{TsuzukiDuke}. By \cite[Theorem~1.3.1]{TsuzukiDuke} there is a smooth proper scheme $Y'$ and a generically \'etale, surjective morphism $f\colon Y'\to Y$, a unit-root $F$-isocrystal $\CN\in\FIsoc_\olK(Y')$, and an isomorphism of overconvergent $F$-isocrystals $\CN|_{V'}\cong f^*(\CL|_V)$ on $V' := f^{-1}(V)$. By Crew's Theorem~\ref{ThmCrewsThm}, $\CN$ corresponds to a representation $\pi_1^\et(Y',\bar\BasePoint)\to \olK\mal$. By \cite[Theorem~1.6]{Crew92} 
the image of $\pi_1^\et(Y',\bar\BasePoint)^{\rm geo}\to \olK\mal$ is finite. If $N$ is the order of this image, then $\CN^{\otimes N}$ has trivial geometric monodromy $\Gr(\CN^{\otimes N}/Y')^{\rm geo}$, and hence is isomorphic to a constant $F$-isocrystal on $Y'$ by Corollary~\ref{CorCrewsThmGeo}. Since the latter is pulled back from $\BF_q$, it is of the form $f^*\CC$ for a constant $F$-isocrystal $\CC\in\FIsoc_\olK(Y)$. If we shrink $V$ to the open subset over which $f$ is \'etale, the isomorphism $f^*(\CL|_V^{\otimes N}) \cong \CN^{\otimes N}|_{V'}\cong f^*(\CC|_V)$ descends by \cite[Th\'eor\`eme~1]{Etesse02} to an isomorphism of overconvergent $F$-isocrystals $\CL|_V^{\otimes N}\cong\CC|_V$ as desired.
\end{proof}

\begin{cor}\label{langlands_implies_mixedness}
Let $\CF\in\FIsoc^\dagger_\olK(U)$ be irreducible such that the determinant $\det\CF$ of $\CF$ is a unit-root $F$-isocrystal and the monodromy group $\mathbf{W}(\det \CF)$ of $\dal\det\CF\dar_{const}$ is finite. Then $\CF$ is $\iota$-pure of weight zero.
\end{cor}

\begin{proof}
By Propositions~\ref{PropCrew1.5} and \ref{PropCrewsGroup} the monodromy group $\Gr^\dagger(\det\CF/U)$ is finite. Let $x\in |U|$ be a closed point. By Theorem~\ref{ThmWeilBounds} we can choose an auxiliary point $y\in|U|$ with $\deg(y)$ coprime to $\deg(x)$. By \cite[Theorem~3.10]{Abe+Esnault} there is a smooth curve $f\colon X\to U$ such that $x$ and $y$ factor through $X$ and such that $f^*\CF\in\FIsoc^\dagger_\olK(X)$ is still irreducible. Let $\olX$ be the smooth projectivization of $X$. Since it contains the points $x$ and $y$ of coprime degree, $\olX$ is geometrically irreducible. The determinant $\det(f^*\CF)=f^*(\det\CF)$ is still unit-root and $\Gr^\dagger(\det(f^*\CF)/X)$ is still finite, because it is a closed subgroup of $\Gr^\dagger(\det\CF/U)$. So by Theorem~\ref{abe} there is a cuspidal automorphic representation $\Pi$ of the function field $\BF_q(\olX)$ corresponding to $f^*\CF$, which is unramified at every $x\in|X|$. By Lafforgue's proof \cite[Th\'eor\`eme~VI.10(i)]{Lafforgue02} of the Ramanujan-Petersson Conjecture for $\Pi$, the Hecke eigenvalues of $\Pi$ at all $x\in |X|$ have complex absolute value one. Theorem~\ref{abe} then implies that $\CF$ is $\iota$-pure of weight zero.
\end{proof}

\subsection{Ingredients for Step~3: The $p$-adic Equidistribution Theorem}

From the theory of Frobenius weights \cite{Kedlaya06} we will derive the $p$-adic analog of Deligne's Equidistribution Theorem. For the rest of Section~\ref{Section11} we keep the following

\begin{notn}\label{Notation13.14}
We fix a semi-simple $\iota$-pure overconvergent $F$-isocrystal $\CU\in\FIsoc^{\dagger}_K(U)$ of weight zero. We call $G^{\rm arith}:=\Gr^{\dagger}(\CU)$ and $G^{\rm geo}:=\DGal^\dagger(\CU)$ its arithmetic and geometric monodromy groups, respectively. The names are justified by Proposition~\ref{PropCrewsGroup}. Assume that $(G^{\rm arith})\open$ is a semi-simple group. In the exact sequence from Proposition~\ref{PropCrewsGroup} the group $\mathbf{W}(\CU)$ is commutative, and hence $(G^{\rm geo})\open$ contains the derived group of $(G^{\rm arith})\open$, which equals $(G^{\rm arith})\open$ by \cite[IV.14.2~Corollary]{Borel91}. Thus  $(G^{\rm arith})\open=(G^{\rm geo})\open$ and $\mathbf{W}(\CU)$ is finite. It follows further that $\mathbf{W}(\CU)$ is cyclic, because it is generated by the Frobenius $f$ of a tensor generator $(W,f)$ of $\dal\CU\dar_{const}$ by Theorem~\ref{ThmMonodrOfConstant}\ref{ThmMonodrOfConstant_B}.

Using the embedding $\iota$ we may extend scalars and view $G^{\rm arith}$ and $G^{\rm geo}$ as semi-simple algebraic groups over $\BC$. We will denote by $\BK^{\rm arith}$ and $\BK^{\rm geo}$ maximal compact subgroups of $G^{\rm arith}$ and $G^{\rm geo}$ such that $\BK^{\rm geo}\subset\BK^{\rm arith}$. Clearly $(\BK^{\rm geo})\open\subset(\BK^{\rm arith})\open$. Since $(G^{\rm arith})\open=(G^{\rm geo})\open$, we get that $(\BK^{\rm arith})\open=(\BK^{\rm geo})\open$, and therefore we deduce from Theorem~\ref{ThmHoch2}\ref{ThmHoch2_A} that  
\begin{align*}
\BK^{\rm arith}/\BK^{\rm geo} & \;\cong\; \bigl(\BK^{\rm arith}/(\BK^{\rm arith})\open\bigr)\big/\bigl(\BK^{\rm geo}/(\BK^{\rm geo})\open\bigr)\\
 & \;\cong\; \bigl(G^{\rm arith}_\BC/(G^{\rm arith}_\BC)\open\bigr)\big/\bigl(G^{\rm geo}_\BC/(G^{\rm geo}_\BC)\open\bigr)\\
 & \;\cong\; G^{\rm arith}_\BC/G^{\rm geo}_\BC\\
 & \;\cong\;\mathbf{W}(\CU)(\BC)\;=:\;\Gamma
\end{align*}
is a finite cyclic group with canonical generator being the Frobenius $f$ from the previous paragraph. Let $d:=\#\Gamma$. In what follows we consider the group homomorphism $\chi$ defined as the composition
\begin{equation}\label{EqClassInGamma}
\chi\colon \BK^{\rm arith}\;\longonto\;\BK^{\rm arith}/\BK^{\rm geo}\;\cong\;\Gamma\;\xleftarrow{\,\sim\;}\BZ/d\BZ
\end{equation}
where the last map is induced by $\BZ\onto\Gamma,\ m\mapsto[m]:=f^m$.
\end{notn}

\begin{defn}\label{DerMeasuresOnConjCl}
Fix an element $\gamma\in\Gamma\cong \BK^{\rm arith}/\BK^{\rm geo}$ and let $\BK^{\rm arith}_\gamma$ denote the inverse image of $\gamma$ in $\BK^{\rm arith}$.  We denote the set of $\BK^{\rm arith}$-conjugacy classes in $\BK^{\rm arith}$ by $\BK^{{\rm arith},\#}$ and the ones which meet $\BK^{\rm arith}_\gamma$ by $\BK^{{\rm arith},\#}_\gamma$. We equip these sets with the quotient topology. $\BK^{{\rm arith},\#}_\gamma$ is a union of connected components of $\BK^{{\rm arith},\#}$, because it equals the preimage of $\gamma$ under the induced map $\BK^{{\rm arith},\#}\to\Gamma$. Let $\mu_{\rm Haar}$ be the Haar measure on $\BK^{\rm arith}$ normalized so that $\mu_{\rm Haar}(\BK^{\rm geo})=1$. (We may take the left or right invariant measure as either is bi-invariant by \cite[Chapitre~VII, \S\,1.3, Corollaire]{BourbakiIntegration}.) Then the push-forward of $\mu_{\rm Haar}$ under $\BK^{\rm arith}\onto \BK^{{\rm arith},\#}$ is a measure on $\BK^{{\rm arith},\#}$ which we will denote by $\mu_{\rm Haar}^\#$, see Definition~\ref{DefPushForwMeasure}. For every $\gamma\in\Gamma$ let $\mu_{{\rm Haar},\gamma}^\#$ be the measure defined by the formula:
\[
\mu_{{\rm Haar},\gamma}^\#(M)\;:=\;\mu_{\rm Haar}^\#(M\cap\BK^{{\rm arith},\#}_\gamma)\quad\textrm{for all measurable }M\subset \BK^{{\rm arith},\#}.
\]
Then $\mu_{{\rm Haar},\gamma}^\#(\BK^{{\rm arith},\#}_\gamma)=\mu_{\rm  Haar}(\BK^{\rm geo})=1$. The main equidistribution statement will be that a suitably normalized sum of point masses corresponding to Frobenius elements converges to the measure $\mu_{{\rm Haar},\gamma}^\#$.
\end{defn}

\begin{lemma}\label{LemmaQuotientHausdorff} 
The space $\BK^{{\rm arith},\#}$ is a compact topological Hausdorff space. 
\end{lemma}

\begin{proof} 
By \cite[Chapter~I, \S\,8.3, Proposition~8]{BourbakiTopology} it suffices to show that the quotient map $\BK^{\rm arith}\to\BK^{{\rm arith},\#}$ is open and the graph of the conjugation action $\BK^{\rm arith}\times \BK^{\rm arith}\to \BK^{\rm arith}\times \BK^{\rm arith}$, $(x,g)\mapsto(x,gxg^{-1})$ is closed in the metric topology. The openness of the quotient map follows from \cite[Chapter~III, \S\,2.4, Lemma~2]{BourbakiTopology}, and the graph is closed because $\BK^{\rm arith}\times \BK^{\rm arith}$ is compact and Hausdorff.
\end{proof}

We will need more results on the structure of the quotient, which are due to Brumfiel by viewing $\BK^{\rm arith}$ as a closed, bounded affine semi-algebraic group, see \cite[\S\,7, Definition~3]{DeKn81}.

\begin{thm}[{\cite[Corollary~1.6]{Br}}] \label{ThmBrumfiel}
If $H$ is a closed, bounded affine semi-algebraic group which acts continuously and semi-algebraically on an affine semi-algebraic space $X$ then the quotient space $X/H$ exits as an affine semi-algebraic space and the quotient map $X\to X/H$ is continuous and semi-algebraic.
\end{thm}

\begin{rem}\label{RemBrumfiel}
Theorem~\ref{ThmBrumfiel} applies to all real closed fields, but we will only use it for the real number field $\BR$. It is pointed out in \cite[Remark~1.3]{Br} that in the latter case the quotient map $X\to X/H$ is \emph{topological}, which implies that the topology on $X/H$ is the usual quotient topology. We conclude that $\BK^{{\rm arith},\#}$ is an affine semi-algebraic space and the quotient map $\BK^{\rm arith}\to\BK^{{\rm arith},\#}$ is semi-algebraic. In particular, $\BK^{{\rm arith},\#}$ is metrizable.
\end{rem}

\begin{defn} \label{Def_theta(x)}
We consider the semi-simplification $\Frob^{\dagger}_x(\CU)^{ss}$ of the Frobenius conjugacy class $\Frob^{\dagger}_x(\CU)$. The eigenvalues $\alpha\in\olK$ of $\Frob^\dagger_x(\CU)^{ss}$ and of $\Frob^\dagger_x(\CU)$ acting on $\omega_\BasePoint(\CU)$ are the same. Since $\CU$ is $\iota$-pure of weight zero, all these eigenvalues have complex norm $1$ in the action on $\omega_\BasePoint(\CU)\otimes_{\olK,\iota}\BC$. So $\Frob^{\dagger}_x(\CU)^{ss}$ is conjugate under $(G^{\rm arith})\open$ to an element of $\BK^{\rm arith}$ by Proposition~\ref{PropWt0InMaxCp}. The $\BK^{\rm arith}$-conjugacy class of this element is well-defined, because by Lemma~\ref{deligne} two elements of $\BK^{\rm arith}$ which are conjugate under $G^{\rm arith}$ are already conjugate under $\BK^{\rm arith}$. We denote this $\BK^{\rm arith}$-conjugacy class by $\theta(x)$. Note that by definition of $\mathbf{W}(\CU)$, the image of $\Frob^{\dagger}_x(\CU)$ and $\Frob^{\dagger}_x(\CU)^{ss}$ in $\Gamma=\mathbf{W}(\CU)(\BC)$ is just the image $\gamma:=[\deg(x)]=[\chi(\theta(x))]$ of $\deg(x)$ under the map \eqref{EqClassInGamma}. Thus the conjugacy class $\theta(x)$ is an element of $\BK^{{\rm arith},\#}_\gamma$.
\end{defn}

\begin{defn}\label{DefMeasure_d_mu_m}
Recall that $U(m):=\{x\in|U|\colon \deg(x) =m\}$ for every positive integer $m$. For every $\theta\in\BK^{{\rm arith},\#}$ let $\delta_\theta$ denote the Dirac measure on $\BK^{{\rm arith},\#}$ centered at $\theta$. We define the discrete measure $\mu_m$ on $\BK^{{\rm arith},\#}$ to be:
\[
\mu_m\;:=\;\frac{1}{\#U(m)}\sum_{x\in U(m)}\delta_{\theta(x)}.
\]
Note that this measure is supported on $\BK^{{\rm arith},\#}_\gamma$ where $\gamma$ is equal to the image $[m]$ of $m$ in $\Gamma$ from \eqref{EqClassInGamma}.
\end{defn}

\begin{thm}[$p$-adic version of Deligne's Equidistribution Theorem]\label{ThmEquiDistr}
For every $\gamma\in\Gamma$ the measures $\mu_m$, for which $[m]=\gamma\in\Gamma$ in \eqref{EqClassInGamma}, converge weakly to $\mu^\#_{{\rm Haar},\gamma}$ on $\BK^{{\rm arith},\#}_\gamma$ as $m\to\infty$.
\end{thm}

Our proof is inspired by Deligne's method \cite[Chapitre~II]{DeligneWeil2} of estimating radii of convergence for $L$-functions as in the following

\begin{lemma}\label{LemmaClaimOnL-Fcn}
For every \emph{irreducible} representation $\rho\in\Rep^\cont_\BC\BK^{\rm arith}$ the $L$-function
\begin{equation}\label{EqLFctOfG}
L(\rho,z)\;:=\;\prod_{ x\in |U| }\det\nolimits_\BC\bigl(1-\rho(\theta(x))z^{\deg(x)}\bigr)^{-1}
\end{equation}
is a meromorphic function without zeros on an open neighborhood of the closed disc $\olD:=\{z\in\BC\colon |z|\le q^{-\dim U}\}$. It has a simple pole at $z=(\xi q^{\dim U})^{-1}$ if $\rho=\xi^\chi$ for a $d$-th root of unity $\xi\in\BC$ and is holomorphic on an open neighborhood of $\olD$ otherwise. Here $\chi$ is the homomorphism $\chi\colon\mathbb K^{\rm arith}\onto\mathbb K^{\rm arith}/\mathbb K^{\rm geo}=\Gamma=\BZ/d\BZ$ from \eqref{EqClassInGamma}.

\end{lemma}

\begin{proof}
We identify $\olK$ with $\BC$ via $\iota$. By Proposition~\ref{PropRealFormOfSemiSimple}\ref{PropRealFormOfSemiSimple_C} the restriction to $\BK^{\rm arith}$ defines an equivalence of categories between $\dal\CU\dar=\Rep_\BC G^{\rm arith}$ and $\Rep^\cont_\BC \BK^{\rm arith}$. Thus $\rho=\rho_\CG$ for an \emph{irreducible} overconvergent $F$-isocrystal $\CG\in\dal\CU\dar$. The latter is $\iota$-pure of weight zero. Its $L$-function is defined as
\[
L(\CG/\olK,z)\;:=\; \prod_{ x\in |U|}\det\nolimits_\olK\bigl(1-F_\CG^{\deg(x)}z^{\deg(x)}|\omega_x(\CG)\bigr)^{-1}\;=\;L(\rho_\CG,z)\,.
\]
Note that for any $\CF\in\FIsoc_K(U)$ and its base extension $\CF\otimes_K\olK\in\FIsoc_\olK(U)$ the fiber $\omega_x(\CF)$ is a $K_n$-vector space for $n=\deg(x)$ and $\omega_x(\CF\otimes_K\olK)=\omega_x(\CF)\otimes_{K_n}\olK$. Therefore
\[
\det\nolimits_\olK\bigl(1-F_\CF^{\deg(x)}z^{\deg(x)}|\omega_x(\CF\otimes_K\olK)\bigr)\;=\;\det\nolimits_{K_n}\bigl(1-F_\CF^{\deg(x)}z^{\deg(x)}|\omega_x(\CF)\bigr)\;=\;\det\nolimits_K\bigl(1-F_\CF^{\deg(x)}z^{\deg(x)}|\omega_x(\CF)\bigr)^{1/\deg(x)}
\]
and this proves the equality of $L(\CF/\olK,z)=L(\rho_\CF,z)$ with the $L$-function of \cite{ES}. It converges absolutely on the open disc $D:=\{z\in\BC\colon |z|<q^{-\dim U}\}$, and extends to a meromorphic function on $\BC$ by the \'Etesse--Le Stum trace formula
\begin{equation}\label{EqESTrace}
L(\CG/\olK,z)\;=\;\prod_{i=0}^2 \det\nolimits_\olK \bigl(1-z\,q^{\dim U}\cdot F_{\CG\dual}^{-1}|\Koh_{\rig}^i(U,\CG\dual)\bigr)^{(-1)^{i+1}},
\end{equation}
see \cite[Th\'eor\`eme~6.3.II and Corollaire~6.4]{ES}, where $F_{\CG\dual}$ is the Frobenius of $\CG\dual$ and $\Koh_{\rig}^i(U,\CG\dual)$ is the rigid cohomology on $U$. Namely, since $\CG$ and $\CG\dual$ are $\iota$-pure of weight zero (and hence $\iota$-realizable), Kedlaya~\cite[Theorem~5.3.2(b)]{Kedlaya06} shows that $\Koh_{\rig}^i(U,\CG\dual)$ is $\iota$-pure of weight $\ge i$, that is, (the images under $\iota$ of) the eigenvalues of the Frobenius $F_{\CG\dual}$ have absolute value $\ge q^{i/2}$, and hence those of $F_{\CG\dual}^{-1}$ have absolute value $\le q^{-i/2}$. In particular, the factors in \eqref{EqESTrace} for $i\ge 1$ have neither zeros nor poles on an open neighborhood of the closed disc $\olD$. For $i=0$ we distinguish two cases. If $\Koh^0_\rig(U,\CG\dual)$ is trivial, then the factor for $i=0$ equals $1$, and $L(\CG/\olK,z)=L(\rho_\CG,z)$ has neither zero nor pole on $\olD$.

On the other hand, if the $\olK$-vector space $\Koh^0_\rig(U,\CG\dual)$ of horizontal sections of $(\CG\dual)^\sim$ is non-trivial, it generates a non-trivial constant, and hence overconvergent sub-isocrystal $\CH\subset(\CG\dual)^\sim$. Since the Frobenius $F_{\CG\dual}$ respects horizontal sections, $\CH$ underlies an overconvergent sub-$F$-isocrystal. The assumption that $\CG\dual$ is irreducible implies that $\CG\dual=\CH$ is a constant $F$-isocrystal, and hence $\CG$ is constant, too. By Proposition~\ref{PropCrewsGroup} this means that the representation $\rho_\CG$ factors through $\chi$ and is of the form $\rho_\CG=\xi^\chi$ for some $d$-th root of unity $\xi$. In this case the $L$-function is $L(\xi^{\chi},z)=\prod_{x\in |U|}(1-(\xi z)^{\deg(x)})^{-1}$ and the statement follows from Theorem~\ref{ThmWeilBounds}.
\end{proof}

\begin{proof}[Proof of Theorem~\ref{ThmEquiDistr}.]
We only use the Lemma~\ref{LemmaClaimOnL-Fcn} and the equality $\chi(\theta(x))\equiv \deg(x)\mod d$ to deduce equidistribution. So one could formulate our argument as a method like it was done by Deligne~\cite[Chapitre~II]{DeligneWeil2}. By the Peter-Weyl theorem, see for example \cite[Propositions~5.21 and 5.25]{Folland}, the linear combinations of the traces $\Tr(\rho)$ for $\rho\in\Rep_\BC^c\BK^{\rm arith}$ are (metrically) dense in the $\BC$-algebra of continuous functions on $\BK^{{\rm arith},\#}$. So by the Portemanteau theorem~\cite[Theorem~13.16]{Klenke} we only need to show that for every irreducible representation $\rho\in\Rep^\cont_\BC\BK^{\rm arith}$
\begin{equation}\label{EqLRHandSide}
\frac{1}{\#U(m)}\sum_{x\in U(m)}\int_{\BK^{{\rm arith},\#}_\gamma}\Tr(\rho)\;d\delta_{\theta(x)}\;\longto\;\int_{\BK^{{\rm arith},\#}_\gamma}\Tr(\rho)\;d\mu_{{\rm Haar},\gamma}^\#\quad\text{if} \quad m\longto \infty \quad\text{with}\quad [m]=\gamma \quad\text{in}\quad \Gamma.
\end{equation}
First we compute the right hand side. Let $n\in\BZ$ with $[n]=\gamma$.

\bigskip\noindent
{\itshape Claim.} \quad We have \ $\displaystyle\int_{\BK^{{\rm arith},\#}_\gamma}\Tr(\rho)\;d\mu_{{\rm Haar},\gamma}^\# \;=\;\begin{cases} \xi^n & \text{if $\rho=\xi^\chi$ for some $d$-th root of unity $\xi$,} \\ 0 & \text{otherwise.} \end{cases}$
      
\bigskip\noindent
Fix a primitive $d$-th root of unity $\zeta$. Using the orthogonality relations $\sum_{k=0}^{d-1}\zeta^{kr}=0$ when $d\nmid r$ and $\sum_{k=0}^{d-1}\zeta^{kr}=d$ when $d| r$ we have:
\[
\int_{\BK^{{\rm arith},\#}_\gamma}\Tr(\rho)\;d\mu_{{\rm Haar},\gamma}^\#\;=\;\int_{\chi^{-1}(n)}\Tr(\rho)\;d\mu_{\rm Haar}\;=\;
\tfrac{1}{d}\,\sum_{k=0}^{d-1}\zeta^{kn}\int_{\BK^{\rm arith}}
\ol{\zeta^{k\chi}}\cdot\Tr(\rho)\;d\mu_{\rm Haar}\,,
\]
which is zero unless $\rho=\zeta^{k\chi}$ for some integer $k$ by the orthogonality of characters of irreducible representations; see \cite[Proposition~5.23]{Folland}. In the latter case, setting $\xi=\zeta^k$ we obtain
\[
\int_{\BK^{{\rm arith},\#}_\gamma}\Tr(\rho)\;d\mu_{{\rm Haar},\gamma}^\#\;=\;\int_{\chi^{-1}(n)}\zeta^{k\chi}\;d\mu_{\rm Haar}\;=\;\zeta^{kn}\cdot\mu_{\rm Haar}\bigl(\chi^{-1}(n)\bigr)\;=\;\xi^n,
\]
which proves the claim.

To compute the left hand side of \eqref{EqLRHandSide} we first compute the logarithmic derivative multiplied by $z$ of the $L$-function $L(\rho,z)$ from \eqref{EqLFctOfG} as follows:
\[
z\,\frac{L(\rho,z)'}{L(\rho,z)}\;=\;\sum_{m=1}^{\infty}a_mz^m, \qquad\text{where}\qquad a_m=\sum_{k|m}k\cdot\sum_{x\in U(k)}\Tr\rho(\theta(x)^{m/k})
\]
and where $\theta(x)^{m/k}$ denotes the conjugacy class of $m/k$-th powers of elements of $\theta(x)$.
Indeed, this is a completely standard computation using the identities
\[
z\,\frac{L(\rho,z)'}{L(\rho,z)}=\sum_{ x\in |U|}z\,\frac{(\det(1-\rho(\theta(x))z^{\deg(x)})^{-1})'}{\det(1-\rho(\theta(x))z^{\deg(x)})^{-1}}
\]
and
\[
z\,\frac{(\det(1-\rho(\theta(x))z^{\deg(x)})^{-1})'}{\det(1-\rho(\theta(x))z^{\deg(x)})^{-1}}=\deg(x)\cdot\sum_{j=1}^{\infty}\Tr\rho(\theta(x)^j)\cdot z^{j\deg(x)}.
\]
We estimate $a_m$ further and claim that
$$a_m\;=\;m\cdot\sum_{x\in U(m)}\Tr\rho(\theta(x))+O(q^{m\dim U/2})\;=\;m\cdot\sum_{x\in U(m)}\int_{\BK^{{\rm arith},\#}_\gamma}\Tr(\rho)\;d\delta_{\theta(x)}+O(q^{m\dim U/2}).$$
Namely, we note that $\rho$ is a unitary representation, since $\BK^{\rm arith}$ is compact, and so $|\Tr\rho(\theta)|\leq \dim(\rho)$ for every $\theta\in\BK^{{\rm arith},\#}$. Therefore, using the bound $\#U(k)\leq O(q^{k\dim U}/k)$ from Theorem~\ref{ThmWeilBounds} we estimate
\begin{align*}
\bigl|a_m-m\cdot\sum_{x\in U(m)}\Tr\rho(\theta(x))\bigr| \;&=\;
\bigl|\sum_{\substack{ k|m \\ k<m }}k\cdot\sum_{x\in U(k)}\Tr\rho(\theta(x)^{m/k}))\bigr|\\
&\leq\;\sum_{\substack{ k|m \\ k<m }}\dim(\rho)\cdot k\cdot\#U(k) \;\leq\; \sum_{\substack{k\le m/2 }}\dim(\rho)\cdot O(q^{k\dim U}) \;\leq\; O(q^{m\dim U/2}).
\end{align*}
Note that it suffices to compute the integral of $\Tr(\rho)\;d\delta_{\theta(x)}$ only over $\BK^{{\rm arith},\#}_\gamma$, because that contains the support of $\delta_{\theta(x)}$.

From Lemma~\ref{LemmaClaimOnL-Fcn} we conclude that the residue of the function $z\,\frac{L(\rho,z)'}{L(\rho,z)}$ at $(\xi q^{\dim U})^{-1}$ is $-(\xi q^{\dim U})^{-1}$ if $\rho=\xi^{\chi}$, and zero otherwise. Let $R$ be this residue. To finish the proof we compute $R$ in a different way. The function $z\,\frac{L(\rho,z)'}{L(\rho,z)}+\frac{R\xi q^{\dim U}}{1-z\xi q^{\dim U}}$ has a holomorphic extension to an open neighborhood of $\olD$. Since $\frac{R\xi q^{\dim U}}{1-z\xi q^{\dim U}}=\sum_{m=0}^{\infty}R(\xi q^{\dim U})^{m+1}z^m$ on $D$, we get that $\sum_{m=0}^{\infty}(a_m+R(\xi q^{\dim U})^{m+1})z^m$ converges absolutely on an open disk of center $0$ and of radius bigger than $q^{-\dim U}$. Let $n\in\BZ$ with $[n]=\gamma$ in $\Gamma$. Using that
$\xi^m=\xi^n$ for $m\equiv n \mod d$ we get that
\[
\lim_{\substack{m\to\infty \\ m\equiv n}}\,\Bigl|\frac{a_m}{q^{m\dim U}}+\xi^{n+1}Rq^{\dim U}\Bigr|\;=\;\lim_{\substack{m\to\infty \\ m\equiv n}}\,\bigl|a_m+R(\xi q^{\dim U})^{m+1}\bigr|\cdot q^{-m\dim U}\;=\;0\,.
\]
Together with Theorem~\ref{ThmWeilBounds} we obtain
\[
\lim_{\substack{m\to\infty \\ [m]=\gamma}}\ \frac{1}{\#U(m)}\sum_{x\in U(m)}\int_{\BK^{{\rm arith},\#}_\gamma}\Tr(\rho)\;d\delta_{\theta(x)} \;=\; \lim_{\substack{m\to\infty \\ m\equiv n}}\ \frac{a_m}{q^{m\dim U}} \;=\; -\xi^{n+1}Rq^{\dim U} \;=\; \int_{\BK^{{\rm arith},\#}_\gamma}\Tr(\rho)\;d\mu_{{\rm Haar},\gamma}^\#\,.
\qedhere
\]
\end{proof}

\subsection{Ingredients for Step~4: Real Algebraic Geometry}

\begin{lemma}\label{LemmaPhiNice}
Abbreviate $\BK:=\BK^{\rm arith}$, let $\BT\subset\BK$ be a maximal compact quasi-torus, and let $h\in \BT$. The inclusion $h\BT^h{}\open\into h\BK\open$ defines a map $\phi\colon h\BT^h{}\open \onto (h\BK\open)^\#$, which is surjective by Propositions~\ref{PropMaxCPQuasiTorus}\ref{PropMaxCPQuasiTorus_D} and \ref{compact_conj}\ref{compact_conj_A}.
\begin{enumerate}
\item \label{LemmaPhiNice_A}
The map $\phi$ is a continuous map between compact metrizable spaces. It is nice in the sense of Definition~\ref{DefNiceMap}. 
\item \label{LemmaPhiNice_B}
There is a positive integer $M$ such that all fibers of $\phi$ have cardinality at most $M$.
\item \label{LemmaPhiNice_D} 
For every semi-algebraic subset $X\subset (h\BT^{h}{}\open)$ we have $\dim X=\dim\phi(X)$.
\item \label{LemmaPhiNice_C}
There is a closed semi-algebraic subset $V\subset(h\BK\open)^\#$ with $\dim V<\dim(h\BK\open)^\#$ whose complement is a finite disjoint union $(h\BK\open)^\#\smallsetminus V=\coprod_{i=1}^n Y_i$ of open subsets $Y_i$ such that $\phi$ is trivial over $Y_i$ in the sense that there is a finite discrete set $F_i$ and a semi-algebraic isomorphism $\phi^{-1}(Y_i)\isoto F_i\times Y_i$ compatible with the projections onto $Y_i$.
\end{enumerate}
\end{lemma}

\begin{proof}
\ref{LemmaPhiNice_A} By Lemma~\ref{LemmaQuotientHausdorff} the quotient $(h\BK\open)^\#$ is compact. By Remark~\ref{RemBrumfiel} the quotient map $h\BK\open\to(h\BK\open)^\#$ is continuous and semi-algebraic as the restriction of $\BK\to\BK^\#$ to the connected component $h\BK\open$, and $(h\BK\open)^\#$ is an affine semi-algebraic space. The latter also holds for $h\BT^{h}{}\open$, and hence both are metrizable. Moreover, $\phi$ is continuous and semi-algebraic as the composition of the inclusion $h\BT^{h}{}\open\to h\BK\open$ and the quotient map $h\BK\open\to(h\BK\open)^{\#}$. Moreover, $h\BT^{h}{}\open$ is semi-algebraic and compact, hence complete by \cite[Theorem~9.4]{DeKn81}. Then \cite[Chapter~I, Remark~5.5(v) and \S\,6, Definition~4]{DeKn85} implies that $\phi$ is a finite semi-algebraic map. Therefore, \cite[Chapter~II, Theorem~6.13]{DeKn85} implies that there are semi-algebraic triangulations $\tau_1,\tau_2$ of $h\BT^{h}{}\open$ and $(h\BK\open)^{\#}$, respectively, such that the restriction of $\phi$ onto any simplex of $\tau_1$ is a simplicial map to a simplex of $\tau_2$, up to continuous semi-algebraic isomorphism. Since every finite to one simplicial map between simplices is trivially injective, we get that each such restriction is injective. The claim now follows from Remark~\ref{RemNiceMap}. 

\medskip\noindent
\ref{LemmaPhiNice_B} follows from Proposition~\ref{compact_conj}\ref{compact_conj_B}.

\medskip\noindent
\ref{LemmaPhiNice_C} By Hardt's Local-Triviality-Theorem~\cite[Theorem~6.4]{DeKn82} for the semi-algebraic map $h\BT^{h}{}\open\to(h\BK\open)^\#$ there is a decomposition $(h\BK\open)^\#=\coprod_{i=1}^{\tilde n} \wt Y_i$ into finitely many semi-algebraic subsets $\wt Y_i\subset (h\BK\open)^\#$, such that $\phi$ is trivial over $\wt Y_i$ in the above sense. By \cite[Proposition~8.2(b)]{DeKn81} there is an $n\le\tilde n$ with $\dim(h\BK\open)^\#=\dim\wt Y_1=\ldots=\dim\wt Y_n>\dim\wt Y_i$ for all $i>n$. For $i\le n$ let $Y_i$ be the open interior of $\wt Y_i$, which is a semi-algebraic subset of $(h\BK\open)^\#$. Let further $V:=\bigcup_{i\le n}(\wt Y_i\smallsetminus Y_i)\cup\bigcup_{i>n}\wt Y_i$. Then $\dim V<\dim(h\BK\open)^\#=\dim Y_i$ for every $i\le n$ by \cite[Theorem~8.10]{DeKn81}. Indeed, if $\dim(\wt Y_i\smallsetminus Y_i)$ were equal to $\dim(h\BK\open)^\#$ then $\wt Y_i\smallsetminus Y_i$ would contain a non-empty open subset by loc.\ cit.\ in contradiction to $Y_i$ being the largest open subset of $\wt Y_i$. This proves \ref{LemmaPhiNice_C}.

\medskip\noindent
\ref{LemmaPhiNice_D} The inequality $\dim X\ge\dim\phi(X)$ follows from \cite[Proposition~8.3]{DeKn81}. We next consider the decomposition $(h\BK\open)^\#=\coprod_{i=1}^{\tilde n} \wt Y_i$ from the previous paragraph and the semi-algebraic subsets $\phi(X)\cap\wt Y_i$ of $(h\BK\open)^\#$. Then $X\cap\phi^{-1}(\wt Y_i)\subset \phi^{-1}(\phi(X)\cap\wt Y_i)\cong F_i\times(\phi(X)\cap\wt Y_i)$. Therefore, \cite[Proposition~8.2(b)]{DeKn81} implies
\[
\dim X \;=\; \max\{\dim X\cap\phi^{-1}(\wt Y_i)\colon 1\le i\le \tilde n\} \;\le\; \max\{\dim \phi(X)\cap\wt Y_i\colon 1\le i\le \tilde n\} \;=\; \dim\phi(X) 
\qedhere
\]
\end{proof}

\subsection{Ingredients for Step~5: Convergence of Complex Hypersurfaces}

As usual let $|z|$ denote the absolute value of a complex number $z\in\BC$ and let $d(\,.\,,.\,):\BC^d\to\BR_{\geq0}$ be the Euclidean metric. The following will be applied to $Y=h_2 T^{h_2}{}\open$ with $Y(\BR)=h_2 \BT^{h_2}{}\open$.

\begin{defn}\label{DefMeasureMu}
Let $Y\subset\BA^d_{\BR}$ be a smooth affine scheme over $\BR$, and assume that its set of real points $Y(\BR)\subset\BR^d$ is compact and Zariski-dense in the base change $Y_{\BC}\subset\BA^d_{\BC}$ of $Y$ to $\BC$. For every $\epsilon>0$ and every subset $A\subset Y(\BR)$ let $A(\epsilon)$ denote the open $\epsilon$-neighborhood of $A$ in $Y(\BR)$:
\[
A(\epsilon)\;:=\;\{z\in Y(\BR)\colon \exists\ y\in A\textrm{ such that }d(z,y)<\epsilon\}\;=\;\bigcup_{y\in A}\{z\in Y(\BR)\colon d(z,y)<\epsilon\}\,.
\]
Note that for a subset $A\subset Y(\BR)$ which is closed, and hence compact, the closure $\ol{A(\epsilon)}$ with respect to the metric on $Y(\BR)$ is contained in $A(2\epsilon)$.
We say that a sequence $(A_m)_{m\in\BN}$ of subsets of $Y(\BR)$ \emph{converges} to a subset $A\subseteq Y(\BR)$ if for every $\epsilon>0$ there is an index $m_\epsilon\in\BN$ such that $A_m\subseteq A(\epsilon)$ for every $m\geq m_\epsilon$.
\end{defn}

Let $P_D\subset\BC[x_1,x_2,\ldots,x_d]$ denote the complex vector space of all complex polynomials on $\BC^d$ of total degree at most $D$ and let $\olP_D$ be its image in the coordinate ring $\BC[x_1,x_2,\ldots,x_d]/I(Y_\BC)$ of $Y_\BC$. The images of the monomials $x_1^{i_1}x_2^{i_2}\cdots x_d^{i_d}$ for $\ul i=(i_1,i_2,\ldots,i_d)\in\BN_0^d$ with $i_1+i_2+\cdots+i_d\leq D$ form a generating system of the $\BC$-vector space $\olP_D$. We may shrink this generating system to a basis $\CB$. For every
\[
F(x_1,x_2,\ldots,x_d)\;=\;\sum_{\ul i\in\CB}
c_{\ul i}\cdot x_1^{i_1}x_2^{i_2}\cdots x_d^{i_d}\;\in\;\olP_D\qquad\text{with}\quad c_{\ul i}\in\BC
\]
let $\|F\|\;:=\;\max_{\ul i\in\CB}|c_{\ul i}|$. This is clearly a norm on $\olP_D$. It satisfies the following estimates.

\begin{lemma}\label{simple_estimate1} In the situation of Definition~\ref{DefMeasureMu}, let $M\in\BR$, $M\ge 1$ be such that $d(0,x)\le M$ for all $x\in Y(\BR)$.
\begin{enumerate}
\item \label{simple_estimate1_A}
For every $x,y\in Y(\BR)$ and for every $F\in\olP_D$ we have:
$$|F(x)-F(y)| \;\leq\; \|F\|\cdot d(x,y)\cdot M^{D-1}\cdot D\cdot\#\CB.$$
\item \label{simple_estimate1_B}
For every $x\in Y(\BR)$ and for every $F_1,F_2\in\olP_D$ we have:
$$|F_1(x)-F_2(x)| \;\leq\; \|F_1-F_2\|\cdot M^D\cdot\#\CB.$$
\end{enumerate}
\end{lemma}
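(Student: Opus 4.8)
The statement is an elementary estimate about polynomials of bounded degree, restricted to a compact set $C$, and the natural approach is simply to expand, bound each monomial difference, and collect the constants. Both parts will use that every coordinate $x_i$ of a point $x\in C$ satisfies $|x_i|\le d(0,x)\le M$, so that on $C$ a monomial $x_1^{i_1}\cdots x_n^{i_n}$ with $i_1+\cdots+i_n\le D$ has absolute value at most $M^{i_1+\cdots+i_n}\le M^D$ (using $M\ge 1$). The number of such monomials is at most $(D+1)^n$, since each exponent $i_j$ ranges over $\{0,1,\dots,D\}$.

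For part~\ref{simple_estimate1_B} this is immediate: writing $F_1-F_2=\sum_{i_1+\cdots+i_n\le D} c_{i_1\cdots i_n}x_1^{i_1}\cdots x_n^{i_n}$, one has $|F_1(x)-F_2(x)|=|(F_1-F_2)(x)|\le\sum|c_{i_1\cdots i_n}|\cdot M^D\le\|F_1-F_2\|\cdot(D+1)^n\cdot M^D$, which is the claimed bound. For part~\ref{simple_estimate1_A} I would first reduce to the case of a single monomial $m(x)=x_1^{i_1}\cdots x_n^{i_n}$ with $i_1+\cdots+i_n\le D$ and estimate $|m(x)-m(y)|$ for $x,y\in C$. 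The standard trick is the telescoping identity: writing $m(x)-m(y)$ as a sum over the coordinates, replacing one coordinate at a time, one gets $|m(x)-m(y)|\le\sum_{j}|x_j-y_j|\cdot(\text{product of at most }D-1\text{ factors each of modulus}\le M)\le\sum_j|x_j-y_j|\cdot M^{D-1}\le D\cdot d(x,y)\cdot M^{D-1}$, where in the last step I bound each $|x_j-y_j|\le d(x,y)$ and note there are at most $D$ nonzero-exponent coordinates (more crudely, at most $D$ terms survive, or one can just bound the number of terms by $D$ using $i_1+\cdots+i_n\le D$). Summing over the at most $(D+1)^n$ monomials of $F$, each weighted by its coefficient which is bounded by $\|F\|$, yields $|F(x)-F(y)|\le\|F\|\cdot d(x,y)\cdot M^{D-1}\cdot D\cdot(D+1)^n$, exactly as stated.

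There is no real obstacle here; the only point requiring a little care is getting the telescoping estimate for a monomial to produce the factor $M^{D-1}\cdot D$ rather than something slightly larger, and making sure the crude count of $(D+1)^n$ monomials is used consistently in both parts. I would present part~\ref{simple_estimate1_B} first since it is a one-line computation, then do part~\ref{simple_estimate1_A} via the monomial telescoping lemma, keeping all constants in the explicit form demanded by the statement.
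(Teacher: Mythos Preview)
Your proposal is correct and follows essentially the same route as the paper: bound each monomial difference by a telescoping argument to get the factor $d(x,y)\cdot M^{D-1}\cdot D$, then sum over the at most $(D+1)^n$ monomials weighted by coefficients bounded by $\|F\|$; part~\ref{simple_estimate1_B} is the direct triangle-inequality computation in both. The paper telescopes variable by variable and then factors $x_j^{i_j}-y_j^{i_j}$ to produce $i_j$ terms, summing to $\sum_j i_j\le D$, whereas your sketch seems to telescope one \emph{factor} at a time in the product of $i_1+\cdots+i_n\le D$ variables; both yield the same bound, and the only thing to tidy in your writeup is making explicit which of the two telescopings you mean so that the factor $D$ is unambiguously justified.
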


\begin{proof} \ref{simple_estimate1_A}
Write $x=(x_1,x_2,\ldots,x_d)$ and $y=(y_1,y_2,\ldots,y_d)$. Then
\[
|x_i-y_i| \;\leq\; d(x,y)\qquad\text{and}\qquad |x_i|,|y_i| \;\le\; M \quad(\forall i=1,2,\ldots,d).
\]
In particular for every multi-index $\ul i=(i_1,i_2,\ldots,i_d)\in\CB$ by telescoping we get:
\begin{eqnarray}\nonumber
|x_1^{i_1}x_2^{i_2}\cdots x_d^{i_d}-y_1^{i_1}y_2^{i_2}\cdots y_d^{i_d}| &=&
|(x_1^{i_1}x_2^{i_2}\cdots x_d^{i_d}-y_1^{i_1}x_2^{i_2}\cdots x_d^{i_d})+
(y_1^{i_1}x_2^{i_2}\cdots x_d^{i_d}-y_1^{i_1}y_2^{i_2}x_3^{i_3}\cdots x_d^{i_d})+\cdots|\\ \nonumber
&\leq &
\sum_{j=1}^d|x_j^{i_j}-y_j^{i_j}|\cdot|y_1^{i_1}\cdots y_{j-1}^{i_{j-1}}\cdot x_{j+1}^{i_{j+1}}\cdots x_d^{i_d}|
\\ 
\nonumber
&\leq &
\sum_{j=1}^d|x_j-y_j|\cdot|x_j^{i_j-1}+x_j^{i_j-2}y_j+\cdots+y_j^{i_j-1}|\cdot M^{i_1+\ldots+i_{j-1}+i_{j+1}+\ldots+i_d}
\\ \nonumber
&\leq & d(x,y)\cdot M^{D-1}\cdot
\sum_{j=1}^di_j \\ \nonumber
&\leq & d(x,y)\cdot M^{D-1}\cdot D.
\end{eqnarray}
Write $\displaystyle F(t_1,t_2,\ldots,t_d) \;=\;\sum_{\ul i\in\CB} c_{\ul i}\cdot t_1^{i_1}t_2^{i_2}\cdots t_d^{i_d}.$ By the above we compute
\begin{eqnarray}\nonumber
|F(x)-F(y)| &=& |\sum_{\ul i\in\CB}c_{\ul i}\cdot (x_1^{i_1}x_2^{i_2}\cdots x_d^{i_d}-y_1^{i_1}y_2^{i_2}\cdots y_d^{i_d})|
\\ \nonumber
&\leq &
\sum_{\ul i\in\CB}|c_{\ul i}|\cdot 
d(x,y)\cdot M^{D-1}\cdot D
\\ \nonumber
&\leq &
\|F\|\cdot d(x,y)\cdot M^{D-1}\cdot D\cdot\#\CB.
\end{eqnarray}

\medskip\noindent
\ref{simple_estimate1_B}
Writing $F_1(t_1,\ldots,t_d)=\sum_{\ul i\in\CB}b_{\ul i}\cdot t_1^{i_1}\cdots t_d^{i_d}$ and $F_2(t_1,\ldots,t_d)=\sum_{\ul i\in\CB}c_{\ul i}\cdot t_1^{i_1}\cdots t_d^{i_d}$ we compute
\[
|F_1(x)-F_2(x)| \;\le\; \sum_{\ul i\in\CB}|b_{\ul i} - c_{\ul i}|\cdot |x_1|^{i_1}|x_2|^{i_2}\cdots |x_d|^{i_d} \;\le\; \sum_{\ul i\in\CB}\|F_1-F_2\|\cdot M^D \;\le\; \|F_1-F_2\|\cdot M^D\cdot\#\CB. 
\qedhere
\]
\end{proof}

\begin{thm}\label{new_oesterle1}
In the situation of Definition~\ref{DefMeasureMu}, let $D\in\BN$, and let $(X_m)_{m\in\BN}$ be a sequence of algebraic hypersurfaces of $\BC^d$ of degree at most $D$ such that the intersection $X_m\cap Y_{\BC}$ is a proper hypersurface in
$Y_{\BC}$ for every $m$. Then there is a subsequence $(X_{m_n})_{n\in\BN}$ and an algebraic hypersurface $X\subset\BC^d$ of degree at most $D$ with $X\cap Y_\BC\subsetneq Y_\BC$, such that the sequence $\bigl(X_{m_n}\cap Y(\BR)\bigr)_{n\in\BN}$ converges to $X\cap Y(\BR)$.
\end{thm}

\begin{proof} For every $m\in\BN$ let $F_m\in\olP_D$ be a polynomial such that $\|F_m\|=1$ and the zero set of $F_m$ in $Y_\BC$ is $X_m\cap Y_\BC$. Because the unit sphere $\{F\in\olP_D\colon\|F\|= 1\}$ is compact, there is a subsequence $(F_{m_n})_{n\in\BN}$ converging to an element $F\in\olP_D$ with respect to the norm $\|\,.\,\|$ such that $\|F\|=1$. We may even assume this subsequence is the full sequence after re-indexing.

We claim that in this case $\bigl(X_m\cap Y(\BR)\bigr)_{m\in\BN}$ converges to $X\cap Y(\BR)$, where $X\cap Y_\BC$ is the zero set of $F$. Assume that this is false. Then there is a small $\epsilon>0$ such that, after taking a suitable subsequence,  $X_m\cap Y(\BR)$ does not lie in $\bigl(X\cap Y(\BR)\bigr)(\epsilon)$ for every $m\in\BN$. Choose an $x_m\in \bigl(X_m\cap Y(\BR)\bigr)\smallsetminus\bigl(X\cap Y(\BR)\bigr)(\epsilon)$ for every $m$. Since the set $Y(\BR)\smallsetminus\bigl(X\cap Y(\BR)\bigr)(\epsilon)$ is closed in $Y(\BR)$, it is compact, so we may assume, after taking a suitable subsequence, that $x_m$ converges to a point $x\in Y(\BR)\smallsetminus \bigl(X\cap Y(\BR)\bigr)(\epsilon)$. Note that
\begin{align*}
|F_m(x)|=|F_m(x)-F_m(x_m)| \; \leq \enspace & M^{D-1}\cdot D\cdot \#\CB\cdot d(x,x_m) \qquad\text{and} \\[2mm]
|F(x)-F_m(x)|\;\le\enspace & M^D\cdot\#\CB\cdot\|F-F_m\|
\end{align*}
by Lemma~\ref{simple_estimate1}. Therefore,
$$F(x)=\lim_{m\to\infty}F_m(x)+\lim_{m\to\infty}\bigl(F(x)-F_m(x)\bigr)=0,$$
and $x\in X\cap Y(\BR)$. But this is a contradiction and so our claim is proven. To finish the proof of Theorem~\ref{new_oesterle1} we note that $F\ne0$ in $\olP_D$, because $\|F\|=1$, and so $X\cap Y_\BC$ is a proper hypersurface in $Y_\BC$.
\end{proof}

To prove Theorem~\ref{ThmOverconvIntro} we will also need the following

\begin{prop}\label{simple_estimate2} 
Keep the notation of Definition~\ref{DefMeasureMu}. Let $\lambda$ be a measure on $Y(\BR)$ satisfying $\lambda(Y(\BR))<\infty$. Then for every algebraic hypersurface $X\subset\BC^d$ with $X\cap Y(\BR)\subsetneq Y(\BR)$ and $\lambda\bigl(X\cap Y(\BR)\bigr)=0$ we have:
$$\lim_{\epsilon\to0}\lambda\bigl(\bigl(X\cap Y(\BR)\bigr)(\epsilon)\bigr) \;=\; 0.$$
\end{prop}

\begin{proof}
Assume that the claim is false. Then there is a strictly decreasing sequence $\epsilon_1,\epsilon_2,\ldots,\epsilon_n,\ldots$ such that $\epsilon_n\to0$, and $\lambda\bigl((X\cap Y(\BR))(\epsilon_n)\bigr) \;\geq\; \delta$, for some positive $\delta$. Set $A_n=\bigl(X\cap Y(\BR)\bigr)(\epsilon_n)-\bigl(X\cap Y(\BR)\bigr)(\epsilon_{n+1})$. Then by $\sigma$-additivity applied to $\bigl(X\cap Y(\BR)\bigr)(\epsilon_n)=\bigl(X\cap Y(\BR)\bigr)\cup\coprod_{m\geq n}A_m$ we have
\begin{equation}\label{Eq10.1}
\delta\;\le\; \lambda\bigl(\bigl(X\cap Y(\BR)\bigr)(\epsilon_n)\bigr)\;=\;\lambda\bigl(X\cap Y(\BR)\bigr)+\sum_{m\geq n}\lambda(A_m)\;=\;\sum_{m\geq n}\lambda(A_m)\,.
\end{equation}
Since $\lambda\bigl(\bigl(X\cap Y(\BR)\bigr)(\epsilon_1)\bigr)\le\lambda(Y(\BR))<\infty$ the sum $\sum_{m=1}^{\infty}\lambda(A_m)$ is convergent. Therefore, $\lim_{n\to\infty}\sum_{m\geq n}\lambda(A_m)=0$, and by taking the limit as $n\to\infty$ in (\ref{Eq10.1}) we get a contradiction.
\end{proof}

\subsection{Proof of Theorem~\ref{ThmOverconvIntro}}\label{SubsectProofOf1.7}

Finally we are ready to give the

\begin{proof}[Proof of Theorem~\ref{ThmOverconvIntro}] \refstepcounter{proofnumber}\label{ProofOfThmOverconvIntro}

\noindent
{\bfseries Step~1.} \ We use the reduction technique~\ref{TechniqueTwist} from page~\pageref{TechniqueTwist}. Let $\CF_i$ be the simple summands of $\CF$. Fix an $i$ and look at the element $\alpha_i:=\Frob^\dagger_\BasePoint(\det\CF_i):=\BasePoint^*\det F_{\CF_i}^\BasePtDeg$ in the (abelian) monodromy group $\Gr^\dagger(\det\CF_i)\subset \olK\mal$, where $\BasePoint\in U(\BF_{q^\BasePtDeg})$ is our fixed base point. We take an $(r_i\BasePtDeg)$-th root $\wt\alpha_i\in\olK$ of $\alpha_i$ where $r_i$ is the rank of $\CF_i$. We let $\CC_i$ be the constant, hence overconvergent $F$-isocrystal on $U$ induced from the $F$-isocrystal $(K,F=\wt\alpha_i^{-1})\in\FIsoc_\olK(\Spec\BF_q)$. Set $\CU_i:=\CF_i\otimes\CC_i$. Then $\CU_i$ is irreducible, because $\CF_i$ is. Moreover, the conjugacy class $\Frob^\dagger_\BasePoint(\det\CU_i)$ of $\det\CU_i=\det(\CF_i\otimes \CC_i)=(\det\CF_i)\otimes \CC_i^{\otimes r_i}$ is equal to $\{1\}\subset\Gr^\dagger(\det\CU_i)\subset \olK\mal$. In particular, $\det\CU_i$ is unit-root at $\BasePoint$. Since it has rank one, it is unit-root on the entire scheme $U$. By Lemma~\ref{LemmaCompatibility} also the tensor generator $\CE_i$ of $\dal\det\CU_i\dar_{const}$ has $\Frob^\dagger_\BasePoint(\CE_i)=\{1\}$ and so $\Gr^\dagger(\CE_i)=\mathbf{W}(\det\CU_i)$ is a finite group by Theorem~\ref{ThmMonodrOfConstant}\ref{ThmMonodrOfConstant_B} and \ref{ThmMonodrOfConstant_C}. Then $\CU_i$ is $\iota$-pure of weight zero by T.~Abe's and L.~Lafforgue's results, see Corollary~\ref{langlands_implies_mixedness}. By the reduction technique~\ref{TechniqueTwist} from page~\pageref{TechniqueTwist} we may assume that $\CF=\CU\oplus\CC$, where $\CU=\bigoplus_i\CU_i$ is a semi-simple and $\iota$-pure overconvergent $F$-isocrystals of weight zero and $\CC=\bigoplus_i\CC_i$ is the direct sum of irreducible constant $F$-isocrystals of varying weights.

Let $Z_1$ be the center of $\Gr^{\dagger}(\CU)\open$. We use Proposition~\ref{PropRedGpAlmostProduct} and let $\CS,\CT\in\dal\CU\dar$ be the overconvergent $F$-isocrystals whose monodromy groups are $\Gr^\dagger(\CS)=\Gr^\dagger(\CU)/Z_1$ and $\Gr^\dagger(\CT)=\Gr^\dagger(\CU)/[\Gr^\dagger(\CU)\open,\Gr^\dagger(\CU)\open]$. Unit-root, $\iota$-pure overconvergent $F$-isocrystals of weight zero form a full Tannakian sub-category, and both $\CS$ and $\CT$ belong to this sub-category. Let $\CE$ be a tensor generator of $\dal\CT\dar_{const}$. It is a constant $F$-isocrystal whose monodromy group is $\Gr^{\dagger}(\CE)=\mathbf{W}(\CT)=\Gr^{\dagger}(\CT)/\DGal^\dagger(\CT)$ by Proposition~\ref{PropCrewsGroup}. There is a commutative diagram:
\[
\xymatrix @R-1pc
{\Gr^{\dagger}(\CU\oplus\CC)
\ar@{^{ (}->}[r]\ar@{->>}[d] &
\Gr^{\dagger}(\CU)\TimesBC\Gr^{\dagger}(\CC)\ar@{->>}[d] \\
\Gr^{\dagger}(\CS\oplus\CT\oplus\CC)\ar@{^{ (}->}[r]\ar@{->>}[d] &
\Gr^{\dagger}(\CS\oplus\CT)\TimesBC\Gr^{\dagger}(\CC)\ar@{^{ (}->}[r] &
\Gr^{\dagger}(\CS)\TimesBC\Gr^{\dagger}(\CT)\TimesBC
\Gr^{\dagger}(\CC) \ar@{->>}[d]\\
\Gr^{\dagger}(\CS\oplus\CE\oplus\CC)\ar@{^{ (}->}[rr] & &
\Gr^{\dagger}(\CS)\TimesBC\Gr^{\dagger}(\CE)\TimesBC
\Gr^{\dagger}(\CC).}
\]
By Proposition~\ref{PropRedGpAlmostProduct} the group $\Gr^{\dagger}(\CS)\open$ is semi-simple with trivial center, $\Gr^{\dagger}(\CT)\open$ is a torus, and the vertical map in the middle has finite kernel. The vertical map on the right has kernel $\DGal^\dagger(\CT)$ which is a finite group by \cite[Lemma~6.1]{Abe11}. Since the horizontal maps are injective by Proposition~\ref{PropGroupOfSum}\ref{PropGroupOfSum_B}, we get that the two vertical maps on the left have finite kernel, too. By Lemma~\ref{Lemma1.10Converse}\ref{Lemma1.10Converse_C} and Proposition~\ref{reduction3->2} it is enough to show that $\CS\oplus\CE\oplus\CC$ satisfies Conjecture~\ref{MainConj3}.

Note that $\CE\oplus\CC$ is the direct sum of constant $\iota$-pure $F$-isocrystals of varying weights. By slight abuse of notation writing $\CU$ for $\CS$ and $\CC$ for $\CE\oplus\CC$ we may assume without loss of generality that $\CF=\CU\oplus\CC$, where $\CU$ is a semi-simple and $\iota$-pure overconvergent $F$-isocrystals of weight zero such that $\Gr^{\dagger}(\CU)\open$ is semi-simple with trivial center, and $\CC$ is the direct sum of constant $\iota$-pure $F$-isocrystals of varying weights. In this case $\Gr^{\dagger}(\CF)$ is the fiber product of $\Gr^{\dagger}(\CU)$ and $\Gr^{\dagger}(\CC)$ over $\Gr^{\dagger}(\dal\CU\dar\cap\dal\CC\dar)$ by Proposition~\ref{PropGroupOfSum}\ref{PropGroupOfSum_B}. The group $\Gr^{\dagger}(\CC)$ is commutative by Theorem~\ref{ThmMonodrOfConstant}\ref{ThmMonodrOfConstant_B}, and hence its quotient $\Gr^{\dagger}(\dal\CU\dar\cap\dal\CC\dar)\open$ is commutative, too. Since $\Gr^{\dagger}(\dal\CU\dar\cap\dal\CC\dar)\open$ is also a quotient of the semi-simple group $\Gr^{\dagger}(\CU)\open$, which has no commutative quotients by \cite[IV.14.2~Proposition]{Borel91}, we conclude that $\Gr^{\dagger}(\dal\CU\dar\cap\dal\CC\dar)\open$ is trivial. So $\Gr^{\dagger}(\CF)\open$ is actually the direct product of $\Gr^{\dagger}(\CU)\open$ and $\Gr^{\dagger}(\CC)\open$.

\medskip\noindent
{\bfseries Step~2.} \ We now consider the base change of these groups to $\BC$ via $\iota$
\[
G_1\;:=\;\Gr^{\dagger}(\CF)\times_{\olK,\iota}\BC\,,\qquad  G_2\;:=\;\Gr^{\dagger}(\CU)\times_{\olK,\iota}\BC\,,\qquad G_3\;:=\;\Gr^{\dagger}(\CC)\times_{\olK,\iota}\BC\,.
\]
Let $T_1\subset G_1$ be a maximal quasi-torus. It is the fiber product of two maximal quasi-tori $T_2\subset  G_2$ and $T_3\subset G_3$ with $T_1\open=T_2\open\TimesBC T_3\open$ by Remark~\ref{handy}.  Note that actually $T_3= G_3$, because $ G_3\open$ is a torus. Let $\BT_1$ be a maximal compact quasi-torus in $T_1$, see Definition~\ref{DefMaxCompQT} and let $\BT_j$ for $j=2,3$ be the image of $\BT_1$ under the projections $T_1\onto T_j$. Then $\BT_j$ is a maximal compact quasi-torus in $T_j$ and $G_j$ by Corollary~\ref{Cor6.10compact}. 

Let $S\subset |U|$ be a subset of positive upper Dirichlet density $\ol\delta(S)>0$. For every connected component $h_1G_1\open$ of $G_1$ we consider the subset $S(h_1)$ of those $x\in S$ for which $h_1G_1\open$ contains a point of $\Frob^\dagger_x(\CF)$. Then $S$ is the finite union of the subsets $S(h_1)$. By Lemma~\ref{sub_additivity} at least one of them has positive upper Dirichlet density. We replace $S$ by this subset and then consider the connected component $h_1G_1\open$ of $G_1$ which meets $\Frob^\dagger_x(\CF)$ for every $x\in S$. By Theorem~\ref{ThmQuasiTorusReductive} and Theorem~\ref{ThmHoch2}\ref{ThmHoch2_A} we may assume that $h_1\in \BT_1$. For $j=2,3$ let $h_j\in \BT_j$ be the image of $h_1$ under the projection $T_1\onto T_j$. Then $T_3^{h_3}:=\{t_3\in T_3\open\colon t_3h_3=h_3t_3\}=T_3\open$ using Notation~\ref{Notation9.4NEW}, because $T_3=G_3$ is commutative. Therefore, $T_1^{h_1}:=\{t_1=(t_2,t_3)\in T_1\open=T_2\open\TimesBC T_3\open\colon t_1h_1=h_1t_1\}=T_2^{h_2}\TimesBC T_3\open$ and $h_1T_1^{h_1}{}\open=h_2T_2^{h_2}{}\open\TimesBC h_3T_3\open$. Moreover, $\BT_j^{h_j}=T_j^{h_j}\cap\BT_j\open$ is a maximal compact subgroup in $T_j^{h_j}$ for $j=1,2,3$ by Lemma~\ref{no_prob}. 

We consider the semi-simplification $\Frob^{\dagger}_x(\CF)^{ss}$ of the Frobenius conjugacy class $\Frob^{\dagger}_x(\CF)$ and similarly for the $F$-isocrystal $\CU$. In order to show that $h_1G_1\open\cap\bigcup_{x\in S}\Frob^\dagger_x(\CF)$ is dense in $h_1G_1\open$, it is by Lemma~\ref{useful}\ref{useful_B} and Proposition~\ref{Prop8.8} enough to show that $h_1T_1^{h_1}{}\open\cap\bigcup_{x\in S}\Frob^\dagger_x(\CF)^{ss}$ is dense in $h_1T_1^{h_1}{}\open$. Also note that $h_1T_1^{h_1}{}\open\cap\Frob^\dagger_x(\CF)^{ss}\ne\emptyset$ for every $x\in S$ by our choice of $h_1$ and by Theorem~\ref{ThmQuasiTorusReductive} and Proposition~\ref{comment27NEW}\ref{comment27NEW_B}.

As in Notation~\ref{Notation13.14} we now consider maximal compact subgroups $\wt\BK_2^{\rm arith}$ of $G_2$ and $\wt\BK_2^{\rm geo}$ of $G_2^{\rm geo}:=\DGal^\dagger(\CU)\times_{\olK,\iota}\BC$ such that $\wt\BK_2^{\rm arith}$ contains $\wt\BK_2^{\rm geo}$. By Theorem~\ref{ThmHoch2} there is an element $e\in G_2\open$ such that $\BK_2^{\rm arith}:=e\,\wt\BK_2^{\rm arith}e^{-1}$ contains $\BT_2$ and is a maximal compact subgroup of $G_2$. Since $G_2^{\rm geo}$ is normal in $G_2$ conjugation by $e$ is an automorphism of $G_2^{\rm geo}$, and hence $\BK_2^{\rm geo}:=e\,\wt\BK_2^{\rm geo}e^{-1}$ is a maximal compact subgroup of $G_2^{\rm geo}$ contained in $\BK_2^{\rm arith}$. To lighten the notation we drop the superscript ``arith'' and just write $\BK_2:=\BK_2^{\rm arith}$. We let $\gamma\in \Gamma$ be the image of the element $h_2\in\BT_2$, and we denote by $\BK_{2,\gamma}$ the preimage of $\gamma$ in $\BK_2$. It is a union of connected components containing $h_2\BK_2\open$. Note that by Lemma~\ref{deligne} two elements of $\BK_2$ which are conjugate under $G_2$ are already conjugate under $\BK_2$. We denote the set of conjugacy classes of $\BK_2$ by $\BK_2^\#$, the ones which meet $\BK_{2,\gamma}$ by $\BK_{2,\gamma}^\#$, and the ones which meet $h_2\BK_2\open$ by $(h_2\BK_2\open)^\#$. We equip these sets with the quotient topology. Then $(h_2\BK_2\open)^\#$ is a connected component of $\BK_{2,\gamma}^\#$. 
We consider the following diagram
\begin{equation}\label{EqDiagConjClass}
\xymatrix @C=3pc {
 & h_2\BK_2\open \ar@{->>}[d]^\psi \ar@{^{ (}->}[r] & \BK_{2,\gamma} \ar@{->>}[d]^\psi \\
h_2\BT_2^{h_2}{}\open \ar@{->>}[r]^\phi & (h_2\BK_2\open)^\# \ar@{^{ (}->}[r] & \BK_{2,\gamma}^\#\;.\!\!\!
}
\end{equation}
In this diagram the map $\psi$ is surjective by construction, and $\phi$ is surjective by Propositions~\ref{PropMaxCPQuasiTorus}\ref{PropMaxCPQuasiTorus_D} and \ref{compact_conj}\ref{compact_conj_A}. In particular, it follows from this and Definition~\ref{Def_theta(x)} that $\theta(x)\in(h_2\BK_2\open)^\#$, and hence
\begin{equation}\label{EqIntersectNonEmpty}
h_2\BT_2^{h_2}{}\open\cap\Frob^\dagger_x(\CU)^{ss} \;=\; \phi^{-1}(\theta(x))\;\ne\; \emptyset \qquad\text{for every }x\in S\,.
\end{equation}

\noindent
{\bfseries Step~3.} \ On the sets in Diagram~\eqref{EqDiagConjClass} we consider various measures: On $\BK_{2,\gamma}$ and $h_2\BK_2\open$ the (restriction of the) Haar measure $\mu_{{\rm Haar},\gamma}$ from Definition~\ref{DerMeasuresOnConjCl} and on $\BK_{2,\gamma}^\#$ and $(h_2\BK_2\open)^\#$ its push-forward $\mu_{{\rm Haar},\gamma}^\#$. Moreover, on $\BK_{2,\gamma}^\#$ and $(h_2\BK_2\open)^\#$ we consider the (restriction of the) measures $\mu_m$ from Definition~\ref{DefMeasure_d_mu_m}. By the Equidistribution Theorem~\ref{ThmEquiDistr}, when $m\to\infty$ the measures $\mu_m$, for which $[m]=\gamma\in\Gamma$ in \eqref{EqClassInGamma}, converge weakly to $\mu_{{\rm Haar},\gamma}^\#$ on $\BK_{2,\gamma}^\#$ and on $(h_2\BK_2\open)^\#$.

\medskip\noindent
{\itshape Claim.} Every closed semi-algebraic subset $Y\subset(h_2\BK_2\open)^\#$ with $\dim Y<\dim(h_2\BK_2\open)^\#$ has volume $\mu_{{\rm Haar},\gamma}^\#(Y)=0$.

\medskip\noindent
Proof of the Claim. By Lemma~\ref{LemmaPhiNice}\ref{LemmaPhiNice_D} the semi-algebraic set $\phi^{-1}(Y)$ satisfies $\dim\phi^{-1}(Y)<\dim h_2\BT_2^{h_2}{}\open$. By definition this means that the Zariski-closure $\ol{\phi^{-1}(Y)}$ in $h_2T_2^{h_2}{}\open$ is strictly contained in $h_2T_2^{h_2}{}\open$. Let ${}^{G_2}\phi^{-1}(Y)$ be the union of the $G_2$-conjugacy classes of the elements of $\phi^{-1}(Y)\subset h_2G_2\open$. The preimage $\psi^{-1}(Y)$ equals the intersection of ${}^{G_2}\phi^{-1}(Y)$ with $h_2\BK_2\open$. If $\psi^{-1}(Y)$ was dense in $h_2G_2\open$, then ${}^{G_2}\phi^{-1}(Y)$ would be dense in $h_2G_2\open$, too. This would imply by Lemma~\ref{useful}\ref{useful_B} and Proposition~\ref{Prop8.8} that ${}^{G_2}\phi^{-1}(Y)\cap h_2T_2^{h_2}{}\open$ is dense in $h_2T_2^{h_2}{}\open$. In the notation of Proposition~\ref{PropComment27NEW} we have ${}^{G_2}\phi^{-1}(Y)\cap h_2T_2^{h_2}{}\open\subset \bigcup_{w\in \Omega, z\in \Delta}zw\phi^{-1}(Y)w^{-1}\subset h_2T_2^{h_2}{}\open$. This is a finite union. Since $h_2T_2^{h_2}{}\open$ is irreducible, already one component $zw\phi^{-1}(Y)w^{-1}$ must be dense in $h_2T_2^{h_2}{}\open$ for certain $w$ and $z$. But then $\phi^{-1}(Y)$ would be dense in $w^{-1}z^{-1}(h_2T_2^{h_2}{}\open) w=h_2T_2^{h_2}{}\open$ which yields a contradiction. Therefore, $\ol{\psi^{-1}(Y)}$ must be contained in a proper hypersurface $W\subsetneq h_2G_2\open$ and then $\psi^{-1}(Y)\subset W\cap h_2\BK_2\open\subsetneq h_2\BK_2\open$, the latter being a strict inclusion by Proposition~\ref{PropRealFormOfSemiSimple}. Since $W$ is defined by a polynomial equation in the coordinates of $h_2G_2\open$ and $\mu_{{\rm Haar},\gamma}$ is absolutely continuous with respect to the Lebesgue measure on the charts of the differentiable manifold $h_2\BK_2\open$, we conclude $0=\mu_{{\rm Haar},\gamma}\bigl(\psi^{-1}(Y)\bigr)=\mu_{{\rm Haar},\gamma}^\#(Y)$. This proves the claim.

\medskip\noindent
{\bfseries Step~4.} \ Since $\phi$ is nice in the sense of Definition~\ref{DefNiceMap} by Lemma~\ref{LemmaPhiNice}, we can pull back measures along $\phi$, see Definition~\ref{DefPullbackMeasure}. By Lemma~\ref{LemmaPhiNice} and Proposition~\ref{PropWeakConvOfPullBack} the pullback measures $\phi^*\mu_m$, for which the class $[m]$ of $m$ in $\Gamma$ equals $\gamma$, converge weakly to the measure $\lambda:=\phi^*\mu_{{\rm Haar},\gamma}^\#$ on $h_2\BT_2^{h_2}{}\open$ when $m\to\infty$. 

\medskip\noindent
{\itshape Claim.} The pull-back measure $\lambda:=\phi^*\mu_{{\rm Haar},\gamma}^\#$ satisfies $\lambda(h_2\BT_2^{h_2}{}\open)<\infty$ and $\lambda(X\cap h_2\BT_2^{h_2}{}\open)=0$ for every proper hypersurface $X\subsetneq h_2T_2^{h_2}{}\open$.

\medskip\noindent
Proof of the Claim. By Lemma~\ref{LemmaPhiNice}\ref{LemmaPhiNice_B} the cardinality of every fiber $\phi^{-1}(y)$ is at most $M$. Therefore,
\[
\lambda(h_2\BT_2^{h_2}{}\open)\;:=\;\int_{(h_2\BK_2\open)^\#}\#\phi^{-1}(y)\;d\mu_{{\rm Haar},\gamma}^\#(y)\;\le\;M\cdot \mu_{{\rm Haar},\gamma}^\#\bigl((h_2\BK_2\open)^\#\bigr)\;:=\;M\cdot \mu_{{\rm Haar},\gamma}(h_2\BK_2\open)\;\le\;M
\]
by Definition~\ref{DerMeasuresOnConjCl} of the measure $\mu_{{\rm Haar},\gamma}$.

Next let $X$ be as in the second statement. By \cite[\S\,8, Definitions~1 and 2]{DeKn81} the dimension of $X\cap h_2\BT_2^{h_2}{}\open$ is the dimension of its Zariski-closure in $h_2T_2^{h_2}{}\open$. Therefore, $\dim(X\cap h_2\BT_2^{h_2}{}\open)\le \dim X<\dim h_2T_2^{h_2}{}\open=\dim h_2\BT_2^{h_2}{}\open=\dim (h_2\BK_2\open)^\#$, because $h_2T_2^{h_2}{}\open$ is irreducible. Here the second-to-last equality follows from \cite[Proposition~8.6]{DeKn81} by considering a real structure on $T_2^{h_2}{}\open$ with $\BT_2^{h_2}{}\open= T_2^{h_2}{}\open(\BR)$ as in Proposition~\ref{PropRealFormOfSemiSimple}. The last equality follows from Lemma~\ref{LemmaPhiNice}\ref{LemmaPhiNice_D}. By \cite[Proposition~8.3]{DeKn81} we have 
\[
\dim \phi(X\cap h_2\BT_2^{h_2}{}\open)\;\le\;\dim(X\cap h_2\BT_2^{h_2}{}\open)\;<\;\dim (h_2\BK_2\open)^\#\,,
\]
and hence the claim in Step~3 implies $\mu_{{\rm Haar},\gamma}^\#\bigr(\phi(X\cap h_2\BT_2^{h_2}{}\open)\bigl)=0$. Now for a point $y\in (h_2\BK_2\open)^\#$ the cardinality of $\phi^{-1}(y)\cap X\cap h_2\BT_2^{h_2}{}\open$ is zero if $y\notin\phi(X\cap h_2\BT_2^{h_2}{}\open)$ and otherwise at most $M$ by Lemma~\ref{LemmaPhiNice}\ref{LemmaPhiNice_B}. Thus we compute
\[
\lambda(X\cap h_2\BT_2^{h_2}{}\open)\;:=\;\int_{(h_2\BK_2\open)^\#}\#\bigl(\phi^{-1}(y)\cap X\cap h_2\BT_2^{h_2}{}\open\bigr)\;d\mu_{{\rm Haar},\gamma}^\#(y)\;\le\;M\cdot \mu_{{\rm Haar},\gamma}^\#\bigr(\phi(X\cap h_2\BT_2^{h_2}{}\open)\bigl)\;=\;0
\]
as desired. This proves the claim.

\medskip\noindent
{\bfseries Step~5.} \ The pullback measure $\phi^*\mu_m$ on $h_2\BT_2^{h_2}{}\open$ has the following description. Let $A\subset h_2\BT_2^{h_2}{}\open$ be a Borel-subset. Then 
\begin{eqnarray*}
\phi^*\mu_m(A) \; := \; \frac{1}{\#U(m)}\sum_{x\in U(m)}(\phi^*\delta_{\theta(x)})(A) \; = \; \frac{1}{\#U(m)}\sum_{x\in U(m)} \#(A\cap\Frob_x(\CU)^{ss})
\end{eqnarray*}
by Definition~\ref{DefPullbackMeasure}. Now let $S_m:=S\cap U(m)$. Then Lemma~\ref{Lemma4.3} gives us an infinite subset $R\subset\BN$ such that 
\[
\frac{\#S_m}{\#U(m)} \;\ge\; \frac{\ol\delta(S)}{2} \quad\text{for every }m\in R\,.
\]
In particular, for every $m\in R$ we have $S_m\ne\emptyset$, and $h_2\BT_2^{h_2}{}\open\cap\Frob^\dagger_x(\CU)^{ss}\ne\emptyset$ for every $x\in S_m$ by \eqref{EqIntersectNonEmpty}. Thus the image $[m]$ of $m$ in $\Gamma=\mathbf{W}(\CU)(\BC)$ under the map \eqref{EqClassInGamma} coincides with the image of $h_2$ which we called $\gamma$.

Consider closed immersions $h_2T_2^{h_2}{}\open\hookrightarrow\BC^{d_2}$ and $h_3T_3\open\hookrightarrow\BC^{d_3}$. It will be sufficient to prove the following

\medskip\noindent
{\itshape Claim.} No proper hypersurface $W\subset \BC^{d_2+d_3}$ with $W\cap h_1T_1^{h_1}{}\open\subsetneq h_1T_1^{h_1}{}\open=h_2T_2^{h_2}{}\open\TimesBC h_3T_3\open\subset \BC^{d_2}\TimesBC\BC^{d_3}$ contains $h_1T_1^{h_1}{}\open\cap\bigcup_{x\in S}\Frob^{\dagger}_x(\CF)^{ss}$.

\medskip\noindent
To prove the claim, assume the contrary and let $W$ be such a counterexample. Let $f\in G_3$ be the Frobenius of the constant $F$-isocrystal $\CC$. Then $f^m=\Frob^\dagger_x(\CC)=\Frob^\dagger_x(\CC)^{ss}\in h_3T_3\open$ for every $x\in S_m$. Let $D$ be the degree of $W$ in the variables of the first factor $\BC^{d_2}$ and for every $m\in R$ set
\[
X_m=\{z\in\BC^{d_2}\colon (z,f^m)\in W\}\,.
\]
Then $\Frob^\dagger_x(\CF)^{ss}=\Frob^\dagger_x(\CU)^{ss}\times\{f^{\deg(x)}\}$ implies that $h_2T_2^{h_2}{}\open\cap\Frob^\dagger_x(\CU)^{ss}\subset X_m$ for every $x\in S_m$. Each $X_m$ is a hypersurface in $\BC^{d_2}$ of degree $\le D$ such that $X_m\cap h_2T_2^{h_2}{}\open$ is properly contained in $h_2T_2^{h_2}{}\open$ for all but finitely many $m$ by Corollary~\ref{Cor4.2} (applied with $L=\BC, T=T_3, \Gamma=f^\BZ, G=G_1, T^c=h_3T_3\open$). So by shrinking $R$ we may assume that $X_m\cap h_2T_2^{h_2}{}\open$ is properly contained in $h_2T_2^{h_2}{}\open$ for every $m\in R$. By considering a real structure on $T_2^{h_2}$ with $h_2\BT_2^{h_2}{}\open =h_2T_2^{h_2}{}\open(\BR)$ as in Proposition~\ref{PropRealFormOfSemiSimple}, applying Theorem~\ref{new_oesterle1} to $Y=h_2 T_2^{h_2}{}\open$, and shrinking $R$ further we may even assume that there is a proper hypersurface $X\subsetneq h_2T_2^{h_2}{}\open\subset \BC^{d_2}$ of degree at most $D$ such that the sequence $(X_m\cap h_2\BT_2^{h_2}{}\open)_m$ converges to $X\cap h_2\BT_2^{h_2}{}\open$ in the sense of Definition~\ref{DefMeasureMu}. Then $X\cap h_2\BT_2^{h_2}{}\open\subsetneq h_2\BT_2^{h_2}{}\open$ by Proposition~\ref{PropRealFormOfSemiSimple} and $\lambda(X\cap h_2\BT_2^{h_2}{}\open)=0$ by the claim in Step~4.

By Proposition~\ref{simple_estimate2} there is a small $\epsilon>0$ such that $\lambda\bigl((X\cap h_2\BT_2^{h_2}{}\open)(2\epsilon)\bigr)<\tfrac{1}{2}\,\ol\delta(S)$. By the triangle inequality $(X\cap h_2\BT_2^{h_2}{}\open)(2\epsilon)$ contains the closure $\ol{(X\cap h_2\BT_2^{h_2}{}\open)(\epsilon)}$ of $(X\cap h_2\BT_2^{h_2}{}\open)(\epsilon)$ in the metric space $h_2\BT_2^{h_2}{}\open$, and hence $\lambda\bigl(\ol{(X\cap h_2\BT_2^{h_2}{}\open)(\epsilon)}\bigr)<\tfrac{1}{2}\,\ol\delta(S)$. Choose an $m_\epsilon\in\BN$ such that for every index $m\in R$ with $m\geq m_\epsilon$ we have $X_m\cap h_2\BT_2^{h_2}{}\open\subseteq (X\cap h_2\BT_2^{h_2}{}\open)(\epsilon)$. For every $x\in S_m$ the intersection $h_2\BT_2^{h_2}{}\open\cap\Frob^\dagger_x(\CU)^{ss}$ is contained in $X_m\cap h_2\BT_2^{h_2}{}\open$ by assumption. Moreover, $h_2\BT_2^{h_2}{}\open\cap\Frob^\dagger_x(\CU)^{ss}$ is non-empty by \eqref{EqIntersectNonEmpty}. So for every $m\in R$ with $m\ge m_\epsilon$ and for every $x\in S_m$ we have $\#\bigl(\ol{(X\cap h_2\BT_2^{h_2}{}\open)(\epsilon)}\cap\Frob_x(\CU)^{ss}\bigr) \ge 1$, and hence
\[
\phi^*\mu_m\bigl(\ol{(X\cap h_2\BT_2^{h_2}{}\open)(\epsilon)}\bigr)\; = \; \frac{1}{\#U(m)}\sum_{x\in U(m)} \#\bigl(\ol{(X\cap h_2\BT_2^{h_2}{}\open)(\epsilon)}\cap\Frob_x(\CU)^{ss}\bigr) \;\ge\; \frac{\#S_m}{\#U(m)} \;\ge\;\frac{\ol\delta(S)}{2}\,.
\]
But taking $\limsup_{m\to\infty}$ and by the weak convergence of $\phi^*\mu_m$ to $\lambda$ and the Portemanteau theorem~\cite[Theorem~13.16]{Klenke} we have
\[
\frac{\ol\delta(S)}{2}\;\le\;\limsup_{m\to\infty}\phi^*\mu_m\bigl(\ol{(X\cap h_2\BT_2^{h_2}{}\open)(\epsilon)}\bigr)\;\le\; \lambda\bigl(\ol{(X\cap h_2\BT_2^{h_2}{}\open)(\epsilon)}\bigr)\;<\;\frac{\ol\delta(S)}{2}\;,
\]
which is a contradiction. This rules out the existence of $W$ and finishes the proof of Theorem~\ref{ThmOverconvIntro}.
\end{proof}

\begin{rem}
Let us point out the similarity between theorems Theorem~\ref{Thm4.3} and Theorem~\ref{ThmOverconvIntro}. In both cases we show that a set $F$ is Zariski-closed in a group ($G$ or $G_1$) which is a fibered product of another group ($H$ or $G_2$) and a torus ($T$ or $G_3$) such that the fibers in $F$ of the projection to the torus satisfy an equidistribution property in some naturally occurring compact group ($\BK$ or $\BK_2$). Indeed, in the first case the Chebotar\"ev Density Theorem for $p$-adic Galois groups is an equidistribution theorem for the normalized Haar measure on the compact group $\BK$ which is the image of the \'etale fundamental group of the variety $U$. We think of Theorem~\ref{Thm4.3} as a $p$-adic, or non-archimedean variant, while Theorem~\ref{ThmOverconvIntro} is an archimedean variant, where the equidistribution result used is Theorem~\ref{ThmEquiDistr}. In our exposition, Theorem~\ref{Thm4.3} is a general, abstract result, axiomatizing the input data needed, while Theorem~\ref{ThmOverconvIntro} is not. 

It is possible to formulate an abstract group theoretic version of Theorem~\ref{ThmOverconvIntro}, too. However, we refrain from doing so for several reasons. Firstly, the statement would be complicated and not too different from the layout of the claim of Theorem~\ref{Thm4.3}. Secondly, we do not have any application in mind at the moment other than Theorem~\ref{ThmOverconvIntro}. Thirdly, what is really important here is not so much the precise statement of the claim, but the method. In fact, it is quite common in analytical number theory, which we consider these results to be part of, that such a result has to be modified every time to fit the actual situation it is applied to next, and it is impossible to find one master theorem which covers all imaginable situations at once. So we leave the details to the reader, in case the need arises. 
\end{rem}

\section{The Parabolicity Conjecture and its Consequences} \label{SectPink}

In this last section we study the consequences of the parabolicity Conjecture~\ref{ConjPink} for (over)convergent $F$-isocrystals $\CF$ on $U$ and a dense open subscheme $f\colon V\into U$. This conjecture was formulated by Richard Pink in private conversation with one of us as a question in the special case when $\CF$ comes from a $p$-divisible group on $U$. Note that by the following lemma, the conjecture is equivalent to the assertion that the natural injective morphism $\Gr(f^*\CF/V)\to\Gr(\CF/U)$ maps every Borel subgroup of $\Gr(f^*\CF/V)$ onto a Borel subgroup of $\Gr(\CF/U)$. We begin with a

\begin{lemma}\label{Lemma9.2}
The natural morphism $\Gr(f^*\CF/V)\to\Gr(\CF/U)$ always is a closed immersion and induces a surjection on the groups of connected components.
\end{lemma}

\begin{proof}
The injectivity follows from \cite[Proposition~2.21(b)]{Deligne-Milne82}, because every object of $\dal f^*\CF\dar$ is a subquotient of an object of the form $\bigoplus_i f^*\CF^{\otimes m_i}\otimes (f^*\CF\dual)^{\otimes n_i}=f^*\bigl(\bigoplus_i \CF^{\otimes m_i}\otimes (\CF\dual)^{\otimes n_i}\bigr)$. To prove the surjectivity on connected components, let $\CU$ be an object of $\dal\CF\dar$ such that the surjective homomorphism $\Gr(\CF/U)\onto\Gr(\CU/U)$ has kernel equal to the normal subgroup $\Gr(\CF/U)\open\subset \Gr(\CF/U)$; see Lemma~\ref{LemmaCompatibility}\ref{LemmaCompatibility_C}. Then $\CU$ is unit-root by Lemma~\ref{Lemma3.2}, and hence the inclusion map $\Gr(f^*\CU/V)\subset\Gr(\CU/U)$ is an isomorphism by Corollary~\ref{CorUnitRootFromUToV}. Since $f^*\CU$ is an object of $\dal f^*\CF\dar$ the corresponding homomorphism $\Gr(f^*\CF/V)\onto\Gr(f^*\CU/V)\isoto\Gr(\CU/U)= \Gr(\CF/U)\big/\Gr(\CF/U)\open$ is surjective by Lemma~\ref{LemmaCompatibility}. This finishes the proof.
\end{proof}

For our purpose the following weaker form of Conjecture~\ref{ConjPink} will even be more important.

\begin{con}\label{ConjWPink}
Let $\CF\in\FIsoc_\olK(U)$ and let $f\colon V\into U$ be a non-empty open subscheme. Then every maximal quasi-torus of $\Gr(f^*\CF/V)$ is also a maximal quasi-torus of the group $\Gr(\CF/U)$.
\end{con}

The reason why we are interested in this conjecture, is Theorem~\ref{ThmWPinkImpliesChebotarevIntro} which we will prove at the end of this section.

\begin{prop}\label{PropWeaklyPinkQuasiTorus}
Let $\CF\in\FIsoc_\olK(U)$ and let $f\colon V\into U$ be an open subscheme. Write $\beta\colon\Gr(f^*\CF/V)\into\Gr(\CF/U)$ for the inclusion.
\begin{enumerate}
\item \label{PropWeaklyPinkQuasiTorus_A}
Conjecture~\ref{ConjPink} for $\CF$ and $f$ implies Conjecture~\ref{ConjWPink} for $\CF$ and $f$.
\item \label{PropWeaklyPinkQuasiTorus_B}
If Conjecture~\ref{ConjWPink} holds for $\CF$ and $f$ then $\beta$ induces an isomorphism on the groups of connected components and every maximal torus of $\Gr(f^*\CF/V)\open$ is also a maximal torus of $\Gr(\CF/U)\open$. (See Remark~\ref{RemWPinkConverse} for the converse.)
\item \label{PropWeaklyPinkQuasiTorus_C}
If $\CF$ satisfies Conjecture~\ref{ConjPink}, respectively \ref{ConjWPink}, respectively the condition $\Gr(f^*\CF/V)=\Gr(\CF/U)$. Then the same is true for every object $\CG$ of $\dal\CF\dar$.
\item \label{PropWeaklyPinkQuasiTorus_D}
If $\CF$ is semi-simple and has an overconvergent extension $\CF^\dagger\in\FIsoc^\dagger_\olK(U)$, then Conjectures~\ref{ConjPink} and \ref{ConjWPink} hold true for $\CF$ and $f$.
\end{enumerate}
\end{prop}

\begin{proof}
Let $T_1$ be a maximal quasi-torus of $G_1:=\Gr(f^*\CF/V)$ and set $G_2:=\Gr(\CF/U)$.

\medskip\noindent
\ref{PropWeaklyPinkQuasiTorus_A} 
By Theorem~\ref{ThmQuasiTorusGeneral} there is a Borel subgroup $B_1\supset T_1\open$ of $G_1\open$ which is normalized by $T_1$. Since $\beta(G_1)\subset G_2$ is parabolic, \cite[IV.11.2~Corollary and IV.11.3~Corollary]{Borel91} imply that $\beta(T_1\open)$ is a maximal torus and $\beta(B_1)$ is a Borel subgroup of $G_2\open$, both normalized by $\beta(T_1)$. By Lemma~\ref{Lemma9.2} the map $T_1\onto G_1/G_1\open\onto G_2/G_2\open$ is surjective, and hence $\beta(T_1)$ is a maximal quasi-torus in $G_2$ by Theorem~\ref{ThmQuasiTorusGeneral}\ref{ThmQuasiTorusGeneral_E}.

\medskip\noindent
\ref{PropWeaklyPinkQuasiTorus_B}
By assumption $T_2:=\beta(T_1)$ is a maximal quasi-torus of $G_2$ and $\beta(T_1\open)=T_2\open$ is a maximal torus of $G_2$ by Lemma~\ref{LemmaQuasiTorus}. Furthermore, Theorem~\ref{ThmQuasiTorusGeneral} implies that $G_1/G_1\open \cong T_1/T_1\open\cong T_2/T_2\open\cong G_2/G_2\open$ is an isomorphism.

\medskip\noindent
\ref{PropWeaklyPinkQuasiTorus_C}
We have a commutative diagram:
\[
\xymatrix @R-0.5pc {
\Gr(f^*\CF/V)\ar@{^{ (}->}[r]^-\beta \ar@{->>}[d]_{\pi_1} & \Gr(\CF/U)\,\;\ar@{->>}[d]^{\pi_2}  \\
\Gr(f^*\CG/V)\ar@{^{ (}->}[r]^-\gamma & \Gr(\CG/U)\,.}
\]
If $\beta$ is an isomorphism, then $\gamma$ is also an isomorphism, because it is a surjective closed immersion. On the other hand, let $Q$ be a Borel subgroup (resp.~maximal quasi-torus) in $\Gr(f^*\CF/V)$. By assumption the image $\beta(Q)$ is also a Borel subgroup (resp.~a maximal quasi-torus) in $\Gr(\CF/U)$.
Since $\pi_1$ and $\pi_2$ are surjective, the images $\pi_1(Q)$ and $\pi_2(\beta(Q))=\gamma(\pi_1(Q))$ are also Borel subgroups (resp.~maximal quasi-tori) in $\Gr(f^*\CG/V)$ and $\Gr(\CG/U)$ by Corollary~\ref{Cor6.10}.

\medskip\noindent
\ref{PropWeaklyPinkQuasiTorus_D}
If $\CF$ is semi-simple, then $\CF^{\dagger}$ is semi-simple, too, because if $\CG^{\dagger}\subset\CF^{\dagger}$ is an overconvergent sub-isocrystal, then it has a convergent complement $\CH\subset\CF$. This $\CH$ is isomorphic to the convergent isocrystal underlying the quotient $\CF^{\dagger}/\CG^{\dagger}$, so by Kedlaya's extension Theorem~\ref{ThmKedlayaFF} the embedding $\CH\hookrightarrow\CF$ extends to an embedding $\CF^{\dagger}/\CG^{\dagger}\hookrightarrow\CF^{\dagger}$, and hence $\CG^{\dagger}$ has an overconvergent complement, too. Statement \ref{PropWeaklyPinkQuasiTorus_D} is now a consequence of Theorem~\ref{ThmDAddezio}.
\end{proof}

\begin{rem}\label{RemWPinkConverse}
We explain why we believe that the converse of Proposition~\ref{PropWeaklyPinkQuasiTorus}\ref{PropWeaklyPinkQuasiTorus_B} is false. If we consider a closed subgroup $G_1$ of a non-connected linear algebraic group $G_2$ and a maximal quasi-torus $T_1$ in $G_1$ one can ask whether there is a maximal quasi-torus of $G_2$ containing $T_1$. We believe that this is not true in general, even under the assumption that $G_1/G_1\open=G_2/G_2\open$ and that $T_1\open$ is a maximal torus in $G_2\open$. That is, we believe that the converse of Proposition~\ref{PropWeaklyPinkQuasiTorus}\ref{PropWeaklyPinkQuasiTorus_B} does not hold in general. In order to prove this converse one would have to show that (in the notation of the proof of Proposition~\ref{PropWeaklyPinkQuasiTorus}) every maximal quasi-torus $T_1$ of $G_1$ is mapped isomorphically to a maximal quasi-torus in $G_2^\red$. By our hypothesis we obtain an isomorphism $G_1/G_1\open \isoto G_2/G_2\open \isoto G_2^\red/(G_2^\red)\open$. The proof now reduces to the following group theoretic statement. By hypothesis $\wt T_2\open=r_{G_2}\circ\beta(T_1\open)$ is a maximal torus in $(G_2^\red)\open$ and we choose a Borel subgroup $\wt B_2\subset (G_2^\red)\open$ containing $\wt T_2\open$. Then $\wt T_2:=N_{G_2^\red}(\wt B_2)\cap N_{G_2^\red}(\wt T_2\open)$ is a maximal quasi-torus in $G_2^\red$. The isomorphism $G_2^\red/(G_2^\red)\open\isoto \wt T_2/\wt T_2\open\subset N_{G_2^\red}(\wt T_2\open)/\wt T_2\open$ from Theorem~\ref{ThmQuasiTorusReductive} yields a split exact sequence of groups
\begin{equation}\label{EqPropWeaklyPinkQuasiTorus}
\xymatrix {
1 \ar[r] & N_{(G_2^\red)\open}(\wt T_2\open)/\wt T_2\open \ar[r] & N_{G_2^\red}(\wt T_2\open)/\wt T_2\open \ar[r] & G_2^\red/(G_2^\red)\open \ar[r] & 1\,.
}
\end{equation}
Here $W:=N_{(G_2^\red)\open}(\wt T_2\open)/\wt T_2\open$ is the Weyl group. The morphism $G_2^\red/(G_2^\red)\open\isoto G_1/G_1\open \isoto T_1/T_1\open \xrightarrow{\,r_{G_2}\beta}N_{G_2^\red}(\wt T_2\open)/\wt T_2\open$ yields another splitting of \eqref{EqPropWeaklyPinkQuasiTorus}. One has to show that the two splittings are conjugate. Every conjugacy class of splittings $s\colon G_2^\red/(G_2^\red)\open\to N_{G_2^\red}(\wt T_2\open)/\wt T_2\open$ defines a cohomology class $\phi\colon G_2^\red/(G_2^\red)\open\to W$ in $\Koh^1(G_2^\red/(G_2^\red)\open, W)$ as follows. Let $g\in G_2^\red/(G_2^\red)\open$. Since $s(g)$ normalizes $\wt T_2\open$ it conjugates $\wt B_2$ to another Borel subgroup containing $\wt T_2\open$. The latter is of the form $s(g)^{-1}\wt B_2 s(g)=\phi(g)\wt B_2\phi(g)^{-1}$ for a uniquely determined element $\phi(g)\in W$ by \cite[IV.11.19 Proposition]{Borel91}. The cohomology class $\phi$ is trivial if and only if the splitting comes from a maximal quasi-torus, because if $s(g)\in \wt T_2\subset N_{G_2^\red}(\wt B_2)$ then $\phi(g)=1$. So $r_{G_2}\beta(T_1)$ is a maximal quasi-torus if and only if the corresponding cohomology class is trivial. Now it is not difficult to construct a group $G_2^\red$ with $\Koh^1(G_2^\red/(G_2^\red)\open, W)\ne0$ and to choose a splitting $s$ with non-zero cohomology class. However, we do not know, whether this situation can arise from a group homomorphism $G_1\to G_2$ or from a convergent $F$-isocrystal $\CF\in\FIsoc_\olK(U)$.
\end{rem}

\begin{prop}\label{PropWeaklyPinkPlusConservative} 
Let $\CF\in\FIsoc_\olK(U)$ satisfy Conjecture~\ref{ConjWPink} with respect to an open immersion $f\colon V\hookrightarrow U$, and let $\CG\in\FIsoc_\olK(U)$ satisfy $\Gr(f^*\CG/V) = \Gr(\CG/U)$. Then $\CF\oplus\CG$ satisfies Conjecture~\ref{ConjWPink} with respect to $f$, too. 
\end{prop}
\begin{proof} By Proposition~\ref{PropGroupOfSum}\ref{PropGroupOfSum_B} there are two Cartesian diagrams
$$\xymatrix @C=-1pc @R=1pc {
 & \ar@{->>}[ld]_{\pi_1}\Gr(f^*(\CF)\oplus f^*(\CG)/V)\ar@{->>}[rd]^{\rho_1} & \\
\Gr(f^*\CF/V)\ar@{->>}[rd] & \;\qed\qquad\, & \ar@{->>}[ld]^{\sigma_1}\Gr(f^*\CG/V) \\ 
& \Gr(\dal f^*(\CF)\dar\cap\dal f^*(\CG)\dar/V) & } \qquad\qquad
\xymatrix @C=0pc @R=1pc {
 & \ar@{->>}[ld]_{\pi_2}\Gr(\CF\oplus\CG/U)\ar@{->>}[rd]^{\rho_2} & \\
\Gr(\CF/U)\ar@{->>}[rd] & \;\qed\quad\quad &
\ar@{->>}[ld]^{\sigma_2}\Gr(\CG/U) \\ 
 & \Gr(\dal\CF\dar\cap\dal\CG\dar/U), & }$$
of algebraic groups, and the pull-back under $f$ induces a morphism $\beta$ from the left to the right diagram. Let $N_i$ denote the kernel of $\pi_i$. Then $N_i$ is mapped under $\rho_i$ isomorphically onto the kernel of $\sigma_i$. Since the map $\beta\colon\Gr(f^*\CG/V)\isoto \Gr(\CG/U)$ is an isomorphism by assumption, and since
$$
\beta\colon \Gr(\dal f^*\CF\dar\cap\dal f^*\CG\dar/V)\into\Gr(\dal\CF\dar\cap\dal\CG\dar/U)
$$
is a closed immersion by Lemma~\ref{Lemma9.2}, we conclude that $\beta|_{N_1}\colon N_1\to N_2$ is an isomorphism. If $T\subset \Gr(f^*(\CF)\oplus f^*(\CG)/V)$ is a maximal quasi-torus, its images in $\Gr(f^*\CF/V)$ and in $\Gr(\CF/U) = \Gr(\CF\oplus\CG/U)\big/\beta(N_1)$ are maximal quasi-tori by Corollary~\ref{Cor6.10} and by the assumption on $\CF$. Therefore, the image of $T$ in $\Gr(\CF\oplus \CG/U)$ is a maximal quasi-torus by Theorem~\ref{new_gambit}, and hence $\CF\oplus\CG$ and $f$ also satisfy Conjecture~\ref{ConjWPink}.
\end{proof}

\bigskip

Next we will establish the following

\begin{prop}\label{direct_isoclinic} 
Let $\CF\in\FIsoc_\olK(U)$ and let $f\colon V\into U$ be an open subscheme. Assume either that
\begin{enumerate}
\item \label{direct_isoclinic_A}
$\CF$ is a direct sum of isoclinic convergent $F$-isocrystals on $U$, or
\item \label{direct_isoclinic_B}
the identity component $\Gr(\CF/U)\open$ of its monodromy group is a torus.
\end{enumerate}
Then $\Gr(f^*\CF/V)=\Gr(\CF/U)$.
\end{prop}

To prove the proposition we need the following
\begin{lemma}\label{isoc10.2}
The full sub-category $\CS(U)$ of direct sums of isoclinic convergent $F$-isocrystals is a full Tannakian sub-category. 
\end{lemma}

\begin{proof} 
Clearly $\CS(U)$ is closed under direct sums. Since the tensor product and duals of isoclinic $F$-isocrystals are isoclinic, the category $\CS(U)$ is closed under tensor products and taking duals, too. Therefore, it will be sufficient to show that if $\CF$ is a direct sum of isoclinic $F$-isocrystals and $\CG\subset\CF$ is a sub-$F$-isocrystal, then $\CG$ and $\CF/\CG$ are also direct sums of isoclinic $F$-isocrystals with the same slopes than $\CF$. By looking at the dual $(\CF/\CG)\dual\subset\CF\dual$ and observing that the dual of an isoclinic $F$-isocrystal is again isoclinic with the negative slope, it suffices to prove the statement for $\CG$. Write $\CF$ as $\CF=\CF_1\oplus\cdots \oplus\CF_n,$ where each $\CF_i$ is an isoclinic $F$-isocrystal of slope $\lambda_i$ and the slopes $\lambda_i$ are pair-wise different. We show the claim by induction on $n$. The case $n=1$ is trivial. 

Now assume that $n\geq2$ and the claim is true for $n-1$. Let $\pi_i\colon\CF\to\CF_i$ be the projection onto the $i$-th factor. Set $\CG_1=\ker(\pi_1)\cap\CG$; it is a sub-$F$-isocrystal of $\CG$. It is also isomorphic to a sub-$F$-isocrystal of $\ker(\pi_1)=\CF_2\oplus\cdots\oplus\CF_n$, so it is a direct sum of isoclinic $F$-isocrystals by the induction hypothesis. Set $\CG_2=\ker(\pi_2\oplus\cdots\oplus\pi_n)\cap\CG$; it is a sub-$F$-isocrystal of $\CG$. It is also isomorphic to a sub-$F$-isocrystal of $\CF_1$, so it is isoclinic. Since $\CG_1\cap\CG_2\subset \ker(\pi_1)\cap\ker(\pi_2\oplus\cdots\oplus\pi_n)$, the intersection $\CG_1\cap\CG_2$ is the trivial isocrystal, and therefore we have an injection $\CG_1\oplus\CG_2\hookrightarrow\CF$ which is an isomorphism onto $\CG_1+\CG_2$, the sub-isocrystal generated by $\CG_1$ and $\CG_2$. Since $\CG$ contains both $\CG_1$ and $\CG_2$, it contains $\CG_1+\CG_2$, too. The quotient $\CG/(\CG_1+\CG_2)$ is isomorphic both to a subquotient of $\CF_1$ and $\CF_2\oplus\cdots\oplus\CF_n$. By our induction hypothesis, every subquotient of $\CF_1$ is isoclinic with slope $\lambda_1$ and every subquotient of $\CF_2\oplus\cdots\oplus\CF_n$ is a direct sum of isoclinics with slopes in $\{\lambda_2,\ldots,\lambda_n\}$. Since $\lambda_1$ does not lie in $\{\lambda_2,\ldots,\lambda_n\}$, this can only be the case if the quotient $\CG/(\CG_1+\CG_2)$ is trivial, and hence $\CG\cong\CG_1\oplus\CG_2$. The claim follows. 
\end{proof}

\begin{proof}[Proof of Proposition~\ref{direct_isoclinic}]
\ref{direct_isoclinic_A}
By \cite[Proposition~2.21]{Deligne-Milne82} we must show two things: (1) $f^*\colon \dal\CF\dar\to\dal f^*\CF\dar$ is fully faithful, and (2) if $\CG\subset f^*\CF$ is a sub-$F$-isocrystal (over $V$), then $\CG$ is of the form $f^*\CH$ for some sub-$F$-isocrystal $\CH\subset\CF$. By the proof of Lemma~\ref{isoc10.2}, $\CG$ is the direct sum of sub-$F$-isocrystals of pull-backs of the isoclinic direct summands of $\CF$ via $f$. So we may assume that $\CF$ is isoclinic of slope $\lambda$. Then there is a constant $F$-isocrystal $\mathcalD\in\FIsoc_\olK(U)$ of slope $\lambda$, such that $\CF\otimes\mathcalD\dual$ is unit-root. Then $\CG\otimes f^*\mathcalD\dual\subset f^*(\CF\otimes\mathcalD\dual)$ is of the form $f^*\CH$ for a unique $F$-isocrystal $\CH\subset\CF\otimes\mathcalD\dual$ on $U$ by Corollary~\ref{CorUnitRootFromUToV}. Therefore, $\CG=f^*(\CH\otimes\mathcalD)$ and (2) is proven. To prove (1) we use the same argument to reduce to full faithfulness of $f^*$ on unit-root $F$-isocrystals, which was stated in Corollary~\ref{CorUnitRootFromUToV}.

\medskip\noindent
\ref{direct_isoclinic_B}
By Corollary~\ref{CorConnMonodromy} there is a finite \'etale covering $g\colon \wt U\to U$, such that  $\Gr(g^*\CF/\wt U)\isoto\Gr(\CF/U)\open$ is an isomorphism. Being a torus, its one-dimensional $\olK$-rational representations generate the Tannakian category of all its representations. So $g^*\CF\in\dal \wt\CF\dar\subset\FIsoc_{\olK}(\wt U)$ for an $F$-isocrystal $\wt\CF$ on $\wt U$ which is a sum of one-dimensional, and hence isoclinic $F$-isocrystals. Let $pr_{\wt U}\colon\wt V:=\wt U\times_U V\to\wt U$ be the projection. By \ref{direct_isoclinic_A} for $\wt\CF$ and Proposition~\ref{PropWeaklyPinkQuasiTorus}\ref{PropWeaklyPinkQuasiTorus_C} the closed immersion $\Gr(pr_{\wt U}^*g^*\CF/\wt V)\into \Gr(g^*\CF/\wt U)$ is an isomorphism. Since $g\circ pr_{\wt U}= f\circ pr_V\colon\wt V\to U$, Proposition~\ref{Prop1.17} and Lemma~\ref{Lemma9.2} provide closed immersions $\Gr(\tilde pr_{\wt U}^*g^*\CF/\wt V)\into \Gr(f^*\CF/V)\into \Gr(\CF/U)$ whose composition is an isomorphism onto the identity component by the above. Now Lemma~\ref{Lemma9.2} implies that $\Gr(f^*\CF/V)\into \Gr(\CF/U)$ is an isomorphism.
\end{proof}

After these general results we now turn towards the proof of Theorem~\ref{ThmWPinkImpliesChebotarevIntro}. We recall the symbol $^{H\!}C$ from Notation~\ref{InitialNotation}, and make use of the following

\begin{defn}\label{DefFrobSS}
Let $\CF\in\FIsoc_\olK(U)$ and fix a maximal quasi-torus $T\subset\Gr(\CF/U)$ and an element $t\in T$. For every closed point $x$ of $U$ let 
\[
\Frob_x^{ss}(\CF,tT\open)\;:=\;\ol{\Frob_x(\CF)}\cap tT\open.
\]
\end{defn}

We will frequently use the following useful fact: let $\CG$ be an object of $\dal\CF\dar$ and let $h\colon \Gr(\CF/U)\onto\Gr(\CG/U)$ be the corresponding surjective homomorphism. Choose two maximal quasi-tori $T_1\subset\Gr(\CF/U)$ and $T_2\subset\Gr(\CG/U)$ such that $h$ maps $T_1$ into $T_2$. Let $t_1\in T_1$ and $t_2:=h(t_1)$. Then for every closed point $x$ of $U$ the morphism $h$ maps $\Frob_x^{ss}(\CF,t_1T_1\open)$ into $\Frob_x^{ss}(\CG,t_2T_2\open)$. (This is clear since $h$ maps $\Frob_x(\CF)$ into $\Frob_x(\CG)$ by Lemma~\ref{LemmaCompatibility}.)

\begin{proof}[Proof of Theorem~\ref{ThmWPinkImpliesChebotarevIntro}]
We write $G:=\Gr(\CF/U)$ and $H:=\Gr(f^*\CF/V)\subset G$. The semi-simplification $\CH=(f^*\CF)^{ss}$ is a direct sum of isoclinic convergent $F$-isocrystals on $V$. Then $r_H\colon  H\onto H^\red:=\Gr(\CH/V)$ is the maximal reductive quotient. We choose a maximal quasi-torus $T$ in $H$. Then $T\open$ is a maximal torus in $H\open$ by Lemma~\ref{LemmaQuasiTorus}, and $\wt T:=r_H(T)$ is a maximal quasi-torus in $H^\red$ with $\wt T\open=r_H(T\open)$, and $r_H|_{T}\colon T\isoto \wt T$ is an isomorphism. Moreover, the map $T/T\open\isoto H/H\open\onto G/G\open$ is surjective by Theorem~\ref{ThmQuasiTorusReductive}\ref{ThmQuasiTorusReductive_A} and Lemma~\ref{Lemma9.2}. 

\medskip\noindent
$\ref{ThmWPinkImpliesChebotarevIntro_A}\Rightarrow\ref{ThmWPinkImpliesChebotarevIntro_B}$ under the additional assumption that $T$ is abelian follows from Theorem~\ref{ThmAbelianQuasiTorus}.

\medskip\noindent
$\ref{ThmWPinkImpliesChebotarevIntro_B}\Rightarrow\ref{ThmWPinkImpliesChebotarevIntro_C}$. By Propositions~\ref{PropWeaklyPinkQuasiTorus}, \ref{PropWeaklyPinkPlusConservative}, \ref{direct_isoclinic}, $\CG\oplus \CI$ also satisfies condition \ref{ThmWPinkImpliesChebotarevIntro_B}. So we may assume $\CF=\CG\oplus\CI$. Let $S\subset|U|$ be a set of positive upper Dirichlet density. We replace $S$ by $S\cap|V|$ which has the same Dirichlet density as $S$ because $|U|\smallsetminus|V|$ has smaller dimension than $U$. By Theorem~\ref{ThmIsoclinic}, Conjecture~\ref{MainConj3} holds for $\CH$ on $V$. Let $\tilde t\in \wt T$ such that the connected component $\tilde t(H^\red)\open$ of $H^\red$ is contained in the closure of $\bigcup_{x\in S}\Frob_x(\CH)$. We claim that this is not changed if we remove from $S$ all points $x$ for which $\Frob_x(\CH)$ does not consist of semi-simple elements or does not meet $\tilde t(H^\red)\open$. Namely, by Theorem~\ref{ThmQuasiTorusReductive}\ref{ThmQuasiTorusReductive_D} there is an open set $O$ in $\tilde t(H^\red)\open$ consisting of semi-simple elements. Since the closure $\ol F=\tilde t(H^\red)\open$ of $F:=\tilde t(H^\red)\open\cap \bigcup_{x\in S}\Frob_x(\CH)$ is irreducible and contained in the union of $\ol{F\cap O}$ and $\ol{F\smallsetminus O}\subset (\tilde t(H^\red)\open\smallsetminus O)$, we conclude that $\tilde t(H^\red)\open$ equals the closure of $F\cap O$ which consists of semi-simple elements only.

Since $H^\red$ is reductive, the semi-simple conjugacy class $\Frob_x(\CH)$ is closed in $H^\red$ for every $x\in S$ by Theorem~\ref{ThmQuasiTorusReductive}\ref{ThmQuasiTorusReductive_B}. Therefore, $\Frob_x^{ss}(\CH,\tilde t\wt T\open)=\tilde t\wt T\open\cap\Frob_x(\CH)$ and Lemma~\ref{useful}\ref{useful_B} implies that $\tilde t\wt T\open$ is the closure of $\wt C:=\tilde t\wt T\open\cap\bigcup_{x\in S}\Frob_x(\CH)=\bigcup_{x\in S}\Frob_x^{ss}(\CH,\tilde t\wt T\open)$. We now lift the situation to $H$. Let $t:=(r_H|_{T})^{-1}(\tilde t)\in T$ and view it as an element of $G$ by Lemma~\ref{Lemma9.2}. By assumption \ref{ThmWPinkImpliesChebotarevIntro_B}, $T$ is also a maximal quasi-torus in $G$. By Lemma~\ref{useful}\ref{useful_B} and Proposition~\ref{reduction3->2} the implication $\ref{ThmWPinkImpliesChebotarevIntro_B}\Rightarrow\ref{ThmWPinkImpliesChebotarevIntro_C}$ is now a consequence of the following

\medskip\noindent
\emph{Claim.} The set $C:=\bigcup_{x\in S}\Frob_x^{ss}(\CF,tT\open)$ is dense in $tT\open$.

\medskip\noindent
To prove the claim, let $x\in S$ be arbitrary and pick a $g\in \Frob_x(f^*\CF)\cap tH\open$. Write $g=g_s\cdot g_u$, where $g_s, g_u\in H$ are the semi-simple and unipotent parts of $g$, respectively. Theorem~\ref{ThmQuasiTorusGeneral} shows that $g_s$ lies in a maximal quasi-torus of $H$ and can be conjugate by an element $h\in H\open$ such that $h^{-1}g_sh$ lies in $T\cap tH\open=tT\open$. Replacing $g$ with $h^{-1}gh\in\Frob_x(f^*\CF)$ we may assume that $g_s\in tT\open$. Since $g_s$ is also the semi-simple part of $g$ in the larger group $G$ which is reductive, and $\Frob_x(f^*\CF)\subset\Frob_x(\CF)$ by definition, we get in terms of Notation~\ref{InitialNotation} that $\Frob_x^{ss}(\CF,tT\open)=\ol{\Frob_x(\CF)}\cap tT\open={}^{G}\{g_s\}\cap tT\open$ by Lemma~\ref{useful}\ref{useful_A}. Therefore, also ${}^{G}\!\Frob_x^{ss}(\CF,tT\open)\cap tT\open=\Frob_x^{ss}(\CF,tT\open)$.

Since $r_H$ preserves Jordan decompositions and $r_H(g)\in\Frob_x(\CH)$ is semi-simple by our assumption on $S$, we get $r_H(g_s)=r_H(g)\in\Frob_x(\CH)$. Therefore, $\Frob_x(\CH)={}^{H^\red\!}\{r_H(g_s)\}$ and this means that ${}^{H^\red\!}r_H(\Frob_x^{ss}(\CF,tT\open))$ contains $\Frob_x(\CH)$. Note that this inclusion may be strict, because $\Frob_x^{ss}(\CF,tT\open)$ may contain elements that are conjugate under $G$ but not under $H$. Therefore, also ${}^{H^\red\!}r_H(\Frob_x^{ss}(\CF,tT\open))\cap \tilde t\wt T\open$ contains $\Frob_x(\CH)\cap \tilde t\wt T\open=\Frob_x^{ss}(\CH,\tilde t\wt T\open)$, and hence ${}^{H^\red\!}r_H(C)\cap \tilde t\wt T\open=\bigcup_{x\in S}{}^{H^\red\!}r_H(\Frob_x^{ss}(\CF,tT\open))\cap \tilde t\wt T\open$ contains $\bigcup_{x\in S}\Frob_x^{ss}(\CH,\tilde t\wt T\open)=\wt C$. Since $\wt C$ is dense in $\tilde t\wt T\open$, also ${}^{H^\red\!}r_H(C)\cap \tilde t\wt T\open$ is dense in $\tilde t\wt T\open$, and Proposition~\ref{finite_to_oneNEW} yields that ${}^{H\!}C\cap tT\open$ is dense in $tT\open$. Finally ${}^{H\!}C\cap tT\open\subset{}^{G\!}C\cap tT\open=\bigcup_{x\in S}{}^{G}\!\Frob_x^{ss}(\CF,tT\open)\cap tT\open=C$. So $C$ is dense in $tT\open$ as claimed and the implication $\ref{ThmWPinkImpliesChebotarevIntro_B}\Rightarrow\ref{ThmWPinkImpliesChebotarevIntro_C}$ is established.

\medskip\noindent
$\ref{ThmWPinkImpliesChebotarevIntro_C}\Rightarrow\ref{ThmWPinkImpliesChebotarevIntro_D}$ is obvious.

\medskip\noindent
$\ref{ThmWPinkImpliesChebotarevIntro_D}\Rightarrow\ref{ThmWPinkImpliesChebotarevIntro_A}$. Let $F=\bigcup_{x\in|V|}\Frob_x(f^*\CF)\subset H$ be the union of the Frobenius conjugacy classes (conjugacy under $H$), and let $F^{ss}=\{g_s\colon g\in F\}\subset H$ be the set of the semi-simple parts $g_s$ of the elements $g$ of $F$. In terms of Notation~\ref{InitialNotation}, the set ${}^{G\!}F=\bigcup_{x\in|V|}\Frob_x(\CF)\subset G$ is the union of the Frobenius conjugacy classes (conjugacy under $G$), and ${}^{G\!}(F^{ss})=({}^{G\!}F)^{ss}:=\{g_s\colon g\in {}^{G\!}F\}\subset G$. By our assumption $^{G\!}F$ is dense in $G$. By Corollary~\ref{Cor_ss-ize} we get that $({}^{G\!}F)^{ss}$ is dense in $G$, too. Since every element of $F^{ss}$ is conjugate to an element of the maximal quasi-torus $T\subset H$ by Theorem~\ref{ThmQuasiTorusReductive}\ref{ThmQuasiTorusReductive_A},\ref{ThmQuasiTorusReductive_B}, we get that ${}^{G}T$ is also dense in $G$, and hence that ${}^{G}T\cap G\open$ is dense in $G\open$. 

Since $T/T\open$ surjects onto $G/G\open$, we have ${}^{G}T={}^{G\open\!}T$, because every element $g\in G$ can be written as $g=g_0h$ with $g_0\in G\open$ and $h\in T$, and so $gTg^{-1}=g_0Tg_0^{-1}$. Therefore, ${}^{G\open\!}T\cap G\open={}^{G\open\!}(T\cap G\open)$ is dense in $G\open$. Now let $A\subset G\open$ be a maximal torus in $G\open$ containing $T\open$. By Steinberg's result \cite[\S\,III.3.4, Corollary~2]{Steinberg74}, see also Remark~\ref{RemSteinbergConjugacy}(c), any two elements of $A$ in the same $G\open$-conjugacy class actually lie in the same orbit under the action of the Weyl group $W(G\open,A)$. If $T\open$ was a proper closed subscheme of $A$ then the same would hold for the finite union of its images under the action of $W(G\open,A)$, because $A$ is irreducible. The ${}^{G\open\!}(T\cap G\open)\cap A$ would be a proper closed subscheme of $A$. By Lemma~\ref{useful}\ref{useful_B} this contradicts that ${}^{G\open\!}(T\cap G\open)$ is dense in $G\open$. We conclude that $T\open=A$ is a maximal torus in $G\open$.
\end{proof}

\begin{rem}\label{RemPinkForOverconv}
The arguments for the implication $\ref{ThmWPinkImpliesChebotarevIntro_B}\Rightarrow\ref{ThmWPinkImpliesChebotarevIntro_C}$ also give a second proof of Theorem~\ref{ThmOverconvIntro} for a semi-simple overconvergent $\CF^\dagger\in\FIsoc^\dagger_\olK(U)$ with associated convergent $\CF\in\FIsoc_\olK(U)$. Namely, keeping the notation of the proof of Theorem~\ref{ThmWPinkImpliesChebotarevIntro} but taking $G:=\Gr^\dagger(\CF^\dagger/U)$ instead, the maximal quasi-torus $T\subset H$ is also a maximal quasi-torus in $G$ by Theorem~\ref{ThmDAddezio} and Proposition~\ref{PropWeaklyPinkQuasiTorus}\ref{PropWeaklyPinkQuasiTorus_A}. Here we use Lemma~\ref{LemmaOverconvMonodr} instead of Lemma~\ref{Lemma9.2}. Literally the same arguments now prove Theorem~\ref{ThmOverconvIntro}.
\end{rem}

\begin{appendix}

\section{Results from Measure Theory}\label{AppendixB}

\begin{defn}\label{DefPushForwMeasure}
Let $f\colon X\to Y$ be a continuous map between topological spaces, each equipped with the Borel $\sigma$-algebra. For a measure $\mu$ on $X$ the \emph{push-forward measure} $f_*\mu$ is defined as $(f_*\mu)(V):=\mu(f^{-1}V)$ for every Borel-measurable subset $V\subset Y$. It satisfies $\int_Y h(y)\;d\,f_*\mu(y)=\int_X h(f(x))\;d\mu(x)$ for every measurable function $h$ on $Y$.
\end{defn}

Our next aim is to define the notion of a pull-back of measures under certain nice maps. 

\begin{lemma}\label{inj_borel} Let $f:X\to Y$ be an injective continuous map between compact Hausdorff spaces. Then $f$ maps Borel-measurable sets in $X$ to Borel-measurable sets in $Y$. 
\end{lemma}

\begin{proof}
Since $X$ is compact and $Y$ is Hausdorff, the image of every closed subset of $X$ is compact, and hence closed, and Borel-measurable. It thus suffices to show that $\CC=\{B\subset X\colon f(B)\subset Y\textrm{ is Borel-measurable}\}$ is a $\sigma$-algebra on $X$, because it will then contain the Borel $\sigma$-algebra. Since $\emptyset$ and $X$ are closed in $X$, we get that $\emptyset,X\in\CC$ by the above. Since $f$ is injective, $f(X\smallsetminus B)=f(X)\smallsetminus f(B)$ for any $B\in\CC$, and so $X\smallsetminus B\in\CC$. If $B_i\in\CC$ for $i\in\BN$, then $f(\bigcup_{i\in\BN}B_i)=\bigcup_{i\in\BN}f(B_i)\subset Y$ is Borel-measurable, and hence $\bigcup_{i\in\BN}B_i\in\CC$. 
\end{proof}

\begin{defn}\label{DefNiceMap}
Let $f:X\to Y$ be a continuous map between topological spaces. We say that the map $f$ is \emph{nice} if there is a countable pair-wise disjoint decomposition $X=\coprod_{i=1}^{\infty}X_i$ of $X$ into Borel-measurable subsets $X_i$ such that the restriction of $f$ onto the closure $\olX_i$ of each $X_i$ is injective.
\end{defn}

\begin{rem}\label{RemNiceMap}
A continuous map $f:X\to Y$ is nice if and only if there is a countable cover $\{V_i\}_{i\in\BN}$ of $X$ by closed subsets such that $f|_{V_i}$ is injective for every $i\in\BN$. Indeed, if $f$ is nice and $X=\coprod_{i=1}^{\infty}X_i$ is a decomposition of $X$ as in Definition~\ref{DefNiceMap}, then $\{\olX_i\}_{i\in\BN}$ of $X$ is such a cover. On the other hand, if $\{V_i\}_{i\in\BN}$ is a countable cover of $X$ by closed subsets such that $f|_{V_i}$ is injective for every $i\in\BN$ then $X_i=V_i\smallsetminus\bigcup_{j<i}V_j$ is Borel-measurable, and $X=\coprod_{i=1}^{\infty}X_i$ is a decomposition of $X$. Since $X_i\subset V_i$, we have $\olX_i\subset\olV_i=V_i$, and hence $f|_{\olX_i}$ is injective for every $i\in\BN$.
\end{rem}

\begin{lemma}\label{nice_borel} Let $f:X\to Y$ be a nice continuous map between compact Hausdorff spaces. Then $f$ maps Borel-measurable sets in $X$ to Borel-measurable sets in $Y$. 
\end{lemma}
\begin{proof} 
Fix a decomposition of $X$ as in Definition~\ref{DefNiceMap}. For every Borel-measurable subset $B\subset X$ we have $f(B)=\bigcup_{i\in\BN}f(B\cap X_i)$. Thus it will be sufficient to show that $f(B\cap X_i)$ is Borel-measurable for every $i\in\BN$. Since $\olX_i$ is closed, it is a compact Hausdorff space.  Also the restriction of $f$ onto $\olX_i$ is injective by assumption. As $B\cap X_i$ is Borel-measurable in $\olX_i$, the claim follows from Lemma~\ref{inj_borel}. 
\end{proof}

\begin{lemma} \label{LemmaCountingFcn}
Let $f:X\to Y$ be a nice continuous map between compact Hausdorff spaces. For every Borel-measurable subset $A\subset X$ the counting function $c_{A/Y}:Y\to\BR\cup\{\infty\}$ given by the rule $y\mapsto\#(f^{-1}(y)\cap A)$ is measurable. 
\end{lemma}

\begin{proof} Fix a decomposition of $X$ as in Definition~\ref{DefNiceMap}. Clearly $c_{A/Y}(y)=\sum_{i=1}^{\infty}c_i(y)$, where for every $i\in\BN$ the function $c_i:Y\to\BR\cup\{\infty\}$ is given by the rule
\[
y\mapsto\#(f^{-1}(y)\cap A\cap X_i).
\]
Since the latter are non-negative, $c_{A/Y}$ is the point-wise supremum of the sequence $\bigl(\sum_{i=1}^j c_i\bigr)_{j\in\BN}$. So it will be enough to show that each $c_i$ is measurable. However, the restriction of $f$ onto $X_i$ is injective, so $c_i$ is just the characteristic function of $f(A\cap X_i)$. By Lemma \ref{nice_borel} the set $f(A\cap X_i)$ is Borel-measurable, since $A\cap X_i$ is. Thus $c_i$ is also measurable. 
\end{proof}

\begin{defn}\label{DefPullbackMeasure}
Let $f:X\to Y$ be a nice continuous map between compact Hausdorff spaces. Let $\mu$ be a measure on the Borel $\sigma$-algebra $Y$. We define the \emph{pull-back measure} $f^*\mu$ on $X$ by the formula:
$$(f^*\mu)(A)\;:=\;\int_Y c_{A/Y}(y)\;d\mu(y)$$
for every Borel-measurable $A\subset X$. By Lemma~\ref{LemmaCountingFcn} the integrand is measurable, and it is also non-negative, so the integral is well-defined. When there is an upper bound on the cardinality of the fibers of $f$, we get that $f^*\mu$ is bounded, too, i.e.~it only takes finite values if the same is true for $\mu$. 
\end{defn}

\begin{prop}\label{PropWeakConvOfPullBack}
Let $f\colon X\to Y$ be a nice, continuous, surjective map between compact metrizable spaces and let $\mu_m$ for $m\in\BN$ be a sequence of measures on $Y$ which converges weakly to a measure $\nu$ on $Y$. Assume that there is a positive integer $M$ such that all fibers of $f$ have cardinality at most $M$. Assume further that there is a closed subset $V\subset Y$ of measure $\nu(V)=0$ whose complement is a finite disjoint union $Y\smallsetminus V=\coprod_{i=1}^n Y_i$ of open subsets $Y_i$ such that $f$ is trivial over $Y_i$ in the sense that there is a finite discrete set $F_i$ and a homeomorphism $g_i\colon F_i\times Y_i\isoto f^{-1}(Y_i)$ compatible with the projections onto $Y_i$. Then the pullback measures $f^*\mu_m$ converge weakly to $f^*\nu$.
\end{prop}

\begin{proof}
We first observe that
\begin{align*}
(f^*\nu)(f^{-1}V)\;=\enspace & \int_V\#f^{-1}(y) \;d\nu(y)\;\le\;M\cdot\nu(V)\;=\;0 \qquad \text{and likewise}\\[2mm]
(f^*\mu_m)(f^{-1}V)\;=\enspace & \int_V\#f^{-1}(y) \;d\mu_m(y)\;\le\;M\cdot\mu_m(V)\,.
\end{align*}
Since $\limsup_{m\to\infty}\mu_m(V)\le\nu(V)=0$ by the weak convergence and the Portemanteau theorem~\cite[Theorem~13.16]{Klenke}, we conclude that $\lim_{m\to\infty}(f^*\mu_m)(f^{-1}V)=0$.

Let $U\subset X$ be an open subset. Then 
\[
(f^*\nu)(U)\;=\;(f^*\nu)(U\cap f^{-1}V)+\sum_{i=1}^n(f^*\nu)(U\cap f^{-1}Y_i)\;=\;\sum_{i=1}^n\sum_{x\in F_i}(f^*\nu)\bigl(U\cap g_i(x\times Y_i)\bigr)\,,
\]
and likewise
\[
\liminf_{m\to\infty}(f^*\mu_m)(U)\;=\;\sum_{i=1}^n\sum_{x\in F_i}\liminf_{m\to\infty}(f^*\mu_m)\bigl(U\cap g_i(x\times Y_i)\bigr)\,,
\]
and similarly for $\limsup$. Since the projection map $x\times Y_i\to Y_i$ is a homeomorphism, the image set $f\bigl(U\cap g_i(x\times Y_i)\bigr)\subset Y_i$ is open. Since the counting function $c_{U\cap g_i(x\times Y_i)/Y}(y)$ takes the value $1$ if $y\in f(U\cap g_i(x\times Y_i))$ and the value $0$ if $y\notin f(U\cap g_i(x\times Y_i))$, we have $(f^*\nu)\bigl(U\cap g_i(x\times Y_i)\bigr)=\nu\bigl(f(U\cap g_i(x\times Y_i))\bigr)$ and $(f^*\mu_m)\bigl(U\cap g_i(x\times Y_i)\bigr)=\mu_m\bigl(f(U\cap g_i(x\times Y_i))\bigr)$. From this and the weak convergence of $\mu_m$ to $\nu$ we obtain $\liminf_{m\to\infty}(f^*\mu_m)(U)\ge (f^*\nu)(U)$.

If $U=X$ we also must show that $\limsup_{m\to\infty}(f^*\mu_m)(X)\le (f^*\nu)(X)$. Note that 
\[
\limsup_{m\to\infty}(f^*\mu_m)\bigl(g_i(x\times Y_i)\bigr)\;=\;\limsup_{m\to\infty}\mu_m(Y_i) \;=\;\limsup_{m\to\infty}\mu_m(V\cup Y_i)\;\le\;\nu(V\cup Y_i)
\]
because $V\cup Y_i$ is closed in $Y$ and $\mu_m$ converges weakly to $\nu$. We compute
\[
\nu(V\cup Y_i)\;=\;\nu(V)+\nu(Y_i)\;=\;\nu(Y_i)\;=\;(f^*\nu)\bigl(g_i(x\times Y_i)\bigr)\,.
\]
This shows that the sequence of measures $f^*\mu_m$ converges weakly to $f^*\nu$.
\end{proof}

\end{appendix}




\begin{thebibliography}{Abe18a}
\bibitem[Abe18a]{Abe11} T.~Abe: \emph{Langlands program for $p$-adic coefficients and the petites camarades conjecture}, J.~Reine Angew.\ Math.\ {\bfseries 734} (2018), 59--69; also available as \href{http://arxiv.org/abs/math/1111.2479}{arXiv:1111.2479}. 

\bibitem[Abe18b]{Abe13} T.~Abe: \emph{Langlands correspondence for isocrystals and existence of crystalline companion for curves}, J.~Amer.\ Math.\ Soc.\ {\bfseries 31} (2018), 921--1057; also available as \href{http://arxiv.org/abs/1310.0528}{arXiv:1310.0528}.

\bibitem[AE19]{Abe+Esnault} T.~Abe, H.~Esnault: \emph{A Lefschetz theorem for overconvergent isocrystals with Frobenius structure}, Ann.\ Sci.\ Éc.\ Norm.\ Supér.~(4) {\bfseries 52} (2019), no.~5, 1243--1264; also available as \href{http://arxiv.org/abs/1607.07112}{arXiv:1607.07112}.

\bibitem[AM15]{AbeMarmora} T.~Abe, A.~Marmora: \emph{Product formula for $p$-adic epsilon factors}, J.~Inst.\ Math.\ Jussieu {\bfseries 14} (2015), no.~2, 275--377; also available as \href{http://arxiv.org/abs/1104.1563}{arXiv:1104.1563}.

\bibitem [D'Ad23]{DAddezioParabol} M.~D'Addezio: \emph{Parabolicity conjecture of F-isocrystals}, Ann.\ of Math.~(2) {\bfseries 198} (2023), no.~2, 619--656; also available as \href{http://arxiv.org/abs/2012.12879}{arXiv:math/2012.12879}.

\bibitem[Bor91]{Borel91} A.~Borel: \emph{Linear algebraic groups}, GTM {\bfseries 126}, Second Enlarged Edition, Springer-Verlag, Berlin, 1991. 

\bibitem[Bor98]{Borovoi98} M.~Borovoi: \emph{Abelian Galois cohomology of reductive groups}, Mem.\ Amer.\ Math.\ Soc.~{\bfseries 132} (1998), no.~626; also available at \href{http://www.math.tau.ac.il/~borovoi/papers/galofile.pdf}{http:/\hspace{-0.2em}/www.math.tau.ac.il/$\sim$borovoi/}.

\bibitem[BGR84]{BGR} S.~Bosch, U.~G{\"u}ntzer, R.~Remmert: \emph{Non-archimedean analysis}, Grundlehren {\bfseries 261}, Springer-Verlag, Berlin etc.\ 1984. 

\bibitem[BLR90]{BLR} S.~Bosch, W.~L\"utkebohmert, M.~Raynaud: \emph{N\'eron models}, Ergebnisse der Mathematik und ihrer Grenzgebiete (3) {\bfseries 21}, Springer-Verlag, Berlin, 1990. 

\bibitem[Bou95]{BourbakiTopology} N.~Bourbaki: \emph{Elements of mathematics -- General topology}, Chapters 1--4, Springer-Verlag, Berlin, 1995. 

\bibitem[Bou98]{BourbakiTopol5} N.~Bourbaki: \emph{Elements of mathematics -- General topology}, Chapters 5--10, Springer-Verlag, Berlin, 1998. 

\bibitem[Bou04]{BourbakiIntegration} N.~Bourbaki: \emph{Elements of mathematics -- Integration, Chapters 7--9}, Springer-Verlag, Berlin, 2004. 

\bibitem[BM94]{BrownMucuk} R.~Brown, O.~Mucuk: \emph{Covering groups of nonconnected topological groups revisited}, Math.\ Proc.\ Cambridge Philos.\ Soc.\ {\bfseries 115} (1994), no.~1, 97--110; also available as \href{http://arxiv.org/abs/math/0009021}{arXiv:math.AT/0009021}.

\bibitem[Bru87]{Br} G.~W.~Brumfiel: \emph{Quotient spaces of semialgebraic equivalence relations}, Math.~Z.~{\bfseries 195} (1987), 69--78; available at \href{https://eudml.org/doc/183673}{https:/\!/eudml.org/doc/183673}.

\bibitem[Cad19]{Cadoret19} A.~Cadoret: \emph{Ultraproduct Weil II for curves and integral models in compatible families of $\ell$-adic local systems}, preprint 2019 available at \href{https://webusers.imj-prg.fr/~anna.cadoret/Weil2Ultra.pdf}{https:/\!/webusers.imj-prg.fr/$\sim$anna.cadoret/Weil2Ultra.pdf}.

\bibitem[CT25]{CadTam} A.~Cadoret, A.Tamagawa: \emph{Variations on a Tannakian Cebotarev density theorem}, Math.\ Ann.\ {\bfseries 392} (2025), 3627--3666; available at \href{https://link.springer.com/content/pdf/10.1007/s00208-024-02994-7.pdf}{https://DOI:10.1007/s00208-024-02994-7}.

\bibitem[Cha97]{Chavdarov} Chavdarov: \emph{The generic irreducibility of the numerator of the Zeta function in a family of curves with large monodromy}, Duke Math.~J.\ {\bfseries 87} (1997), no.~1, 151--180. 

\bibitem[Cre87]{Crew87} R. Crew: \emph{$F$-isocrystals and $p$-adic representations}, Algebraic geometry, Bowdoin, 1985 (Brunswick, Maine, 1985), 111--138, Proc.\ Sympos.\ Pure Math.\ {\bfseries 46}, Part 2, Amer.\ Math.\ Soc., Providence, RI, 1987. 

\bibitem[Cre92]{Crew92} R. Crew: \emph{$F$-isocrystals and their monodromy groups}, Ann.\ Sci.\ \'Ecole Norm.\ Sup.\ {\bfseries 25} (1992), 429--464; available at \href{http://www.numdam.org/item/ASENS_1992_4_25_4_429_0/}{http:/\!/www.numdam.org/item/ASENS\_1992\_4\_25\_4\_429\_0/}.

\bibitem[DK81]{DeKn81} H.~Delfs, M.~Knebusch: \emph{Semialgebraic topology over a real closed field, II}, \emph{Basic theory of semialgebraic spaces}, Math.\ Z.~{\bfseries 178} (1981), no.~2, 175--213; available at \href{https://epub.uni-regensburg.de/12798/1/ubr05114_ocr.pdf}{https:/\!/epub.uni-regensburg.de/12798/1/ubr05114\_ocr.pdf}.

\bibitem[DK82]{DeKn82} H.~Delfs, M.~Knebusch: \emph{On the homology of algebraic varieties over real closed fields}, J.~Reine Angew.\ Math.\ {\bfseries 335} (1982), 122--163. 

\bibitem[DK85]{DeKn85} H.~Delfs and M.~Knebusch: \emph{Locally semialgebraic spaces}, Lecture Notes in Mathematics {\bfseries 1173}, Springer, Berlin-New York, 1985.  

\bibitem[Del74]{DeligneWeil1} P.~Deligne: \emph{La conjecture de Weil: I}, Inst.\ Hautes \'Etudes Sci.\ Publ.\ Math.~{\bfseries 43} (1974), 173--307; available at \href{http://www.numdam.org/item?id=PMIHES_1974__43__273_0}{http:/\hspace{-0.2em}/www.numdam.org/item?id=PMIHES\_1974\_\_43\_\_273\_0}.

\bibitem[Del80]{DeligneWeil2} P.~Deligne: \emph{La conjecture de Weil: II}, Inst.\ Hautes \'Etudes Sci.\ Publ.\ Math.~{\bfseries 52} (1980), 137--252; available at \href{http://www.numdam.org/item?id=PMIHES_1980__52__137_0}{http:/\hspace{-0.2em}/www.numdam.org/item?id=PMIHES\_1980\_\_52\_\_137\_0}.

\bibitem[DM82]{Deligne-Milne82} P.~Deligne, J.~Milne: \emph{Tannakian categories}, in ``Hodge Cycles, Motives, and Shimura Varieties'', pp.~101--228, LNM {\bfseries 900}, Springer-Verlag, New York 1982; also available at \href{http://www.jmilne.org/math/Books/DMOS.pdf}{http:/\!/www.jmilne.org/math}.

\bibitem[EHS07]{EHS} H.~Esnault, P.-H.~Hai, and X.~Sun: \emph{On Nori's fundamental group scheme}, in Geometry and Dynamics of Groups and Spaces, Progr.\ in Math.\ , vol.\ {\bfseries 265}, Birkh\"auser, 2007, pp.~377--398; also available as \href{http://arxiv.org/abs/math/0605645}{arXiv:math.AG/0605645}.

\bibitem[\'Ete02]{Etesse02} J.-T.~\'Etesse: \emph{Descente \'etale des $F$-isocristaux surconvergents et rationalit\'e des fonctions $L$ de sch\'emas ab\'eliens}, Ann.\ Sci.\ \'Ecole Norm.\ Sup.\ (4) {\bfseries 35} (2002), no.~4, 575--603; available at \href{https://eudml.org/doc/82583}{https:/\!/eudml.org/doc/82583}.

\bibitem[\'ES93]{ES} J.-T.~\'Etesse and B.~le Stum: \emph{Fonctions $L$ associ\'ees aux $F$-isocristaux surconvergents, I. Interpr\'etation cohomologique}, Math.\ Ann.\ {\bfseries 296} (1993), 557--576; available at \href{https://eudml.org/doc/165100}{https:/\!/eudml.org/doc/165100}.

\bibitem[EGA]{EGA} A.~Grothendieck: \emph{{\'E}lements de G{\'e}o\-m{\'e}trie Alg{\'e}\-brique}, Publ.\ Math.\ IHES {\bfseries 4}, {\bfseries 8}, {\bfseries 11}, {\bfseries 17}, {\bfseries 20}, {\bfseries 24}, {\bfseries 28}, {\bfseries 32}, Bures-Sur-Yvette, 1960--1967; see also Grundlehren {\bfseries 166}, Springer-Verlag, Berlin etc.\ 1971; also available at \href{https://www.numdam.org/search?Grothendieck%2C%20Alexander-%C3%89lements%20de%20g%C3%A9om%C3%A9trie%20alg%C3%A9brique-ab&eprint=False}{http:/\!/www.numdam.org/}.

\bibitem[Fol16]{Folland} G.~Folland: \emph{A course in abstract harmonic analysis}, Textbooks in Mathematics, CRC Press, Boca Raton, FL, 2016. 

\bibitem[HN12]{HilgertNeeb} J.~Hilgert, K.-H.~Neeb: \emph{Structure and Geometry of Lie Groups}, Springer Monographs in Mathematics, Springer-Verlag 2012. 

\bibitem[Hoc65]{Hoch65} G.~Hochschild: \emph{The Structure of Lie groups}, Holden-Day Series in Mathematics, Holden-Day Inc., San Francisco 1965. 

\bibitem[Hoc81]{HochschildBasic} G.~Hochschild: \emph{Basic theory of algebraic groups and Lie algebras}. GTM {\bfseries 75}, Springer-Verlag, New York-Berlin, 1981. 

\bibitem[Kat79]{Katz79} N.~Katz: \emph{Slope filtration of $F$-crystals}, in Journ\'ees de G\'eom\'etrie Alg\'ebrique de Rennes (Rennes, 1978), Vol.~I, pp.~113--163, Ast\'erisque {\bfseries 63}, Soc.\ Math.\ France, Paris 1979; available at \href{http://www.numdam.org/item/?id=AST_1979__63__113_0}{http:/\!/www.numdam.org/item/?id=AST\_1979\_\_63\_\_113\_0}. 

\bibitem[KLV14]{KLV14} D.~Kazhdan, M.~Larsen, Y.~Varshavsky: \emph{The Tannakian formalism and the Langlands conjectures}, Algebra Number Theory {\bfseries 8} (2014), no.~1, 243--256; also available as \href{https://arxiv.org/abs/math/1006.3864}{arxiv:math/1006.3864}.

\bibitem[Ked04]{Kedlaya04} K.~Kedlaya: \emph{Full faithfulness for overconvergent $F$-isocrystals}, in Geometric Aspects of Dwork Theory (Volume II), Adolphson et al (eds.), de Gruyter, Berlin, 2004, pp.~819--835; also available as \href{https://arxiv.org/abs/math/0110125}{arxiv:math/0110125}.

\bibitem[Ked06]{Kedlaya06} K. Kedlaya: \emph{Fourier transforms and $p$-adic ``Weil II''}, Compos.\ Math.\ {\bfseries 142} (2006), 1426--1450; also available as \href{https://arxiv.org/abs/math/0210149}{arxiv:math/0210149}.

\bibitem[Kle14]{Klenke} A.~Klenke: \emph{Probability theory, A comprehensive course}, Universitext, Springer, London, 2014. 

\bibitem[KS99]{Kottwitz-Shelstad99} R.~Kottwitz, and D.~Shelstad: \emph{Foundations of twisted endoscopy}, Ast\'erisque {\bfseries 255}, Soci\'ete Math\'ematique de France, Paris 1999; available at \href{http://www.numdam.org/item/AST_1999__255__R3_0}{http:/\!/www.numdam.org/item/AST\_1999\_\_255\_\_R3\_0}.

\bibitem[Laf02]{Lafforgue02} L.\ Lafforgue: \emph{Chtoucas de Drinfeld et correspondance de Langlands}, Inventiones Math.\ {\bfseries 147} (2002), 1--241; also available at \href{http://www.ihes.fr/~lafforgue/math/fulltext.pdf}{http:/\hspace{-0.2em}/www.ihes.fr/$\sim$lafforgue/}.

\bibitem[Lau84]{Laurent} M. Laurent: \emph{Equations diophantiennes exponentielles}, Inventiones Math.\ {\bfseries 78} (1984), 299--327. 

\bibitem[Laz62]{Lazard} M.~Lazard: \emph{Les z\'eros des fonctions analytiques d'une variable sur un corps valu\'e complet}, Publ.\ Math.\ Inst.\ Hautes \'Etudes Sci.\ {\bfseries 14} (1962), 47--75; also available at \href{http://www.numdam.org/item?id=PMIHES_1962__14__47_0}{http:/\!/www.numdam.org/item?id=PMIHES\_1962\_\_14\_\_47\_0}.

\bibitem[LP21]{LP} C.~Lazda and A.~P\'al: \emph{A homotopy exact sequence for overconvergent isocrystals}, Forum of Math.\ Sigma {\bfseries 9}:e71 (2021), 1--33; available at \href{https://www.doi.org/10.1017/fms.2021.63}{doi:10.1017/fms.2021.63}.

\bibitem[NSW08]{NSW08} J.~Neukirch, A.~Schmidt, K.~Wingberg: \emph{Cohomology of number fields}, Grundlehren der Mathematischen Wissenschaften {\bfseries 323}, Springer-Verlag, Berlin, 2008. 

\bibitem[Oes82]{Oesterle} J. Oesterl\'e: \emph{R\'eduction modulo $p^n$ des sous-ensembles analytiques ferm\'es de $\BZ^N_p$}, Inventiones Math.\ {\bfseries 66} (1982), 325--341; available at \href{https://eudml.org/doc/142884}{https://eudml.org/doc/142884}. 

\bibitem[Ogu84]{Ogus84} A.~Ogus: \emph{$F$-isocrystals and de Rham cohomology II, Convergent isocrystals}, Duke Math.\ J.~{\bfseries 51} (1984), no.~4, 765--850. 

\bibitem[Saa72]{Saavedra72} N.~Saavedra~Rivano: \emph{Cat\'egories Tannakiennes}, Lecture Notes in Mathematics, vol.~265, Springer-Verlag, Berlin-New York, 1972. 

\bibitem[Sch84]{Schikhof} W.H.~Schikhof: \emph{Ultrametric calculus. An introduction to $p$-adic analysis}, Cambridge Studies in Advanced Mathematics {\bfseries 4}, Cambridge University Press, Cambridge, 1984. 

\bibitem[Ser63]{Serre63} J.-P.~Serre: \emph{Zeta and $L$-functions}, in Arithmetical Algebraic Geometry (Proc.\ Conf.\ Purdue Univ., 1963), Harper \& Row, New York 1963, pp.~82--92. 

\bibitem[Ser88]{Serre88} J.-P.~Serre: \emph{Algebraic groups and class fields}, GTM {\bfseries 117}, Springer-Verlag, Berlin etc.\ 1988. 

\bibitem[Ser92]{Serre92} J.-P.~Serre: \emph{Lie algebras and Lie groups}, Lecture Notes in Mathematics {\bfseries 1500}, second edition, Springer-Verlag, Berlin (1992). 

\bibitem[Ser93]{SerreGebres} J.-P.~Serre: \emph{G\`ebres}, Enseign.\ Math.\ (2) {\bfseries 39} (1993), no.~1--2, 33--85; available at \href{https://www.e-periodica.ch/cntmng?pid=ens-001:1993:39::15}{https:/\!/www.e-periodica.ch/cntmng?pid=ens-001:1993:39::15}. 

\bibitem[Ser98]{SerreAbelian} J.-P.~Serre: \emph{Abelian $l$-adic representations and elliptic curves}, Revised reprint of the 1968 original, Research Notes in Mathematics {\bfseries 7}, A K Peters, Wellesley, MA, 1998; available at \href{https://www.math.mcgill.ca/darmon/courses/18-19/gs/serre-mcgill.pdf}{https://www.math.mcgill.ca/darmon/courses/18-19/gs/serre-mcgill.pdf}.

\bibitem[SGA 1]{SGA1} A.~Grothendieck: \emph{SGA 1: Rev{\^e}tements {\'e}tales et groupe fondamental}, LNM {\bfseries 224}, Springer-Verlag, Berlin-Heidelberg 1971; also available as \href{http://arxiv.org/abs/math/0206203}{arXiv:math/0206203}. 

\bibitem[SGA 4]{SGA4} M.~Artin, A.~Grothendieck, J.-L.~Verdier: \emph{SGA 4: Th\'eorie des topos et cohomologie \'etale des schémas}, LNM~{\bfseries 269}, {\bfseries 270}, {\bfseries 305}, Springer-Verlag, Berlin etc.\ 1972/1973; also available at \href{http://library.msri.org/books/sga/sga/pdf/}{http:/\hspace{-0.2em}/library.msri.org/books/sga/} or reedited on \href{http://fabrice.orgogozo.perso.math.cnrs.fr/SGA4/}{http:/\hspace{-0.2em}/fabrice.orgogozo.perso.math.cnrs.fr/SGA4/}.

\bibitem[Spa82]{Spaltenstein82} N.~Spaltenstein: \emph{Classes unipotentes et sous-groupes de Borel}, LNM {\bfseries 946}, Springer-Verlag, Berlin-New York, 1982. 

\bibitem[Ste68]{Steinberg68} R.~Steinberg: \emph{Endomorphisms of linear algebraic groups}, Memoirs of the American Mathematical Society, No.~{\bfseries 80} American Mathematical Society, Providence, R.I.\ 1968. 

\bibitem[Ste74]{Steinberg74} R.~Steinberg: \emph{Conjugacy classes in algebraic groups}, Lecture Notes in Mathematics {\bfseries 366}, 1974. 

\bibitem[Tsu98]{Tsuzuki98} N.~Tsuzuki: \emph{Finite local monodromy of overconvergent unit-root $F$-isocrystals on a curve}, American Journal of Mathematics {\bfseries 120} (1998), no.~6, 1165--1190. 

\bibitem[Tsu02]{TsuzukiDuke} N.~Tsuzuki: \emph{Morphisms of $F$-isocrystals and the finite monodromy theorem for unit-root $F$-isocrystals}, Duke Math.\ J.~{\bfseries 111} (2002), no.~3, 385--419. 

\bibitem[Wat79]{Waterhouse79} W.C.~Waterhouse: \emph{Introduction to affine group schemes}, Graduate Texts in Mathematics {\bfseries 66}, Springer-Verlag, New York. 

\end{thebibliography}
\end{document}